\documentclass[12pt]{l4dc2021}


\usepackage[utf8]{inputenc} 
\usepackage[T1]{fontenc}    
\usepackage{hyperref}       
\usepackage{url}            
\usepackage{booktabs}       
\usepackage{amsfonts}       
\usepackage{nicefrac}       
\usepackage{microtype}      

\usepackage{graphicx}
\usepackage{subcaption}
\usepackage{amsmath}
\usepackage{amssymb}
\usepackage{mathrsfs}
\usepackage{color,colortbl}
\usepackage{xcolor}
\usepackage{cite}
\newtheorem{prop}{Proposition}
\newtheorem{assumption}{Assumption}

\newenvironment{manualtheorem}[1]{%
  \manualtheoreminner
}{\endmanualtheoreminner}

\newenvironment{manuallemma}[1]{%
  \manuallemmainner
}{\endmanuallemmainner}

\newcommand{\N}{\mathcal{N}}
\usepackage{wrapfig}

\usepackage{tikz}
\usetikzlibrary{shapes,arrows}
\usetikzlibrary{positioning}
\tikzstyle{block} = [draw, fill=white, rectangle, 
    minimum height=3em, minimum width=3em]
\tikzstyle{circ} = [draw, fill=white, circle, minimum size=2.5em]
\tikzstyle{input} = [coordinate]
\tikzstyle{output} = [coordinate]
\tikzstyle{pinstyle} = [pin edge={to-,thin,black}]

\usepackage{algorithm}
\usepackage{algorithmic}
\graphicspath{ {images/} }

\usepackage{times}

\title[Accelerated Learning with Robustness to Adversarial Regressors]{Accelerated Learning with Robustness to Adversarial Regressors}

\author{%
 \Name{Joseph E. Gaudio}
 \Email{jegaudio@mit.edu}\\
 \Name{Anuradha M. Annaswamy} \Email{aanna@mit.edu}\\
 \Name{Jos\'{e} M. Moreu} \Email{jmmoreu@mit.edu}\\
 \addr Massachusetts Institute of Technology
 \AND
 \Name{Michael A. Bolender} \Email{michael.bolender@us.af.mil}\\
 \addr Air Force Research Laboratory
 \AND
 \Name{Travis E. Gibson} \Email{tegibson@bwh.harvard.edu}\\
 \addr Brigham and Women’s Hospital and Harvard Medical School%
}

\begin{document}

\maketitle

\begin{abstract}
    High order momentum-based parameter update algorithms have seen widespread applications in training machine learning models. Recently, connections with variational approaches have led to the derivation of new learning algorithms with accelerated learning guarantees. Such methods however, have only considered the case of static regressors. There is a significant need for parameter update algorithms which can be proven stable in the presence of adversarial time-varying regressors, as is commonplace in control theory. In this paper, we propose a new discrete time algorithm which 1) provides stability and asymptotic convergence guarantees in the presence of adversarial regressors by leveraging insights from \emph{adaptive control theory} and 2) provides non-asymptotic accelerated learning guarantees leveraging insights from convex optimization. In particular, our algorithm reaches an $\epsilon$ sub-optimal point in at most $\tilde{\mathcal{O}}(1/\sqrt{\epsilon})$ iterations when regressors are constant - matching lower bounds due to Nesterov of $\Omega(1/\sqrt{\epsilon})$, up to a $\log(1/\epsilon)$ factor and provides guaranteed bounds for stability when regressors are time-varying. We provide numerical experiments for a variant of Nesterov's provably hard convex optimization problem with time-varying regressors, as well as the problem of recovering an image with a time-varying blur and noise using streaming data.
\end{abstract}

\section{Introduction}
\label{s:Introduction_N}

Iterative gradient-based optimization methods in machine learning commonly employ a combination of time-scheduled learning rates \citep{Shalev_Shwartz_2011,Hazan_2016}, adaptive learning rates \citep{Duchi_2011,Kingma_2017,Wilson_2017}, and/or higher order ``momentum'' based dynamics \citep{Polyak_1964,Nesterov_1983,Wibisono_2016}. Variants of the higher order update proposed by Nesterov \citep{Nesterov_1983} in particular, have received significant attention in the optimization \citep{Nesterov_2004,Beck_2009,Bubeck_2015,Carmon_2018,Nesterov_2018} and neural network communities \citep{Krizhevsky_2012,Sutskever_2013} due to their provable guarantees of accelerated learning for classes of convex functions. Empirical investigations for non-convex neural network training are also a topic of significant interest.

To gain insight into Nesterov's discrete time method \citep{Nesterov_1983}, the authors in \citep{Su_2016} identified the second order ordinary differential equation (ODE) at the limit of zero step size. Still pushing further in the continuous time analysis of these higher order methods, several recent results have leveraged a variational approach showing that a larger class of higher order methods exist where one can obtain an arbitrarily fast convergence rate \citep{Wibisono_2016,Wilson_2016}. An equivalent algorithm in discrete-time to these continuous-time results, which is implementable and has comparable convergence rates, is an active area of research \citep{Betancourt_2018,Wilson_2018,Shi_2019}. It should be noted that, quite often, in many of these papers \citep{Su_2016,Wibisono_2016,Wilson_2016,Betancourt_2018,Wilson_2018,Shi_2019}, the analysis is performed for static features/regressors, with any dynamic components arising only due to a recursive update of the parameters.

There are many machine learning applications and paradigms where the features or inputs are time-varying. Examples include multi-armed bandits \citep{Auer_1995,Auer_2002,Bubeck_2012}, adaptive-filtering \citep{Goodwin_1984,Widrow_1985,Haykin_2014}, and temporal-prediction tasks \citep{Dietterich_2002,Kuznetsov_2015,Hall_2015}, to name a few. In addition, many models can be trained via adversarial learning \citep{Shalev_Shwartz_2011,Ben_David_2009} which results in time-varying inputs during training \citep{Auer_1995,Cesa_Bianchi_2006}. Even if the application does not require time-varying inputs, the presence of large training data has necessitated online or stochastic training methods in several applications, bringing a dynamic component into the problem statement \citep{Goodfellow-et-al-2016,Shalev_Shwartz_2011,Cesa_Bianchi_2004,Bengio_2012,Jain_2018,Gitman_2019}. Online learning is another class of problems that requires an investigation of optimization \citep{Zinkevich_2003,Hazan_2007,Hazan_2008,Hazan_2016,Shalev_Shwartz_2011,Raginsky_2010}. Online learning has had particular success in the development of state of the art gradient methods for training large neural networks \citep{Duchi_2011,Kingma_2017}. 

Time variations in inputs become even more important in real-time applications, with potentially limited compute \citep{Jordan_2015}, and in an area of machine learning which has now come to be referred to as continual/lifelong learning \citep{Ben_David_2009,Chen_2018_LML,Thrun_1995,Thrun_1998,Parisi_2019}. Continual/lifelong learning algorithms must be robust to adversarial features/inputs in addition to shifts in the distribution of the incoming data \citep{Ben_David_2009,Lopez_2017}, i.e.~data is not necessarily independent and identically distributed from a fixed probability distribution \citep{Pentina_2015}. Such algorithms must also be able to incrementally learn for an indefinite amount of time without human intervention \citep{Silver_2013,Fei_2016,Chen_2018_LML}.\footnote{In the online learning setting, when minimizing regret \citep{Shalev_Shwartz_2011}, the learning rates decay over time.} These notions are further important in robotics \citep{Thrun_1995,Thrun_1998} and learning-based control theory \citep{Sastry_1989,Narendra2005,Goodwin_1984,Ioannou1996} due to the requirement of continuously running in such applications.

\begin{table}[t]
    \caption{Comparison of gradient-based methods for a class of time-varying convex functions.}
    \label{t:Comparison_Gradient_Methods}
    \centering
    \begin{tabular}{lccc}
        \toprule
        Algorithm & Equation & Constant Regressor & Time-Varying \\
         &  & \# Iterations & Regressor \\
        \midrule
        Gradient Descent Normalized & \eqref{e:GD_Goodwin_N} & $\mathcal{O}(1/\epsilon)$ & Stable \\
        Gradient Descent Fixed & \eqref{e:Gradient_Method} & $\mathcal{O}(1/\epsilon)$ & Unstable \\
        Nesterov Acceleration Varying & \eqref{e:Nesterov_Two_Convex_TV_beta} & $\mathcal{O}(1/\sqrt{\epsilon})$ & Unstable \\
        Nesterov Acceleration Fixed & \eqref{e:Nesterov_Two_Convex} & $\mathcal{O}(1/\sqrt{\epsilon}\cdot\log(1/\epsilon))$ & Unstable \\
        \rowcolor{lightgray!40}This Paper & Alg \ref{alg:HOT_R} & $\mathcal{O}(1/\sqrt{\epsilon}\cdot\log(1/\epsilon))$ & Stable \\
        \bottomrule
    \end{tabular}
\end{table}

This paper proposes a new discrete time algorithm for parameter updates that accommodates time-varying regressors and is of high order. This algorithm will be shown to achieve two objectives. The first objective is to demonstrate stability of this high-order parameter tuner in the presence of time-varying adversarial regressors. This is in contrast to many other iterative methods that cannot be proved to be stable in this setting. The second objective is to show an accelerated convergence rate when the regressors are constant. Our higher order tuner is based on  a novel discretization of a continuous time higher order learning parameter update, and differs from the variational perspective-based high order tuner proposed in \citep{Wibisono_2016} which leverages time-scheduled hyperparameters. Unlike many of the papers listed above, we do not assume the requirement of an \emph{a priori} bound in the time-varying regressor for stability of our algorithm, and directly deploy the gradient which utilizes the regressors. The non-asymptotic convergence rate, which corresponds to the second objective, will be shown to be a logarithm factor away from provable lower bounds due to Nesterov \citep{Nesterov_2018}, with comparable constant factors. 

Table \ref{t:Comparison_Gradient_Methods} provides an overview of how these two objectives are realized with this algorithm in comparison to other iterative methods. The first objective ensures that our iterative algorithm remains stable and learns indefinitely as streaming data changes, even in an adversarial manner - crucial in learning for dynamical systems applications. The second objective demonstrates that the proposed learning algorithm retains fast convergence in the standard setting of constant regressors. That is, the significant benefit of provable stability of our algorithm in the presence of time-varying regressors does not come at the expense of large degradation in the rate of convergence - our proposed algorithm has a near-optimal convergence rate in the standard static regressor analysis setting as well.

The main contributions of this work are summarized as: (i) A new class of momentum/Nesterov-type iterative optimization algorithms, (ii) Accelerated learning guarantee a logarithm factor away from Nesterov (while remaining stable), (iii) Explicit stability conditions for the new algorithms with adversarial regressors, (iv) Connections to a Lagrangian variational perspective alongside an introduction of an adaptive systems-based normalization in machine learning, and (v) Numerical simulations demonstrating the efficacy of the proposed methods.

\section{Problem setting}
\label{s:Problem_Setting_N}

In this paper, we present the continuous time perspective with time $t$ while discrete time steps are indexed by $k$. When in continuous time, the time dependence of variables may be omitted when it is clear from the context. The classes $\mathcal{L}_p$ and $\ell_p$ for $p\in[1,\infty]$ are described in \citep[Appendix \ref{s:Preliminaries_N}]{Gaudio_arXiv_2020}, alongside definitions of (strong) convexity, smoothness, and Euler discretization techniques. Unless otherwise specified, $\lVert\cdot\rVert$ represents the 2-norm. We denote the discrete time difference of a function $V$ as $\Delta V_k:=V_{k+1}-V_k$. For notational clarity and to focus on multidimensional parameters/regressors we present the single output setting. The results of this paper trivially extend to multiple outputs.

We consider the setting of linear regression with time-varying regressors $\phi\in\mathbb{R}^N$ which are related in a linear combination with an unknown parameter $\theta^*\in\mathbb{R}^N$ to the output $y\in\mathbb{R}$ as $y_k=\theta^{*T}\phi_k$. Given that the parameter $\theta^*$ is unknown, an estimator $\hat{y}_k=\theta^T_k\phi_k$ is formulated, where $\hat{y}\in\mathbb{R}$ is the output estimate and $\theta\in\mathbb{R}^N$ is the parameter estimate. In this setting, the output error is defined as
\begin{equation}\label{e:error1_N_discrete_N}
    e_{y,k}=\hat{y}_k-y_k=\tilde{\theta}^T_k\phi_k,
\end{equation}
where $\tilde{\theta}_k=\theta_k-\theta^*$ is the parameter estimation error. The goal is to design an iterative algorithm to adjust the parameter estimate $\theta$ using streaming regressor-output data pairs $\mathcal{D}ata_k=(\phi_k,y_k)$ such that the prediction error $e_y$ converges to zero with a provably fast non-asymptotic convergence rate when regressors $\phi_k$ are constant, and that stability and asymptotic convergence properties remain in the presence of time-varying regressors. An iterative gradient-based method is proposed to enable computational simplicity and accommodate data in real-time. To formulate the gradient-based methods of this paper, we consider the squared loss function using \eqref{e:error1_N_discrete_N} of the form
\begin{equation}\label{e:Squared_Loss_N}
    L_k(\theta_k)=\frac{1}{2}e_{y,k}^2=\frac{1}{2}\tilde{\theta}_k^T\phi_k\phi_k^T\tilde{\theta}_k,
\end{equation}
where the subscript $k$ in $L_k$ denotes the regressor iteration number. At each iteration $k$, the gradient of the loss function is implementable as $\nabla L_k(\theta_k)=\phi_ke_{y,k}$. The Hessian of \eqref{e:Squared_Loss_N} can be expressed as $\nabla^2L_k(\theta_k)=\phi_k\phi_k^T$, and thus $0\leq\nabla^2L_k(\theta_k)\leq\lVert\phi_k\rVert^2I$. Therefore, the loss function can be seen to be (non-strongly) convex with a time-varying regressor-dependent smoothness parameter.
\begin{remark}
    The stability results of this paper will be shown to hold even for \textbf{adversarial} time-varying regressors $\phi_k$. No bound on $\phi_k$ is required to be known and the prediction error $e_{y,k}$ is not assumed to be bounded a priori. This is in comparison to standard methods in online learning which assume knowledge of a bound on gradients and regressors for proving stability \citep{Shalev_Shwartz_2011,Hazan_2016}. Thus the algorithm proposed in this paper can be employed in the continual learning \citep{Ben_David_2009,Thrun_1995,Thrun_1998} and learning-based control theory \citep{Sastry_1989,Narendra2005,Goodwin_1984,Ioannou1996} settings where such assumptions of a priori boundedness cannot be made.
\end{remark}
The starting point for our proposed algorithm comes from adaptive methods (see for example, \citep[Ch. 3]{Goodwin_1984}) which leads to an iterative normalized gradient descent method
\begin{equation}\label{e:GD_Goodwin_N}
    \theta_{k+1}=\theta_k-\gamma \nabla \bar f_k(\theta_k),\quad 0<\gamma<2,
\end{equation}
where $\bar{f}_k(\cdot)$ corresponds to a normalized loss function defined as
\begin{equation}\label{e:normalize}
\bar f_k(\theta_k)=\frac{L_k(\theta_k)}{\N_k},
\end{equation}
and $\N_k=1+\lVert\phi_k\rVert^2$ is a normalization signal employed to ensure boundedness of signals for any arbitrary regressor $\phi_k$. Motivated by the normalized gradient method in \eqref{e:GD_Goodwin_N}, our goal is to derive a Nesterov-type higher order gradient method to ensure a provably faster convergence rate when regressors are constant while preserving stability of the estimation algorithm in the presence of adversarial time-varying $\phi_k$.

\section{Algorithms based on a higher order tuner}
\label{s:Derivations}

We begin the derivation of our discrete time Nesterov-type higher order tuner algorithm from the continuous time perspective, which provides insights into the underlying stability structure. This perspective further results in a representation of a second order differential equation as two first order differential equations which are used to certify stability in the presence of time-varying regressors by employing Lyapunov function techniques. Motivated by this continuous time representation, we provide a novel discretization to result in a discrete time higher order tuner which can be shown to be stable using the same Lyapunov function. The discrete time algorithm is then shown to be equivalent to the common Nesterov iterative method form when regressors are constant.

\subsection{Continuous time higher order tuner}

We begin the derivation of our algorithm using the variational perspective of Wibisono, Wilson, and Jordan \citep{Wibisono_2016}. In particular, the Bregman Lagrangian in \citep[Eq. 1]{Wibisono_2016} is re-stated with the Euclidean norm employed in the Bregman divergence as $\mathcal{L}(\theta(t),\dot{\theta}(t),t)=\text{e}^{\bar{\alpha}_t+\bar{\gamma}_t}\left(\text{e}^{-2\bar{\alpha}_t}\frac{1}{2}\lVert\dot{\theta}(t)\rVert^2-\text{e}^{\bar{\beta}_t}L_t(\theta(t))\right)$. This Lagrangian weights potential energy (loss) $L_t(\theta(t))$, and kinetic energy $(1/2)\lVert\dot{\theta}(t)\rVert^2$, with an exponential term $\exp(\bar{\alpha}_t+\bar{\gamma}_t)$, which adjusts the damping. The hyperparameters $(\bar{\alpha}_t,\bar{\beta}_t,\bar{\gamma}_t)$ are commonly time-scheduled and result in different algorithms by appropriately weighting each component in the Lagrangian (see \citep{Wibisono_2016} for choices common in optimization for machine learning). It can be easily shown however, that time scheduling the hyperparameters can result in instability when regressors are time-varying. We thus propose the use of a regressor-based normalization $\N_t=1+\lVert\phi(t)\rVert^2$ with constant gains $\gamma,\beta>0$ to parameterize the Lagrangian as
\begin{equation}
    \label{e:Lagrangian_N_N}
    \mathcal{L}(\theta(t),\dot{\theta}(t),t)=\text{e}^{\beta (t-t_0)}\left(\frac{1}{2}\lVert\dot{\theta}(t)\rVert^2-\frac{\gamma\beta}{\N_t} L_t(\theta(t))\right).
\end{equation}
Using a Lagrangian, a functional may be defined as: $J(\theta)=\int_{\mathbb{T}}\mathcal{L}(\theta,\dot{\theta},t)dt$, where $\mathbb{T}$ is an interval of time. To minimize this functional, a necessary condition from the calculus of variations \citep{Goldstein_2002} is that the Lagrangian solves the Euler-Lagrange equation: $\frac{d}{dt}\left(\frac{\partial\mathcal{L}}{\partial\dot{\theta}}(\theta,\dot{\theta},t)\right)=\frac{\partial\mathcal{L}}{\partial\theta}(\theta,\dot{\theta},t)$. Using \eqref{e:Lagrangian_N_N}, the second order differential equation resulting from the application of the Euler-Lagrange equation is: $\ddot{\theta}(t)+\beta\dot{\theta}(t)=-\frac{\gamma\beta}{\N_t}\nabla L_t(\theta(t))$. This differential equation can be seen to have the normalized gradient of the loss function as the forcing term parameterized with $\gamma\beta$, and constant damping parameterized with $\beta$. Crucial to the development of the results of this paper, this second order differential equation may be written as a \emph{higher order tuner} given by
\begin{align}\label{e:Accelerated_GF_N_N}
    \begin{split}
    \dot{\vartheta}(t)&=-\frac{\gamma}{\N_t}\nabla L_t(\theta(t)),\\
    \dot{\theta}(t)&=-\beta(\theta(t)-\vartheta(t)),
    \end{split}
\end{align}
which can be seen to take the form of a normalized gradient flow update followed by a linear time invariant (LTI) filter. This representation of a higher order tuner will be fundamental to prove stability with time-varying regressors using Lyapunov function techniques in Section \ref{s:Stability_N}.

\subsection{Discretization of continuous time higher order tuner}

We propose in this paper a specific discretization of the high-order tuner in \eqref{e:Accelerated_GF_N_N}, of the form
\begin{align}\label{e:higher_order_L_Discretized}
    \begin{split}
        \text{Implicit Euler}:\vartheta_{k+1}&=\vartheta_k-\gamma\nabla \bar{f}_k(\theta_{k+1}),\\
        \text{Explicit Euler}:\hspace{.04cm}\theta_{k+1}&=\bar{\theta}_k-\beta(\bar{\theta}_k-\vartheta_k),\\
        \text{Extra Gradient}:\hspace{.385cm}\bar{\theta}_k&=\theta_k-\gamma\beta\nabla \bar{f}_k(\theta_k),
    \end{split}
\end{align}
where $\bar{f}_k(\cdot)$ is given by \eqref{e:normalize}, and the hyperparameters are $\gamma$ and $\beta$. One can employ any number of techniques for the discretization of an ordinary differential equation, including Runge–Kutta, symplectic, and Euler methods (see for example methods in \citep{Hairer_2006,Betancourt_2018}). The one employed in \eqref{e:higher_order_L_Discretized} can be viewed as a combination of implicit-Euler (for the variable $\vartheta$) and explicit-Euler method (for the variable $\theta$). An important correction is introduced in the explicit-Euler component, which corresponds to the use of an extra gradient. It should also be noted that the extra gradient step only serves to adjust the direction of the update, but does not increase the order of the tuner beyond two.

\subsection{Augmented objective function}

\begin{wrapfigure}{r}{0.50\textwidth}
    \begin{minipage}{0.50\textwidth}
        \vspace{-0.8cm}
        \begin{algorithm}[H]
        \caption{Higher Order Tuner Optimizer}
        \label{alg:HOT_R}
        \begin{algorithmic}[1]
        \STATE {\bfseries Input:} initial conditions $\theta_0$, $\vartheta_0$, gains $\gamma$, $\beta$, $\mu$
        \FOR{$k=0,1,2,\ldots$}
        \STATE \textbf{Receive} regressor $\phi_k$, output $y_k$
        \STATE Let $\N_k=1+\lVert\phi_k\rVert^2$\\
        $\nabla f_k(\theta_k)=\frac{\nabla L_k(\theta_k)}{\N_k}+\mu(\theta_k-\theta_0)$,\\ $\bar{\theta}_k=\theta_k-\gamma\beta\nabla f_k(\theta_k)$
        \STATE $\theta_{k+1}\leftarrow\bar{\theta}_k-\beta(\bar{\theta}_k-\vartheta_k)$
        \STATE Let \\
        $\nabla f_k(\theta_{k+1})=\frac{\nabla L_k(\theta_{k+1})}{\N_k}+\mu(\theta_{k+1}-\theta_0)$
        \STATE $\vartheta_{k+1}\leftarrow\vartheta_k-\gamma\nabla f_k(\theta_{k+1})$
        \ENDFOR
        \end{algorithmic}
        \end{algorithm}
    \end{minipage}
    \vspace{-0.1cm}
\end{wrapfigure}
As discussed in Section \ref{s:Problem_Setting_N}, the squared error loss function in \eqref{e:Squared_Loss_N} as well as its normalized version in \eqref{e:normalize} are non-strongly convex. In order to obtain accelerated learning properties similar to that of Nesterov's in \citep{Nesterov_2018}, we now propose a new algorithm that builds on that in \eqref{e:higher_order_L_Discretized}. For this purpose, we modify the normalized cost function in \eqref{e:normalize} to include L2 regularization as
\begin{equation}\label{e:Strongly_Convex_Objective}
    f_k(\theta_k)=\bar{f}_k(\theta_k)+\frac{\mu}{2}\lVert\theta_k-\theta_0\rVert^2,
\end{equation}
where $\mu>0$ is the regularization constant, $\theta_0$ is the initial condition of the parameter estimate, and the subscript $k$ in $f_k$ denotes the regressor iteration number. Using \eqref{e:Squared_Loss_N}, the Hessian of \eqref{e:Strongly_Convex_Objective} can be expressed as $\nabla^2f_k(\theta_k)=(\phi_k\phi_k^T)/\N_k+\mu I$, and thus it can be seen that $\mu I\leq\nabla^2f_k(\theta_k)\leq (1+\mu)I$. Therefore, the objective function in \eqref{e:Strongly_Convex_Objective} can be seen to be $\mu$-strongly convex and $(1+\mu)$-smooth and has desirable properties of constant smoothness and strong convexity.

By replacing the objective function $\bar{f}_k(\theta_k)$ in \eqref{e:higher_order_L_Discretized} with $f_k(\theta_k)$ in \eqref{e:Strongly_Convex_Objective}, we now obtain Algorithm \ref{alg:HOT_R}, the main higher order tuner optimizer introduced in this paper with hyperparameters $\gamma$, $\beta$, and $\mu$. It should be noted that the gradient expressions for $L_k$ in lines 4 and 6 of Algorithm \ref{alg:HOT_R} are given by $\nabla L_k(\theta_{k})=\phi_k(\theta_{k}^T\phi_k-y_k)$ and $\nabla L_k(\theta_{k+1})=\phi_k(\theta_{k+1}^T\phi_k-y_k)$ respectively. The stability properties of Algorithm \ref{alg:HOT_R}, in the presence of a time-varying regressor $\phi_k$, will be demonstrated in Section \ref{s:Stability_N}. In addition to the stability properties in the presence of $\phi_k$, the additional advantage of Algorithm \ref{alg:HOT_R} is a fast minimization of \eqref{e:Squared_Loss_N} for constant regressors. This accelerated convergence property will be established in Section \ref{s:Acceleration} by minimizing the augmented objective in \eqref{e:Strongly_Convex_Objective}. The relation between Algorithm \ref{alg:HOT_R} and Nesterov's method is summarized in the  following proposition.
\begin{prop}\label{prop:equivalence}
    Algorithm \ref{alg:HOT_R} with a constant regressor $\phi_k\equiv\phi$ (and thus $f_k(\cdot)\equiv f(\cdot)$) may be reduced to the common form of Nesterov's equations \citep[Eq. 2.2.22]{Nesterov_2018} with $\bar{\beta}=1-\beta$ and $\bar{\alpha}=\gamma\beta$ as
\begin{align}\label{e:Nesterov_Two_Convex}
    \begin{split}
        \theta_{k+1}&=\nu_k-\bar{\alpha}\nabla f(\nu_k),\\
        \nu_{k+1}&=\left(1+\bar{\beta}\right)\theta_{k+1} - \bar{\beta}\theta_{k}.
    \end{split}
\end{align}
\end{prop}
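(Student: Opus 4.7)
The plan is to identify Nesterov's iterates in Algorithm~\ref{alg:HOT_R} by setting $\nu_k^{\text{Nes}} := \theta_k$ and $\theta_{k+1}^{\text{Nes}} := \bar\theta_k$, and then to derive a closed two-term recurrence in $\{\bar\theta_k\}$ by eliminating the auxiliary variable $\vartheta_k$. With a constant regressor the normalizer $\mathcal{N}_k = 1+\lVert\phi\rVert^2$ is constant and hence $f_k \equiv f$, so the three lines of Algorithm~\ref{alg:HOT_R} specialize to updates in $f$ alone, which I will manipulate below.

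The extra-gradient line, $\bar\theta_k = \theta_k - \gamma\beta\,\nabla f(\theta_k)$, is exactly the first Nesterov equation $\theta_{k+1}^{\text{Nes}} = \nu_k^{\text{Nes}} - \bar\alpha\,\nabla f(\nu_k^{\text{Nes}})$ under the identification above, with $\bar\alpha = \gamma\beta$; this part requires no work. The substantive step is to show that the implicit- and explicit-Euler updates together yield the second Nesterov equation
\[
\theta_{k+1} = (1+\bar\beta)\bar\theta_k - \bar\beta\,\bar\theta_{k-1}, \qquad \bar\beta = 1-\beta.
\]
To this end I would use the extra-gradient line at index $k+1$ to write $\gamma\,\nabla f(\theta_{k+1}) = (\theta_{k+1}-\bar\theta_{k+1})/\beta$, turning the implicit-Euler update into $\beta\vartheta_{k+1} = \beta\vartheta_k - \theta_{k+1} + \bar\theta_{k+1}$. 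Next, the explicit-Euler line rearranges to $\beta\vartheta_k = \theta_{k+1} - (1-\beta)\bar\theta_k$; substituting this into the previous identity collapses matters to the clean expression $\beta\vartheta_{k+1} = \bar\theta_{k+1} - (1-\beta)\bar\theta_k$, and hence $\beta\vartheta_k = \bar\theta_k - (1-\beta)\bar\theta_{k-1}$ for $k \ge 1$. Inserting this into $\theta_{k+1} = (1-\beta)\bar\theta_k + \beta\vartheta_k$ delivers $\theta_{k+1} = (2-\beta)\bar\theta_k - (1-\beta)\bar\theta_{k-1}$, which is the claimed Nesterov form.

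The only real obstacle is bookkeeping: Nesterov's convention pairs the post-gradient iterate $\theta_{k+1}$ with the extrapolation $\nu_k$ at the same index $k$, whereas Algorithm~\ref{alg:HOT_R} pairs $\bar\theta_k$ with $\theta_k$, so the correspondence has a one-step offset that must be carried consistently. This same offset fixes the natural base case $\theta_0^{\text{Nes}} = \nu_0^{\text{Nes}} = \theta_0$ (equivalently $\bar\theta_{-1} := \theta_0$), which matches the standard Nesterov initialization from a single starting point. Everything beyond the index-tracking is a routine algebraic substitution; no convexity, stability, or Lyapunov argument is invoked in this proposition.
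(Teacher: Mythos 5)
Your elimination argument is correct: identifying Nesterov's $\nu_k$ with the algorithm's $\theta_k$ and Nesterov's $\theta_{k+1}$ with $\bar\theta_k$, then using the extra-gradient line at index $k+1$ to remove $\vartheta$ from the implicit-Euler update, gives exactly $\theta_{k+1}=(2-\beta)\bar\theta_k-(1-\beta)\bar\theta_{k-1}$, which is the claimed form with $\bar\alpha=\gamma\beta$ and $\bar\beta=1-\beta$; the paper states this proposition without a written proof, and your derivation is the natural one implied by its parameter correspondence. The only loose end is the initialization: with the convention $\bar\theta_{-1}:=\theta_0$, the $k=0$ instance of the recurrence holds only if $\vartheta_0=\theta_0-\gamma\nabla f(\theta_0)$ (not $\vartheta_0=\theta_0$), a detail worth one sentence but immaterial to the structural equivalence being asserted.
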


\begin{remark}
    It is apparent from Proposition \ref{prop:equivalence} that Algorithm \ref{alg:HOT_R} is Nesterov-type; it includes both averaging outside of the gradient evaluation, and an "extra-gradient" step to enable adjustments inside the gradient evaluation. The presence of both of these ingredients makes our Algorithm \ref{alg:HOT_R} similar to Nesterov-type rather than Heavy-ball type \citep{Polyak_1964} which only includes the first. 
\end{remark}
\begin{remark}
    Similar to the derivation of Algorithm \ref{alg:HOT_R}, which is Nesterov-type, we can derive another algorithm that is Heavy-Ball type \citep{Polyak_1964}. We accomplish this in \citep[Appendix \ref{ss:Discrete_time_setting}]{Gaudio_arXiv_2020} and denote it as Algorithm \ref{alg:HOT_R_HB}. Algorithm \ref{alg:HOT_R_HB} only includes the implicit-explicit mix and not the extra gradient step. As is shown in \citep[Appendix \ref{s:Stability_N_Proofs}]{Gaudio_arXiv_2020}, Algorithm \ref{alg:HOT_R_HB} is also stable in the presence of time-varying regressors. The disadvantage of Algorithm \ref{alg:HOT_R_HB} over Algorithm \ref{alg:HOT_R} is simply due to the well known point of Heavy-ball type methods in comparison to Nesterov-type methods (c.f. \citep{Lessard_2016} for a clear example). This is the reason for our preference of Algorithm \ref{alg:HOT_R} over Algorithm \ref{alg:HOT_R_HB}.
\end{remark}

\section{Stability and asymptotic convergence}
\label{s:Stability_N}

In this section, we state the main results of stability and asymptotic convergence in the presence of time-varying regressors for the continuous time higher order tuner \eqref{e:Accelerated_GF_N_N}, discretized equations \eqref{e:higher_order_L_Discretized}, and the main stability result of Algorithm \ref{alg:HOT_R}. Proofs of all theorems and corollaries in this section are provided in \citep[Appendix \ref{s:Stability_N_Proofs}]{Gaudio_arXiv_2020} alongside more in-depth auxiliary results of stability. For completeness, complementary stability proofs of the normalized gradient method \eqref{e:GD_Goodwin_N} in both continuous and discrete time are additionally provided in \citep[Appendix \ref{s:Stability_N_Proofs}]{Gaudio_arXiv_2020}. We begin with the discussion of stability of the discretized equations in \eqref{e:higher_order_L_Discretized} (Algorithm \ref{alg:HOT_R} with $\mu=0$), in the following theorem.
\begin{theorem}\label{th:HOT_paper_Discrete}
    For the linear regression error model in \eqref{e:error1_N_discrete_N} with loss in \eqref{e:Squared_Loss_N}, with Algorithm \ref{alg:HOT_R} and its hyperparameters chosen as $\mu=0$, $0<\beta<1$, $0<\gamma\leq\frac{\beta(2-\beta)}{16+\beta^2}$, the following
    \begin{equation}\label{e:Lyap_2_discrete}
        V_k=\frac{1}{\gamma}\lVert \vartheta_k-\theta^*\rVert^2+\frac{1}{\gamma}\lVert \theta_k-\vartheta_k\rVert^2,
    \end{equation}
    is a Lyapunov function with increment $\Delta V_k\leq -\frac{L_k(\theta_{k+1})}{\N_k}\leq0$. It can also be shown that $V\in\ell_{\infty}$, and $\sqrt{\frac{L_k(\theta_{k+1})}{\N_k}}\in\ell_2\cap\ell_{\infty}$. If in addition it is assumed that $\phi\in\ell_{\infty}$ then $\lim_{k\rightarrow\infty}L_k(\theta_{k+1})=0$.
\end{theorem}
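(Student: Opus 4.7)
The plan is to compute the one-step increment $\Delta V_k$ directly from the update equations of Algorithm~\ref{alg:HOT_R} with $\mu=0$, show it is bounded above by $-L_k(\theta_{k+1})/\N_k$, and then read off the four downstream claims by standard Lyapunov/telescoping arguments. I would work with the error variables $\tilde{\vartheta}_k:=\vartheta_k-\theta^*$ and $z_k:=\theta_k-\vartheta_k$, and exploit the key algebraic fact that $\nabla\bar f_k(\theta_k)=(\phi_k^T\tilde\theta_k/\N_k)\phi_k$ and $\nabla\bar f_k(\theta_{k+1})=(\phi_k^T\tilde\theta_{k+1}/\N_k)\phi_k$ are both parallel to $\phi_k$, so every cross term collapses to products of the scalars $\phi_k^T\tilde\vartheta_k$, $\phi_k^T z_k$, and $\phi_k^T\tilde\theta_{k+1}$; combined with the normalization bound $\lVert\phi_k\rVert^2/\N_k\leq 1$ this is what keeps everything finite regardless of the size of $\phi_k$.

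First, from $\tilde{\vartheta}_{k+1}=\tilde{\vartheta}_k-\gamma\nabla\bar f_k(\theta_{k+1})$ I would expand
\[
\tfrac{1}{\gamma}\lVert\tilde{\vartheta}_{k+1}\rVert^2-\tfrac{1}{\gamma}\lVert\tilde{\vartheta}_k\rVert^2
=-2\,\tilde{\vartheta}_k^T\nabla\bar f_k(\theta_{k+1})+\gamma\lVert\nabla\bar f_k(\theta_{k+1})\rVert^2,
\]
with the residual bounded by $\gamma\cdot 2L_k(\theta_{k+1})/\N_k\cdot(\lVert\phi_k\rVert^2/\N_k)\leq 2\gamma L_k(\theta_{k+1})/\N_k$. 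Then, using $z_{k+1}=(1-\beta)z_k-\gamma\beta(1-\beta)\nabla\bar f_k(\theta_k)+\gamma\nabla\bar f_k(\theta_{k+1})$ obtained by substituting the updates into $\theta_{k+1}-\vartheta_{k+1}$, I would expand $\tfrac{1}{\gamma}\lVert z_{k+1}\rVert^2-\tfrac{1}{\gamma}\lVert z_k\rVert^2$, producing the main damping term $-\beta(2-\beta)\tfrac{1}{\gamma}\lVert z_k\rVert^2$, a negative contribution of order $-L_k(\theta_{k+1})/\N_k$ coming from the alignment of $z_k$ with $\phi_k$ via the identity $\phi_k^T\tilde\theta_{k+1}=\phi_k^T\tilde\vartheta_k+\phi_k^T z_k-\gamma(\phi_k^T\tilde\theta_{k+1})(\lVert\phi_k\rVert^2/\N_k)$, plus cross terms in $\nabla\bar f_k(\theta_k)$ and $\nabla\bar f_k(\theta_{k+1})$.

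The third and hardest step is the coefficient bookkeeping: I would apply Young's inequality $2ab\leq\varepsilon a^2+\varepsilon^{-1}b^2$ repeatedly with a small number of carefully chosen $\varepsilon$'s to absorb each cross term into the available negative reserves ($-\beta(2-\beta)\lVert z_k\rVert^2/\gamma$ and the alignment-induced $-L_k(\theta_{k+1})/\N_k$ reserve). The constraint $0<\beta<1$ is what guarantees the damping coefficient $\beta(2-\beta)$ is strictly positive, and the precise bound $\gamma\leq\beta(2-\beta)/(16+\beta^2)$ should emerge as the exact value at which the worst-case residual coefficient in front of $L_k(\theta_{k+1})/\N_k$ is no larger than $-1$ while the residual coefficients of $\lVert z_k\rVert^2$ and $(\phi_k^T\tilde\theta_k/\N_k)^2$ remain non-positive. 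The constant $16$ in the denominator suggests four Young applications, each contributing a factor of roughly $4$; I expect this is the main obstacle and the place where one has to be patient with the algebra rather than clever.

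Once $\Delta V_k\leq -L_k(\theta_{k+1})/\N_k\leq 0$ is in hand, the remaining claims are routine. Non-increasingness of $V_k\geq 0$ gives $V\in\ell_\infty$, hence $\tilde\vartheta_k,z_k\in\ell_\infty$. Telescoping yields $\sum_{k=0}^{\infty}L_k(\theta_{k+1})/\N_k\leq V_0<\infty$, i.e.\ $\sqrt{L_k(\theta_{k+1})/\N_k}\in\ell_2$. The $\ell_\infty$ bound on the same quantity follows from $L_k(\theta_{k+1})/\N_k\leq \tfrac{1}{2}\lVert\tilde\theta_{k+1}\rVert^2\cdot\lVert\phi_k\rVert^2/\N_k\leq \lVert\tilde\vartheta_{k+1}\rVert^2+\lVert z_{k+1}\rVert^2\leq\gamma V_0$. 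Finally, if $\phi\in\ell_\infty$ then $\N_k$ is uniformly bounded; since the $\ell_1$-summable sequence $L_k(\theta_{k+1})/\N_k$ tends to zero, multiplying by the bounded factor $\N_k$ yields $\lim_{k\to\infty}L_k(\theta_{k+1})=0$, as claimed.
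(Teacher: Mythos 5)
Your overall strategy is the same as the paper's: expand $\Delta V_k$ term by term from the two updates, use the fact that both gradients are parallel to $\phi_k$ so all cross terms reduce to the scalars $\phi_k^T\tilde{\vartheta}_k$, $\phi_k^T z_k$, $\phi_k^T\tilde{\theta}_{k+1}$, absorb the cross terms into the damping reserve by completing squares, and finish by telescoping. Your recursion $z_{k+1}=(1-\beta)z_k-\gamma\beta(1-\beta)\nabla\bar f_k(\theta_k)+\gamma\nabla\bar f_k(\theta_{k+1})$ is correct, the bound $\lVert\nabla\bar f_k(\theta_{k+1})\rVert^2\leq 2L_k(\theta_{k+1})/\N_k$ is correct, and the entire endgame ($V\in\ell_\infty$, the $\ell_2\cap\ell_\infty$ claim via $L_k(\theta_{k+1})/\N_k\leq\tfrac12\lVert\tilde\theta_{k+1}\rVert^2\leq\gamma V_0$, and the limit when $\phi\in\ell_\infty$) is exactly the paper's argument and is fine.

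The gap is that the one step carrying the entire content of the theorem --- verifying that the \emph{stated} constant $\gamma\leq\beta(2-\beta)/(16+\beta^2)$ makes $\Delta V_k\leq-L_k(\theta_{k+1})/\N_k$ --- is only asserted to ``emerge,'' and your guess about its provenance (``four Young applications, each contributing a factor of roughly $4$'') is not how it arises. In the paper's proof the damping reserve is measured in $\lVert\bar\theta_k-\vartheta_k\rVert^2$ (the post-extra-gradient displacement), not $\lVert z_k\rVert^2$; regrouping $\theta_{k+1}-\vartheta_{k+1}=(1-\beta)(\bar\theta_k-\vartheta_k)+\gamma\nabla\bar f_k(\theta_{k+1})$ leaves a residual $\tfrac1\gamma\lVert\bar\theta_k-\vartheta_k\rVert^2-\tfrac1\gamma\lVert\theta_k-\vartheta_k\rVert^2$ that must itself be rewritten via the extra-gradient step, producing the additional terms in $\nabla L_k(\theta_k)$. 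The decisive move is then that $\tfrac{\beta(2-\beta)\N_k}{\gamma}\geq(16+\beta^2)(1+\lVert\phi_k\rVert^2)$, so the reserve splits into a $\lVert\phi_k\rVert^2$-weighted part and a constant part: the $16\lVert\phi_k\rVert^2\lVert\bar\theta_k-\vartheta_k\rVert^2$ piece closes a \emph{single} square $-\bigl[\lVert\tilde\theta_{k+1}^T\phi_k\rVert-4\lVert\phi_k\rVert\lVert\bar\theta_k-\vartheta_k\rVert\bigr]^2$ against the (loosely bounded by $8$) cross term $4(1-\tfrac32\beta)(\bar\theta_k-\vartheta_k)^T\nabla L_k(\theta_{k+1})$, the $\beta^2\lVert\phi_k\rVert^2\lVert\bar\theta_k-\vartheta_k\rVert^2$ piece closes a second square against the $\nabla L_k(\theta_k)$ cross term, and the surviving $-\tfrac78\lVert\tilde\theta_{k+1}^T\phi_k\rVert^2=-\tfrac74 L_k(\theta_{k+1})$ (left over from $-2(1-\gamma)\geq-\tfrac{30}{16}\cdot$ after donating $-1$ to the first square) supplies the claimed $-L_k(\theta_{k+1})/\N_k$. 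Without executing this bookkeeping your argument establishes stability only for \emph{some} sufficiently small $\gamma$, not for the specific threshold in the theorem; you need to either carry out the paper's chain or an equivalent one in your $z_k$ parametrization and confirm the same constant survives.
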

We now proceed to the main stability theorem of Algorithm \ref{alg:HOT_R} with $\mu\neq0$.
\begin{theorem}\label{th:HOT_paper_Full_Alg}
    For the linear regression error model in \eqref{e:error1_N_discrete_N} with loss in \eqref{e:Squared_Loss_N}, with Algorithm \ref{alg:HOT_R} and its hyperparameters chosen as $0<\mu<1$, $0<\beta<1$, $0<\gamma\leq \frac{\beta(2-\beta)}{16+\beta^2+\mu\left(\frac{57\beta+1}{16\beta}\right)}$, the function $V$ in \eqref{e:Lyap_2_discrete} can be shown to have increment $\Delta V_k\leq-\frac{L_k(\theta_{k+1})}{\N_k}-\mu c_1V_k+\mu c_2$, for constants $0<c_1<1$, $c_2>0$ (given in \citep[Appendix \ref{s:Stability_N_Proofs}]{Gaudio_arXiv_2020}). It can also be shown that $\Delta V_k<0$ outside of the compact set $D=\left\{V\middle|V\leq\frac{c_2}{c_1}\right\}$. Furthermore, $V\in\ell_{\infty}$ and $V_k\leq \exp(-\mu c_1k)\left(V_0-\frac{c_2}{c_1}\right)+\frac{c_2}{c_1}$.
\end{theorem}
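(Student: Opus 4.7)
The plan is to build directly on the $\mu=0$ analysis of Theorem~\ref{th:HOT_paper_Discrete}, isolating the extra terms introduced by the L2 regularization and then absorbing them into a strongly-negative component plus a bounded perturbation. Write $\nabla f_k(\cdot)=\nabla\bar f_k(\cdot)+\mu(\cdot-\theta_0)$, and expand $V_{k+1}$ using the updates $\vartheta_{k+1}=\vartheta_k-\gamma\nabla f_k(\theta_{k+1})$, $\theta_{k+1}=\bar\theta_k-\beta(\bar\theta_k-\vartheta_k)$, and $\bar\theta_k=\theta_k-\gamma\beta\nabla f_k(\theta_k)$. Separating the gradient into its normalized-loss and regularization pieces, the increment decomposes as $\Delta V_k=\Delta V_k^{(0)}+\Delta V_k^{(\mu)}$, where $\Delta V_k^{(0)}$ is exactly the quantity already bounded in Theorem~\ref{th:HOT_paper_Discrete} by $-L_k(\theta_{k+1})/\N_k$ (minus a reserve that will be useful later), and $\Delta V_k^{(\mu)}$ collects all cross products involving $\mu(\theta_k-\theta_0)$ and $\mu(\theta_{k+1}-\theta_0)$ plus the pure regularization squares $\mu^2\|\cdot\|^2$.

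Next I would show that $\Delta V_k^{(\mu)}$ contributes both a genuinely dissipative piece and a bounded disturbance. The dissipative piece originates from the inner products $-2\mu\langle\vartheta_k-\theta^*,\theta_{k+1}-\theta_0\rangle/\gamma$ (from expanding $\|\vartheta_{k+1}-\theta^*\|^2$) and the analogous term from the $\|\theta_{k+1}-\vartheta_{k+1}\|^2$ piece; rewriting $\theta_{k+1}-\theta_0=(\theta_{k+1}-\theta^*)+(\theta^*-\theta_0)$ and using $\theta_{k+1}-\theta^*=(\theta_{k+1}-\vartheta_{k+1})+(\vartheta_{k+1}-\theta^*)$ yields quadratic terms $-\tfrac{\mu}{\gamma}\|\vartheta_k-\theta^*\|^2$ and $-\tfrac{\mu}{\gamma}\|\theta_k-\vartheta_k\|^2$ (up to factors), which together give the $-\mu c_1 V_k$ of the claim, and a constant term proportional to $\mu\|\theta^*-\theta_0\|^2$ giving $\mu c_2$. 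The remaining cross terms are controlled by Young's inequality with weights tuned so that the positive residuals scale like $\gamma\mu$ times $V_k$ (plus $\mu$ times $\|\theta^*-\theta_0\|^2$); these residuals are then absorbed into the slack produced by the stricter gain bound $\gamma\le\beta(2-\beta)/\bigl(16+\beta^2+\mu(57\beta+1)/(16\beta)\bigr)$, which is precisely where the $\mu$-dependent term in the denominator originates.

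Combining the estimates delivers $\Delta V_k\le -L_k(\theta_{k+1})/\N_k-\mu c_1 V_k+\mu c_2$ with explicit $c_1\in(0,1)$ and $c_2>0$. Since $L_k(\theta_{k+1})/\N_k\ge 0$, this gives $V_{k+1}\le(1-\mu c_1)V_k+\mu c_2$, so $\Delta V_k<0$ whenever $V_k>c_2/c_1$, proving invariance of the complement of $D=\{V\le c_2/c_1\}$ and hence $V\in\ell_\infty$. Subtracting the fixed point $c_2/c_1$ from both sides gives $V_{k+1}-c_2/c_1\le(1-\mu c_1)(V_k-c_2/c_1)$; iterating and using $(1-\mu c_1)^k\le\exp(-\mu c_1 k)$ (valid because $0<\mu c_1<1$) yields the claimed bound
\begin{equation*}
V_k\le\exp(-\mu c_1 k)\Bigl(V_0-\tfrac{c_2}{c_1}\Bigr)+\tfrac{c_2}{c_1}.
\end{equation*}

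The main obstacle is the second step: there are several $\mu$-cross terms of mixed sign, and the positive ones must be split between a part that scales with $V_k$ (absorbed by the reserve in $\Delta V_k^{(0)}$ via the tighter gain bound) and a part that scales with the fixed offset $\|\theta^*-\theta_0\|^2$ (absorbed into $c_2$). Getting the Young's-inequality weights consistent so that $c_1$ comes out strictly positive and $c_1<1$, while simultaneously matching the prescribed coefficient $(57\beta+1)/(16\beta)$ in the $\gamma$-bound, is the delicate bookkeeping; once that is done, the $\ell_\infty$ conclusion and the exponential contraction follow by a standard linear recursion.
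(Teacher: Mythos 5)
Your proposal follows essentially the same route as the paper's proof: expand $\Delta V_k$ via the algorithm updates, separate the $\mu=0$ part (handled as in Theorem~\ref{th:HOT_paper_Discrete}) from the regularization cross terms, extract the dissipative $-\mu c_1 V_k$ and offset $\mu c_2\propto\mu\lVert\theta^*-\theta_0\rVert^2$ by completing squares, absorb the positive residuals into the slack created by the $\mu\left(\frac{57\beta+1}{16\beta}\right)$ term in the $\gamma$-bound, and close with the linear recursion $V_{k+1}\leq(1-\mu c_1)V_k+\mu c_2$ and $(1-\mu c_1)^k\leq\exp(-\mu c_1 k)$. The bookkeeping you flag as the main obstacle is indeed the bulk of the paper's argument, but your outline matches it step for step, including the source of the dissipative terms and the role of the tightened gain condition.
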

The Lyapunov function in \eqref{e:Lyap_2_discrete} which is employed in Theorems \ref{th:HOT_paper_Discrete} and \ref{th:HOT_paper_Full_Alg} was originally motivated by the continuous time higher order tuners in \citep{Morse_1992,Evesque_2003}. The continuous time equivalent of \eqref{e:Lyap_2_discrete} is used in the following theorem to prove stability and asymptotic convergence properties for the continuous time higher order tuner in \eqref{e:Accelerated_GF_N_N}.
\begin{theorem}\label{th:HOT_paper}
    For continuous time equivalents to the linear regression model in \eqref{e:error1_N_discrete_N} with loss in \eqref{e:Squared_Loss_N} (concretely, \eqref{e:error1_N} and \eqref{e:error1_N_Loss} in \citep[Appendix \ref{ss:Continuous_time_setting}]{Gaudio_arXiv_2020}), for the higher order tuner update in \eqref{e:Accelerated_GF_N_N} with $\beta>0$, $0<\gamma\leq\beta/2$, the following
    \begin{equation}\label{e:Lyap_2_continuous}
        V(t)=\frac{1}{\gamma}\lVert\vartheta(t)-\theta^*\rVert^2+\frac{1}{\gamma}\lVert\theta(t)-\vartheta(t)\rVert^2,
    \end{equation}
    is a Lyapunov function with time derivative $\dot{V}(t)\leq-\frac{L_t(\theta(t))}{\N_t}\leq0$. It can be shown that $V\in\mathcal{L}_{\infty}$ and $\sqrt{\frac{L_t(\theta(t))}{\N_t}}\in\mathcal{L}_2\cap\mathcal{L}_{\infty}$. If in addition it assumed that $\phi,\dot{\phi} \in \mathcal{L}_{\infty}$ then $\lim_{t\rightarrow\infty}L_t(\theta(t))=0$.
\end{theorem}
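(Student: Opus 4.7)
The plan is to differentiate the candidate Lyapunov function $V$ along the trajectories of the higher order tuner \eqref{e:Accelerated_GF_N_N}, massage the cross-term using the identity for $(\vartheta-\theta^*)^T\phi$ in terms of $e_y$, normalize via $\|\phi_t\|^2\le\mathcal{N}_t$, and then close the argument with Barbalat's lemma for the asymptotic claim.

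First I would introduce the error variables $\tilde{\vartheta}(t):=\vartheta(t)-\theta^*$ and $\tilde{\theta}(t):=\theta(t)-\theta^*$, so that $\dot{\tilde{\vartheta}}=\dot{\vartheta}=-\tfrac{\gamma}{\mathcal{N}_t}\phi_t e_y$ and $\dot{\theta}-\dot{\vartheta}=-\beta(\theta-\vartheta)+\tfrac{\gamma}{\mathcal{N}_t}\phi_t e_y$. Differentiating \eqref{e:Lyap_2_continuous} directly gives
\begin{equation*}
\dot{V}=-\tfrac{2}{\mathcal{N}_t}(\vartheta-\theta^*)^T\phi_t e_y-\tfrac{2\beta}{\gamma}\|\theta-\vartheta\|^2+\tfrac{2}{\mathcal{N}_t}(\theta-\vartheta)^T\phi_t e_y.
\end{equation*}
The key algebraic step is to use $(\vartheta-\theta^*)^T\phi_t=\tilde{\theta}^T\phi_t-(\theta-\vartheta)^T\phi_t=e_y-(\theta-\vartheta)^T\phi_t$, which turns the first term into $-2e_y^2/\mathcal{N}_t$ plus an extra copy of the cross term. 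The result is
\begin{equation*}
\dot{V}=-\tfrac{2e_y^2}{\mathcal{N}_t}+\tfrac{4}{\mathcal{N}_t}(\theta-\vartheta)^T\phi_t e_y-\tfrac{2\beta}{\gamma}\|\theta-\vartheta\|^2.
\end{equation*}

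Next I would bound the cross term by Young's inequality and the regressor normalization. Choosing the weights so that the $e_y^2/\mathcal{N}_t$ coefficient is fully absorbed gives
\begin{equation*}
\tfrac{4}{\mathcal{N}_t}(\theta-\vartheta)^T\phi_t e_y\le\tfrac{e_y^2}{\mathcal{N}_t}+\tfrac{4((\theta-\vartheta)^T\phi_t)^2}{\mathcal{N}_t}\le\tfrac{e_y^2}{\mathcal{N}_t}+4\|\theta-\vartheta\|^2,
\end{equation*}
where the last inequality uses $\|\phi_t\|^2/\mathcal{N}_t\le 1$—this is precisely what the regressor-based normalization in \eqref{e:Accelerated_GF_N_N} buys us. Combining yields $\dot{V}\le-e_y^2/\mathcal{N}_t+(4-2\beta/\gamma)\|\theta-\vartheta\|^2$, which is $\le-2L_t(\theta)/\mathcal{N}_t\le -L_t(\theta)/\mathcal{N}_t$ as soon as $\gamma\le\beta/2$, establishing the stated decrement. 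Since $V\ge 0$ and $\dot{V}\le 0$, we have $V\in\mathcal{L}_\infty$, which in turn bounds $\|\tilde{\vartheta}\|$ and $\|\theta-\vartheta\|$, hence $\|\tilde{\theta}\|$. Integrating the inequality $\dot V\le -L_t(\theta)/\mathcal{N}_t$ on $[0,\infty)$ gives $\sqrt{L_t(\theta)/\mathcal{N}_t}\in\mathcal{L}_2$, and the Cauchy--Schwarz bound $|e_y|\le\|\tilde\theta\|\|\phi_t\|\le\|\tilde\theta\|\sqrt{\mathcal{N}_t}$ combined with $\tilde\theta\in\mathcal{L}_\infty$ yields $\sqrt{L_t(\theta)/\mathcal{N}_t}\in\mathcal{L}_\infty$.

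For the final asymptotic statement, under the additional assumption $\phi,\dot{\phi}\in\mathcal{L}_\infty$ the normalization $\mathcal{N}_t$ is bounded above, so $L_t(\theta)\in\mathcal{L}_1$ as well. I would then invoke Barbalat's lemma: writing $\tfrac{d}{dt}L_t(\theta(t))=e_y\bigl(\dot{\tilde{\theta}}^T\phi_t+\tilde{\theta}^T\dot{\phi}_t\bigr)$ and using $\dot{\tilde{\theta}}=-\beta(\theta-\vartheta)\in\mathcal{L}_\infty$ together with the already-established boundedness of $\tilde{\theta}$, $e_y$, $\phi_t$, and $\dot{\phi}_t$ shows that $L_t(\theta)$ is uniformly continuous in $t$; combined with its $\mathcal{L}_1$ bound, Barbalat yields $L_t(\theta(t))\to 0$. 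The main obstacle is the cross-term bookkeeping in the third paragraph of the computation — specifically, exploiting the identity for $(\vartheta-\theta^*)^T\phi_t$ together with the right split in Young's inequality so that the normalization $\|\phi_t\|^2\le\mathcal{N}_t$ cleanly decouples the resulting quadratic form and leaves $\gamma\le\beta/2$ as the only constraint relating the two hyperparameters.
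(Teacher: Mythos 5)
Your proposal is correct and follows essentially the same route as the paper: the same Lyapunov function, the same decomposition $(\vartheta-\theta^*)^T\phi_t=e_y-(\theta-\vartheta)^T\phi_t$ yielding $\dot V=-2e_y^2/\N_t+4(\theta-\vartheta)^T\phi_te_y/\N_t-\tfrac{2\beta}{\gamma}\lVert\theta-\vartheta\rVert^2$, and the same absorption of the cross term under $\gamma\le\beta/2$ (your Young's inequality $4ab\le a^2+4b^2$ with $\lVert\phi_t\rVert^2\le\N_t$ is exactly the paper's completed square $[\lVert e_y\rVert-2\lVert\theta-\vartheta\rVert\lVert\phi_t\rVert]^2$ after splitting $\N_t=1+\lVert\phi_t\rVert^2$). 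Your Barbalat step, applied directly to $L_t\in\mathcal{L}_1$ with $\tfrac{d}{dt}L_t\in\mathcal{L}_\infty$, is equivalent to the paper's use of the corollary on $e_y\in\mathcal{L}_2\cap\mathcal{L}_\infty$ with $\dot e_y\in\mathcal{L}_\infty$.
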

\begin{remark}
    The same function $V$ is employed throughout, as motivated by the continuous time higher order tuner in Theorem \ref{th:HOT_paper}. Note that the proofs of stability in the presence of adversarial time-varying regressors are enabled as the Lyapunov functions in \eqref{e:Lyap_2_discrete} and \eqref{e:Lyap_2_continuous} do not contain the regressor. In both the continuous and discrete time analyses, stability is proven by showing that the provided function $V$ does not increase globally (Theorems \ref{th:HOT_paper_Discrete} and \ref{th:HOT_paper}) or at least does not increase outside a compact set containing the origin (Theorem \ref{th:HOT_paper_Full_Alg}).
\end{remark}

\section{Non-asymptotic accelerated convergence rates with constant regressors}
\label{s:Acceleration}

In this section, we state the main accelerated non-asymptotic convergence rate result for Algorithm \ref{alg:HOT_R} for the case of constant regressors, $\phi_k\equiv\phi$. All proofs in this section are provided in \citep[Appendix \ref{s:NonAsymptoticProofs}]{Gaudio_arXiv_2020} alongside convergence rate proofs for first order gradient descent methods, and Nesterov's method with time-varying gains, as given in overview form in Table \ref{t:Comparison_Gradient_Methods}.

Given the constant smoothness and strong-convexity parameters of the augmented objective function in \eqref{e:Strongly_Convex_Objective}, the representation of Algorithm \ref{alg:HOT_R} as \eqref{e:Nesterov_Two_Convex} may be used to provide a non-asymptotic rate for \eqref{e:Strongly_Convex_Objective} in the following theorem due to Nesterov.
\begin{theorem}[Modified from \citep{Bubeck_2015,Nesterov_2018}]\label{theorem:Nesterov_SC_paper}
    For a $\bar{L}$-smooth and $\mu$-strongly convex function $f$, the iterates $\{\theta_k\}_{k=0}^{\infty}$ generated by \eqref{e:Nesterov_Two_Convex} with $\theta_0=\nu_0$, $\bar{\alpha}=1/\bar{L}$, $\kappa=\bar{L}/\mu$, and $\bar{\beta}=(\sqrt{\kappa}-1)/(\sqrt{\kappa}+1)$ satisfy $f(\theta_k)-f(\theta^*)\leq\frac{\bar{L}+\mu}{2}\lVert\theta_0-\theta^*\rVert^2\exp\left(-\frac{k}{\sqrt{\kappa}}\right)$.
\end{theorem}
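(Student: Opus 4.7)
The plan is to use Nesterov's \emph{estimate sequence} method. I would build a sequence of quadratic model functions $\{\phi_k(x)\}$ with scalars $\lambda_k \downarrow 0$ satisfying the two invariants
\[
\phi_k(x) \leq (1-\lambda_k) f(x) + \lambda_k \phi_0(x) \quad \forall x, \qquad f(\theta_k) \leq \phi_k^{*} := \min_x \phi_k(x).
\]
Once both are established, evaluating the first inequality at $x=\theta^*$ and chaining with the second yields
\[
f(\theta_k) - f(\theta^*) \leq \lambda_k\bigl(\phi_0(\theta^*) - f(\theta^*)\bigr).
\]
Taking $\phi_0(x) = f(\theta_0) + \tfrac{\mu}{2}\|x-\theta_0\|^2$ and bounding $\phi_0(\theta^*) - f(\theta^*)$ via $\bar{L}$-smoothness of $f$ by $\tfrac{\bar{L}+\mu}{2}\|\theta_0-\theta^*\|^2$, plus $\lambda_k = (1-1/\sqrt{\kappa})^k \leq \exp(-k/\sqrt{\kappa})$, delivers exactly the stated bound.

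The recursive construction is
\[
\phi_{k+1}(x) = (1-\alpha_k)\phi_k(x) + \alpha_k\Bigl[f(\nu_k) + \nabla f(\nu_k)^T(x-\nu_k) + \tfrac{\mu}{2}\|x-\nu_k\|^2\Bigr],
\]
with $\lambda_{k+1} = (1-\alpha_k)\lambda_k$. The first invariant is then immediate by $\mu$-strong convexity and induction. For the second invariant, I would write each $\phi_k$ in canonical form $\phi_k(x) = \phi_k^{*} + \tfrac{\mu}{2}\|x-w_k\|^2$, expand $\phi_{k+1}$ in the same form, identify its minimizer $w_{k+1}$ as an explicit affine combination of $w_k$ and $\nu_k$, and invoke the standard descent lemma $f(\theta_{k+1}) \leq f(\nu_k) - \tfrac{1}{2\bar{L}}\|\nabla f(\nu_k)\|^2$ (from $\bar{L}$-smoothness at $\nu_k$ with step $\bar{\alpha}=1/\bar{L}$), together with strong convexity used to lower-bound $f(\theta_k)$ and $f(w_k)$ in terms of $f(\nu_k)$ and $\nabla f(\nu_k)$. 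Choosing $\alpha_k \equiv \sqrt{\mu/\bar{L}} = 1/\sqrt{\kappa}$ keeps the quadratic curvature pinned at $\mu$ across iterations, and matching the minimizer $w_{k+1}$ to the extrapolation $\nu_{k+1} = (1+\bar{\beta})\theta_{k+1} - \bar{\beta}\theta_k$ in \eqref{e:Nesterov_Two_Convex} forces precisely $\bar{\beta} = (\sqrt{\kappa}-1)/(\sqrt{\kappa}+1)$.

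The main obstacle is the algebraic bookkeeping inside the induction $f(\theta_{k+1}) \leq \phi_{k+1}^{*}$: one must track the centers $w_k$ and minimum values $\phi_k^{*}$ consistently, cancel inner-product cross terms against the descent-lemma slack $\tfrac{1}{2\bar{L}}\|\nabla f(\nu_k)\|^2$, and verify that the identification of $w_{k+1}$ with $\nu_{k+1}$ is consistent with the recursion for $\theta_{k+1}$. Once that consistency is checked — which is exactly what forces the values of $\bar{\alpha}$ and $\bar{\beta}$ in the statement — the exponential decay $\lambda_k = (1-1/\sqrt{\kappa})^k$ gives the theorem with essentially no further work. This argument is the classical one presented in \cite{Nesterov_2018} and \cite{Bubeck_2015}, and the "modification" here is only a notational repackaging to match the convention in \eqref{e:Nesterov_Two_Convex}.
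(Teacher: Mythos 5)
Your proposal is correct and follows essentially the same route as the paper's proof: an estimate-sequence argument with the same initial quadratic $\Phi_0(x)=f(\nu_0)+\tfrac{\mu}{2}\lVert x-\nu_0\rVert^2$, the same recursion with weight $1/\sqrt{\kappa}$, the canonical form $\Phi_k(x)=\Phi_k^*+\tfrac{\mu}{2}\lVert x-v_k\rVert^2$, and the descent lemma at $\nu_k$. One small caution on your closing step: the center $v_{k+1}$ is not identified \emph{with} $\nu_{k+1}$ but satisfies $v_{k+1}-\nu_{k+1}=\sqrt{\kappa}\,(\nu_{k+1}-\theta_{k+1})$ (proved by induction), and it is this relation, combined with the momentum recursion, that forces $\bar{\beta}=(\sqrt{\kappa}-1)/(\sqrt{\kappa}+1)$.
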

Leveraging the accelerated convergence rate for the augmented function $f$ in \eqref{e:Nesterov_Two_Convex} as provided by Theorem \ref{theorem:Nesterov_SC_paper}, we provide the following new lemmas to give accelerated non-asymptotic convergence rates for the normalized and unnormalized versions of the loss function in \eqref{e:Squared_Loss_N}, as desired.
\begin{lemma}\label{l:L_N_rate}
    The iterates $\{\theta_k\}_{k=0}^{\infty}$ generated by \eqref{e:Nesterov_Two_Convex} for the function in \eqref{e:Strongly_Convex_Objective} with $\theta_0=\nu_0$, $\Psi\geq\max\{1,\lVert\theta_0-\theta^*\rVert^2\}$, $\mu=\epsilon/\Psi$, $\bar{L}=1+\mu$, $\bar{\alpha}=1/\bar{L}$, $\kappa=\bar{L}/\mu$, $\bar{\beta}=(\sqrt{\kappa}-1)/(\sqrt{\kappa}+1)$, if
    \begin{equation}\label{e:Desired_Rate_N}
        k\geq\left\lceil\sqrt{1+\frac{\Psi}{\epsilon}}\log\left(2+\frac{\Psi}{\epsilon}\right)\right\rceil,\text{ then $\frac{L(\theta_k)-L(\theta^*)}{\N}\leq\epsilon$}.
    \end{equation}
\end{lemma}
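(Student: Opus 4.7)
The plan is to apply Theorem \ref{theorem:Nesterov_SC_paper} to the augmented objective $f$ in \eqref{e:Strongly_Convex_Objective}, which (per the discussion right after its definition) is $\mu$-strongly convex and $(1+\mu)$-smooth, so one may set $\bar L = 1+\mu$ and $\kappa = (1+\mu)/\mu = 1+\Psi/\epsilon$. Letting $\theta^*_f$ denote the unique minimizer of $f$, the theorem yields
$$f(\theta_k) - f(\theta^*_f) \leq \tfrac{1+2\mu}{2}\,\|\theta_0-\theta^*_f\|^2\exp(-k/\sqrt{\kappa}).$$
Since the regularizer in \eqref{e:Strongly_Convex_Objective} is nonnegative, $\bar{f}(\theta_k) \leq f(\theta_k)$, and controlling the right-hand side will suffice to control $L(\theta_k)/\N = \bar f(\theta_k)$.

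Two quantities drive the argument. First, comparing $\theta^*_f$ to the true parameter $\theta^*$ and using $L(\theta^*)=0$, $\Psi\geq\|\theta_0-\theta^*\|^2$, and $\mu=\epsilon/\Psi$ gives
$$f(\theta^*_f)\leq f(\theta^*) = \bar{f}(\theta^*) + \tfrac{\mu}{2}\|\theta^*-\theta_0\|^2 \leq 0 + \tfrac{\mu\Psi}{2} = \tfrac{\epsilon}{2}.$$
Second, I need the tight bound $\|\theta_0 - \theta^*_f\|^2 \leq \Psi$. The first-order condition $\mu(\theta^*_f-\theta_0) = -\phi\phi^T(\theta^*_f-\theta^*)/\N$ forces $\theta^*_f-\theta_0 = c\phi$ for some scalar $c$; solving gives $c = -\phi^T(\theta_0-\theta^*)/(\mu\N + \|\phi\|^2)$, and then Cauchy--Schwarz together with $\mu\N+\|\phi\|^2 \geq \|\phi\|^2$ yields
$$\|\theta_0-\theta^*_f\|^2 = c^2\|\phi\|^2 \leq \|\theta_0-\theta^*\|^2 \leq \Psi.$$

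Combining,
$$\bar{f}(\theta_k) \leq \tfrac{\epsilon}{2} + \tfrac{1+2\mu}{2}\Psi\exp(-k/\sqrt\kappa),$$
and requiring the second term to be at most $\epsilon/2$ is equivalent to $k\geq\sqrt{\kappa}\,\log((1+2\mu)\Psi/\epsilon)$. Plugging in $\mu=\epsilon/\Psi$ gives $(1+2\mu)\Psi/\epsilon = 2 + \Psi/\epsilon$ and $\sqrt{\kappa}=\sqrt{1+\Psi/\epsilon}$, so after taking the ceiling I recover the threshold in \eqref{e:Desired_Rate_N}. Since $L(\theta^*)=0$, this delivers $(L(\theta_k)-L(\theta^*))/\N = \bar f(\theta_k)\leq\epsilon$.

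The main obstacle is the sharp bound $\|\theta_0-\theta^*_f\|^2\leq\Psi$. A naive route through strong convexity, $\tfrac{\mu}{2}\|\theta_0-\theta^*_f\|^2\leq f(\theta_0)-f(\theta^*_f)\leq \bar f(\theta_0)\leq\Psi/2$, produces only $\|\theta_0-\theta^*_f\|^2\leq\Psi/\mu = \Psi^2/\epsilon$, which essentially doubles the log prefactor and misses \eqref{e:Desired_Rate_N}. Exploiting the rank-one structure of $\phi\phi^T$ via the one-dimensional parametrization $\theta^*_f = \theta_0 + c\phi$ is the step that recovers the correct constant inside the logarithm.
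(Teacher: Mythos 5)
Your proof is correct and follows essentially the same route as the paper: apply Theorem \ref{theorem:Nesterov_SC_paper} to the regularized objective $f$, absorb the regularization offset into an $\epsilon/2$ term via $\mu=\epsilon/\Psi$, bound $\lVert\theta_0-\theta^*_f\rVert\leq\lVert\theta_0-\theta^*\rVert$, and solve for $k$. The only difference is that you explicitly justify that last inequality (the paper asserts it without comment); your rank-one argument is fine, though the same bound follows for any PSD Hessian from $\theta^*_f-\theta_0=(\phi\phi^T/\N+\mu I)^{-1}(\phi\phi^T/\N)(\theta^*-\theta_0)$ and the fact that this matrix has operator norm at most one.
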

\begin{lemma}\label{l:L_rate}
    The iterates $\{\theta_k\}_{k=0}^{\infty}$ generated by \eqref{e:Nesterov_Two_Convex} for the function in \eqref{e:Strongly_Convex_Objective} with $\theta_0=\nu_0$, $\Psi\geq\max\{1,\N\lVert\theta_0-\theta^*\rVert^2\}$, $\mu=\epsilon/\Psi$, $\bar{L}=1+\mu$, $\bar{\alpha}=1/\bar{L}$, $\kappa=\bar{L}/\mu$, $\bar{\beta}=(\sqrt{\kappa}-1)/(\sqrt{\kappa}+1)$, if
    \begin{equation}\label{e:Desired_Rate}
        k\geq\left\lceil\sqrt{1+\frac{\Psi}{\epsilon}}\log\left(2+\frac{\Psi}{\epsilon}\right)\right\rceil,\text{ then $L(\theta_k)-L(\theta^*)\leq\epsilon$}.
    \end{equation}
\end{lemma}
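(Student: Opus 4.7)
The plan is to translate the exponentially decaying bound on $f(\theta_k)-f(\theta_f^*)$ from Theorem \ref{theorem:Nesterov_SC_paper}, applied to the $\mu$-strongly convex augmented objective $f$ in \eqref{e:Strongly_Convex_Objective}, into the bound on $L(\theta_k)-L(\theta^*)$ that Lemma \ref{l:L_rate} asks for. Throughout I need to keep distinct the minimizer $\theta_f^*$ of $f$ and the minimizer $\theta^*$ of $L$ (equivalently of $\bar f$); these coincide only in the limit $\mu\to 0$. Since $L=\N\bar f$, the whole task reduces to bounding $\N(\bar f(\theta_k)-\bar f(\theta^*))$ by $\epsilon$.

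First I would bridge $\bar f$ and $f$. The inequality $\bar f(\theta_k)\leq f(\theta_k)$ is free from nonnegativity of the regularizer, and $f(\theta_f^*)\leq f(\theta^*)=\bar f(\theta^*)+(\mu/2)\|\theta^*-\theta_0\|^2$ is free from optimality of $\theta_f^*$, yielding
\[
\bar f(\theta_k)-\bar f(\theta^*)\;\leq\;\bigl(f(\theta_k)-f(\theta_f^*)\bigr)+\tfrac{\mu}{2}\|\theta^*-\theta_0\|^2.
\]
Next I would bound $\|\theta_0-\theta_f^*\|^2$, which appears in Nesterov's estimate. Using $\bar f(\theta_f^*)\geq\bar f(\theta^*)$ gives $f(\theta^*)-f(\theta_f^*)\leq(\mu/2)\|\theta^*-\theta_0\|^2$, while $\mu$-strong convexity of $f$ at $\theta_f^*$ supplies $f(\theta^*)-f(\theta_f^*)\geq(\mu/2)\|\theta^*-\theta_f^*\|^2$. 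These combine to $\|\theta^*-\theta_f^*\|\leq\|\theta^*-\theta_0\|$ and hence $\|\theta_0-\theta_f^*\|^2\leq 4\|\theta_0-\theta^*\|^2$ by the triangle inequality.

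With Theorem \ref{theorem:Nesterov_SC_paper} plugged in (so $\bar L+\mu=1+2\mu$ and $\sqrt{\kappa}=\sqrt{1+\Psi/\epsilon}$), multiplying through by $\N$ and invoking the hypothesis $\N\|\theta_0-\theta^*\|^2\leq\Psi$ together with $\mu=\epsilon/\Psi$, I expect the bound
\[
L(\theta_k)-L(\theta^*)\;\leq\;2(1+2\epsilon/\Psi)\Psi\,e^{-k/\sqrt{\kappa}}+\tfrac{\epsilon}{2}.
\]
The regularization bias is already $\leq\epsilon/2$, and forcing the transient below $\epsilon/2$ gives $k\geq\sqrt{1+\Psi/\epsilon}\,\log(4(2+\Psi/\epsilon))$, which is of the form \eqref{e:Desired_Rate} once the numerical $\log 4$ is absorbed by the ceiling.

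I do not expect a real obstacle; the proof is accounting, and the only subtle step is keeping $\theta^*$ and $\theta_f^*$ separate. The qualitative content is that $\mu=\epsilon/\Psi$ is the scaling that balances the $O(\mu\Psi)$ regularization bias against the $O(1/\sqrt{\kappa})$ contraction exponent, and the stronger hypothesis $\|\theta_0-\theta^*\|^2\leq\Psi/\N$ (in contrast to Lemma \ref{l:L_N_rate}) is exactly what absorbs the extra factor of $\N$ picked up when passing from $\bar f$ to $L$.
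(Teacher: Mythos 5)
Your overall strategy is exactly the paper's: decompose $L(\theta_k)-L(\theta^*)=\N\bigl(f(\theta_k)-f(\theta^*)\bigr)+\tfrac{\mu}{2}\N\bigl(\lVert\theta^*-\theta_0\rVert^2-\lVert\theta_k-\theta_0\rVert^2\bigr)$, pass to the minimizer $\theta^*_{\epsilon}$ of the augmented objective, bound the regularization bias by $\tfrac{\mu}{2}\N\lVert\theta^*-\theta_0\rVert^2\leq\tfrac{\mu}{2}\Psi=\tfrac{\epsilon}{2}$, and invoke Theorem \ref{theorem:Nesterov_SC_paper} for the remaining term. You are also right that keeping $\theta^*$ and $\theta^*_{\epsilon}$ distinct is the one subtle point, and right about why the hypothesis $\N\lVert\theta_0-\theta^*\rVert^2\leq\Psi$ (rather than $\lVert\theta_0-\theta^*\rVert^2\leq\Psi$) is what distinguishes this lemma from Lemma \ref{l:L_N_rate}.

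There is, however, one step that fails quantitatively: your bound $\lVert\theta_0-\theta^*_{\epsilon}\rVert^2\leq4\lVert\theta_0-\theta^*\rVert^2$ via strong convexity plus the triangle inequality costs a factor of $4$ inside the exponential estimate, so your iteration requirement becomes $k\geq\sqrt{1+\Psi/\epsilon}\,\bigl(\log 4+\log(2+\Psi/\epsilon)\bigr)$. The excess over \eqref{e:Desired_Rate} is $\sqrt{1+\Psi/\epsilon}\,\log 4$, which is unbounded as $\epsilon\to0$; the ceiling adds at most $1$ and cannot absorb it, so the lemma as stated is not established. The fix is already implicit in your own intermediate inequality: comparing $f(\theta^*_{\epsilon})\leq f(\theta^*)$ with $\bar f(\theta^*_{\epsilon})\geq\bar f(\theta^*)$ gives $\tfrac{\mu}{2}\lVert\theta^*_{\epsilon}-\theta_0\rVert^2\leq\tfrac{\mu}{2}\lVert\theta^*-\theta_0\rVert^2$ directly, i.e.\ $\lVert\theta_0-\theta^*_{\epsilon}\rVert\leq\lVert\theta_0-\theta^*\rVert$ with no factor of $2$ and no appeal to strong convexity. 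This is the inequality the paper uses (without comment), and with it the leading constant becomes $\tfrac{\bar L+\mu}{2}\N\lVert\theta_0-\theta^*\rVert^2\leq\tfrac{\Psi+2\epsilon}{2}$, which yields precisely $k\geq\sqrt{1+\Psi/\epsilon}\log(2+\Psi/\epsilon)$.
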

\begin{remark}
    Lemmas \ref{l:L_N_rate} and \ref{l:L_rate} provide the provable number of iterations required to obtain an $\epsilon$ sub-optimal point of for the normalized and original loss function in \eqref{e:Squared_Loss_N} of $\mathcal{O}(1/\sqrt{\epsilon}\cdot\log(1/\epsilon))$, with all constants included. It can be noted that the constants are comparable to the constants for the gradient and Nesterov iterative methods shown in Table \ref{t:Comparison_Gradient_Methods_Constants} and Figure \ref{f:Comparison_Gradient_Methods_Constants} in \citep[Appendix \ref{s:NonAsymptoticProofs}]{Gaudio_arXiv_2020}.
\end{remark}
\begin{remark}
    It can be noted from both Lemmas \ref{l:L_N_rate} and \ref{l:L_rate} that the L2 regularization parameter $\mu$ in \eqref{e:Strongly_Convex_Objective} is smaller than the $\epsilon$ sub-optimality gap. The L2 regularization parameter is present to ensure strong convexity of the augmented objective function in \eqref{e:Strongly_Convex_Objective}, such that Algorithm \ref{alg:HOT_R} can be reduced to Nesterov's iterative method with constant gains in \eqref{e:Nesterov_Two_Convex}, which results in the convergence rate for the augmented function in Theorem \ref{theorem:Nesterov_SC_paper}, which in turn lends to Lemmas \ref{l:L_N_rate} and \ref{l:L_rate}.
\end{remark}

\section{Numerical experiments}
\label{s:Numerical_N}

In this section, we analyze and compare the performance of our proposed algorithm in two different numerical experiment settings: a variant of Nesterov's provably hard smooth convex optimization problem \citep[p.~69]{Nesterov_2018} and a variant of the image deblurring problem considered by Beck and Teboulle \citep{Beck_2009}. In each setting, we compare hyperparameters chosen in accordance with Theorem \ref{th:HOT_paper_Full_Alg} and hyperparameters chosen optimally as per the standard Nesterov iterative method. All simulations are implemented in Python code available at \underline{\href{https://colab.research.google.com/drive/14V1X4kyEU3G6NMZuCaswSHSLaywUbqEe}{link1}} and \underline{\href{https://colab.research.google.com/drive/1cQRk44jaMNXCtOvEZ-QWIeqzKZBujQQN}{link2}}. Videos demonstrating the real-time image deblurring results are furthermore \underline{\href{https://drive.google.com/drive/folders/1oQFFWuUox-ChbFeQFveGlLR2nim-8y0w?usp=sharing}{available}}.

\subsection{Nesterov's smooth convex function}\label{sec:SHP}

In this section, we consider a modified version of Nesterov's provably hard smooth convex problem \citep[p.~69]{Nesterov_2018} of the form $L_k(\theta)=\lVert \phi_k^T\theta\rVert^2+B^T\theta$. In the experiments, the regressor $\phi_k$ changes, resulting in a change in the smoothness parameter $\bar{L}$. This problem was selected to demonstrate a lower bound of $\mathcal{O}(1/\sqrt{\epsilon})$ for iterative methods with gradient information \citep[p.~69]{Nesterov_2018}. \citep[Appendix \ref{SmoothConvexMinProblem}]{Gaudio_arXiv_2020} provides a detailed description.

In Figure \ref{fig:SHPsimulations}, we compare two different experiments with hyperparameters selected in two ways, where the regressor $\phi_k$ changes at iteration $k=500$. For the first experiment in Figure \ref{fig:SHPconstant}, the hyperparameters for the high order tuner algorithm are chosen according to Theorem \ref{th:HOT_paper_Full_Alg}, and the hyperparameters of the other methods are chosen with the same step size and the momentum parameter as in Proposition \ref{prop:equivalence}. In Figure \ref{fig:SHPstep2}, the hyperparameters are chosen optimally in accordance with the Nesterov iterative method with $\beta=1-\Bar{\beta}$ and $\gamma=\Bar{\alpha}/\beta$, as per Proposition \ref{prop:equivalence}. For $k<500$, the convergence rate of the Higher Order Tuner algorithm can be seen to be comparable to the optimal Nesterov algorithm. After $\phi_k$ changes at iteration $k=500$, all unnormalized algorithms become unstable. Even in the presence of a change in the regressor, the Higher Order Tuner algorithm can be seen to have a fast rate of convergence as compared to the normalized gradient descent method.

\begin{figure}[ht]
\centering
\begin{subfigure}{0.527\textwidth}
\includegraphics[width=1\linewidth]{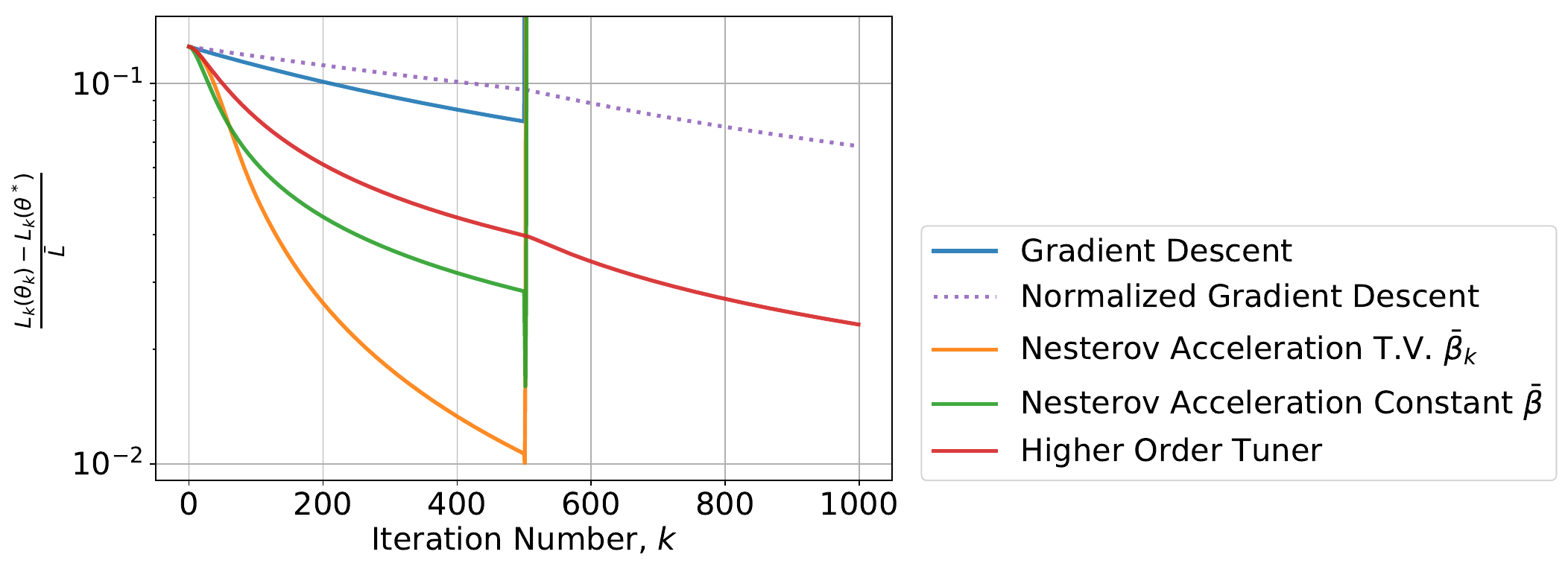}
\caption{}
\label{fig:SHPconstant}
\end{subfigure}
\begin{subfigure}{0.306\textwidth}
\includegraphics[width=1\linewidth]{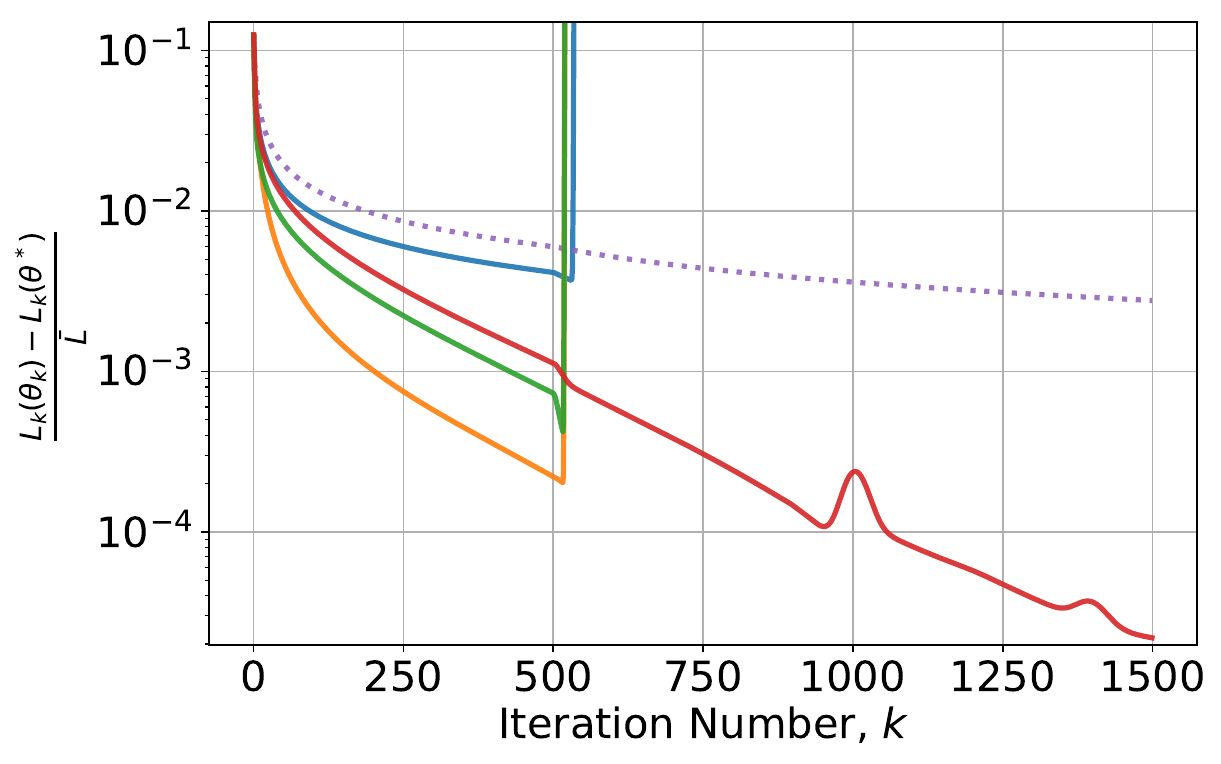}
\caption{}
\label{fig:SHPstep2}
\end{subfigure}
\caption{A variant of Nesterov's smooth convex function \citep[p.~69]{Nesterov_2018}. (a) $\mu=10^{-5}$, $\beta=0.1$ and $\gamma$ as in Theorem \ref{th:HOT_paper_Full_Alg}, $\Bar{\beta}=1-\beta$ and $\Bar{\alpha}=\gamma\beta$. At iteration $k=500$, step change in $\Bar{L}$ from $2$ to $8000$. (b) Hyperparameters chosen satisfying Lemma \ref{l:L_rate}  at iteration $k=0$ with $\epsilon=0.001$, $\beta=1-\Bar{\beta}$, and $\gamma=\Bar{\alpha}/\beta$. At iteration $k=500$, step change in $\Bar{L}$, from $2$ to $8$.}
\label{fig:SHPsimulations}
\end{figure}

\begin{figure}[ht]
        \centering
        \begin{subfigure}[b]{0.9\textwidth}
            \centering
            \includegraphics[width=\textwidth]{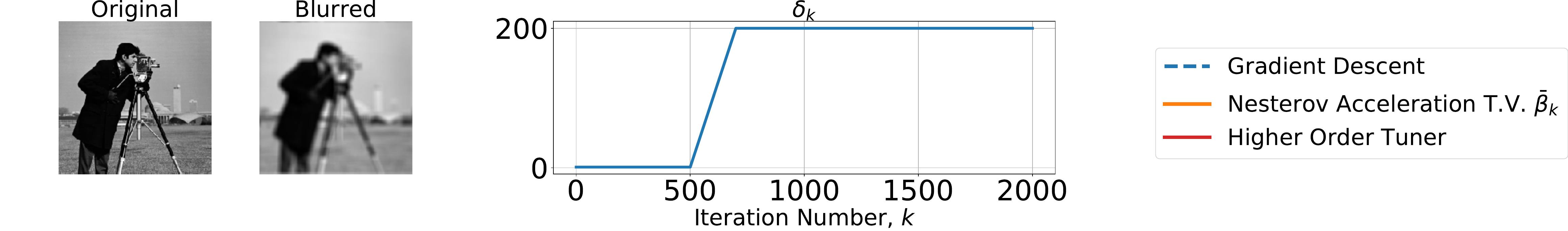}
            \caption{} 
            \label{fig:IDP_info}
        \end{subfigure}
        \vskip\baselineskip
        \begin{subfigure}[b]{0.245\textwidth}  
            \centering 
            \includegraphics[width=\textwidth]{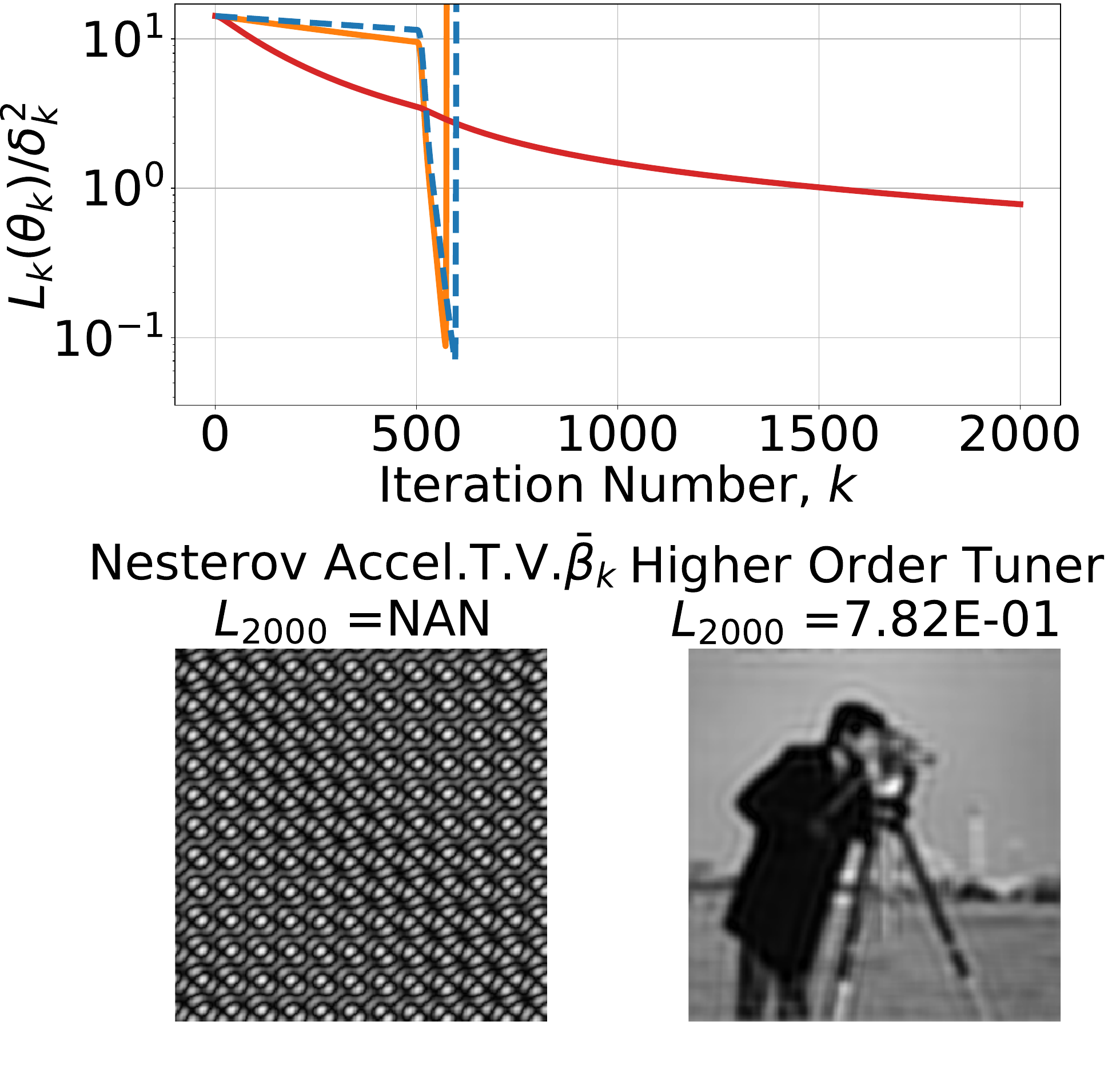}
            \caption{}  
            \label{fig:IDP_stable}
        \end{subfigure}
        \begin{subfigure}[b]{0.245\textwidth}  
            \centering 
            \includegraphics[width=\textwidth]{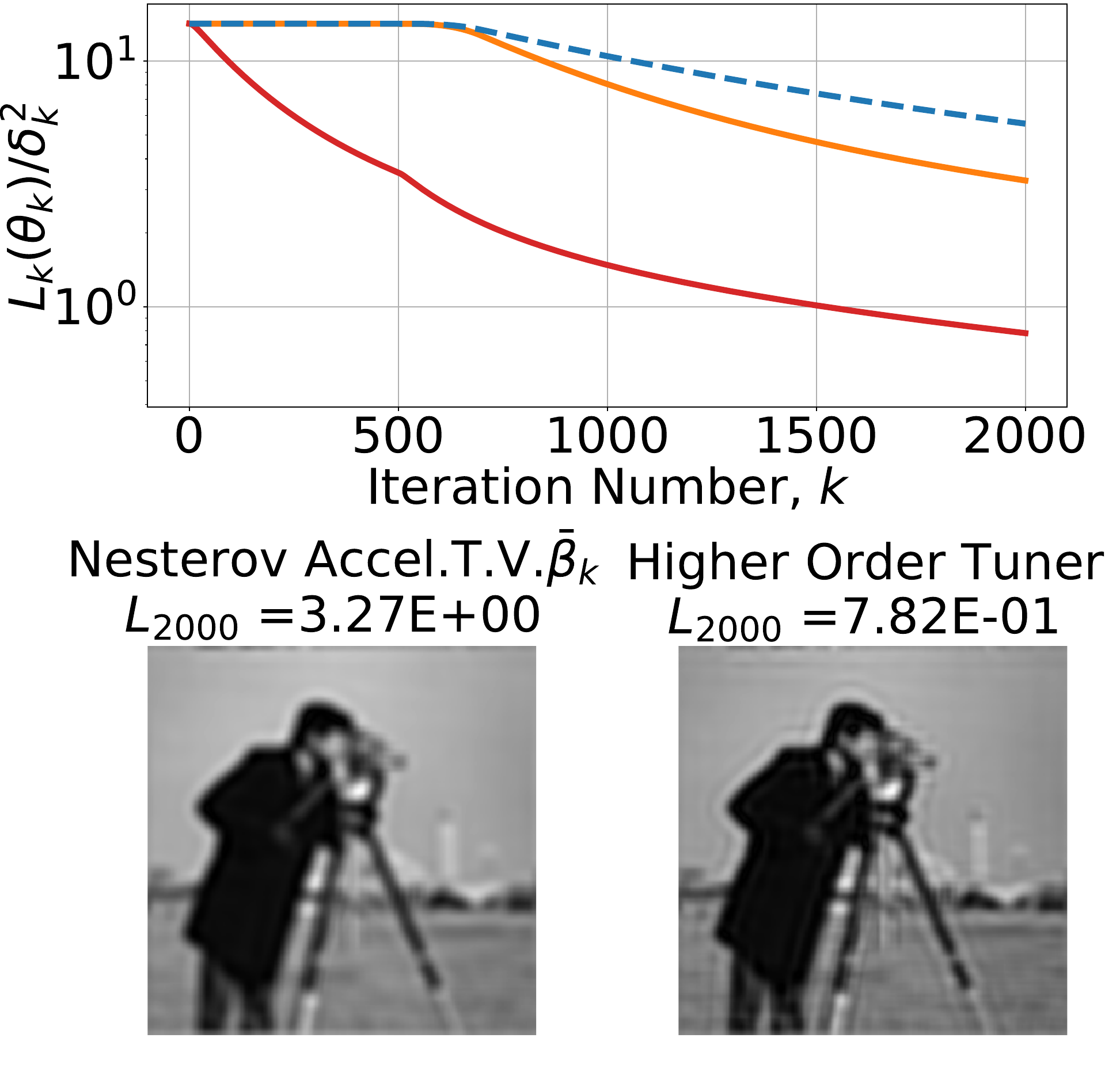}
            \caption{}  
            \label{fig:IDP_stable_SUP}
        \end{subfigure}
        \begin{subfigure}[b]{0.245\textwidth}   
            \centering 
            \includegraphics[width=\textwidth]{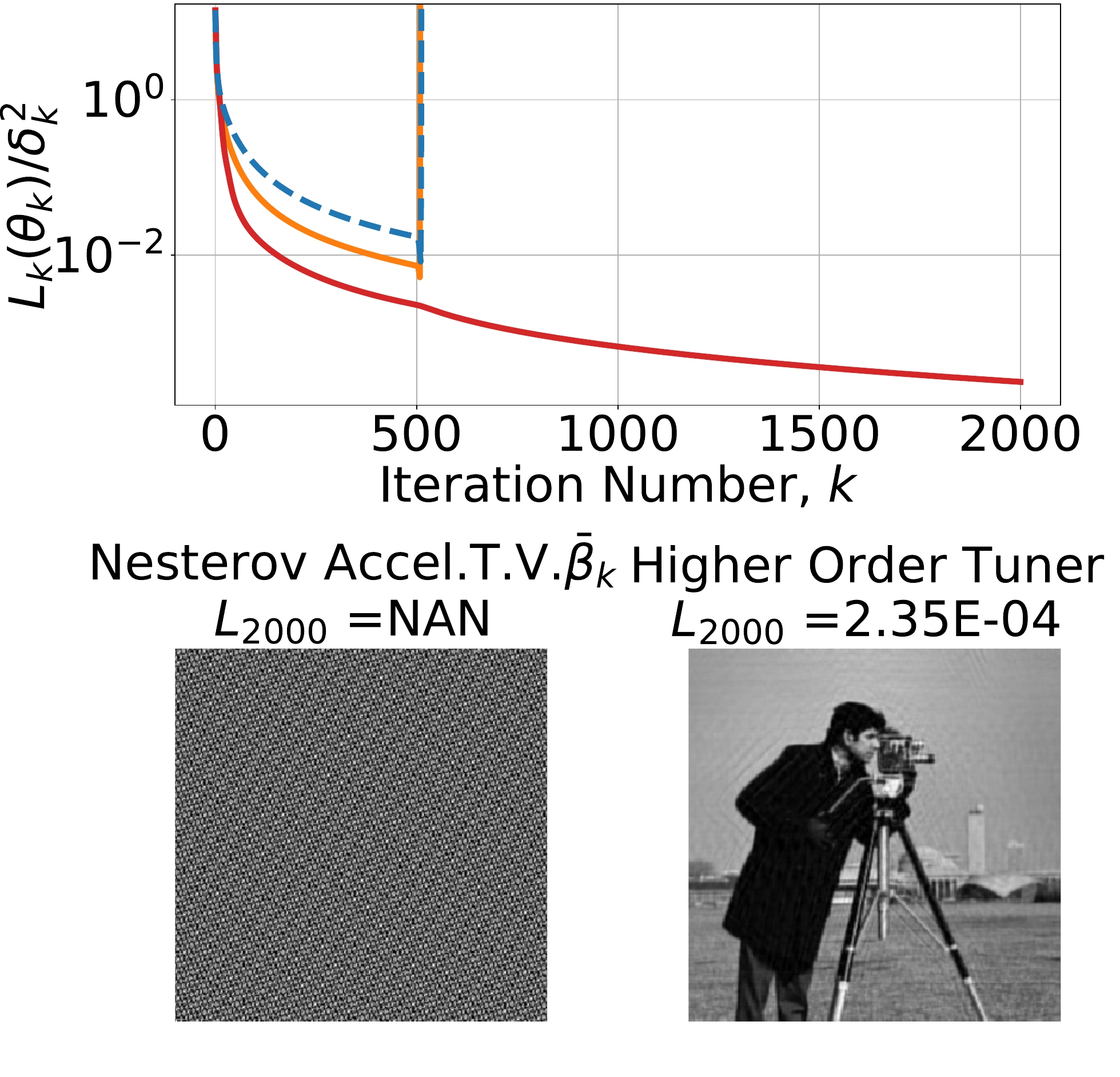}
            \caption{}  
            \label{fig:IDP_aggresive}
        \end{subfigure}
        \begin{subfigure}[b]{0.245\textwidth}   
            \centering 
            \includegraphics[width=\textwidth]{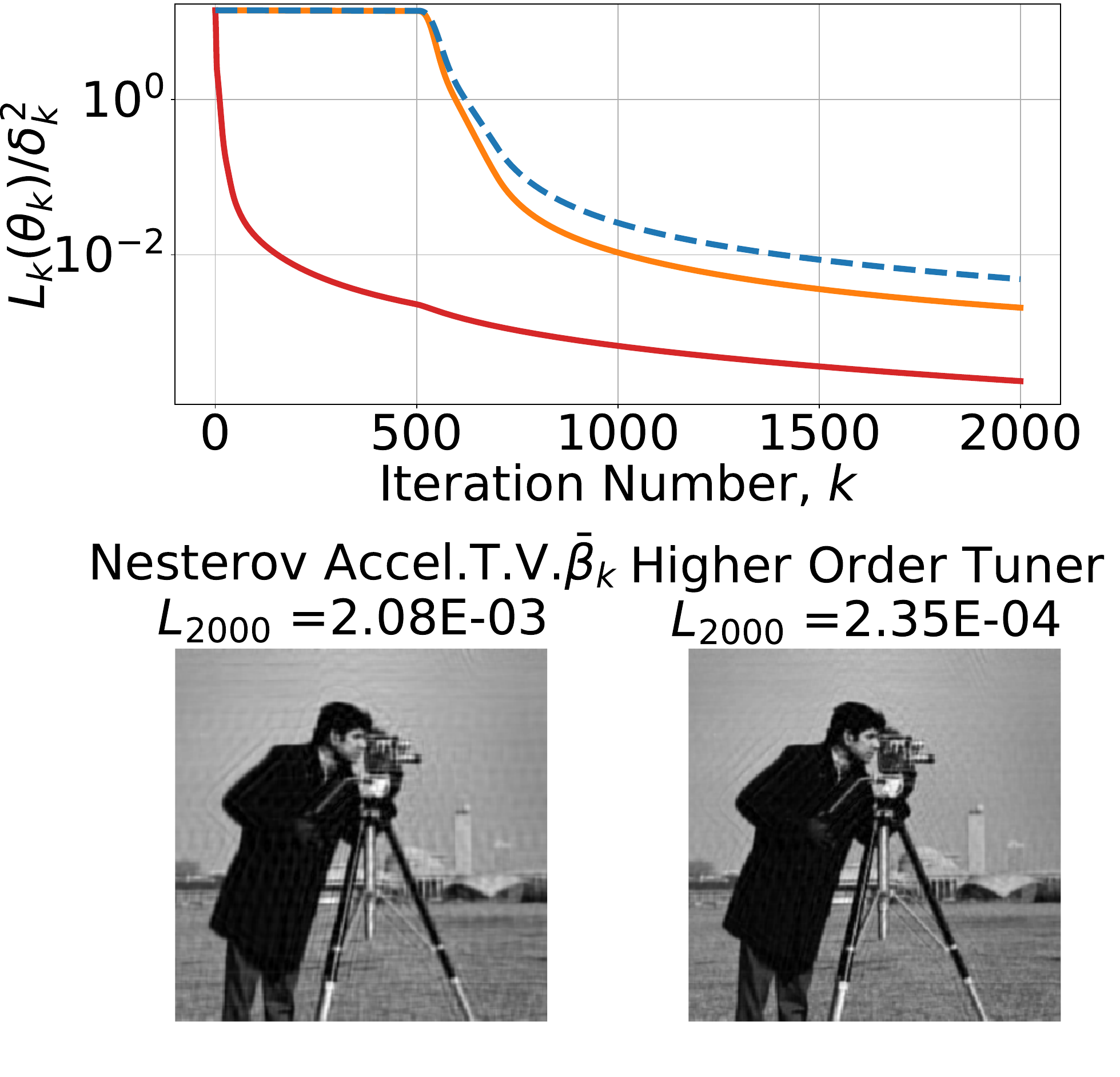}
            \caption{}
            \label{fig:IDP_aggresive_SUP}
        \end{subfigure}
        \caption{(a) Original and blurred images; Ramp increase of $\delta_k$ from $1$ to $200$ in $200$ iterations, starting at $k=500$. (b) and (c) Hyperparameters as in Theorem \ref{th:HOT_paper_Full_Alg}. (b) Loss values and reconstructed images when only $\phi_0$ is known \emph{a priori}. (c) Loss values and reconstructed images when all $\phi_k$ are known \emph{a priori}. (d) and (e) Hyperparameters chosen optimally as per the Nesterov iterative method. (d) Loss values and reconstructed images when only $\phi_0$ is known \emph{a priori}. (e) Loss values and reconstructed images when all $\phi_k$ are known \emph{a priori}. Please, see Figure \ref{fig:Noisy} for the noisy case.}
        \label{fig:IDPsimulations}
\end{figure}

\subsection{Image deblurring problem} \label{sec:IDP}

In this section, we consider a variant of the image deblurring problem of Beck and Teboulle \citep{Beck_2009}, with a time-varying blur. All processing is done in the \emph{frequency domain} in which the unknown true image is denoted as $\theta^*$, the measured blurry version is represented as $y_k=\phi_k^T\theta^*$, where the blur is represented as the regressor $\phi_k$. We consider the fully adversarial setting where $\phi_k$ may be affected due to issues in the communication system, changes in lighting, or any other adversarial effects. For the purposes of the presented results, we consider a scalar multiplicative perturbation $\delta_k$, which results in a frequency domain blur representation as $\phi_k = \delta_k\odot\text{blur\_operator}(P_k)$, where $P_k$ is a known point spread function. We employ the same squared loss function as in \eqref{e:Squared_Loss_N}. A complete description of the problem formulation is provided in \citep[Appendix \ref{ImageDeblurringStatement}]{Gaudio_arXiv_2020}, alongside additional experiments with noisy measurements.

Figure \ref{fig:IDPsimulations} shows the loss values and reconstructed images at iteration $2000$, for the ramp change in the regressor/blur in Figure \ref{fig:IDP_info}. We present numerical results for hyperparameters chosen in four different ways. In Figure \ref{fig:IDP_stable} and Figure \ref{fig:IDP_stable_SUP}, the higher order tuner hyperparameters are chosen according to Theorem \ref{th:HOT_paper_Full_Alg}, and  $\bar{\alpha}$ is chosen as $\bar{\alpha} = \gamma\beta/\mathcal{N}_0$ in Figure \ref{fig:IDP_stable} and $\bar{\alpha} = \gamma\beta/\max{\mathcal{N}_k}$ in Figure \ref{fig:IDP_stable_SUP}. In Figure \ref{fig:IDP_aggresive} and Figure \ref{fig:IDP_aggresive_SUP}, the step size is chosen as $\bar{\alpha}=1/\lVert\phi_0\rVert^2_2$ and $\bar{\alpha}=1/\max{\lVert\phi_k\rVert^2_2}$ respectively, which results in hyperparameter choices for the higher order tuner as $\mu=10^{-20}$, $\beta=0.1$, and $\gamma=1/\beta$, as per Proposition \ref{prop:equivalence}. The higher order tuner remains stable as opposed to the other methods which are unstable if the parameters are not chosen appropriately for all $\phi_k$.

\acks{This work was supported by the Air Force Research Laboratory, Collaborative Research and Development for Innovative Aerospace Leadership (CRDInAL), Thrust 3 - Control Automation and Mechanization grant FA 8650-16-C-2642 and the Boeing Strategic University Initiative; cleared for release, case number 88ABW-2020-1889.}

\bibliography{References}

\clearpage
\tableofcontents

\clearpage
\appendix
\noindent{\LARGE \textbf{Appendix}}

\paragraph{Organization of the appendix.}

Mathematical preliminaries alongside additional continuous and discrete time equations are provided for completeness in Appendix \ref{s:Preliminaries_N}. Lyapunov stability proofs for the continuous and discrete time algorithms considered in this paper are provided in Appendix \ref{s:Stability_N_Proofs}. Non-asymptotic convergence rate proofs are provided in Appendix \ref{s:NonAsymptoticProofs}. Appendix \ref{s:Experiment_details} provides further implementation details of the numerical experiments in this paper, alongside additional experiments.

\section{Preliminaries}
\label{s:Preliminaries_N}

\subsection{Definitions}
\label{ss:Definitions_N}

The following definitions of convexity and smoothness, modified from \citep{Nesterov_2018} are used throughout.
\begin{definition}
    A continuously differentiable function $f$ is convex if
    \begin{equation}\label{e:convex}
        f(y)\geq f(x)+\nabla f(x)^T(y-x),\quad \forall x,y\in\mathbb{R}^N.
    \end{equation}
\end{definition}
\begin{definition}
    A twice continuously differentiable function $f$ is convex if
    \begin{equation*}
        \nabla^2f(x)\geq0,\quad \forall x\in\mathbb{R}^N.
    \end{equation*}
\end{definition}
\begin{definition}
    A continuously differentiable function $f$ is $\mu$-strongly convex if there exists a $\mu>0$ such that
    \begin{equation}\label{e:strongly_convex}
        f(y)\geq f(x)+\nabla f(x)^T(y-x)+\frac{\mu}{2}\lVert y-x\rVert^2,\quad \forall x,y\in\mathbb{R}^N.
    \end{equation}
\end{definition}
\begin{definition}
    A twice continuously differentiable function $f$ is $\mu$-strongly convex if there exists a $\mu>0$ such that
    \begin{equation*}
        \nabla^2f(x)\geq\mu I,\quad \forall x\in\mathbb{R}^N.
    \end{equation*}
\end{definition}
\begin{definition}
    A continuously differentiable function $f$ is $\bar{L}$-smooth if there exists a $\bar{L}>0$ such that
    \begin{equation}\label{e:smooth_convex}
        f(y)\leq f(x)+\nabla f(x)^T(y-x)+\frac{\bar{L}}{2}\lVert y-x\rVert^2,\quad \forall x,y\in\mathbb{R}^N.
    \end{equation}
\end{definition}
\begin{definition}
    A twice continuously differentiable function $f$ is $\bar{L}$-smooth if there exists a $\bar{L}>0$ such that
    \begin{equation*}
        \nabla^2f(x)\leq\bar{L}I,\quad \forall x\in\mathbb{R}^N.
    \end{equation*}
\end{definition}

The following two Euler-type discretization methods are employed in this paper.
\begin{definition}\label{d:explicit_Euler}
    An explicit Euler discretization of a differential equation $\dot{x}(t)=f(x(t))$, where $f:\mathbb{R}^N\rightarrow\mathbb{R}^N$, and $\Delta t$ is the sample time, takes the form
    \begin{equation*}
        \dot{x}(t)\approx\frac{x_{k+1}-x_k}{\Delta t}=f(x_k).
    \end{equation*}
\end{definition}
\begin{definition}\label{d:implicit_Euler}
    An implicit Euler discretization of a differential equation $\dot{x}(t)=f(x(t))$, where $f:\mathbb{R}^N\rightarrow\mathbb{R}^N$, and $\Delta t$ is the sample time, takes the form
    \begin{equation*}
        \dot{x}(t)\approx\frac{x_{k+1}-x_k}{\Delta t}=f(x_{k+1}).
    \end{equation*}
\end{definition}

The classes $\mathcal{L}_p$ and $\ell_p$ for $p\in[1,\infty]$ are described below.
\begin{definition}[See \citep{Narendra2005}]\label{d:L_p}
    For any fixed $p\in[1,\infty)$, $f:\mathbb{R}_+\rightarrow\mathbb{R}$ is defined to belong to $\mathcal{L}_p$ if $f$ is locally integrable and
    \begin{equation*}
        \lVert f\rVert_{\mathcal{L}_p}\triangleq\left(\lim_{t\rightarrow\infty}\int_0^t\lVert f(\tau)\rVert^pd\tau\right)^{\frac{1}{p}}<\infty.
    \end{equation*}
    When $p=\infty$, $f\in\mathcal{L}_{\infty}$ if,
    \begin{equation*}
        \lVert f\rVert_{\mathcal{L}_{\infty}}\triangleq \sup_{t\geq0}\lVert f(t)\rVert<\infty.
    \end{equation*}
\end{definition}
\begin{definition}[See \citep{Luenberger_1969}]\label{d:l_p}
    For any fixed $p\in[1,\infty)$, a sequence of scalars $\xi=\{\xi_0,\xi_1,\xi_2,\ldots\}$ is defined to belong to $\ell_p$ if
    \begin{equation*}
        \lVert \xi\rVert_{\ell_p}\triangleq\left(\lim_{k\rightarrow\infty}\sum_{i=0}^k\lVert \xi_i\rVert^p\right)^{\frac{1}{p}}<\infty.
    \end{equation*}
    When $p=\infty$, $\xi\in\ell_{\infty}$ if,
    \begin{equation*}
        \lVert \xi\rVert_{\ell_{\infty}}\triangleq \sup_{i\geq0}\lVert \xi_i\rVert<\infty.
    \end{equation*}
\end{definition}

The following lemma was attributed to Barbalat by Popov \citep{Popov_1973} and has found significant use in the fields of adaptive and nonlinear control. The version from \citep{Narendra2005} is stated below with an associated corollary.
\begin{lemma}[See \citep{Narendra2005}]
    If $f:\mathbb{R}_+\rightarrow\mathbb{R}$ is uniformly continuous for $t\geq0$, and if the limit of the integral
    \begin{equation*}
        \lim_{t\rightarrow\infty}\int_0^t|f(\tau)|d\tau
    \end{equation*}
    exists and is finite, then
    \begin{equation*}
        \lim_{t\rightarrow\infty}f(t)=0.
    \end{equation*}
\end{lemma}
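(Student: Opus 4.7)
The plan is to argue by contradiction: assume that $f(t)$ does not tend to zero, and then use uniform continuity to produce a sequence of disjoint intervals on each of which $|f|$ is bounded below by a fixed positive constant. This will force the integral of $|f|$ to diverge, contradicting the hypothesis that $\lim_{t\to\infty}\int_0^t |f(\tau)|\,d\tau$ is finite.

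First, I would negate the conclusion. Since $f(t) \not\to 0$, there exists $\epsilon_0 > 0$ and a sequence $t_n \to \infty$ such that $|f(t_n)| \geq \epsilon_0$ for every $n$. By passing to a subsequence, I can additionally assume the $t_n$ are spaced far apart in the following sense: after invoking uniform continuity, I will fix a width $\delta > 0$ and require $t_{n+1} - t_n > 2\delta$ so that the intervals produced in the next step are pairwise disjoint. This thinning is routine but worth stating explicitly because disjointness is what ultimately lets me add up the integral contributions.

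Next, I would invoke uniform continuity of $f$: there exists $\delta > 0$ such that $|s - t| \leq \delta$ implies $|f(s) - f(t)| \leq \epsilon_0/2$. In particular, for every $\tau$ with $|\tau - t_n| \leq \delta$, the reverse triangle inequality gives
\begin{equation*}
|f(\tau)| \;\geq\; |f(t_n)| - |f(\tau) - f(t_n)| \;\geq\; \epsilon_0 - \frac{\epsilon_0}{2} \;=\; \frac{\epsilon_0}{2}.
\end{equation*}
Integrating over the interval $[t_n - \delta, t_n + \delta]$ (or $[t_n, t_n + \delta]$ if $t_n$ is near $0$) yields a contribution of at least $\epsilon_0 \delta$ to the running integral of $|f|$. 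Since the $t_n$ were chosen so that these windows are disjoint and $t_n \to \infty$, after $N$ of them the integral $\int_0^{t_N + \delta} |f(\tau)|\,d\tau$ exceeds $N \epsilon_0 \delta$, which is unbounded in $N$. This contradicts the assumed finiteness of $\lim_{t\to\infty}\int_0^t |f(\tau)|\,d\tau$, completing the argument.

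The main point to be careful about is the selection of the subsequence so that the windows around the $t_n$ do not overlap, because overlapping windows could cause the same mass of $|f|$ to be double-counted and weaken the divergence bound. Everything else — the contradiction setup, the application of uniform continuity, and the lower bound on $|f|$ inside each window — is standard and requires no further machinery beyond the two hypotheses (uniform continuity and integrability of $|f|$ at infinity) stated in the lemma.
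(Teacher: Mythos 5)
The paper does not prove this lemma; it states it as a known result and defers to the cited reference \cite{Narendra2005}. Your contradiction argument --- extract $t_n\to\infty$ with $|f(t_n)|\geq\epsilon_0$, use uniform continuity to lower-bound $|f|$ by $\epsilon_0/2$ on disjoint windows of width $\delta$ around a thinned subsequence, and conclude that $\int_0^t|f(\tau)|\,d\tau$ grows without bound --- is the standard proof of Barbalat's lemma, and it is correct, including the care you take to enforce disjointness of the windows before summing their contributions.
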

\begin{corollary}[See \citep{Narendra2005}]\label{c:Barbalat_Corollary}
    If $f\in\mathcal{L}_2\cap\mathcal{L}_{\infty}$, and $\dot{f}\in\mathcal{L}_{\infty}$, then $\lim_{t\rightarrow\infty}f(t)=0$.
\end{corollary}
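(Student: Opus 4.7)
The plan is to reduce the corollary to Barbalat's lemma by applying the lemma not to $f$ directly but to the auxiliary function $g(t) := f(t)^2$. This is the natural object to consider because the hypothesis $f \in \mathcal{L}_2$ gives control of $\int_0^\infty f^2(\tau)\,d\tau$ rather than $\int_0^\infty |f(\tau)|\,d\tau$, and Barbalat's lemma requires a finite integral of the function itself.

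First I would verify that $g$ satisfies the two premises of Barbalat's lemma. For uniform continuity, observe that $g$ is differentiable with $\dot{g}(t) = 2 f(t)\dot{f}(t)$, so $\lVert \dot{g}\rVert_{\mathcal{L}_\infty} \leq 2\,\lVert f\rVert_{\mathcal{L}_\infty}\,\lVert \dot{f}\rVert_{\mathcal{L}_\infty} < \infty$ by the assumptions $f,\dot{f}\in\mathcal{L}_\infty$. A bounded derivative yields a global Lipschitz bound on $g$, which in turn implies uniform continuity on $\mathbb{R}_+$. For the integral condition, since $g = f^2 \geq 0$, Definition \ref{d:L_p} gives $\lim_{t\to\infty}\int_0^t |g(\tau)|\,d\tau = \lim_{t\to\infty}\int_0^t f^2(\tau)\,d\tau = \lVert f\rVert_{\mathcal{L}_2}^2 < \infty$.

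With both hypotheses of Barbalat's lemma verified for $g$, I would conclude $\lim_{t\to\infty} g(t) = \lim_{t\to\infty} f(t)^2 = 0$, and taking square roots (using continuity of $\sqrt{\cdot}$ at $0$) yields $\lim_{t\to\infty} f(t) = 0$, as desired.

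There is no real obstacle here; the only subtlety worth flagging is why one should not attempt to apply Barbalat's lemma directly to $|f|$. Although $\dot{f}\in\mathcal{L}_\infty$ already makes $f$ (and hence $|f|$, away from its zero set) uniformly continuous, the $\mathcal{L}_2$ hypothesis does not give $\int_0^\infty |f(\tau)|\,d\tau < \infty$ in general, so the integral condition would fail. Working with $f^2$ is precisely what converts the $\mathcal{L}_2$ hypothesis into the $\mathcal{L}_1$-type condition that Barbalat's lemma demands, and the $\mathcal{L}_\infty$ assumptions on $f$ and $\dot f$ are exactly what is needed to propagate uniform continuity to $f^2$.
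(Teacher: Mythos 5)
Your proof is correct and is the standard argument for this corollary: the paper itself gives no proof and simply cites \cite{Narendra2005}, where precisely this reduction is used — apply Barbalat's lemma to $g=f^2$, whose integrability follows from $f\in\mathcal{L}_2$ and whose uniform continuity follows from $\dot g = 2f\dot f$ being bounded. Your remark on why $|f|$ itself is the wrong candidate (since $\mathcal{L}_2$ does not imply $\mathcal{L}_1$) correctly identifies the only subtlety.
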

A discrete time proposition which corresponds to Corollary \ref{c:Barbalat_Corollary} follows.
\begin{prop}
    If $\xi\in\ell_2\cap\ell_{\infty}$, then $\lim_{k\rightarrow\infty}\xi_k=0$.
\end{prop}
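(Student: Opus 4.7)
The plan is very short because the $\ell_\infty$ hypothesis turns out to be superfluous in the discrete-time version; unlike the continuous-time Barbalat corollary (where uniform continuity of $f$ is needed to rule out ever-thinner spikes), in discrete time the index set has no accumulation structure, so square-summability alone forces the terms to decay.

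First I would unpack the hypothesis $\xi\in\ell_2$ via Definition \ref{d:l_p}: the sequence of partial sums $S_n=\sum_{i=0}^{n}\lVert\xi_i\rVert^2$ is monotonically non-decreasing and bounded above by $\lVert\xi\rVert_{\ell_2}^2<\infty$, hence convergent in $\mathbb{R}$. Next I would invoke the Cauchy criterion for convergent real sequences: for any $\varepsilon>0$ there exists $K$ such that $|S_{n}-S_{n-1}|<\varepsilon$ for all $n\geq K$. Since $S_n-S_{n-1}=\lVert\xi_n\rVert^2$, this is exactly the statement $\lVert\xi_k\rVert^2<\varepsilon$ for all $k\geq K$, i.e.\ $\lVert\xi_k\rVert^2\to 0$, and therefore $\lim_{k\to\infty}\xi_k=0$.

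There is no real obstacle here; the only thing worth noting is why the statement is phrased with the extra hypothesis $\xi\in\ell_\infty$. In the continuous-time Corollary \ref{c:Barbalat_Corollary} one needs $\dot f\in\mathcal{L}_\infty$ (giving uniform continuity) because an $\mathcal{L}_2$ function can otherwise fail to tend to zero. The discrete analogue of that pathology cannot occur, so the $\ell_\infty$ assumption is included only to keep the statement parallel to the continuous case and to align with how the proposition is invoked in the stability arguments of Section \ref{s:Stability_N}. I would mention this remark at the end of the proof but not rely on it.
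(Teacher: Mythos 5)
Your proof is correct. Note that the paper states this proposition without proof (it is offered in Appendix \ref{s:Preliminaries_N} simply as the discrete-time counterpart of Corollary \ref{c:Barbalat_Corollary}), so there is no argument of the authors' to compare against; the telescoping/Cauchy-criterion argument you give --- convergence of the partial sums $S_n=\sum_{i=0}^{n}\lVert\xi_i\rVert^2$ forces $S_n-S_{n-1}=\lVert\xi_n\rVert^2\to 0$ --- is exactly the standard proof one would supply. Your side remark is also accurate: $\ell_2$ membership alone already implies the terms vanish, so the $\ell_\infty$ hypothesis is logically redundant and serves only to mirror the continuous-time statement, where uniform continuity (via $\dot f\in\mathcal{L}_\infty$) is genuinely needed to exclude thin spikes.
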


\subsection{Continuous time problem setting}
\label{ss:Continuous_time_setting}

In the interest of completeness for the continuous time stability proofs provided in Appendix \ref{s:Stability_N_Proofs}, the following analogs of the problem setting of Section \ref{s:Problem_Setting_N} in continuous time are provided below. In particular, the output error in direct correspondence with \eqref{e:error1_N_discrete_N} is stated as
\begin{equation}\label{e:error1_N}
    e_y(t)=\hat{y}(t)-y(t)=\tilde{\theta}^T(t)\phi(t),
\end{equation}
where $\tilde{\theta}(t)=\theta(t)-\theta^*$ is the parameter estimation error and $\phi(t)$ is the regressor. The analog of the squared loss function in \eqref{e:Squared_Loss_N} may be formulated in continuous time as
\begin{equation}\label{e:error1_N_Loss}
    L_t(\theta(t))=\frac{1}{2}e_y^2(t)=\frac{1}{2}\tilde{\theta}^T(t)\phi(t)\phi^T(t)\tilde{\theta}(t).
\end{equation}
The continuous time analog of the normalized gradient method in \eqref{e:GD_Goodwin_N} is stated as
\begin{equation}\label{e:Gradient_Flow_N_N}
    \dot{\theta}(t)=-\frac{\gamma}{\N_t}\nabla L_t(\theta(t)),
\end{equation}
where the normalization signal is $\N_t=1+\lVert\phi(t)\rVert^2$.

\subsection{Additional discrete time methods}
\label{ss:Discrete_time_setting}

\begin{wrapfigure}{r}{0.50\textwidth}
    \begin{minipage}{0.50\textwidth}
        \vspace{-0.8cm}
        \begin{algorithm}[H]
        \caption{Higher Order Tuner Optimizer (HB)}
        \label{alg:HOT_R_HB}
        \begin{algorithmic}[1]
        \STATE {\bfseries Input:} initial conditions $\theta_0$, $\vartheta_0$, gains $\gamma$, $\beta$, $\mu$
        \FOR{$k=0,1,2,\ldots$}
        \STATE \textbf{Receive} regressor $\phi_k$, output $y_k$
        \STATE $\theta_{k+1}\leftarrow\theta_k-\beta(\theta_k-\vartheta_k)$
        \STATE Let $\N_k=1+\lVert\phi_k\rVert^2$,\\
        $\nabla L_k(\theta_{k+1})=\phi_k(\theta_{k+1}^T\phi_k-y_k)$,\\
        $\nabla f_k(\theta_{k+1})=\frac{\nabla L_k(\theta_{k+1})}{\N_k}+\mu(\theta_{k+1}-\theta_0)$
        \STATE $\vartheta_{k+1}\leftarrow\vartheta_k-\gamma\nabla f_k(\theta_{k+1})$
        \ENDFOR
        \end{algorithmic}
        \end{algorithm}
    \end{minipage}
\end{wrapfigure}
For completeness, this section provides additional discrete time iterative methods discussed in this paper. The gradient method may be stated as
\begin{equation}\label{e:Gradient_Method}
    \theta_{k+1}=\theta_k-\bar{\alpha}\nabla f(\theta_k).
\end{equation}
Similar to \eqref{e:Nesterov_Two_Convex}, Nesterov's algorithm with a time-varying $\beta_k$ may be expressed as
\begin{align}\label{e:Nesterov_Two_Convex_TV_beta}
    \begin{split}
        \theta_{k+1}&=\nu_k-\bar{\alpha}\nabla f(\nu_k),\\
        \nu_{k+1}&=\left(1+\bar{\beta}_k\right)\theta_{k+1} - \bar{\beta}_k\theta_{k}.
    \end{split}
\end{align}

A provably stable version of the Heavy Ball method of Polyak \citep{Polyak_1964} may be stated using a similar discretization of the continuous higher order tuner as in \eqref{e:higher_order_L_Discretized}, but without the ``Extra Gradient Step'' as
\begin{align}\label{e:higher_order_L_Discretized_HB1}
    \begin{split}
        \text{Implicit Euler}:\vartheta_{k+1}&=\vartheta_k-\gamma\nabla \bar{f}_k(\theta_{k+1}),\\
        \text{Explicit Euler}:\hspace{.04cm}\theta_{k+1}&=\theta_k-\beta(\theta_k-\vartheta_k).
    \end{split}
\end{align}
Similar to Algorithm \ref{alg:HOT_R}, using the same regularized function in \eqref{e:Strongly_Convex_Objective}, Algorithm \ref{alg:HOT_R_HB} may be provided based on the discretization procedure in \eqref{e:higher_order_L_Discretized_HB1} by replacing $\bar{f}_k(\theta_k)$ in \eqref{e:higher_order_L_Discretized_HB1} with $f_k(\theta_k)$ in \eqref{e:Strongly_Convex_Objective}. The following proposition relates Algorithm \ref{alg:HOT_R_HB} to the Heavy Ball method.
\begin{prop}
    Algorithm \ref{alg:HOT_R_HB} with a constant regressor $\phi_k\equiv\phi$ (and thus $f_k(\cdot)\equiv f(\cdot)$) may be reduced to the common form of the Heavy Ball method \citep{Polyak_1964} with $\bar{\beta}=1-\beta$ and $\bar{\alpha}=\gamma\beta$ as
\begin{equation}\label{e:Heavy_Ball}
    \theta_{k+1}=\left(1+\bar{\beta}\right)\theta_k - \bar{\beta}\theta_{k-1}-\bar{\alpha}\nabla f(\theta_k).
\end{equation}
\end{prop}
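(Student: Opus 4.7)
The plan is to eliminate the auxiliary variable $\vartheta_k$ from Algorithm \ref{alg:HOT_R_HB} under the constant‑regressor assumption and recover a one‑step recursion in $\theta$ alone. Because the regressor is constant, $\N_k$ and $f_k$ do not depend on $k$, so the implicit step collapses to $\vartheta_{k+1} = \vartheta_k - \gamma\nabla f(\theta_{k+1})$, which is the only place the gradient enters.

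First I would use the explicit‑Euler step to solve for $\vartheta_k$ purely in terms of the $\theta$ iterates. Rearranging $\theta_{k+1}=\theta_k-\beta(\theta_k-\vartheta_k)=(1-\beta)\theta_k+\beta\vartheta_k$ gives
\[
\vartheta_k=\frac{1}{\beta}\bigl[\theta_{k+1}-(1-\beta)\theta_k\bigr],
\]
and by shifting the index one step back,
\[
\vartheta_{k-1}=\frac{1}{\beta}\bigl[\theta_{k}-(1-\beta)\theta_{k-1}\bigr].
\]
Next I would substitute both expressions into the implicit update written at time $k-1$, namely $\vartheta_k=\vartheta_{k-1}-\gamma\nabla f(\theta_k)$, and multiply through by $\beta$ to clear denominators, obtaining
\[
\theta_{k+1}-(1-\beta)\theta_k=\theta_k-(1-\beta)\theta_{k-1}-\gamma\beta\nabla f(\theta_k).
\]
Collecting the $\theta_k$ terms and identifying $\bar\beta=1-\beta$ and $\bar\alpha=\gamma\beta$ then yields $\theta_{k+1}=(1+\bar\beta)\theta_k-\bar\beta\theta_{k-1}-\bar\alpha\nabla f(\theta_k)$, which is exactly the Heavy Ball form in \eqref{e:Heavy_Ball}.

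There is essentially no obstacle here beyond careful index bookkeeping; the only subtlety is that the shifted identity for $\vartheta_{k-1}$ requires $k\geq 1$, so for $k=0$ one simply needs to interpret the recursion using the initial conditions $\theta_0,\vartheta_0$ provided in the algorithm (equivalently, define $\theta_{-1}$ implicitly from $\vartheta_0=\tfrac{1}{\beta}[\theta_0-(1-\beta)\theta_{-1}]$). This is a cosmetic initialization point and does not affect the algebraic equivalence for $k\geq 1$.
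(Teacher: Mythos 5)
Your proof is correct: eliminating $\vartheta$ via $\vartheta_k=\tfrac{1}{\beta}[\theta_{k+1}-(1-\beta)\theta_k]$ and substituting into the implicit update $\vartheta_k=\vartheta_{k-1}-\gamma\nabla f(\theta_k)$ (valid since $f_k\equiv f$ for a constant regressor) yields exactly \eqref{e:Heavy_Ball} with $\bar{\beta}=1-\beta$, $\bar{\alpha}=\gamma\beta$. The paper states this proposition without proof, and your elimination argument — including the remark on defining $\theta_{-1}$ from the initial conditions — is the natural and complete way to establish it.
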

Stability proofs for \eqref{e:higher_order_L_Discretized_HB1} and Algorithm \ref{alg:HOT_R_HB} can be found in Appendix \ref{s:Stability_N_Proofs}. A non-asymptotic convergence rate proof for the Heavy Ball method with constant regressors as in \eqref{e:Heavy_Ball} can be found in Appendix \ref{s:NonAsymptoticProofs}.

\clearpage
\section{Stability proofs}
\label{s:Stability_N_Proofs}

\subsection{Regressor normalized gradient flow}

\begin{theorem}\label{th:Stability_Error1_GF}
    For the linear regression model in \eqref{e:error1_N} with loss in \eqref{e:error1_N_Loss}, the normalized gradient flow update in \eqref{e:Gradient_Flow_N_N} with $\gamma>0$, results in $\tilde{\theta}\in\mathcal{L}_{\infty}$ and $\frac{e_y}{\sqrt{\N_t}}\in\mathcal{L}_2\cap\mathcal{L}_{\infty}$. If in addition it assumed that $\phi,\dot{\phi} \in \mathcal{L}_{\infty}$ then $\lim_{t\rightarrow\infty}e_y(t)=0$ and $\lim_{t\rightarrow\infty}\dot{\tilde{\theta}}(t)=0$.
\end{theorem}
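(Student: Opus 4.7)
The natural Lyapunov candidate here is the textbook adaptive-control choice
\[
    V(t) = \frac{1}{\gamma}\lVert \tilde\theta(t)\rVert^2,
\]
which is regressor-free, so its decay will be uniform in $\phi$. Since $\theta^*$ is constant, $\dot{\tilde\theta}=\dot\theta$, and using $\nabla L_t(\theta)=\phi e_y$ together with $e_y=\tilde\theta^T\phi$ I would compute
\[
    \dot V(t) = \tfrac{2}{\gamma}\tilde\theta^T\dot{\tilde\theta} = -\tfrac{2}{\N_t}\tilde\theta^T\phi\, e_y = -\tfrac{2\,e_y^2}{\N_t}\le 0.
\]
This immediately gives $V\in\mathcal{L}_\infty$, hence $\tilde\theta\in\mathcal{L}_\infty$, and integrating yields $\int_0^\infty e_y^2/\N_t\,dt \le \gamma V(0)/2 <\infty$, i.e.\ $e_y/\sqrt{\N_t}\in\mathcal{L}_2$. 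The $\mathcal{L}_\infty$ bound on the same quantity comes from Cauchy--Schwarz: $|e_y|/\sqrt{\N_t}\le \lVert\tilde\theta\rVert\,\lVert\phi\rVert/\sqrt{1+\lVert\phi\rVert^2}\le\lVert\tilde\theta\rVert$, which is already bounded. This closes the first half of the theorem without any assumption on $\phi$.

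For the asymptotic claims, the plan is to invoke Corollary \ref{c:Barbalat_Corollary} on $f(t):=e_y(t)/\sqrt{\N_t}$. I already have $f\in\mathcal{L}_2\cap\mathcal{L}_\infty$, so I only need $\dot f\in\mathcal{L}_\infty$. For that I would first observe that $\dot{\tilde\theta}=-(\gamma/\N_t)\phi e_y$ satisfies $\lVert\dot{\tilde\theta}\rVert\le \gamma\lVert\tilde\theta\rVert\cdot(\lVert\phi\rVert^2/\N_t)\le\gamma\lVert\tilde\theta\rVert$, hence $\dot{\tilde\theta}\in\mathcal{L}_\infty$. Under the added assumption $\phi,\dot\phi\in\mathcal{L}_\infty$, the quantity $\dot e_y=\dot{\tilde\theta}^T\phi+\tilde\theta^T\dot\phi$ is a product/sum of bounded signals, so $\dot e_y\in\mathcal{L}_\infty$, and $\dot{\N}_t=2\phi^T\dot\phi$ is bounded while $\N_t\ge 1$, so $\tfrac{d}{dt}(1/\sqrt{\N_t})$ is bounded as well. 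Combining these gives $\dot f\in\mathcal{L}_\infty$, Barbalat's corollary yields $f(t)\to 0$, and since $\N_t$ is bounded (because $\phi$ is), this in turn gives $e_y(t)\to 0$. Finally $\dot{\tilde\theta}=-(\gamma/\N_t)\phi e_y$ with $\phi\in\mathcal{L}_\infty$, $\N_t\ge 1$, and $e_y\to 0$ forces $\dot{\tilde\theta}(t)\to 0$.

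I do not anticipate a significant technical obstacle — this is the standard Lyapunov + Barbalat skeleton from adaptive identification, and the regressor-independent Lyapunov function is precisely what makes stability with adversarial $\phi$ tractable. The only place where one must be careful is verifying that the normalization by $\N_t=1+\lVert\phi\rVert^2$ is strong enough to absorb the $\phi e_y$ coupling in $\dot V$ without needing any a priori bound on $\phi$; the estimate $\lVert\phi\rVert^2/\N_t\le 1$ is what makes this go through, and it is also what ensures $\dot{\tilde\theta}$ stays bounded in terms of $\tilde\theta$ alone, which is the key to the Barbalat step at the end.
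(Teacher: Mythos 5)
Your proposal is correct and follows essentially the same route as the paper: the regressor-free Lyapunov function $V=\frac{1}{\gamma}\lVert\tilde\theta\rVert^2$ with $\dot V=-2e_y^2/\N_t\le 0$, integration to get $e_y/\sqrt{\N_t}\in\mathcal{L}_2\cap\mathcal{L}_\infty$, and Barbalat's corollary under the added assumption $\phi,\dot\phi\in\mathcal{L}_\infty$. The only cosmetic difference is that you apply Barbalat to the normalized signal $e_y/\sqrt{\N_t}$ and then transfer the limit using boundedness of $\N_t$, whereas the paper first upgrades to $e_y\in\mathcal{L}_2\cap\mathcal{L}_\infty$ and applies Barbalat to $e_y$ and $\dot{\tilde\theta}$ directly; both are valid.
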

\begin{proof}
Consider the candidate Lyapunov function stated as
\begin{equation}\label{e:Lyap_error1}
V=\frac{1}{\gamma}\lVert\tilde{\theta}\rVert^2.
\end{equation}
Using \eqref{e:error1_N}, \eqref{e:error1_N_Loss}, and \eqref{e:Gradient_Flow_N_N} with $\gamma>0$, the time derivative of \eqref{e:Lyap_error1} may be bounded as
\begin{align*}
\dot{V}&=\frac{2}{\gamma}\tilde{\theta}^T\left(-\frac{\gamma}{\N_t}\nabla_{\theta}L_t(\theta(t))\right)\\
\dot{V}&=2\tilde{\theta}^T\left(-\frac{1}{\N_t}\phi e_y\right)\\
\dot{V}&=-\frac{2}{\N_t}e_y^2\leq0.
\end{align*}
Thus it can be concluded that $V$ is a Lyapunov function with $\tilde{\theta}\in\mathcal{L}_{\infty}$. Using \eqref{e:error1_N}, $\frac{e_y}{\sqrt{\N_t}}\in\mathcal{L}_{\infty}$. Integrating $\dot{V}$ from $t_0$ to $\infty$: $\int_{t_0}^{\infty}2\lVert\frac{e_y}{\sqrt{\N_t}}\rVert^2dt=-\int_{t_0}^{\infty}\dot{V}dt=V(t_0)-V(\infty)<\infty$, thus $\frac{e_y}{\sqrt{\N_t}}\in\mathcal{L}_2\cap\mathcal{L}_{\infty}$. From \eqref{e:error1_N_Loss} and \eqref{e:Gradient_Flow_N_N}, $\dot{\tilde{\theta}}\in\mathcal{L}_2\cap\mathcal{L}_{\infty}$. If additionally $\phi\in\mathcal{L}_{\infty}$, then $e_y\in\mathcal{L}_2\cap\mathcal{L}_{\infty}$. If additionally, $\dot{\phi}\in\mathcal{L}_{\infty}$ then from the time derivative of \eqref{e:error1_N}, it can be seen that $\dot{e}_y\in\mathcal{L}_{\infty}$ and from the time derivative of \eqref{e:Gradient_Flow_N_N}, $\ddot{\tilde{\theta}}\in\mathcal{L}_{\infty}$ and thus from Barbalat's lemma (Corollary \ref{c:Barbalat_Corollary}), $\lim_{t\rightarrow\infty}e_y(t)=0$ and $\lim_{t\rightarrow\infty}\dot{\tilde{\theta}}(t)=0$.
\end{proof}

\subsection{Regressor normalized gradient descent}

\begin{theorem}\label{th:Stability_Error1_GD}
    For the linear regression error model in \eqref{e:error1_N_discrete_N} with loss in \eqref{e:Squared_Loss_N}, the normalized gradient descent update in \eqref{e:GD_Goodwin_N} with $0<\gamma<2$, results in $\tilde{\theta}\in\ell_{\infty}$ and $\frac{e_{y,k}}{\sqrt{\N_k}}\in\ell_2\cap\ell_{\infty}$. If in addition it is assumed that $\phi\in\ell_{\infty}$ then $\lim_{k\rightarrow\infty}e_{y,k}=0$.
\end{theorem}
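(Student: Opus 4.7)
\medskip

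The plan is to mirror the continuous-time proof of Theorem \ref{th:Stability_Error1_GF}, but with a discrete Lyapunov increment in place of a time derivative. I would take as candidate Lyapunov function
\begin{equation*}
    V_k = \frac{1}{\gamma}\lVert\tilde{\theta}_k\rVert^2,
\end{equation*}
which is the natural analog of \eqref{e:Lyap_error1}, and compute $\Delta V_k = V_{k+1}-V_k$ directly by expanding the update. Using \eqref{e:GD_Goodwin_N} with $\nabla \bar f_k(\theta_k) = \phi_k e_{y,k}/\N_k$, the parameter error satisfies $\tilde{\theta}_{k+1}=\tilde{\theta}_k-\tfrac{\gamma}{\N_k}\phi_k e_{y,k}$, and expanding the squared norm gives a linear term $-\tfrac{2}{\N_k}e_{y,k}^2$ (using $\tilde{\theta}_k^T\phi_k = e_{y,k}$) plus a quadratic term $\tfrac{\gamma \lVert\phi_k\rVert^2}{\N_k^2}e_{y,k}^2$.

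The key inequality, which is the discrete-time analog of the ``free'' continuous bound and the reason for the normalization, is $\lVert\phi_k\rVert^2/\N_k \leq 1$, which follows directly from $\N_k = 1+\lVert\phi_k\rVert^2$. Combining this with $0<\gamma<2$ yields
\begin{equation*}
    \Delta V_k \leq -\frac{(2-\gamma)}{\N_k}e_{y,k}^2 \leq 0.
\end{equation*}
This is the main (modest) obstacle: unlike continuous time, the discretization introduces a quadratic-in-$\gamma$ cross term that must be dominated, and this is exactly where the normalization by $\N_k$ and the step-size restriction $\gamma<2$ earn their keep.

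From the monotonicity of $V_k$ I would conclude $\tilde{\theta}\in\ell_\infty$, and then deduce $e_{y,k}/\sqrt{\N_k}\in\ell_\infty$ from \eqref{e:error1_N_discrete_N} via the Cauchy--Schwarz-type bound $e_{y,k}^2/\N_k \leq \lVert\tilde{\theta}_k\rVert^2 \lVert\phi_k\rVert^2/\N_k \leq \lVert\tilde{\theta}_k\rVert^2$. Summing the telescoping inequality from $k=0$ to $\infty$ gives $(2-\gamma)\sum_{k=0}^\infty e_{y,k}^2/\N_k \leq V_0 < \infty$, so $e_{y,k}/\sqrt{\N_k}\in\ell_2$.

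Finally, under the additional hypothesis $\phi\in\ell_\infty$, the normalization $\N_k$ is bounded above by some constant $M$, so $e_{y,k}^2 \leq M\,(e_{y,k}^2/\N_k)$, yielding $e_{y,k}\in\ell_2\cap\ell_\infty$. Applying the discrete proposition stated just before Section \ref{ss:Continuous_time_setting} (any $\ell_2$ sequence must tend to zero), I conclude $\lim_{k\to\infty}e_{y,k}=0$. Note that no analog of $\dot{\phi}\in\mathcal{L}_\infty$ is needed here, because the discrete-time analog of Barbalat's lemma requires only $\ell_2$ membership rather than $\ell_2\cap\ell_\infty$ plus uniform continuity, which is a minor simplification relative to the continuous-time argument.
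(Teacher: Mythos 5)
Your proposal is correct and follows essentially the same route as the paper's proof: the same Lyapunov function $V_k=\frac{1}{\gamma}\lVert\tilde{\theta}_k\rVert^2$, the same expansion of $\Delta V_k$ into a linear and a quadratic term, the same use of $\lVert\phi_k\rVert^2/\N_k\leq 1$ with $0<\gamma<2$ to obtain $\Delta V_k\leq-(2-\gamma)e_{y,k}^2/\N_k$, and the same telescoping and boundedness arguments for the $\ell_2\cap\ell_\infty$ and limit claims. Your closing remark about why no analog of $\dot{\phi}\in\mathcal{L}_\infty$ is needed in discrete time is also accurate.
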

\begin{proof}
    Consider the candidate Lyapunov function stated as
    \begin{equation}\label{e:Lyapunov_Gradient_Descent}
        V_k=\frac{1}{\gamma}\lVert\tilde{\theta}_k\rVert^2.
    \end{equation}
    The increment $\Delta V_k:=V_{k+1}-V_k$ may then be expanded using \eqref{e:error1_N_discrete_N}, \eqref{e:Squared_Loss_N}, and \eqref{e:GD_Goodwin_N} as
    \begin{align*}
        \Delta V_k&=\frac{1}{\gamma}\lVert\tilde{\theta}_{k+1}\rVert^2-\frac{1}{\gamma}\lVert\tilde{\theta}_k\rVert^2\\
        \Delta V_k&=\frac{1}{\gamma}\lVert\tilde{\theta}_k-\frac{\gamma}{\N_k}\nabla L_k(\theta_k)\rVert^2-\frac{1}{\gamma}\lVert\tilde{\theta}_k\rVert^2\\
        \Delta V_k&=-2\left(1-\frac{\gamma\phi_k^T\phi_k}{2\N_k}\right)\frac{e_{y,k}^2}{\N_k}.
    \end{align*}
    Using $0<\gamma<2$, it can be seen that
    \begin{equation*}
        \Delta V_k\leq-(2-\gamma)\frac{e_{y,k}^2}{\N_k}\leq0.
    \end{equation*}
    Thus it can be concluded that $V$ is a Lyapunov function with $\tilde{\theta}\in\ell_{\infty}$. Using \eqref{e:error1_N_discrete_N}, $\frac{e_{y,k}}{\sqrt{\N_k}}\in\ell_{\infty}$. Collecting $\Delta V_k$ terms from $t_0$ to $T$: $\sum_{k=t_0}^{T}(2-\gamma)\lVert\frac{e_{y,k}}{\sqrt{\N_k}}\rVert^2\leq V_{t_0}-V_{T+1}<\infty$. Taking $T\rightarrow\infty$, it can be seen that $\frac{e_{y,k}}{\sqrt{\N_k}}\in\ell_2\cap\ell_{\infty}$ and therefore $\lim_{k\rightarrow\infty}\frac{e_{y,k}}{\sqrt{\N_k}}=0$. If additionally $\phi\in\ell_{\infty}$, then $e_{y,k}\in\ell_2\cap\ell_{\infty}$ and therefore $\lim_{k\rightarrow\infty}e_{y,k}=0$.
\end{proof}

\subsection{Continuous time higher order tuner}

\begin{manualtheorem}{\ref{th:HOT_paper} from Main Text (with proof)}
    For the linear regression model in \eqref{e:error1_N} with loss in \eqref{e:error1_N_Loss}, the higher order tuner update in \eqref{e:Accelerated_GF_N_N} with $\beta>0$, $0<\gamma\leq\beta/2$, results in $(\vartheta-\theta^*)\in\mathcal{L}_{\infty}$, $(\theta-\vartheta)\in\mathcal{L}_{\infty}$, and $\frac{e_y}{\sqrt{\N_t}}\in\mathcal{L}_2\cap\mathcal{L}_{\infty}$. If in addition it assumed that $\phi,\dot{\phi} \in \mathcal{L}_{\infty}$ then  $\lim_{t\rightarrow\infty}e_y(t)=0$, $\lim_{t\rightarrow\infty}(\theta(t)-\vartheta(t))=0$, $\lim_{t\rightarrow\infty}\dot{\vartheta}(t)=0$, and $\lim_{t\rightarrow\infty}\dot{\tilde{\theta}}(t)=0$.
\end{manualtheorem}
\begin{proof}
Consider the candidate Lyapunov function inspired by the higher order tuner approach in \citep{Evesque_2003} stated as
\begin{equation}\label{e:V_Accelerated_GF}
    V=\frac{1}{\gamma}\lVert\vartheta-\theta^*\rVert^2+\frac{1}{\gamma}\lVert\theta-\vartheta\rVert^2.
\end{equation}
Using \eqref{e:Accelerated_GF_N_N}, \eqref{e:error1_N}, and \eqref{e:error1_N_Loss} with $\gamma\leq\beta/2$, the time derivative of \eqref{e:V_Accelerated_GF} may be bounded as
\begin{align*}
    \dot{V}&=\frac{2}{\gamma}(\vartheta-\theta^*)^T\left(-\frac{\gamma}{\N_t}\nabla_{\theta}L_t(\theta)\right)+\frac{2}{\gamma}(\theta-\vartheta)^T\left(-\beta(\theta-\vartheta)+\frac{\gamma}{\N_t}\nabla_{\theta}L_t(\theta)\right)\\
    \dot{V}&=\frac{2}{\gamma}(\vartheta-\theta+\tilde{\theta})^T\left(-\frac{\gamma}{\N_t}\phi e_y\right)+\frac{2}{\gamma}(\theta-\vartheta)^T\left(-\beta(\theta-\vartheta)+\frac{\gamma}{\N_t}\phi e_y\right)\\
    \dot{V}&=\frac{1}{\N_t}\left\{-2e_y^2-\N_t\frac{2\beta}{\gamma}\lVert\theta-\vartheta\rVert^2+4(\theta-\vartheta)^T\phi e_y\right\}\\
    \dot{V}&=\frac{1}{\N_t}\left\{-2e_y^2-\frac{2\beta}{\gamma}\lVert\theta-\vartheta\rVert^2-\frac{2\beta}{\gamma}\lVert\theta-\vartheta\rVert^2\lVert\phi\rVert^2+4(\theta-\vartheta)^T\phi e_y\right\}\\
    \dot{V}&\leq\frac{1}{\N_t}\left\{-2e_y^2-\frac{2\beta}{\gamma}\lVert\theta-\vartheta\rVert^2-4\lVert\theta-\vartheta\rVert^2\lVert\phi\rVert^2+4\lVert\theta-\vartheta\rVert\lVert\phi\rVert \lVert e_y\rVert\right\}\\
    \dot{V}&\leq\frac{1}{\N_t}\left\{-\frac{2\beta}{\gamma}\lVert\theta-\vartheta\rVert^2-\lVert e_y\rVert^2-\left[\lVert e_y\rVert-2\lVert\theta-\vartheta\rVert\lVert\phi\rVert\right]^2\right\}\leq0.
\end{align*}
Thus it can be concluded that $V$ is a Lyapunov function with $(\vartheta-\theta^*)\in\mathcal{L}_{\infty}$ and $(\theta-\vartheta)\in\mathcal{L}_{\infty}$. Using \eqref{e:error1_N}, $\frac{e_y}{\sqrt{\N_t}}\in\mathcal{L}_{\infty}$. Integrating $\dot{V}$ from $t_0$ to $\infty$: $\int_{t_0}^{\infty}\lVert \frac{e_y}{\sqrt{\N_t}}\rVert^2dt\leq-\int_{t_0}^{\infty}\dot{V}dt=V(t_0)-V(\infty)<\infty$, thus $\frac{e_y}{\sqrt{\N_t}}\in\mathcal{L}_2\cap\mathcal{L}_{\infty}$. Likewise, $\int_{t_0}^{\infty}\frac{2\beta}{\gamma}\lVert\frac{\theta-\vartheta}{\sqrt{\N_t}}\rVert^2dt\leq-\int_{t_0}^{\infty}\dot{V}dt=V(t_0)-V(\infty)<\infty$, thus $\left(\frac{\theta-\vartheta}{\sqrt{\N_t}}\right)\in\mathcal{L}_2\cap\mathcal{L}_{\infty}$. Furthermore:
\begin{equation*}
    \left\lVert\frac{\theta-\vartheta}{\sqrt{\N_t}}\right\rVert^2_{\mathcal{L}_2}\leq\frac{\gamma V(t_0)}{2\beta},
\end{equation*}
where $\lVert\frac{\theta-\vartheta}{\sqrt{\N_t}}\rVert^2_{\mathcal{L}_2}\rightarrow0$ as $\beta\rightarrow\infty$. From \eqref{e:Accelerated_GF_N_N} and \eqref{e:error1_N_Loss}, $\dot{\vartheta}\in\mathcal{L}_2\cap\mathcal{L}_{\infty}$. If additionally $\phi\in\mathcal{L}_{\infty}$, then $e_y\in\mathcal{L}_2\cap\mathcal{L}_{\infty}$ and $(\theta-\vartheta)\in\mathcal{L}_2\cap\mathcal{L}_{\infty}$, and from \eqref{e:Accelerated_GF_N_N}, $\dot{\tilde{\theta}}\in\mathcal{L}_2\cap\mathcal{L}_{\infty}$, $\ddot{\tilde{\theta}}\in\mathcal{L}_{\infty}$ thus from Barbalat's lemma (Corollary \ref{c:Barbalat_Corollary}), $\lim_{t\rightarrow\infty}\dot{\tilde{\theta}}(t)=0$. If additionally $\dot{\phi}\in\mathcal{L}_{\infty}$, then from the time derivative of \eqref{e:error1_N}, it can be seen that $\dot{e}_y\in\mathcal{L}_{\infty}$ and from the time derivative of \eqref{e:Accelerated_GF_N_N}, $\ddot{\vartheta}\in\mathcal{L}_{\infty}$ and thus from Barbalat's lemma (Corollary \ref{c:Barbalat_Corollary}), $\lim_{t\rightarrow\infty}e_y(t)=0$, $\lim_{t\rightarrow\infty}(\theta(t)-\vartheta(t))=0$, and $\lim_{t\rightarrow\infty}\dot{\vartheta}(t)=0$. Given that $\lim_{t\rightarrow\infty}e_y(t)=0$, using \eqref{e:error1_N}, \eqref{e:error1_N_Loss} $\lim_{t\rightarrow\infty}L_t(\theta(t))=0$
\end{proof}

\subsection{Heavy Ball discrete time higher order tuner}
\begin{theorem}
    For the linear regression error model in \eqref{e:error1_N_discrete_N} with loss in \eqref{e:Squared_Loss_N}, running Algorithm \ref{alg:HOT_R_HB} with $\mu=0$, $0<\beta<2$, $0<\gamma\leq\frac{\beta(2-\beta)}{16}$ results in $(\vartheta_k-\theta^*)\in\ell_{\infty}$, $(\theta-\vartheta)\in\ell_{\infty}$, and $\sqrt{\frac{L_k(\theta_{k+1})}{\N_k}}\in\ell_2\cap\ell_{\infty}$. If in addition it is assumed that $\phi\in\ell_{\infty}$ then $\lim_{k\rightarrow\infty}L_k(\theta_{k+1})=0$.
\end{theorem}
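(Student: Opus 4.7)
The plan is to reuse the Lyapunov function $V_k$ from \eqref{e:Lyap_2_discrete} and mirror the argument of Theorem~\ref{th:HOT_paper_Discrete}, exploiting the simpler structure of the Heavy Ball update in Algorithm~\ref{alg:HOT_R_HB} (which omits the extra-gradient correction). Two algebraic identities will drive the increment computation. With $z_k := \theta_k - \vartheta_k$, the explicit-Euler step immediately gives $\theta_{k+1} - \vartheta_k = (1-\beta)z_k$, and combining with the implicit-Euler step yields $z_{k+1} = (1-\beta)z_k + (\gamma/\N_k)\phi_k e_{y,k+1}$. Likewise, writing $\vartheta_k - \theta^* = \tilde{\theta}_{k+1} + (\vartheta_k - \theta_{k+1})$ and taking the inner product with $\phi_k$ exposes the factor $\tilde{\theta}_{k+1}^T \phi_k = e_{y,k+1}$, splitting the gradient cross term in $\Delta V_k$ into a clean $-e_{y,k+1}^2$ piece and a mixed $(z_k, e_{y,k+1})$ piece.

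Expanding both squared pieces of $V_k$ using these identities and invoking $\|\phi_k\|^2 \le \N_k$ to absorb the two discretization-induced quadratic error terms, I expect the increment to reduce to
\begin{equation*}
\Delta V_k \;\le\; \frac{4(1-\beta)}{\N_k}\,z_k^T\phi_k\,e_{y,k+1} \;-\; \frac{2(1-\gamma)}{\N_k}\,e_{y,k+1}^2 \;-\; \frac{\beta(2-\beta)}{\gamma}\,\|z_k\|^2.
\end{equation*}
Young's inequality $2ab \le \eta a^2 + b^2/\eta$ applied to the cross term (using $\|\phi_k\|^2 \le \N_k$ once more) then produces, for any $\eta>0$, a bound of the form
\begin{equation*}
\Delta V_k \;\le\; \left[c_1(\beta)\,\eta-\frac{\beta(2-\beta)}{\gamma}\right]\|z_k\|^2 \;+\; \left[\frac{c_2}{\eta}-2(1-\gamma)\right]\frac{e_{y,k+1}^2}{\N_k},
\end{equation*}
and selecting $\eta$ in its admissible window delivers a negative-definite bound $\Delta V_k \le -c\,L_k(\theta_{k+1})/\N_k$ for some $c>0$ whenever $\gamma \le \beta(2-\beta)/16$. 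The main obstacle will be balancing the Young's constants so that the resulting step-size range matches the stated one exactly; the factor $16$ quantifies the discretization penalty relative to the continuous-time bound $\gamma\le\beta/2$ in Theorem~\ref{th:HOT_paper}.

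Given the Lyapunov decrease, the remaining conclusions follow standard arguments. Monotonicity of $V$ yields $V\in\ell_\infty$, so $(\vartheta_k-\theta^*),(\theta_k-\vartheta_k)\in\ell_\infty$ and hence $\tilde{\theta}_{k+1}\in\ell_\infty$. The bound $|e_{y,k+1}|\le\|\tilde{\theta}_{k+1}\|\sqrt{\N_k}$ gives $\sqrt{L_k(\theta_{k+1})/\N_k}=|e_{y,k+1}|/\sqrt{2\N_k}\in\ell_\infty$, while telescoping the increment bound from $0$ to $T$ and sending $T\to\infty$ produces $\sum_k L_k(\theta_{k+1})/\N_k<\infty$, so $\sqrt{L_k(\theta_{k+1})/\N_k}\in\ell_2$ as well. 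Finally, if $\phi\in\ell_\infty$ then $\N_k$ is bounded, so the established convergence $L_k(\theta_{k+1})/\N_k\to 0$ upgrades immediately to $L_k(\theta_{k+1})\to 0$, completing the proof.
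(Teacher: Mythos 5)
Your proposal is correct and follows essentially the same route as the paper's own proof: the identical Lyapunov function, the same expansion of $\Delta V_k$ using $\theta_{k+1}-\vartheta_k=(1-\beta)(\theta_k-\vartheta_k)$ and the split $\vartheta_k-\theta^*=\tilde{\theta}_{k+1}+(\vartheta_k-\theta_{k+1})$ to isolate $-e_{y,k+1}^2$ plus a cross term, and your Young's-inequality balancing is just the paper's explicit completion of squares with the choice $\eta=2$ (yielding the same $\Delta V_k\leq-\tfrac{7}{4}L_k(\theta_{k+1})/\N_k$ and the same telescoping conclusion). No gaps.
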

\begin{proof}
    Consider the candidate Lyapunov function stated as
    \begin{equation}
        V_k=\frac{1}{\gamma}\lVert \vartheta_k-\theta^*\rVert^2+\frac{1}{\gamma}\lVert \theta_k-\vartheta_k\rVert^2.
    \end{equation}
    The increment $\Delta V_k:=V_{k+1}-V_k$ may then be expanded using \eqref{e:error1_N_discrete_N}, \eqref{e:Squared_Loss_N}, and Algorithm \ref{alg:HOT_R_HB} as
    \begingroup
    \allowdisplaybreaks
    \begin{align*}
        \Delta V_k&=\frac{1}{\gamma}\lVert \vartheta_{k+1}-\theta^*\rVert^2+\frac{1}{\gamma}\lVert \theta_{k+1}-\vartheta_{k+1}\rVert^2-\frac{1}{\gamma}\lVert \vartheta_k-\theta^*\rVert^2-\frac{1}{\gamma}\lVert \theta_k-\vartheta_k\rVert^2\\
        \Delta V_k&=\frac{1}{\gamma}\lVert (\vartheta_k-\theta^*)-\frac{\gamma}{\N_k}\nabla L_k(\theta_{k+1})\rVert^2-\frac{1}{\gamma}\lVert \vartheta_k-\theta^*\rVert^2\\
        &\quad +\frac{1}{\gamma}\lVert \theta_k-\beta(\theta_k-\vartheta_k)-\vartheta_k+\frac{\gamma}{\N_k}\nabla L_k(\theta_{k+1})\rVert^2-\frac{1}{\gamma}\lVert \theta_k-\vartheta_k\rVert^2\\
        \Delta V_k&=\frac{\gamma}{\N_k^2}\lVert\nabla L_k(\theta_{k+1})\rVert^2-\frac{2}{\N_k}(\vartheta_k-\theta^*)^T\nabla L_k(\theta_{k+1})\\
        &\quad+\frac{1}{\gamma}\lVert \theta_k-\vartheta_k\rVert^2-\frac{1}{\gamma}\lVert \theta_k-\vartheta_k\rVert^2-\frac{\beta(2-\beta)}{\gamma}\lVert \theta_k-\vartheta_k\rVert^2\\
        &\quad +\frac{2}{\N_k}(1-\beta)(\theta_k-\vartheta_k)^T\nabla L_k(\theta_{k+1})+\frac{\gamma}{\N_k^2}\lVert\nabla L_k(\theta_{k+1})\rVert^2\\
        \Delta V_k&=\frac{2\gamma}{\N_k^2}\lVert\nabla L_k(\theta_{k+1})\rVert^2-\frac{2}{\N_k}(\theta_{k+1}-\theta^*)^T\nabla L_k(\theta_{k+1})-\frac{\beta(2-\beta)}{\gamma}\lVert \theta_k-\vartheta_k\rVert^2\\
        &\quad +\frac{2}{\N_k}(1-\beta)(\theta_k-\vartheta_k)^T\nabla L_k(\theta_{k+1})-\frac{2}{\N_k}(\vartheta_k-\theta_{k+1})^T\nabla L_k(\theta_{k+1})\\
        \Delta V_k&=-2\left(1-\frac{\gamma\phi_k^T\phi_k}{\N_k}\right)\frac{\tilde{\theta}_{k+1}^T\nabla L_k(\theta_{k+1})}{\N_k}-\frac{\beta(2-\beta)}{\gamma}\lVert \theta_k-\vartheta_k\rVert^2\\
        &\quad +\frac{4}{\N_k}(1-\beta)(\theta_k-\vartheta_k)^T\nabla L_k(\theta_{k+1})\\
        \Delta V_k&=\frac{1}{\N_k}\left\{-2\left(1-\frac{\gamma\phi_k^T\phi_k}{\N_k}\right)\tilde{\theta}_{k+1}^T\nabla L_k(\theta_{k+1})-\frac{\beta(2-\beta)}{\gamma}\N_k\lVert \theta_k-\vartheta_k\rVert^2\right.\\
        &\quad\left.+4(1-\beta)(\theta_k-\vartheta_k)^T\nabla L_k(\theta_{k+1})\right\}\\
        \Delta V_k&\leq\frac{1}{\N_k}\left\{-\lVert\tilde{\theta}_{k+1}^T\phi_k\rVert^2-4\lVert\phi_k\rVert^2\lVert \theta_k-\vartheta_k\rVert^2+4\lVert\theta_k-\vartheta_k\rVert\lVert\phi_k\rVert\lVert\tilde{\theta}_{k+1}^T\phi_k\rVert\right.\\
        &\quad\left.-\frac{\beta(2-\beta)}{\gamma}\lVert \theta_k-\vartheta_k\rVert^2-12\lVert\phi_k\rVert^2\lVert \theta_k-\vartheta_k\rVert^2-\frac{7}{8}\lVert\tilde{\theta}_{k+1}^T\phi_k\rVert^2\right\}\\
        \Delta V_k&\leq\frac{1}{\N_k}\left\{-\frac{\beta(2-\beta)}{\gamma}\lVert \theta_k-\vartheta_k\rVert^2-12\lVert\phi_k\rVert^2\lVert \theta_k-\vartheta_k\rVert^2-\frac{7}{8}\lVert\tilde{\theta}_{k+1}^T\phi_k\rVert^2\right.\\
        &\quad\left.-\left[\lVert\tilde{\theta}_{k+1}^T\phi_k\rVert-2\lVert\phi_k\rVert\lVert \theta_k-\vartheta_k\rVert\right]^2\right\}\leq0.
    \end{align*}
    \endgroup
    
    Thus it can be concluded that $V$ is a Lyapunov function with $(\theta-\theta^*)\in\ell_{\infty}$ and $(\theta-\vartheta)\in\ell_{\infty}$. Using \eqref{e:error1_N_discrete_N} and $\N_k$ from Algorithm \ref{alg:HOT_R_HB}, $\frac{e_{y,k}}{\N_k}\in\ell_{\infty}$. Collecting $\Delta V_k$ terms from $t_0$ to $T$: $\sum_{k=t_0}^T\frac{7}{4}\lVert\sqrt{\frac{L_k(\theta_{k+1})}{\N_k}}\rVert^2\leq V_{t_0}-V_{T+1}<\infty$. Taking $T\rightarrow\infty$, it can be seen that $\sqrt{\frac{L_k(\theta_{k+1})}{\N_k}}\in\ell_2\cap\ell_{\infty}$ and therefore $\lim_{k\rightarrow\infty}\sqrt{\frac{L_k(\theta_{k+1})}{\N_k}}=0$. If additionally $\phi\in\ell_{\infty}$, then $\sqrt{L_k(\theta_{k+1})}\in\ell_2\cap\ell_{\infty}$ and therefore $\lim_{k\rightarrow\infty}\sqrt{L_k(\theta_{k+1})}=0$ and $\lim_{k\rightarrow\infty}L_k(\theta_{k+1})=0$.
\end{proof}

\subsection{Heavy Ball discrete time higher order tuner with regularization}
\begin{theorem}
    For the linear regression error model in \eqref{e:error1_N_discrete_N} with loss in \eqref{e:Squared_Loss_N}, running Algorithm \ref{alg:HOT_R_HB} with $0<\mu<1$, $0<\beta<2$, $0<\gamma\leq \frac{\beta(2-\beta)}{16+\mu\left(\frac{157}{48}\right)}$ results in $(\vartheta-\theta^*)\in\ell_{\infty}$, $(\theta-\vartheta)\in\ell_{\infty}$ and $V_k\leq \exp(-\mu c_3k)\left(V_0-\frac{c_4}{c_3}\right)+\frac{c_4}{c_3}$, where $V_k=\frac{1}{\gamma}\lVert \vartheta_k-\theta^*\rVert^2+\frac{1}{\gamma}\lVert \theta_k-\vartheta_k\rVert^2$, $c_3=\gamma\frac{1}{8}$, $c_4=\frac{189}{64}\lVert\theta^*-\theta_0\rVert^2$.
\end{theorem}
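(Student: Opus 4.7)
The plan is to extend the unregularized Heavy-Ball proof (the theorem immediately preceding this one) by carefully isolating the contribution of the regularization term $\mu(\theta_{k+1}-\theta_0)$ appearing in $\nabla f_k(\theta_{k+1})=\frac{\nabla L_k(\theta_{k+1})}{\N_k}+\mu(\theta_{k+1}-\theta_0)$. Using the same Lyapunov function $V_k=\frac{1}{\gamma}\lVert\vartheta_k-\theta^*\rVert^2+\frac{1}{\gamma}\lVert\theta_k-\vartheta_k\rVert^2$, I would expand $\Delta V_k$ by substituting the Algorithm \ref{alg:HOT_R_HB} updates $\theta_{k+1}=\theta_k-\beta(\theta_k-\vartheta_k)$ and $\vartheta_{k+1}=\vartheta_k-\gamma\nabla f_k(\theta_{k+1})$, and write $\Delta V_k=\Delta V_k^{(0)}+\Delta V_k^{(\mu)}$, where $\Delta V_k^{(0)}$ is exactly the quantity bounded in the $\mu=0$ case (which already admits the clean bound $\Delta V_k^{(0)}\leq -\frac{L_k(\theta_{k+1})}{\N_k}$ under $\gamma\leq \beta(2-\beta)/16$), and $\Delta V_k^{(\mu)}$ collects all the terms that contain at least one factor of $\mu$.

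Next I would decompose $\theta_{k+1}-\theta_0=(\theta_{k+1}-\theta^*)+(\theta^*-\theta_0)$ to generate a genuine strong-convexity-type contraction. The dominant negative term comes from the cross product $-\frac{2\mu}{\gamma}\gamma\,(\vartheta_k-\theta^*)^T(\theta_{k+1}-\theta^*)$, which, after writing $\theta_{k+1}-\theta^*=(\vartheta_k-\theta^*)+(\theta_{k+1}-\vartheta_k)=(\vartheta_k-\theta^*)+(1-\beta)(\theta_k-\vartheta_k)$, yields $-2\mu\lVert\vartheta_k-\theta^*\rVert^2$ plus a cross term in $(\theta_k-\vartheta_k)$ that is absorbed by Young's inequality into the $\lVert\theta_k-\vartheta_k\rVert^2$ slack available in $\Delta V_k^{(0)}$. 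Every remaining term in $\Delta V_k^{(\mu)}$ is either of order $\mu$ (cross terms involving $\theta^*-\theta_0$, handled by $2ab\leq \eta a^2+\eta^{-1}b^2$) or of order $\mu^2$ (the $\lVert\gamma\mu(\theta_{k+1}-\theta_0)\rVert^2$ piece, which is absorbed using $\mu<1$ into the constant bounding $\gamma$); this is where the denominator $16+\mu(157/48)$ in the $\gamma$ bound is calibrated so that the Hessian-like quadratic forms in $\vartheta_k-\theta^*$ and $\theta_k-\vartheta_k$ remain negative-semidefinite.

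After combining everything, I would arrive at $\Delta V_k\leq -\frac{L_k(\theta_{k+1})}{\N_k}-\mu c_3 V_k +\mu c_4$, with $c_3=\gamma/8$ extracted from the $-2\mu\lVert\vartheta_k-\theta^*\rVert^2$ and the Young-inequality slack on $\lVert\theta_k-\vartheta_k\rVert^2$ (divided by the $1/\gamma$ normalization in $V_k$), and $c_4=(189/64)\lVert\theta^*-\theta_0\rVert^2$ gathering the pure $\lVert\theta^*-\theta_0\rVert^2$ residues from the Young splittings. Dropping the nonpositive $-L_k(\theta_{k+1})/\N_k$ term gives the linear recursion $V_{k+1}\leq(1-\mu c_3)V_k+\mu c_4$; unrolling this, using $1-\mu c_3\in(0,1)$ (guaranteed by $0<\mu<1$ and the $\gamma$ bound which forces $\mu c_3<1$), and bounding $(1-\mu c_3)^k\leq\exp(-\mu c_3 k)$ yields the claimed $V_k\leq\exp(-\mu c_3 k)(V_0-c_4/c_3)+c_4/c_3$, from which $(\vartheta_k-\theta^*),(\theta_k-\vartheta_k)\in\ell_\infty$ follow immediately.

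The main obstacle will be the bookkeeping of the $\mu$-cross terms: there are several of them (from $\lVert\vartheta_{k+1}-\theta^*\rVert^2$ and from $\lVert\theta_{k+1}-\vartheta_{k+1}\rVert^2$), each mixing $(\vartheta_k-\theta^*)$, $(\theta_k-\vartheta_k)$, $\nabla L_k(\theta_{k+1})/\N_k$, and $(\theta^*-\theta_0)$. The delicate point is choosing Young-inequality weights so that the $\lVert\vartheta_k-\theta^*\rVert^2$ and $\lVert\theta_k-\vartheta_k\rVert^2$ pieces recombine into a clean multiple of $V_k$ (giving $c_3=\gamma/8$), while the surplus $\phi_k$-dependent quadratic forms in $\nabla L_k(\theta_{k+1})$ are swallowed by the slack already present in $\Delta V_k^{(0)}$ — this is precisely what forces the $\mu(157/48)$ correction in the admissible range of $\gamma$, and getting the constants to land exactly at the stated values is the delicate calculation.
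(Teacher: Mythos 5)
Your plan follows the paper's proof essentially step for step: same Lyapunov function, same separation of the $\mu=0$ increment (already handled by the unregularized Heavy-Ball theorem) from the $\mu$-dependent terms, same decomposition $\theta_{k+1}-\theta_0=(1-\beta)(\theta_k-\vartheta_k)+(\vartheta_k-\theta^*)+(\theta^*-\theta_0)$ producing the $-2\mu\lVert\vartheta_k-\theta^*\rVert^2$ contraction, the same completion-of-squares absorption of the $(\theta^*-\theta_0)$ cross terms into $c_4$ and of the remaining $\mu$-terms into the tightened admissible range of $\gamma$, and the same unrolling of $V_{k+1}\leq(1-\mu c_3)V_k+\mu c_4$. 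All that remains is the page of bookkeeping the paper itself carries out to make the constants $c_3=\gamma/8$, $c_4=\tfrac{189}{64}\lVert\theta^*-\theta_0\rVert^2$, and the $\mu\left(\tfrac{157}{48}\right)$ correction land exactly.
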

\begin{proof}
    Consider the candidate Lyapunov function stated as
    \begin{equation}
        V_k=\frac{1}{\gamma}\lVert \vartheta_k-\theta^*\rVert^2+\frac{1}{\gamma}\lVert \theta_k-\vartheta_k\rVert^2.
    \end{equation}
    The increment $\Delta V_k:=V_{k+1}-V_k$ may then be expanded using \eqref{e:error1_N_discrete_N}, \eqref{e:Squared_Loss_N}, and Algorithm \ref{alg:HOT_R_HB} as
    \begingroup
    \allowdisplaybreaks
    \begin{align*}
        \Delta V_k&=\frac{1}{\gamma}\lVert \vartheta_{k+1}-\theta^*\rVert^2+\frac{1}{\gamma}\lVert \theta_{k+1}-\vartheta_{k+1}\rVert^2-\frac{1}{\gamma}\lVert \vartheta_k-\theta^*\rVert^2-\frac{1}{\gamma}\lVert \theta_k-\vartheta_k\rVert^2\\
        \Delta V_k&=\frac{1}{\gamma}\lVert (\vartheta_k-\theta^*)-\frac{\gamma}{\N_k}\nabla L_k(\theta_{k+1})\rVert^2-\frac{1}{\gamma}\lVert \vartheta_k-\theta^*\rVert^2\\
        &\quad +\frac{1}{\gamma}\lVert \theta_k-\beta(\theta_k-\vartheta_k)-\vartheta_k+\frac{\gamma}{\N_k}\nabla L_k(\theta_{k+1})\rVert^2-\frac{1}{\gamma}\lVert \theta_k-\vartheta_k\rVert^2\\
        &\quad -\frac{2}{\gamma}\left[(\vartheta_k-\theta^*)-\frac{\gamma}{\N_k}\nabla L_k(\theta_{k+1})\right]^T\gamma\mu(\theta_{k+1}-\theta_0)+\gamma\mu^2\lVert\theta_{k+1}-\theta_0\rVert^2\\
        &\quad +\frac{2}{\gamma}\left[ \theta_k-\beta(\theta_k-\vartheta_k)-\vartheta_k+\frac{\gamma}{\N_k}\nabla L_k(\theta_{k+1})\right]^T\gamma\mu(\theta_{k+1}-\theta_0)+\gamma\mu^2\lVert\theta_{k+1}-\theta_0\rVert^2\\
        \Delta V_k&=\frac{\gamma}{\N_k^2}\lVert\nabla L_k(\theta_{k+1})\rVert^2-\frac{2}{\N_k}(\vartheta_k-\theta^*)^T\nabla L_k(\theta_{k+1})\\
        &\quad+\frac{1}{\gamma}\lVert \theta_k-\vartheta_k\rVert^2-\frac{1}{\gamma}\lVert \theta_k-\vartheta_k\rVert^2-\frac{\beta(2-\beta)}{\gamma}\lVert \theta_k-\vartheta_k\rVert^2\\
        &\quad +\frac{2}{\N_k}(1-\beta)(\theta_k-\vartheta_k)^T\nabla L_k(\theta_{k+1})+\frac{\gamma}{\N_k^2}\lVert\nabla L_k(\theta_{k+1})\rVert^2\\
        &\quad -2\left[(\vartheta_k-\theta^*)-\frac{\gamma}{\N_k}\nabla L_k(\theta_{k+1})\right]^T\mu(\theta_{k+1}-\theta_0)\\
        &\quad +2\left[ \theta_k-\beta(\theta_k-\vartheta_k)-\vartheta_k+\frac{\gamma}{\N_k}\nabla L_k(\theta_{k+1})\right]^T\mu(\theta_{k+1}-\theta_0)\\
        &\quad +2\gamma\mu^2\lVert\theta_{k+1}-\theta_0\rVert^2\\
        \Delta V_k&=\frac{2\gamma}{\N_k^2}\lVert\nabla L_k(\theta_{k+1})\rVert^2-\frac{2}{\N_k}(\theta_{k+1}-\theta^*)^T\nabla L_k(\theta_{k+1})-\frac{\beta(2-\beta)}{\gamma}\lVert \theta_k-\vartheta_k\rVert^2\\
        &\quad +\frac{2}{\N_k}(1-\beta)(\theta_k-\vartheta_k)^T\nabla L_k(\theta_{k+1})-\frac{2}{\N_k}(\vartheta_k-\theta_{k+1})^T\nabla L_k(\theta_{k+1})\\
        &\quad -2\left[(\vartheta_k-\theta^*)-\frac{\gamma}{\N_k}\nabla L_k(\theta_{k+1})\right]^T\mu(\theta_{k+1}-\theta_0)\\
        &\quad +2\left[ (1-\beta)(\theta_k-\vartheta_k)+\frac{\gamma}{\N_k}\nabla L_k(\theta_{k+1})\right]^T\mu(\theta_{k+1}-\theta_0)\\
        &\quad +2\gamma\mu^2\lVert(1-\beta)(\theta_k-\vartheta_k)+\vartheta_k-\theta_0\rVert^2\\
        \Delta V_k&=-2\left(1-\frac{\gamma\phi_k^T\phi_k}{\N_k}\right)\frac{\tilde{\theta}_{k+1}^T\nabla L_k(\theta_{k+1})}{\N_k}-\frac{\beta(2-\beta)}{\gamma}\lVert \theta_k-\vartheta_k\rVert^2\\
        &\quad +\frac{4}{\N_k}(1-\beta)(\theta_k-\vartheta_k)^T\nabla L_k(\theta_{k+1})\\
        &\quad +4\left[\frac{\gamma}{\N_k}\nabla L_k(\theta_{k+1})\right]^T\mu(\theta_{k+1}-\theta_0)\\
        &\quad +2\left[ (1-\beta)(\theta_k-\vartheta_k)-(\vartheta_k-\theta^*)\right]^T\mu(\theta_{k+1}-\theta_0)\\
        &\quad +2\gamma\mu^2\lVert(1-\beta)(\theta_k-\vartheta_k)+(\vartheta_k-\theta_0)\rVert^2\\
        \Delta V_k&=\frac{1}{\N_k}\left\{-2\left(1-\frac{\gamma\phi_k^T\phi_k}{\N_k}\right)\tilde{\theta}_{k+1}^T\nabla L_k(\theta_{k+1})-\frac{\beta(2-\beta)}{\gamma}\N_k\lVert \theta_k-\vartheta_k\rVert^2\right.\\
        &\quad \left.+4(1-\beta)(\theta_k-\vartheta_k)^T\nabla L_k(\theta_{k+1})\right\}\\
        &\quad +4\left[\frac{\gamma}{\N_k}\nabla L_k(\theta_{k+1})\right]^T\mu\left[\tilde{\theta}_{k+1}+\theta^*-\theta_0\right]\\
        &\quad +2\left[ (1-\beta)(\theta_k-\vartheta_k)-(\vartheta_k-\theta^*)\right]^T\mu\left[(1-\beta)(\theta_k-\vartheta_k)+(\vartheta_k-\theta^*)+(\theta^*-\theta_0)\right]\\
        &\quad +2\gamma\mu^2\lVert(1-\beta)(\theta_k-\vartheta_k)+(\vartheta_k-\theta^*)+(\theta^*-\theta_0)\rVert^2\\
        \Delta V_k&=\frac{1}{\N_k}\left\{-2\left(1-\frac{\gamma\phi_k^T\phi_k}{\N_k}\right)\tilde{\theta}_{k+1}^T\nabla L_k(\theta_{k+1})-\frac{\beta(2-\beta)}{\gamma}\N_k\lVert \theta_k-\vartheta_k\rVert^2\right.\\
        &\quad \left.+4(1-\beta)(\theta_k-\vartheta_k)^T\nabla L_k(\theta_{k+1})\right\}\\
        &\quad +4\mu\frac{\gamma}{\N_k}\tilde{\theta}_{k+1}^T\nabla L_k(\theta_{k+1})+4\mu\frac{\gamma}{\N_k}\left[(1-\beta)(\theta_k-\vartheta_k)+(\vartheta_k-\theta^*)\right]^T\phi_k\phi_k^T(\theta^*-\theta_0)\\
        &\quad +2\mu(1-\beta)^2\lVert\theta_k-\vartheta_k\rVert^2-2\mu\lVert\vartheta_k-\theta^*\rVert^2\\
        &\quad +2\mu(1-\beta)(\theta_k-\vartheta_k)^T(\theta^*-\theta_0)-2\mu(\vartheta_k-\theta^*)^T(\theta^*-\theta_0)\\
        &\quad +2\mu^2\gamma\lVert(1-\beta)(\theta_k-\vartheta_k)\rVert^2+2\mu^2\gamma\lVert\vartheta_k-\theta^*\rVert^2+2\mu^2\gamma\lVert\theta^*-\theta_0\rVert^2\\
        &\quad +4\mu^2\gamma(1-\beta)(\theta_k-\vartheta_k)^T(\vartheta_k-\theta^*)+4\mu^2\gamma(1-\beta)(\theta_k-\vartheta_k)^T(\theta^*-\theta_0)\\
        &\quad +4\mu^2\gamma(\vartheta_k-\theta^*)^T(\theta^*-\theta_0)\\
        \Delta V_k&=\frac{1}{\N_k}\left\{-2\left(1-\frac{\gamma\phi_k^T\phi_k}{\N_k}\right)\tilde{\theta}_{k+1}^T\nabla L_k(\theta_{k+1})-\frac{\beta(2-\beta)}{\gamma}\N_k\lVert \theta_k-\vartheta_k\rVert^2\right.\\
        &\quad \left.+4(1-\beta)(\theta_k-\vartheta_k)^T\nabla L_k(\theta_{k+1})\right\}\\
        &\quad +4\mu\frac{\gamma}{\N_k}\tilde{\theta}_{k+1}^T\nabla L_k(\theta_{k+1})\\
        &\quad -2\mu\lVert\vartheta_k-\theta^*\rVert^2+2\mu^2\gamma\lVert\vartheta_k-\theta^*\rVert^2\\
        &\quad +2\mu(1-\beta)^2\lVert\theta_k-\vartheta_k\rVert^2+2\mu^2\gamma\lVert(1-\beta)(\theta_k-\vartheta_k)\rVert^2\\
        &\quad +4\mu^2\gamma(1-\beta)(\theta_k-\vartheta_k)^T(\vartheta_k-\theta^*)\\
        &\quad -2\mu(\vartheta_k-\theta^*)^T(\theta^*-\theta_0)+4\mu^2\gamma(\vartheta_k-\theta^*)^T(\theta^*-\theta_0)\\
        &\quad +4\mu\frac{\gamma}{\N_k}(\vartheta_k-\theta^*)^T\phi_k\phi_k^T(\theta^*-\theta_0)\\
        &\quad +2\mu(1-\beta)(\theta_k-\vartheta_k)^T(\theta^*-\theta_0)+4\mu^2\gamma(1-\beta)(\theta_k-\vartheta_k)^T(\theta^*-\theta_0)\\
        &\quad +4\mu\frac{\gamma}{\N_k}(1-\beta)(\theta_k-\vartheta_k)^T\phi_k\phi_k^T(\theta^*-\theta_0)\\
        &\quad +2\mu^2\gamma\lVert\theta^*-\theta_0\rVert^2\\
        \Delta V_k&=\frac{1}{\N_k}\left\{-2\left(1-\frac{\gamma\phi_k^T\phi_k}{\N_k}\right)\tilde{\theta}_{k+1}^T\nabla L_k(\theta_{k+1})-\frac{\beta(2-\beta)}{\gamma}\N_k\lVert \theta_k-\vartheta_k\rVert^2\right.\\
        &\quad \left.+4(1-\beta)(\theta_k-\vartheta_k)^T\nabla L_k(\theta_{k+1})\right\}\\
        &\quad +4\mu\frac{\gamma}{\N_k}\tilde{\theta}_{k+1}^T\nabla L_k(\theta_{k+1})\\
        &\quad -\mu\left(2-2\mu\gamma\right)\lVert\vartheta_k-\theta^*\rVert^2\\
        &\quad +\mu\left(2(1-\beta)^2+2\mu\gamma(1-\beta)^2\right)\lVert\theta_k-\vartheta_k\rVert^2\\
        &\quad +\mu\left(4\mu\gamma(1-\beta)\right)(\theta_k-\vartheta_k)^T(\vartheta_k-\theta^*)\\
        &\quad +\mu\left(-2+4\mu\gamma\right)(\vartheta_k-\theta^*)^T(\theta^*-\theta_0)\\
        &\quad +\mu\left(4\frac{\gamma}{\N_k}\right)(\vartheta_k-\theta^*)^T\phi_k\phi_k^T(\theta^*-\theta_0)\\
        &\quad +\mu\left(2(1-\beta)+4\mu\gamma(1-\beta)\right)(\theta_k-\vartheta_k)^T(\theta^*-\theta_0)\\
        &\quad +\mu\left(4\frac{\gamma}{\N_k}(1-\beta)\right)(\theta_k-\vartheta_k)^T\phi_k\phi_k^T(\theta^*-\theta_0)\\
        &\quad +\mu\left(2\mu\gamma\right)\lVert\theta^*-\theta_0\rVert^2\\
        \Delta V_k&\leq\frac{1}{\N_k}\left\{-\lVert\tilde{\theta}_{k+1}^T\phi_k\rVert^2-4\lVert\phi_k\rVert^2\lVert \theta_k-\vartheta_k\rVert^2+4\lVert\theta_k-\vartheta_k\rVert\lVert\phi_k\rVert\lVert\tilde{\theta}_{k+1}^T\phi_k\rVert\right.\\
        &\quad\left.-16\lVert \theta_k-\vartheta_k\rVert^2-12\lVert\phi_k\rVert^2\lVert \theta_k-\vartheta_k\rVert^2-\frac{7}{8}\lVert\tilde{\theta}_{k+1}^T\phi_k\rVert^2\right\}-\mu\left(\frac{157}{48}\right)\lVert\theta_k-\vartheta_k\rVert^2\\
        &\quad +\frac{1}{4}\frac{1}{\N_k}\lVert\tilde{\theta}_{k+1}^T\phi_k\rVert^2\\
        &\quad -\mu\left(\frac{30}{16}\right)\lVert\vartheta_k-\theta^*\rVert^2\\
        &\quad +\mu\left(\frac{34}{16}\right)\lVert\theta_k-\vartheta_k\rVert^2\\
        &\quad +\mu\left(\frac{1}{4}\right)\lVert\theta_k-\vartheta_k\rVert\lVert\vartheta_k-\theta^*\rVert\\
        &\quad +\mu\left(\frac{9}{4}\right)\lVert\vartheta_k-\theta^*\rVert\lVert\theta^*-\theta_0\rVert\\
        &\quad +\mu\left(\frac{10}{4}\right)\lVert\theta_k-\vartheta_k\rVert\lVert\theta^*-\theta_0\rVert\\
        &\quad +\mu\left(\frac{1}{8}\right)\lVert\theta^*-\theta_0\rVert^2\\
        \Delta V_k&\leq\frac{1}{\N_k}\left\{-\lVert\tilde{\theta}_{k+1}^T\phi_k\rVert^2-4\lVert\phi_k\rVert^2\lVert \theta_k-\vartheta_k\rVert^2+4\lVert\theta_k-\vartheta_k\rVert\lVert\phi_k\rVert\lVert\tilde{\theta}_{k+1}^T\phi_k\rVert\right.\\
        &\quad\left.-16\lVert \theta_k-\vartheta_k\rVert^2-12\lVert\phi_k\rVert^2\lVert \theta_k-\vartheta_k\rVert^2-\frac{7}{8}\lVert\tilde{\theta}_{k+1}^T\phi_k\rVert^2\right\}-\mu\left(\frac{157}{48}\right)\lVert\theta_k-\vartheta_k\rVert^2\\
        &\quad +\frac{1}{4}\frac{1}{\N_k}\lVert\tilde{\theta}_{k+1}^T\phi_k\rVert^2\\
        &\quad -\mu\left(\frac{30}{16}\pm\frac{12}{16}\pm\frac{16}{16}\pm\frac{2}{16}\right)\lVert\vartheta_k-\theta^*\rVert^2\\
        &\quad +\mu\left(\frac{34}{16}\pm\frac{1}{48}\pm1\pm\frac{2}{16}\right)\lVert\theta_k-\vartheta_k\rVert^2\\
        &\quad +\mu\left(\frac{1}{4}\right)\lVert\theta_k-\vartheta_k\rVert\lVert\vartheta_k-\theta^*\rVert\\
        &\quad +\mu\left(\frac{9}{4}\right)\lVert\vartheta_k-\theta^*\rVert\lVert\theta^*-\theta_0\rVert\\
        &\quad +\mu\left(\frac{10}{4}\right)\lVert\theta_k-\vartheta_k\rVert\lVert\theta^*-\theta_0\rVert\\
        &\quad +\mu\left(\frac{1}{8}\pm\frac{81}{64}\pm\frac{100}{64}\right)\lVert\theta^*-\theta_0\rVert^2\\
        \Delta V_k&\leq\frac{1}{\N_k}\left\{-\left[\lVert\tilde{\theta}_{k+1}^T\phi_k\rVert-2\lVert\phi_k\rVert\lVert \theta_k-\vartheta_k\rVert\right]^2\right.\\
        &\quad\left.-16\lVert \theta_k-\vartheta_k\rVert^2-12\lVert\phi_k\rVert^2\lVert \theta_k-\vartheta_k\rVert^2-\frac{7}{8}\lVert\tilde{\theta}_{k+1}^T\phi_k\rVert^2\right\}-\mu\left(\frac{157}{48}\right)\lVert\theta_k-\vartheta_k\rVert^2\\
        &\quad +\frac{1}{4}\frac{1}{\N_k}\lVert\tilde{\theta}_{k+1}^T\phi_k\rVert^2+\mu\left(\frac{157}{48}\right)\lVert\theta_k-\vartheta_k\rVert^2\\
        &\quad -\mu\gamma\frac{1}{8}V_k+\mu\frac{189}{64}\lVert\theta^*-\theta_0\rVert^2\\
        &\quad -\mu\left[\frac{\sqrt{3}}{2}\lVert\vartheta_k-\theta^*\rVert-\frac{1}{4\sqrt{3}}\lVert\theta_k-\vartheta_k\rVert\right]^2\\
        &\quad -\mu\left[\lVert\vartheta_k-\theta^*\rVert-\frac{9}{8}\lVert\theta^*-\theta_0\rVert\right]^2\\
        &\quad -\mu\left[\lVert\theta_k-\vartheta_k\rVert-\frac{10}{8}\lVert\theta^*-\theta_0\rVert\right]^2\\
        \Delta V_k&\leq-\frac{L_k(\theta_{k+1})}{\N_k}-\mu\underbrace{\gamma\frac{1}{8}}_{c_3}V_k+\mu\underbrace{\frac{189}{64}\lVert\theta^*-\theta_0\rVert^2}_{c_4}.
    \end{align*}
    \endgroup
    From the bound on $\Delta V_k$, it can be noted that $\Delta V_k<0$ in $D^c$, where the compact set $D$ is defined as
    \begin{equation*}
        D=\left\{V\middle|V\leq\frac{c_4}{c_3}\right\}.
    \end{equation*}
    Therefore $V\in\ell_{\infty}$, $(\vartheta-\theta^*)\in\ell_{\infty}$, and $(\theta-\vartheta)\in\ell_{\infty}$. Furthermore, from the bound on $\Delta V_k$,
    \begin{align*}
        V_{k+1}&\leq(1-\mu c_3)V_k+\mu c_4\\
        V_{k+1}&\leq(1-\mu c_3)\left(V_k-\frac{c_4}{c_3}\right)+(1-\mu c_3)\frac{c_4}{c_3}+\mu c_4\\
        V_{k+1}&\leq(1-\mu c_3)\left(V_k-\frac{c_4}{c_3}\right)+\frac{c_4}{c_3}\\
        V_{k+1}-\frac{c_4}{c_3}&\leq(1-\mu c_3)\left(V_k-\frac{c_4}{c_3}\right)
    \end{align*}
    Collecting terms,
    \begin{align*}
        V_k-\frac{c_4}{c_3}&\leq(1-\mu c_3)^k\left(V_0-\frac{c_4}{c_3}\right)\\
        V_k-\frac{c_4}{c_3}&\leq\exp(-\mu c_3k)\left(V_0-\frac{c_4}{c_3}\right)\\
        V_k&\leq\exp(-\mu c_3k)\left(V_0-\frac{c_4}{c_3}\right)+\frac{c_4}{c_3}.
    \end{align*}
\end{proof}

\subsection{Nesterov discrete time higher order tuner}

\begin{manualtheorem}{\ref{th:HOT_paper_Discrete} from Main Text (with proof)}
    For the linear regression error model in \eqref{e:error1_N_discrete_N} with loss in \eqref{e:Squared_Loss_N}, running Algorithm \ref{alg:HOT_R} with $\mu=0$, $0<\beta<1$, $0<\gamma\leq\frac{\beta(2-\beta)}{16+\beta^2}$ results in $(\vartheta_k-\theta^*)\in\ell_{\infty}$, $(\theta-\vartheta)\in\ell_{\infty}$, and $\sqrt{\frac{L_k(\theta_{k+1})}{\N_k}}\in\ell_2\cap\ell_{\infty}$. If in addition it is assumed that $\phi\in\ell_{\infty}$ then $\lim_{k\rightarrow\infty}L_k(\theta_{k+1})=0$.
\end{manualtheorem}
\begin{proof}
    Consider the candidate Lyapunov function stated as
    \begin{equation}
        V_k=\frac{1}{\gamma}\lVert \vartheta_k-\theta^*\rVert^2+\frac{1}{\gamma}\lVert \theta_k-\vartheta_k\rVert^2.
    \end{equation}
    The increment $\Delta V_k:=V_{k+1}-V_k$ may then be expanded using \eqref{e:error1_N_discrete_N}, \eqref{e:Squared_Loss_N}, and Algorithm \ref{alg:HOT_R} as
    \begingroup
    \allowdisplaybreaks
    \begin{align*}
        \Delta V_k&=\frac{1}{\gamma}\lVert \vartheta_{k+1}-\theta^*\rVert^2+\frac{1}{\gamma}\lVert \theta_{k+1}-\vartheta_{k+1}\rVert^2-\frac{1}{\gamma}\lVert \vartheta_k-\theta^*\rVert^2-\frac{1}{\gamma}\lVert \theta_k-\vartheta_k\rVert^2\\
        \Delta V_k&=\frac{1}{\gamma}\lVert (\vartheta_k-\theta^*)-\frac{\gamma}{\N_k}\nabla L_k(\theta_{k+1})\rVert^2-\frac{1}{\gamma}\lVert \vartheta_k-\theta^*\rVert^2\\
        &\quad +\frac{1}{\gamma}\lVert \bar{\theta}_k-\beta(\bar{\theta}_k-\vartheta_k)-\vartheta_k+\frac{\gamma}{\N_k}\nabla L_k(\theta_{k+1})\rVert^2-\frac{1}{\gamma}\lVert \theta_k-\vartheta_k\rVert^2\\
        \Delta V_k&=\frac{\gamma}{\N_k^2}\lVert\nabla L_k(\theta_{k+1})\rVert^2-\frac{2}{\N_k}(\vartheta_k-\theta^*)^T\nabla L_k(\theta_{k+1})\\
        &\quad+\frac{1}{\gamma}\lVert \bar{\theta}_k-\vartheta_k\rVert^2-\frac{1}{\gamma}\lVert \theta_k-\vartheta_k\rVert^2-\frac{\beta(2-\beta)}{\gamma}\lVert \bar{\theta}_k-\vartheta_k\rVert^2\\
        &\quad +\frac{2}{\N_k}(1-\beta)(\bar{\theta}_k-\vartheta_k)^T\nabla L_k(\theta_{k+1})+\frac{\gamma}{\N_k^2}\lVert\nabla L_k(\theta_{k+1})\rVert^2\\
        \Delta V_k&=\frac{2\gamma}{\N_k^2}\lVert\nabla L_k(\theta_{k+1})\rVert^2-\frac{2}{\N_k}(\theta_{k+1}-\theta^*)^T\nabla L_k(\theta_{k+1})\\
        &\quad+\frac{1}{\gamma}\lVert \bar{\theta}_k-\vartheta_k\rVert^2-\frac{1}{\gamma}\lVert \theta_k-\vartheta_k\rVert^2-\frac{\beta(2-\beta)}{\gamma}\lVert \bar{\theta}_k-\vartheta_k\rVert^2\\
        &\quad +\frac{2}{\N_k}(1-\beta)(\bar{\theta}_k-\vartheta_k)^T\nabla L_k(\theta_{k+1})-\frac{2}{\N_k}(\vartheta_k-\theta_{k+1})^T\nabla L_k(\theta_{k+1})\\
        \Delta V_k&=-2\left(1-\frac{\gamma\phi_k^T\phi_k}{\N_k}\right)\frac{\tilde{\theta}_{k+1}^T\nabla L_k(\theta_{k+1})}{\N_k}\\
        &\quad+\frac{1}{\gamma}\lVert \bar{\theta}_k-\vartheta_k\rVert^2-\frac{1}{\gamma}\lVert \theta_k-\vartheta_k\rVert^2-\frac{\beta(2-\beta)}{\gamma}\lVert \bar{\theta}_k-\vartheta_k\rVert^2\\
        &\quad +\frac{4}{\N_k}(1-\beta)(\bar{\theta}_k-\vartheta_k)^T\nabla L_k(\theta_{k+1})\\
        \Delta V_k&=-2\left(1-\frac{\gamma\phi_k^T\phi_k}{\N_k}\right)\frac{\tilde{\theta}_{k+1}^T\nabla L_k(\theta_{k+1})}{\N_k}\\
        &\quad+\frac{\gamma\beta^2}{\N_k^2}\lVert\nabla L_k(\theta_k)\rVert^2-\frac{2\beta}{\N_k}(\theta_k-\vartheta_k)^T\nabla L_k(\theta_k)\\
        &\quad -\frac{\beta(2-\beta)}{\gamma}\lVert \bar{\theta}_k-\vartheta_k\rVert^2+\frac{4}{\N_k}(1-\beta)(\bar{\theta}_k-\vartheta_k)^T\nabla L_k(\theta_{k+1})\\
        \Delta V_k&=\frac{1}{\N_k}\left\{-2\left(1-\frac{\gamma\phi_k^T\phi_k}{\N_k}\right)\tilde{\theta}_{k+1}^T\nabla L_k(\theta_{k+1})+\frac{\gamma\beta^2}{\N_k}\lVert\nabla L_k(\theta_k)\rVert^2-2\beta(\theta_k-\vartheta_k)^T\nabla L_k(\theta_k)\right.\\
        &\quad\left.-\frac{\beta(2-\beta)\N_k}{\gamma}\lVert \bar{\theta}_k-\vartheta_k\rVert^2+4(1-\beta)(\bar{\theta}_k-\vartheta_k)^T\nabla L_k(\theta_{k+1})\right\}\\
        \Delta V_k&=\frac{1}{\N_k}\left\{-2\left(1-\frac{\gamma\phi_k^T\phi_k}{\N_k}\right)\tilde{\theta}_{k+1}^T\nabla L_k(\theta_{k+1})-\frac{\gamma\beta^2}{\N_k}\lVert\nabla L_k(\theta_k)\rVert^2-2\beta(\bar{\theta}_k-\vartheta_k)^T\nabla L_k(\theta_k)\right.\\
        &\quad\left.-\frac{\beta(2-\beta)\N_k}{\gamma}\lVert \bar{\theta}_k-\vartheta_k\rVert^2+4(1-\beta)(\bar{\theta}_k-\vartheta_k)^T\nabla L_k(\theta_{k+1})\right\}\\
        \Delta V_k&=\frac{1}{\N_k}\left\{-2\left(1-\frac{\gamma\phi_k^T\phi_k}{\N_k}\right)\tilde{\theta}_{k+1}^T\nabla L_k(\theta_{k+1})-\frac{\gamma\beta^2}{\N_k}\lVert\nabla L_k(\theta_k)\rVert^2-2\beta(\bar{\theta}_k-\vartheta_k)^T\phi_k\phi^T_k\tilde{\theta}_k\right.\\
        &\quad\left.-\frac{\beta(2-\beta)\N_k}{\gamma}\lVert \bar{\theta}_k-\vartheta_k\rVert^2+4(1-\beta)(\bar{\theta}_k-\vartheta_k)^T\nabla L_k(\theta_{k+1})\right\}\\
        \Delta V_k&=\frac{1}{\N_k}\left\{-2\left(1-\frac{\gamma\phi_k^T\phi_k}{\N_k}\right)\tilde{\theta}_{k+1}^T\nabla L_k(\theta_{k+1})-\frac{\gamma\beta^2}{\N_k}\lVert\nabla L_k(\theta_k)\rVert^2\right.\\
        &\quad\left.-\frac{\beta(2-\beta)\N_k}{\gamma}\lVert \bar{\theta}_k-\vartheta_k\rVert^2+4(1-\beta)(\bar{\theta}_k-\vartheta_k)^T\nabla L_k(\theta_{k+1})\right.\\
        &\quad\left.-2\beta(\bar{\theta}_k-\vartheta_k)^T\phi_k\phi^T_k\left[\theta_k-\theta^*+(1-\beta)\bar{\theta}_k+\beta\vartheta_k-(1-\beta)\bar{\theta}_k-\beta\vartheta_k\right]\right\}\\
        \Delta V_k&=\frac{1}{\N_k}\left\{-2\left(1-\frac{\gamma\phi_k^T\phi_k}{\N_k}\right)\tilde{\theta}_{k+1}^T\nabla L_k(\theta_{k+1})-\frac{\gamma\beta^2}{\N_k}\lVert\nabla L_k(\theta_k)\rVert^2\right.\\
        &\quad\left.-2\beta(\bar{\theta}_k-\vartheta_k)^T\nabla L_k(\theta_{k+1})-\frac{\beta(2-\beta)\N_k}{\gamma}\lVert \bar{\theta}_k-\vartheta_k\rVert^2\right.\\
        &\quad\left.+4(1-\beta)(\bar{\theta}_k-\vartheta_k)^T\nabla L_k(\theta_{k+1})-2\beta(\bar{\theta}_k-\vartheta_k)^T\phi_k\phi^T_k\left[\theta_k-\bar{\theta}_k+\beta(\bar{\theta}_k-\vartheta_k)\right]\right\}\\
        \Delta V_k&=\frac{1}{\N_k}\left\{-2\left(1-\frac{\gamma\phi_k^T\phi_k}{\N_k}\right)\tilde{\theta}_{k+1}^T\nabla L_k(\theta_{k+1})-\frac{\gamma\beta^2}{\N_k}\lVert\nabla L_k(\theta_k)\rVert^2\right.\\
        &\quad\left.-2\gamma\beta^2(\bar{\theta}_k-\vartheta_k)^T\frac{\phi_k\phi^T_k}{\N_k}\nabla L_k(\theta_k)-\frac{\beta(2-\beta)\N_k}{\gamma}\lVert \bar{\theta}_k-\vartheta_k\rVert^2\right.\\
        &\quad\left.+4(1-\frac{3}{2}\beta)(\bar{\theta}_k-\vartheta_k)^T\nabla L_k(\theta_{k+1})-2\beta^2(\bar{\theta}_k-\vartheta_k)^T\phi_k\phi^T_k(\bar{\theta}_k-\vartheta_k)\right\}\\
        \Delta V_k&\leq\frac{1}{\N_k}\left\{-2\left(1-\frac{\gamma\phi_k^T\phi_k}{\N_k}\right)\tilde{\theta}_{k+1}^T\nabla L_k(\theta_{k+1})-16\lVert\phi_k\rVert^2\lVert \bar{\theta}_k-\vartheta_k\rVert^2\right.\\
        &\quad\left.+4(1-\frac{3}{2}\beta)(\bar{\theta}_k-\vartheta_k)^T\nabla L_k(\theta_{k+1})\right.\\
        &\quad\left.-\frac{\gamma\beta^2}{\N_k}\lVert\nabla L_k(\theta_k)\rVert^2-\beta^2\lVert\phi_k\rVert^2\lVert \bar{\theta}_k-\vartheta_k\rVert^2-2\beta^2(\bar{\theta}_k-\vartheta_k)^T\gamma\frac{\phi_k\phi^T_k}{\N_k}\nabla L_k(\theta_k)\right.\\
        &\quad\left.-(16+\beta^2)\lVert \bar{\theta}_k-\vartheta_k\rVert^2-2\beta^2(\bar{\theta}_k-\vartheta_k)^T\phi_k\phi^T_k(\bar{\theta}_k-\vartheta_k)\right\}\\
        \Delta V_k&\leq\frac{1}{\N_k}\left\{-\frac{30}{16}\lVert\tilde{\theta}_{k+1}^T\phi_k\rVert^2-16\lVert\phi_k\rVert^2\lVert \bar{\theta}_k-\vartheta_k\rVert^2+8\lVert\bar{\theta}_k-\vartheta_k\rVert\lVert\phi_k\rVert\lVert\tilde{\theta}_{k+1}^T\phi_k\rVert\right.\\
        &\quad\left.-\frac{\gamma\beta^2}{\N_k}\lVert\nabla L_k(\theta_k)\rVert^2-\beta^2\lVert\phi_k\rVert^2\lVert \bar{\theta}_k-\vartheta_k\rVert^2+2\beta^2\lVert\bar{\theta}_k-\vartheta_k\rVert\frac{\lVert\sqrt{\gamma}\phi_k\rVert^2}{\N_k}\lVert\nabla L_k(\theta_k)\rVert\right.\\
        &\quad\left.-(16+\beta^2)\lVert \bar{\theta}_k-\vartheta_k\rVert^2-2\beta^2(\bar{\theta}_k-\vartheta_k)^T\phi_k\phi^T_k(\bar{\theta}_k-\vartheta_k)\right\}\\
        \Delta V_k&\leq\frac{1}{\N_k}\left\{-\lVert\tilde{\theta}_{k+1}^T\phi_k\rVert^2-16\lVert\phi_k\rVert^2\lVert \bar{\theta}_k-\vartheta_k\rVert^2+8\lVert\bar{\theta}_k-\vartheta_k\rVert\lVert\phi_k\rVert\lVert\tilde{\theta}_{k+1}^T\phi_k\rVert\right.\\
        &\quad\left.-\frac{\gamma\beta^2}{\N_k}\lVert\nabla L_k(\theta_k)\rVert^2-\beta^2\lVert\phi_k\rVert^2\lVert \bar{\theta}_k-\vartheta_k\rVert^2+\frac{2\sqrt{\gamma}\beta^2}{\sqrt{\N_k}}\lVert\bar{\theta}_k-\vartheta_k\rVert\lVert\phi_k\rVert\lVert\nabla L_k(\theta_k)\rVert\right.\\
        &\quad\left.-\frac{7}{8}\lVert\tilde{\theta}_{k+1}^T\phi_k\rVert^2-(16+\beta^2)\lVert \bar{\theta}_k-\vartheta_k\rVert^2-2\beta^2(\bar{\theta}_k-\vartheta_k)^T\phi_k\phi^T_k(\bar{\theta}_k-\vartheta_k)\right\}\\
        \Delta V_k&\leq\frac{1}{\N_k}\left\{-\left[\lVert\tilde{\theta}_{k+1}^T\phi_k\rVert-4\lVert\phi_k\rVert\lVert \bar{\theta}_k-\vartheta_k\rVert\right]^2\right.\\
        &\quad\left.-\left[\frac{\sqrt{\gamma}\beta}{\sqrt{\N_k}}\lVert\nabla L_k(\theta_k)\rVert-\beta\lVert\phi_k\rVert\lVert \bar{\theta}_k-\vartheta_k\rVert\right]^2\right.\\
        &\quad\left.-\frac{7}{8}\lVert\tilde{\theta}_{k+1}^T\phi_k\rVert^2-(16+\beta^2)\lVert \bar{\theta}_k-\vartheta_k\rVert^2-2\beta^2\lVert(\bar{\theta}_k-\vartheta_k)^T\phi_k\rVert^2\right\}\leq0.
    \end{align*}
    \endgroup
    
    Thus it can be concluded that $V$ is a Lyapunov function with $(\theta-\theta^*)\in\ell_{\infty}$ and $(\theta-\vartheta)\in\ell_{\infty}$. Using \eqref{e:error1_N_discrete_N} and $\N_k$ from Algorithm \ref{alg:HOT_R}, $\frac{e_{y,k}}{\N_k}\in\ell_{\infty}$. Collecting $\Delta V_k$ terms from $t_0$ to $T$: $\sum_{k=t_0}^T\frac{7}{4}\lVert\sqrt{\frac{L_k(\theta_{k+1})}{\N_k}}\rVert^2\leq V_{t_0}-V_{T+1}<\infty$. Taking $T\rightarrow\infty$, it can be seen that $\sqrt{\frac{L_k(\theta_{k+1})}{\N_k}}\in\ell_2\cap\ell_{\infty}$ and therefore $\lim_{k\rightarrow\infty}\sqrt{\frac{L_k(\theta_{k+1})}{\N_k}}=0$. If additionally $\phi\in\ell_{\infty}$, then $\sqrt{L_k(\theta_{k+1})}\in\ell_2\cap\ell_{\infty}$ and therefore $\lim_{k\rightarrow\infty}\sqrt{L_k(\theta_{k+1})}=0$ and $\lim_{k\rightarrow\infty}L_k(\theta_{k+1})=0$.
\end{proof}

\subsection{Nesterov discrete time higher order tuner algorithm with regularization}

\begin{manualtheorem}{\ref{th:HOT_paper_Full_Alg} from Main Text (with proof)}
    For the linear regression error model in \eqref{e:error1_N_discrete_N} with loss in \eqref{e:Squared_Loss_N}, running Algorithm \ref{alg:HOT_R} with $0<\mu<1$, $0<\beta<1$, $0<\gamma\leq \frac{\beta(2-\beta)}{16+\beta^2+\mu\left(\frac{57\beta+1}{16\beta}\right)}$ results in $(\vartheta-\theta^*)\in\ell_{\infty}$, $(\theta-\vartheta)\in\ell_{\infty}$ and $V_k\leq \exp(-\mu c_1k)\left(V_0-\frac{c_2}{c_1}\right)+\frac{c_2}{c_1}$, where $V_k=\frac{1}{\gamma}\lVert \vartheta_k-\theta^*\rVert^2+\frac{1}{\gamma}\lVert \theta_k-\vartheta_k\rVert^2$, $c_1=\gamma\beta\frac{10}{16}$, $c_2=\left(\frac{3570\beta+896}{224\beta}\right)\lVert\theta^*-\theta_0\rVert^2$.
\end{manualtheorem}
\begin{proof}
    Consider the candidate Lyapunov function stated as
    \begin{equation}
        V_k=\frac{1}{\gamma}\lVert \vartheta_k-\theta^*\rVert^2+\frac{1}{\gamma}\lVert \theta_k-\vartheta_k\rVert^2.
    \end{equation}
    The increment $\Delta V_k:=V_{k+1}-V_k$ may then be expanded using \eqref{e:error1_N_discrete_N}, \eqref{e:Squared_Loss_N}, and Algorithm \ref{alg:HOT_R} as
    \begingroup
    \allowdisplaybreaks
    \begin{align*}
        \Delta V_k&=\frac{1}{\gamma}\lVert \vartheta_{k+1}-\theta^*\rVert^2+\frac{1}{\gamma}\lVert \theta_{k+1}-\vartheta_{k+1}\rVert^2-\frac{1}{\gamma}\lVert \vartheta_k-\theta^*\rVert^2-\frac{1}{\gamma}\lVert \theta_k-\vartheta_k\rVert^2\\
        \Delta V_k&=\frac{1}{\gamma}\lVert (\vartheta_k-\theta^*)-\frac{\gamma}{\N_k}\nabla L_k(\theta_{k+1})\rVert^2-\frac{1}{\gamma}\lVert \vartheta_k-\theta^*\rVert^2\\
        &\quad +\frac{1}{\gamma}\lVert \bar{\theta}_k-\beta(\bar{\theta}_k-\vartheta_k)-\vartheta_k+\frac{\gamma}{\N_k}\nabla L_k(\theta_{k+1})\rVert^2-\frac{1}{\gamma}\lVert \theta_k-\vartheta_k\rVert^2\\
        &\quad -\frac{2}{\gamma}\left[(\vartheta_k-\theta^*)-\frac{\gamma}{\N_k}\nabla L_k(\theta_{k+1})\right]^T\gamma\mu(\theta_{k+1}-\theta_0)+\gamma\mu^2\lVert\theta_{k+1}-\theta_0\rVert^2\\
        &\quad +\frac{2}{\gamma}\left[ \bar{\theta}_k-\beta(\bar{\theta}_k-\vartheta_k)-\vartheta_k+\frac{\gamma}{\N_k}\nabla L_k(\theta_{k+1})\right]^T\gamma\mu(\theta_{k+1}-\theta_0)+\gamma\mu^2\lVert\theta_{k+1}-\theta_0\rVert^2\\
        \Delta V_k&=\frac{\gamma}{\N_k^2}\lVert\nabla L_k(\theta_{k+1})\rVert^2-\frac{2}{\N_k}(\vartheta_k-\theta^*)^T\nabla L_k(\theta_{k+1})\\
        &\quad+\frac{1}{\gamma}\lVert \bar{\theta}_k-\vartheta_k\rVert^2-\frac{1}{\gamma}\lVert \theta_k-\vartheta_k\rVert^2-\frac{\beta(2-\beta)}{\gamma}\lVert \bar{\theta}_k-\vartheta_k\rVert^2\\
        &\quad +\frac{2}{\N_k}(1-\beta)(\bar{\theta}_k-\vartheta_k)^T\nabla L_k(\theta_{k+1})+\frac{\gamma}{\N_k^2}\lVert\nabla L_k(\theta_{k+1})\rVert^2\\
        &\quad -2\left[(\vartheta_k-\theta^*)-\frac{\gamma}{\N_k}\nabla L_k(\theta_{k+1})\right]^T\mu(\theta_{k+1}-\theta_0)\\
        &\quad +2\left[ \bar{\theta}_k-\beta(\bar{\theta}_k-\vartheta_k)-\vartheta_k+\frac{\gamma}{\N_k}\nabla L_k(\theta_{k+1})\right]^T\mu(\theta_{k+1}-\theta_0)\\
        &\quad +2\gamma\mu^2\lVert\theta_{k+1}-\theta_0\rVert^2\\
        \Delta V_k&=\frac{2\gamma}{\N_k^2}\lVert\nabla L_k(\theta_{k+1})\rVert^2-\frac{2}{\N_k}(\theta_{k+1}-\theta^*)^T\nabla L_k(\theta_{k+1})\\
        &\quad+\frac{1}{\gamma}\lVert \bar{\theta}_k-\vartheta_k\rVert^2-\frac{1}{\gamma}\lVert \theta_k-\vartheta_k\rVert^2-\frac{\beta(2-\beta)}{\gamma}\lVert \bar{\theta}_k-\vartheta_k\rVert^2\\
        &\quad +\frac{2}{\N_k}(1-\beta)(\bar{\theta}_k-\vartheta_k)^T\nabla L_k(\theta_{k+1})-\frac{2}{\N_k}(\vartheta_k-\theta_{k+1})^T\nabla L_k(\theta_{k+1})\\
        &\quad -2\left[(\vartheta_k-\theta^*)-\frac{\gamma}{\N_k}\nabla L_k(\theta_{k+1})\right]^T\mu(\theta_{k+1}-\theta_0)\\
        &\quad +2\left[ (1-\beta)(\bar{\theta}_k-\vartheta_k)+\frac{\gamma}{\N_k}\nabla L_k(\theta_{k+1})\right]^T\mu(\theta_{k+1}-\theta_0)\\
        &\quad +2\gamma\mu^2\lVert(1-\beta)(\bar{\theta}_k-\vartheta_k)+\vartheta_k-\theta_0\rVert^2\\
        \Delta V_k&=-2\left(1-\frac{\gamma\phi_k^T\phi_k}{\N_k}\right)\frac{\tilde{\theta}_{k+1}^T\nabla L_k(\theta_{k+1})}{\N_k}\\
        &\quad+\frac{1}{\gamma}\lVert \bar{\theta}_k-\vartheta_k\rVert^2-\frac{1}{\gamma}\lVert \theta_k-\vartheta_k\rVert^2-\frac{\beta(2-\beta)}{\gamma}\lVert \bar{\theta}_k-\vartheta_k\rVert^2\\
        &\quad +\frac{4}{\N_k}(1-\beta)(\bar{\theta}_k-\vartheta_k)^T\nabla L_k(\theta_{k+1})\\
        &\quad +4\left[\frac{\gamma}{\N_k}\nabla L_k(\theta_{k+1})\right]^T\mu(\theta_{k+1}-\theta_0)\\
        &\quad +2\left[ (1-\beta)(\bar{\theta}_k-\vartheta_k)-(\vartheta_k-\theta^*)\right]^T\mu(\theta_{k+1}-\theta_0)\\
        &\quad +2\gamma\mu^2\lVert(1-\beta)(\bar{\theta}_k-\vartheta_k)+(\vartheta_k-\theta_0)\rVert^2\\
        \Delta V_k&=-2\left(1-\frac{\gamma\phi_k^T\phi_k}{\N_k}\right)\frac{\tilde{\theta}_{k+1}^T\nabla L_k(\theta_{k+1})}{\N_k}\\
        &\quad+\frac{\gamma\beta^2}{\N_k^2}\lVert\nabla L_k(\theta_k)\rVert^2-\frac{2\beta}{\N_k}(\theta_k-\vartheta_k)^T\nabla L_k(\theta_k)\\
        &\quad -\frac{\beta(2-\beta)}{\gamma}\lVert \bar{\theta}_k-\vartheta_k\rVert^2+\frac{4}{\N_k}(1-\beta)(\bar{\theta}_k-\vartheta_k)^T\nabla L_k(\theta_{k+1})\\
        &\quad +4\left[\frac{\gamma}{\N_k}\nabla L_k(\theta_{k+1})\right]^T\mu\left[\tilde{\theta}_{k+1}+(\theta^*-\theta_0)\right]\\
        &\quad +2\left[ (1-\beta)(\bar{\theta}_k-\vartheta_k)-(\vartheta_k-\theta^*)\right]^T\mu\left[(1-\beta)(\bar{\theta}_k-\vartheta_k)+(\vartheta_k-\theta^*)+(\theta^*-\theta_0)\right]\\
        &\quad +2\gamma\mu^2\lVert(1-\beta)(\bar{\theta}_k-\vartheta_k)+(\vartheta_k-\theta^*)+(\theta^*-\theta_0)\rVert^2\\
        &\quad -\frac{2}{\gamma}\left[\theta_k-\vartheta_k-\frac{\gamma\beta}{\N_k}\nabla L_k(\theta_k)\right]^T\gamma\beta\mu(\theta_k-\theta_0)+\gamma\beta^2\mu^2\lVert\theta_k-\theta_0\rVert^2\\
        \Delta V_k&=\frac{1}{\N_k}\left\{-2\left(1-\frac{\gamma\phi_k^T\phi_k}{\N_k}\right)\tilde{\theta}_{k+1}^T\nabla L_k(\theta_{k+1})+\frac{\gamma\beta^2}{\N_k}\lVert\nabla L_k(\theta_k)\rVert^2-2\beta(\theta_k-\vartheta_k)^T\nabla L_k(\theta_k)\right.\\
        &\quad\left.-\frac{\beta(2-\beta)\N_k}{\gamma}\lVert \bar{\theta}_k-\vartheta_k\rVert^2+4(1-\beta)(\bar{\theta}_k-\vartheta_k)^T\nabla L_k(\theta_{k+1})\right\}\\
        &\quad +4\mu\frac{\gamma}{\N_k}\tilde{\theta}_{k+1}^T\nabla L_k(\theta_{k+1})+4\mu\frac{\gamma}{\N_k}\tilde{\theta}_{k+1}^T\phi_k\phi_k^T(\theta^*-\theta_0)\\
        &\quad +2\mu(1-\beta)^2\lVert\bar{\theta}_k-\vartheta_k\rVert^2-2\mu\lVert\vartheta_k-\theta^*\rVert^2\\
        &\quad +2\mu (1-\beta)(\bar{\theta}_k-\vartheta_k)^T(\theta^*-\theta_0)-2\mu(\vartheta_k-\theta^*)^T(\theta^*-\theta_0)\\
        &\quad +2\gamma\mu^2\lVert(1-\beta)(\bar{\theta}_k-\vartheta_k)\rVert^2+2\gamma\mu^2\lVert\vartheta_k-\theta^*\rVert^2+2\gamma\mu^2\lVert\theta^*-\theta_0\rVert^2\\
        &\quad +4\gamma\mu^2(1-\beta)(\bar{\theta}_k-\vartheta_k)^T(\vartheta_k-\theta^*)+4\gamma\mu^2(1-\beta)(\bar{\theta}_k-\vartheta_k)^T(\theta^*-\theta_0)\\
        &\quad +4\gamma\mu^2(\vartheta_k-\theta^*)^T(\theta^*-\theta_0)\\
        &\quad +2\left[\frac{\gamma\beta}{\N_k}\nabla L_k(\theta_k)\right]^T\beta\mu(\theta_k-\theta_0)+\gamma\beta^2\mu^2\lVert(\theta_k-\vartheta_k)+(\vartheta_k-\theta^*)+(\theta^*-\theta_0)\rVert^2\\
        &\quad -2\left[\theta_k-\vartheta_k\right]^T\beta\mu(\theta_k-\theta_0)\\
        \Delta V_k&=\frac{1}{\N_k}\left\{-2\left(1-\frac{\gamma\phi_k^T\phi_k}{\N_k}\right)\tilde{\theta}_{k+1}^T\nabla L_k(\theta_{k+1})-\frac{\gamma\beta^2}{\N_k}\lVert\nabla L_k(\theta_k)\rVert^2-2\beta(\bar{\theta}_k-\vartheta_k)^T\nabla L_k(\theta_k)\right.\\
        &\quad\left.-\frac{\beta(2-\beta)\N_k}{\gamma}\lVert \bar{\theta}_k-\vartheta_k\rVert^2+4(1-\beta)(\bar{\theta}_k-\vartheta_k)^T\nabla L_k(\theta_{k+1})\right\}\\
        &\quad +4\mu\frac{\gamma}{\N_k}\tilde{\theta}_{k+1}^T\nabla L_k(\theta_{k+1})+4\mu\frac{\gamma}{\N_k}\left[(1-\beta)(\bar{\theta}_k-\vartheta_k)+(\vartheta_k-\theta^*)\right]^T\phi_k\phi_k^T(\theta^*-\theta_0)\\
        &\quad +2\mu(1-\beta)^2\lVert\bar{\theta}_k-\vartheta_k\rVert^2-2\mu\lVert\vartheta_k-\theta^*\rVert^2\\
        &\quad +2\mu (1-\beta)(\bar{\theta}_k-\vartheta_k)^T(\theta^*-\theta_0)-2\mu(\vartheta_k-\theta^*)^T(\theta^*-\theta_0)\\
        &\quad +2\gamma\mu^2(1-\beta)^2\lVert(\bar{\theta}_k-\vartheta_k)\rVert^2+2\gamma\mu^2\lVert\vartheta_k-\theta^*\rVert^2+2\gamma\mu^2\lVert\theta^*-\theta_0\rVert^2\\
        &\quad +4\gamma\mu^2(1-\beta)(\bar{\theta}_k-\vartheta_k)^T(\vartheta_k-\theta^*)+4\gamma\mu^2(1-\beta)(\bar{\theta}_k-\vartheta_k)^T(\theta^*-\theta_0)\\
        &\quad +4\gamma\mu^2(\vartheta_k-\theta^*)^T(\theta^*-\theta_0)\\
        &\quad +2\left[\frac{\gamma\beta}{\N_k}\nabla L_k(\theta_k)\right]^T\beta\mu(\theta_k-\theta_0)\\
        &\quad +\gamma\beta^2\mu^2\lVert\theta_k-\vartheta_k\rVert^2+\gamma\beta^2\mu^2\lVert\vartheta_k-\theta^*\rVert^2+\gamma\beta^2\mu^2\lVert\theta^*-\theta_0\rVert^2\\
        &\quad +2\gamma\beta^2\mu^2(\theta_k-\vartheta_k)^T(\vartheta_k-\theta^*)+2\gamma\beta^2\mu^2(\theta_k-\vartheta_k)^T(\theta^*-\theta_0)\\
        &\quad +2\gamma\beta^2\mu^2(\vartheta_k-\theta^*)^T(\theta^*-\theta_0)\\
        &\quad -2\beta\mu(\theta_k-\vartheta_k)^T\left[(\theta_k-\vartheta_k)+(\vartheta_k-\theta^*)+(\theta^*-\theta_0)\right]-\beta\mu\lVert\vartheta_k-\theta^*\rVert^2\\
        &\quad +\beta\mu\lVert\vartheta_k-\theta^*\rVert^2\\
        &\quad  -2\frac{\gamma\beta^2\mu}{\N_k}(\theta_k-\theta_0)^T\nabla L_k(\theta_k)\\
        \Delta V_k&=\frac{1}{\N_k}\left\{-2\left(1-\frac{\gamma\phi_k^T\phi_k}{\N_k}\right)\tilde{\theta}_{k+1}^T\nabla L_k(\theta_{k+1})-\frac{\gamma\beta^2}{\N_k}\lVert\nabla L_k(\theta_k)\rVert^2-2\beta(\bar{\theta}_k-\vartheta_k)^T\phi_k\phi^T_k\tilde{\theta}_k\right.\\
        &\quad\left.-\frac{\beta(2-\beta)\N_k}{\gamma}\lVert \bar{\theta}_k-\vartheta_k\rVert^2+4(1-\beta)(\bar{\theta}_k-\vartheta_k)^T\nabla L_k(\theta_{k+1})\right\}\\
        &\quad +4\mu\frac{\gamma}{\N_k}\tilde{\theta}_{k+1}^T\nabla L_k(\theta_{k+1})\\
        &\quad -2\mu\lVert\vartheta_k-\theta^*\rVert^2-\beta\mu\lVert\vartheta_k-\theta^*\rVert^2+\beta\mu\lVert\vartheta_k-\theta^*\rVert^2+2\gamma\mu^2\lVert\vartheta_k-\theta^*\rVert^2\\
        &\quad +\gamma\beta^2\mu^2\lVert\vartheta_k-\theta^*\rVert^2\\
        &\quad -2\beta\mu\lVert\theta_k-\vartheta_k\rVert^2+\gamma\beta^2\mu^2\lVert\theta_k-\vartheta_k\rVert^2\\
        &\quad +2\mu(1-\beta)^2\lVert\bar{\theta}_k-\vartheta_k\rVert^2+2\gamma\mu^2(1-\beta)^2\lVert\bar{\theta}_k-\vartheta_k\rVert^2\\
        &\quad +2\gamma\beta^2\mu^2(\theta_k-\vartheta_k)^T(\vartheta_k-\theta^*)-2\beta\mu(\theta_k-\vartheta_k)^T(\vartheta_k-\theta^*)\\
        &\quad +4\gamma\mu^2(1-\beta)(\bar{\theta}_k-\vartheta_k)^T(\vartheta_k-\theta^*)\\
        &\quad +4\mu\frac{\gamma}{\N_k}(1-\beta)(\bar{\theta}_k-\vartheta_k)^T\phi_k\phi_k^T(\theta^*-\theta_0)+2\mu (1-\beta)(\bar{\theta}_k-\vartheta_k)^T(\theta^*-\theta_0)\\
        &\quad +4\gamma\mu^2(1-\beta)(\bar{\theta}_k-\vartheta_k)^T(\theta^*-\theta_0)\\
        &\quad +4\mu\frac{\gamma}{\N_k}(\vartheta_k-\theta^*)^T\phi_k\phi_k^T(\theta^*-\theta_0)-2\mu(\vartheta_k-\theta^*)^T(\theta^*-\theta_0)\\
        &\quad +4\gamma\mu^2(\vartheta_k-\theta^*)^T(\theta^*-\theta_0)+2\gamma\beta^2\mu^2(\vartheta_k-\theta^*)^T(\theta^*-\theta_0)\\
        &\quad +2\gamma\beta^2\mu^2(\theta_k-\vartheta_k)^T(\theta^*-\theta_0)-2\beta\mu(\theta_k-\vartheta_k)^T(\theta^*-\theta_0)\\
        &\quad +2\gamma\mu^2\lVert\theta^*-\theta_0\rVert^2+\gamma\beta^2\mu^2\lVert\theta^*-\theta_0\rVert^2\\
        \Delta V_k&=\frac{1}{\N_k}\left\{-2\left(1-\frac{\gamma\phi_k^T\phi_k}{\N_k}\right)\tilde{\theta}_{k+1}^T\nabla L_k(\theta_{k+1})-\frac{\gamma\beta^2}{\N_k}\lVert\nabla L_k(\theta_k)\rVert^2\right.\\
        &\quad\left.-\frac{\beta(2-\beta)\N_k}{\gamma}\lVert \bar{\theta}_k-\vartheta_k\rVert^2+4(1-\beta)(\bar{\theta}_k-\vartheta_k)^T\nabla L_k(\theta_{k+1})\right.\\
        &\quad\left.-2\beta(\bar{\theta}_k-\vartheta_k)^T\phi_k\phi^T_k\left[\theta_k-\theta^*+(1-\beta)\bar{\theta}_k+\beta\vartheta_k-(1-\beta)\bar{\theta}_k-\beta\vartheta_k\right]\right\}\\
        &\quad +4\mu\frac{\gamma}{\N_k}\tilde{\theta}_{k+1}^T\nabla L_k(\theta_{k+1})\\
        &\quad -\mu\left(2+\beta-\beta-2\gamma\mu-\gamma\beta^2\mu\right)\lVert\vartheta_k-\theta^*\rVert^2\\
        &\quad -\mu\left(2\beta-\gamma\beta^2\mu\right)\lVert\theta_k-\vartheta_k\rVert^2\\
        &\quad +\mu\left(2(1-\beta)^2+2\gamma\mu(1-\beta)^2\right)\lVert\bar{\theta}_k-\vartheta_k\rVert^2\\
        &\quad +\mu\left(2\gamma\beta^2\mu-2\beta\right)(\theta_k-\vartheta_k)^T(\vartheta_k-\theta^*)\\
        &\quad +\mu\left(4\gamma\mu(1-\beta)\right)(\bar{\theta}_k-\vartheta_k)^T(\vartheta_k-\theta^*)\\
        &\quad +\mu\left(4\frac{\gamma}{\N_k}(1-\beta)\right)(\bar{\theta}_k-\vartheta_k)^T\phi_k\phi_k^T(\theta^*-\theta_0)\\
        &\quad +\mu\left(2 (1-\beta)+4\gamma\mu(1-\beta)\right)(\bar{\theta}_k-\vartheta_k)^T(\theta^*-\theta_0)\\
        &\quad +\mu\left(4\frac{\gamma}{\N_k}\right)(\vartheta_k-\theta^*)^T\phi_k\phi_k^T(\theta^*-\theta_0)-\mu\left(2\right)(\vartheta_k-\theta^*)^T(\theta^*-\theta_0)\\
        &\quad +\mu\left(4\gamma\mu+2\gamma\beta^2\mu\right)(\vartheta_k-\theta^*)^T(\theta^*-\theta_0)\\
        &\quad +\mu\left(2\gamma\beta^2\mu-2\beta\right)(\theta_k-\vartheta_k)^T(\theta^*-\theta_0)\\
        &\quad +\mu\left(2\gamma\mu+\gamma\beta^2\mu\right)\lVert\theta^*-\theta_0\rVert^2\\
        \Delta V_k&=\frac{1}{\N_k}\left\{-2\left(1-\frac{\gamma\phi_k^T\phi_k}{\N_k}\right)\tilde{\theta}_{k+1}^T\nabla L_k(\theta_{k+1})-\frac{\gamma\beta^2}{\N_k}\lVert\nabla L_k(\theta_k)\rVert^2\right.\\
        &\quad\left.-2\beta(\bar{\theta}_k-\vartheta_k)^T\nabla L_k(\theta_{k+1})-\frac{\beta(2-\beta)\N_k}{\gamma}\lVert \bar{\theta}_k-\vartheta_k\rVert^2\right.\\
        &\quad\left.+4(1-\beta)(\bar{\theta}_k-\vartheta_k)^T\nabla L_k(\theta_{k+1})-2\beta(\bar{\theta}_k-\vartheta_k)^T\phi_k\phi^T_k\left[\theta_k-\bar{\theta}_k+\beta(\bar{\theta}_k-\vartheta_k)\right]\right\}\\
        &\quad +4\mu\frac{\gamma}{\N_k}\tilde{\theta}_{k+1}^T\nabla L_k(\theta_{k+1})\\
        &\quad -\mu\left(2+\beta-\beta-2\gamma\mu-\gamma\beta^2\mu\right)\lVert\vartheta_k-\theta^*\rVert^2\\
        &\quad -\mu\left(2\beta-\gamma\beta^2\mu\right)\lVert\theta_k-\vartheta_k\rVert^2\\
        &\quad +\mu\left(2(1-\beta)^2+2\gamma\mu(1-\beta)^2\right)\lVert\bar{\theta}_k-\vartheta_k\rVert^2\\
        &\quad +\mu\left(2\gamma\beta^2\mu-2\beta\right)(\theta_k-\vartheta_k)^T(\vartheta_k-\theta^*)\\
        &\quad +\mu\left(4\gamma\mu(1-\beta)\right)(\bar{\theta}_k-\vartheta_k)^T(\vartheta_k-\theta^*)\\
        &\quad +\mu\left(4\frac{\gamma}{\N_k}(1-\beta)\right)(\bar{\theta}_k-\vartheta_k)^T\phi_k\phi_k^T(\theta^*-\theta_0)\\
        &\quad +\mu\left(2 (1-\beta)+4\gamma\mu(1-\beta)\right)(\bar{\theta}_k-\vartheta_k)^T(\theta^*-\theta_0)\\
        &\quad +\mu\left(4\frac{\gamma}{\N_k}\right)(\vartheta_k-\theta^*)^T\phi_k\phi_k^T(\theta^*-\theta_0)-\mu\left(2\right)(\vartheta_k-\theta^*)^T(\theta^*-\theta_0)\\
        &\quad +\mu\left(4\gamma\mu+2\gamma\beta^2\mu\right)(\vartheta_k-\theta^*)^T(\theta^*-\theta_0)\\
        &\quad +\mu\left(2\gamma\beta^2\mu-2\beta\right)(\theta_k-\vartheta_k)^T(\theta^*-\theta_0)\\
        &\quad +\mu\left(2\gamma\mu+\gamma\beta^2\mu\right)\lVert\theta^*-\theta_0\rVert^2\\
        \Delta V_k&=\frac{1}{\N_k}\left\{-2\left(1-\frac{\gamma\phi_k^T\phi_k}{\N_k}\right)\tilde{\theta}_{k+1}^T\nabla L_k(\theta_{k+1})-\frac{\gamma\beta^2}{\N_k}\lVert\nabla L_k(\theta_k)\rVert^2\right.\\
        &\quad\left.-2\gamma\beta^2(\bar{\theta}_k-\vartheta_k)^T\frac{\phi_k\phi^T_k}{\N_k}\nabla L_k(\theta_k)-\frac{\beta(2-\beta)\N_k}{\gamma}\lVert \bar{\theta}_k-\vartheta_k\rVert^2\right.\\
        &\quad\left.+4(1-\frac{3}{2}\beta)(\bar{\theta}_k-\vartheta_k)^T\nabla L_k(\theta_{k+1})-2\beta^2(\bar{\theta}_k-\vartheta_k)^T\phi_k\phi^T_k(\bar{\theta}_k-\vartheta_k)\right\}\\
        &\quad +4\mu\frac{\gamma}{\N_k}\tilde{\theta}_{k+1}^T\nabla L_k(\theta_{k+1})\\
        &\quad -\mu\left(2+\beta-\beta-2\gamma\mu-\gamma\beta^2\mu\right)\lVert\vartheta_k-\theta^*\rVert^2\\
        &\quad -\mu\left(2\beta-\gamma\beta^2\mu\right)\lVert\theta_k-\vartheta_k\rVert^2\\
        &\quad +\mu\left(2(1-\beta)^2+2\gamma\mu(1-\beta)^2\right)\lVert\bar{\theta}_k-\vartheta_k\rVert^2\\
        &\quad +\mu\left(2\gamma\beta^2\mu-2\beta\right)(\theta_k-\vartheta_k)^T(\vartheta_k-\theta^*)\\
        &\quad +\mu\left(4\gamma\mu(1-\beta)\right)(\bar{\theta}_k-\vartheta_k)^T(\vartheta_k-\theta^*)\\
        &\quad +\mu\left(4\frac{\gamma}{\N_k}(1-\beta)\right)(\bar{\theta}_k-\vartheta_k)^T\phi_k\phi_k^T(\theta^*-\theta_0)\\
        &\quad +\mu\left(2 (1-\beta)+4\gamma\mu(1-\beta)\right)(\bar{\theta}_k-\vartheta_k)^T(\theta^*-\theta_0)\\
        &\quad +\mu\left(4\frac{\gamma}{\N_k}\right)(\vartheta_k-\theta^*)^T\phi_k\phi_k^T(\theta^*-\theta_0)-\mu\left(2\right)(\vartheta_k-\theta^*)^T(\theta^*-\theta_0)\\
        &\quad +\mu\left(4\gamma\mu+2\gamma\beta^2\mu\right)(\vartheta_k-\theta^*)^T(\theta^*-\theta_0)\\
        &\quad +\mu\left(2\gamma\beta^2\mu-2\beta\right)(\theta_k-\vartheta_k)^T(\theta^*-\theta_0)\\
        &\quad +\mu\left(2\gamma\mu+\gamma\beta^2\mu\right)\lVert\theta^*-\theta_0\rVert^2\\
        &\quad -2\frac{\gamma\beta^2\mu}{\N_k}(\bar{\theta}_k-\vartheta_k)^T\phi_k\phi_k^T\left[(\theta_k-\vartheta_k)+(\vartheta_k-\theta^*)+(\theta^*-\theta_0)\right]\\
        \Delta V_k&=\frac{1}{\N_k}\left\{-2\left(1-\frac{\gamma\phi_k^T\phi_k}{\N_k}\right)\tilde{\theta}_{k+1}^T\nabla L_k(\theta_{k+1})-\frac{\gamma\beta^2}{\N_k}\lVert\nabla L_k(\theta_k)\rVert^2\right.\\
        &\quad\left.-2\gamma\beta^2(\bar{\theta}_k-\vartheta_k)^T\frac{\phi_k\phi^T_k}{\N_k}\nabla L_k(\theta_k)-\frac{\beta(2-\beta)\N_k}{\gamma}\lVert \bar{\theta}_k-\vartheta_k\rVert^2\right.\\
        &\quad\left.+4(1-\frac{3}{2}\beta)(\bar{\theta}_k-\vartheta_k)^T\nabla L_k(\theta_{k+1})-2\beta^2(\bar{\theta}_k-\vartheta_k)^T\phi_k\phi^T_k(\bar{\theta}_k-\vartheta_k)\right\}\\
        &\quad +4\mu\frac{\gamma}{\N_k}\tilde{\theta}_{k+1}^T\nabla L_k(\theta_{k+1})\\
        &\quad -\mu\left(2+\beta-\beta-2\gamma\mu-\gamma\beta^2\mu\right)\lVert\vartheta_k-\theta^*\rVert^2\\
        &\quad -\mu\left(2\beta-\gamma\beta^2\mu\right)\lVert\theta_k-\vartheta_k\rVert^2\\
        &\quad +\mu\left(2(1-\beta)^2+2\gamma\mu(1-\beta)^2\right)\lVert\bar{\theta}_k-\vartheta_k\rVert^2\\
        &\quad +\mu\left(2\gamma\beta^2\mu-2\beta\right)(\theta_k-\vartheta_k)^T(\vartheta_k-\theta^*)\\
        &\quad +\mu\left(4\gamma\mu(1-\beta)\right)(\bar{\theta}_k-\vartheta_k)^T(\vartheta_k-\theta^*)-\mu\left(2\frac{\gamma\beta^2}{\N_k}\right)(\bar{\theta}_k-\vartheta_k)^T\phi_k\phi_k^T(\vartheta_k-\theta^*)\\
        &\quad -\mu\left(2\frac{\gamma\beta^2}{\N_k}\right)(\bar{\theta}_k-\vartheta_k)^T\phi_k\phi_k^T(\theta_k-\vartheta_k)\\
        &\quad +\mu\left(4\frac{\gamma}{\N_k}(1-\beta)\right)(\bar{\theta}_k-\vartheta_k)^T\phi_k\phi_k^T(\theta^*-\theta_0)\\
        &\quad +\mu\left(2 (1-\beta)+4\gamma\mu(1-\beta)\right)(\bar{\theta}_k-\vartheta_k)^T(\theta^*-\theta_0)\\
        &\quad +\mu\left(4\frac{\gamma}{\N_k}\right)(\vartheta_k-\theta^*)^T\phi_k\phi_k^T(\theta^*-\theta_0)-\mu\left(2\right)(\vartheta_k-\theta^*)^T(\theta^*-\theta_0)\\
        &\quad +\mu\left(4\gamma\mu+2\gamma\beta^2\mu\right)(\vartheta_k-\theta^*)^T(\theta^*-\theta_0)\\
        &\quad +\mu\left(2\gamma\beta^2\mu-2\beta\right)(\theta_k-\vartheta_k)^T(\theta^*-\theta_0)\\
        &\quad -\mu\left(2\frac{\gamma\beta^2}{\N_k}\right)(\bar{\theta}_k-\vartheta_k)^T\phi_k\phi_k^T(\theta^*-\theta_0)\\
        &\quad +\mu\left(2\gamma\mu+\gamma\beta^2\mu\right)\lVert\theta^*-\theta_0\rVert^2\\
        \Delta V_k&\leq\frac{1}{\N_k}\left\{-2\left(1-\frac{\gamma\phi_k^T\phi_k}{\N_k}\right)\tilde{\theta}_{k+1}^T\nabla L_k(\theta_{k+1})-16\lVert\phi_k\rVert^2\lVert \bar{\theta}_k-\vartheta_k\rVert^2\right.\\
        &\quad\left.+4(1-\frac{3}{2}\beta)(\bar{\theta}_k-\vartheta_k)^T\nabla L_k(\theta_{k+1})\right.\\
        &\quad\left.-\frac{\gamma\beta^2}{\N_k}\lVert\nabla L_k(\theta_k)\rVert^2-\beta^2\lVert\phi_k\rVert^2\lVert \bar{\theta}_k-\vartheta_k\rVert^2-2\beta^2(\bar{\theta}_k-\vartheta_k)^T\gamma\frac{\phi_k\phi^T_k}{\N_k}\nabla L_k(\theta_k)\right.\\
        &\quad\left.-(16+\beta^2)\lVert \bar{\theta}_k-\vartheta_k\rVert^2-2\beta^2(\bar{\theta}_k-\vartheta_k)^T\phi_k\phi^T_k(\bar{\theta}_k-\vartheta_k)\right\}\\
        &\quad +4\mu\frac{\gamma}{\N_k}\lVert\tilde{\theta}_{k+1}^T\phi_k\rVert^2\\
        &\quad -\mu\left(2+\beta-\beta-2\gamma\mu-\gamma\beta^2\mu\right)\lVert\vartheta_k-\theta^*\rVert^2\\
        &\quad -\mu\left(2\beta-\gamma\beta^2\mu\right)\lVert\theta_k-\vartheta_k\rVert^2\\
        &\quad +\mu\left(2(1-\beta)^2+2\gamma\mu(1-\beta)^2\right)\lVert\bar{\theta}_k-\vartheta_k\rVert^2\\
        &\quad +\mu\left|2\gamma\beta^2\mu-2\beta\right|\lVert\theta_k-\vartheta_k\rVert\lVert\vartheta_k-\theta^*\rVert\\
        &\quad +\mu\left(\left|4\gamma\mu(1-\beta)\right|+2\gamma\beta^2\right)\lVert\bar{\theta}_k-\vartheta_k\rVert\lVert\vartheta_k-\theta^*\rVert\\
        &\quad +\mu\left(2\gamma\beta^2\right)\lVert\bar{\theta}_k-\vartheta_k\rVert\lVert\theta_k-\vartheta_k\rVert\\
        &\quad +\mu\left(4\gamma\left|(1-\beta)\right|+2\left|(1-\beta)\right|+4\gamma\mu\left|(1-\beta)\right|\right)\lVert\bar{\theta}_k-\vartheta_k\rVert\lVert\theta^*-\theta_0\rVert\\
        &\quad +\mu\left(4\gamma+2\right)\lVert\vartheta_k-\theta^*\rVert\lVert\theta^*-\theta_0\rVert\\
        &\quad +\mu\left(4\gamma\mu+2\gamma\beta^2\mu\right)\lVert\vartheta_k-\theta^*\rVert\lVert\theta^*-\theta_0\rVert\\
        &\quad +\mu\left|2\gamma\beta^2\mu-2\beta\right|\lVert\theta_k-\vartheta_k\rVert\lVert\theta^*-\theta_0\rVert\\
        &\quad +\mu\left(2\gamma\beta^2\right)\lVert\bar{\theta}_k-\vartheta_k\rVert\lVert\theta^*-\theta_0\rVert\\
        &\quad +\mu\left(2\gamma\mu+\gamma\beta^2\mu\right)\lVert\theta^*-\theta_0\rVert^2\\
        &\quad -\mu\left(\frac{17}{8}+\frac{7}{8}+\frac{9}{16}+\frac{1}{16\beta}\right)\lVert\bar{\theta}_k-\vartheta_k\rVert^2\\
        \Delta V_k&\leq\frac{1}{\N_k}\left\{-\frac{30}{16}\lVert\tilde{\theta}_{k+1}^T\phi_k\rVert^2-16\lVert\phi_k\rVert^2\lVert \bar{\theta}_k-\vartheta_k\rVert^2+8\lVert\bar{\theta}_k-\vartheta_k\rVert\lVert\phi_k\rVert\lVert\tilde{\theta}_{k+1}^T\phi_k\rVert\right.\\
        &\quad\left.-\frac{\gamma\beta^2}{\N_k}\lVert\nabla L_k(\theta_k)\rVert^2-\beta^2\lVert\phi_k\rVert^2\lVert \bar{\theta}_k-\vartheta_k\rVert^2+2\beta^2\lVert\bar{\theta}_k-\vartheta_k\rVert\frac{\lVert\sqrt{\gamma}\phi_k\rVert^2}{\N_k}\lVert\nabla L_k(\theta_k)\rVert\right.\\
        &\quad\left.-(16+\beta^2)\lVert \bar{\theta}_k-\vartheta_k\rVert^2-2\beta^2(\bar{\theta}_k-\vartheta_k)^T\phi_k\phi^T_k(\bar{\theta}_k-\vartheta_k)\right\}\\
        &\quad +\frac{1}{4}\frac{1}{\N_k}\lVert\tilde{\theta}_{k+1}^T\phi_k\rVert^2\\
        &\quad -\mu\left(\frac{13}{16}+\beta\right)\lVert\vartheta_k-\theta^*\rVert^2\\
        &\quad -\mu\left(\beta\left(\frac{31}{16}\right)\right)\lVert\theta_k-\vartheta_k\rVert^2\\
        &\quad +\mu\left(\frac{17}{8}\right)\lVert\bar{\theta}_k-\vartheta_k\rVert^2\\
        &\quad +\mu\left(2\beta\right)\lVert\theta_k-\vartheta_k\rVert\lVert\vartheta_k-\theta^*\rVert\\
        &\quad +\mu\left(\frac{3}{8}\right)\lVert\bar{\theta}_k-\vartheta_k\rVert\lVert\vartheta_k-\theta^*\rVert\\
        &\quad +\mu\left(\frac{1}{8}\right)\lVert\bar{\theta}_k-\vartheta_k\rVert\lVert\theta_k-\vartheta_k\rVert\\
        &\quad +\mu\left(\frac{21}{8}\right)\lVert\vartheta_k-\theta^*\rVert\lVert\theta^*-\theta_0\rVert\\
        &\quad +\mu\left(2\right)\lVert\theta_k-\vartheta_k\rVert\lVert\theta^*-\theta_0\rVert\\
        &\quad +\mu\left(\frac{21}{8}\right)\lVert\bar{\theta}_k-\vartheta_k\rVert\lVert\theta^*-\theta_0\rVert\\
        &\quad +\mu\left(\frac{3}{16}\right)\lVert\theta^*-\theta_0\rVert^2\\
        &\quad -\mu\left(\frac{17}{8}+\frac{7}{8}+\frac{9}{16}+\frac{1}{16\beta}\right)\lVert\bar{\theta}_k-\vartheta_k\rVert^2\\
        \Delta V_k&\leq\frac{1}{\N_k}\left\{-\lVert\tilde{\theta}_{k+1}^T\phi_k\rVert^2-16\lVert\phi_k\rVert^2\lVert \bar{\theta}_k-\vartheta_k\rVert^2+8\lVert\bar{\theta}_k-\vartheta_k\rVert\lVert\phi_k\rVert\lVert\tilde{\theta}_{k+1}^T\phi_k\rVert\right.\\
        &\quad\left.-\frac{\gamma\beta^2}{\N_k}\lVert\nabla L_k(\theta_k)\rVert^2-\beta^2\lVert\phi_k\rVert^2\lVert \bar{\theta}_k-\vartheta_k\rVert^2+\frac{2\sqrt{\gamma}\beta^2}{\sqrt{\N_k}}\lVert\bar{\theta}_k-\vartheta_k\rVert\lVert\phi_k\rVert\lVert\nabla L_k(\theta_k)\rVert\right.\\
        &\quad\left.-\frac{7}{8}\lVert\tilde{\theta}_{k+1}^T\phi_k\rVert^2-(16+\beta^2)\lVert \bar{\theta}_k-\vartheta_k\rVert^2-2\beta^2(\bar{\theta}_k-\vartheta_k)^T\phi_k\phi^T_k(\bar{\theta}_k-\vartheta_k)\right\}\\
        &\quad +\frac{1}{4}\frac{1}{\N_k}\lVert\tilde{\theta}_{k+1}^T\phi_k\rVert^2\\
        &\quad -\mu\left(\frac{10}{16}\beta+\frac{2}{16}+\frac{1}{16}+\beta\right)\lVert\vartheta_k-\theta^*\rVert^2+\mu\left(\frac{21}{8}\right)\lVert\vartheta_k-\theta^*\rVert\lVert\theta^*-\theta_0\rVert\\
        &\quad \pm\mu\left(\frac{441}{32}\right)\lVert\theta^*-\theta_0\rVert^2\\
        &\quad -\mu\left(\beta\left(\frac{10}{16}+\frac{4}{16}+\frac{1}{16}+1\right)\right)\lVert\theta_k-\vartheta_k\rVert^2+\mu\left(2\right)\lVert\theta_k-\vartheta_k\rVert\lVert\theta^*-\theta_0\rVert\\
        &\quad \pm\mu\left(\frac{4}{\beta}\right)\lVert\theta^*-\theta_0\rVert^2\\
        &\quad -\mu\left(\frac{17}{8}+\frac{7}{8}+\frac{9}{16}+\frac{1}{16\beta}-\frac{17}{8}\right)\lVert\bar{\theta}_k-\vartheta_k\rVert^2+\mu\left(\frac{21}{8}\right)\lVert\bar{\theta}_k-\vartheta_k\rVert\lVert\theta^*-\theta_0\rVert\\
        &\quad \pm\mu\left(\frac{441}{224}\right)\lVert\theta^*-\theta_0\rVert^2\\
        &\quad +\mu\left(2\beta\right)\lVert\theta_k-\vartheta_k\rVert\lVert\vartheta_k-\theta^*\rVert\\
        &\quad +\mu\left(\frac{3}{8}\right)\lVert\bar{\theta}_k-\vartheta_k\rVert\lVert\vartheta_k-\theta^*\rVert\\
        &\quad +\mu\left(\frac{1}{8}\right)\lVert\bar{\theta}_k-\vartheta_k\rVert\lVert\theta_k-\vartheta_k\rVert\\
        &\quad +\mu\left(\frac{3}{16}\right)\lVert\theta^*-\theta_0\rVert^2\\
        \Delta V_k&\leq\frac{1}{\N_k}\left\{-\left[\lVert\tilde{\theta}_{k+1}^T\phi_k\rVert-4\lVert\phi_k\rVert\lVert \bar{\theta}_k-\vartheta_k\rVert\right]^2\right.\\
        &\quad\left.-\left[\frac{\sqrt{\gamma}\beta}{\sqrt{\N_k}}\lVert\nabla L_k(\theta_k)\rVert-\beta\lVert\phi_k\rVert\lVert \bar{\theta}_k-\vartheta_k\rVert\right]^2\right.\\
        &\quad\left.-\frac{7}{8}\lVert\tilde{\theta}_{k+1}^T\phi_k\rVert^2-(16+\beta^2)\lVert \bar{\theta}_k-\vartheta_k\rVert^2-2\beta^2\lVert(\bar{\theta}_k-\vartheta_k)^T\phi_k\rVert^2\right\}\\
        &\quad +\frac{1}{4}\frac{1}{\N_k}\lVert\tilde{\theta}_{k+1}^T\phi_k\rVert^2\\
        &\quad -\mu\gamma\beta\frac{10}{16}V_k+\mu\left(\frac{3570\beta+896}{224\beta}\right)\lVert\theta^*-\theta_0\rVert^2\\
        &\quad -\mu\left[\frac{\sqrt{2}}{4}\lVert\vartheta_k-\theta^*\rVert-\frac{21}{4\sqrt{2}}\lVert\theta^*-\theta_0\rVert\right]^2\\
        &\quad -\mu\left[\frac{\sqrt{\beta}}{2}\lVert\theta_k-\vartheta_k\rVert-\frac{2}{\sqrt{\beta}}\lVert\theta^*-\theta_0\rVert\right]^2\\
        &\quad -\mu\left[\frac{\sqrt{14}}{4}\lVert\bar{\theta}_k-\vartheta_k\rVert-\frac{21}{4\sqrt{14}}\lVert\theta^*-\theta_0\rVert\right]^2\\
        &\quad -\mu\left[\sqrt{\beta}\lVert\vartheta_k-\theta^*\rVert-\sqrt{\beta}\lVert\theta_k-\vartheta_k\rVert\right]^2\\
        &\quad -\mu\left[\frac{1}{4}\lVert\vartheta_k-\theta^*\rVert-\frac{3}{4}\lVert\bar{\theta}_k-\vartheta_k\rVert\right]^2\\
        &\quad -\mu\left[\frac{\sqrt{\beta}}{4}\lVert\theta_k-\vartheta_k\rVert-\frac{1}{4\sqrt{\beta}}\lVert\bar{\theta}_k-\vartheta_k\rVert\right]^2\\
        \Delta V_k&\leq-\frac{L_k(\theta_{k+1})}{\N_k}-\mu\underbrace{\gamma\beta\frac{10}{16}}_{c_1}V_k+\mu\underbrace{\left(\frac{3570\beta+896}{224\beta}\right)\lVert\theta^*-\theta_0\rVert^2}_{c_2}.
    \end{align*}
    \endgroup
    From the bound on $\Delta V_k$, it can be noted that $\Delta V_k<0$ in $D^c$, where the compact set $D$ is defined as
    \begin{equation*}
        D=\left\{V\middle|V\leq\frac{c_2}{c_1}\right\}.
    \end{equation*}
    Therefore $V\in\ell_{\infty}$, $(\vartheta-\theta^*)\in\ell_{\infty}$, and $(\theta-\vartheta)\in\ell_{\infty}$. Furthermore, from the bound on $\Delta V_k$,
    \begin{align*}
        V_{k+1}&\leq(1-\mu c_1)V_k+\mu c_2\\
        V_{k+1}&\leq(1-\mu c_1)\left(V_k-\frac{c_2}{c_1}\right)+(1-\mu c_1)\frac{c_2}{c_1}+\mu c_2\\
        V_{k+1}&\leq(1-\mu c_1)\left(V_k-\frac{c_2}{c_1}\right)+\frac{c_2}{c_1}\\
        V_{k+1}-\frac{c_2}{c_1}&\leq(1-\mu c_1)\left(V_k-\frac{c_2}{c_1}\right)
    \end{align*}
    Collecting terms,
    \begin{align*}
        V_k-\frac{c_2}{c_1}&\leq(1-\mu c_1)^k\left(V_0-\frac{c_2}{c_1}\right)\\
        V_k-\frac{c_2}{c_1}&\leq\exp(-\mu c_1k)\left(V_0-\frac{c_2}{c_1}\right)\\
        V_k&\leq\exp(-\mu c_1k)\left(V_0-\frac{c_2}{c_1}\right)+\frac{c_2}{c_1}.
    \end{align*}
\end{proof}

\clearpage
\section{Non-asymptotic convergence rate proofs}
\label{s:NonAsymptoticProofs}

The theorems provided in section are widely known in the iterative optimization literature (theorem statements and proofs are modified from provided references). The full statement and proof of each theorem is provided for completeness of the comparison in Table \ref{t:Comparison_Gradient_Methods}, and to set the stage for the core lemmas of this paper in Appendix \ref{ss:Regularization_New} and comparisons in Appendix \ref{ss:Comparison_with_Constants}.

\subsection{Gradient descent for smooth convex functions}
\begin{theorem}[Modified from \citep{Nesterov_2018}]\label{theorem:GD_k}
    For a $\bar{L}$-smooth convex function $f$, the iterates $\{\theta_k\}_{k=0}^{\infty}$ generated by \eqref{e:Gradient_Method} with $\bar{\alpha}=1/\bar{L}$ satisfy
    \begin{equation}\label{e:GD_SmoothConvex_convergence}
        f(\theta_k)-f(\theta^*)\leq\frac{2\bar{L}\lVert\theta_0-\theta^*\rVert^2}{k+4},
    \end{equation}
    and therefore if
    \begin{equation}
        k\geq\left\lceil\frac{2\bar{L}\lVert\theta_0-\theta^*\rVert^2}{\epsilon}-4\right\rceil,
    \end{equation}
    then $f(\theta_k)-f(\theta^*)\leq\epsilon$.
\end{theorem}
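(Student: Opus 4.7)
The plan is to combine three classical ingredients for gradient descent on smooth convex functions: a descent lemma from smoothness, a distance-to-optimum bound from convexity, and a quadratic recursion argument.

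First, I would establish a descent lemma. Since $f$ is $\bar{L}$-smooth, applying \eqref{e:smooth_convex} at $x=\theta_k$, $y=\theta_{k+1}$ with the update $\theta_{k+1}=\theta_k-\frac{1}{\bar{L}}\nabla f(\theta_k)$ yields
\begin{equation*}
    f(\theta_{k+1})\leq f(\theta_k)-\frac{1}{2\bar{L}}\lVert\nabla f(\theta_k)\rVert^2.
\end{equation*}
In particular the sequence $\{f(\theta_k)\}$ is non-increasing. Next, I would show distances to $\theta^*$ are non-increasing. Expanding $\lVert\theta_{k+1}-\theta^*\rVert^2$ and using convexity together with $f(\theta_{k+1})\geq f(\theta^*)$ (which via the descent lemma gives $\nabla f(\theta_k)^T(\theta_k-\theta^*)\geq \frac{1}{2\bar{L}}\lVert\nabla f(\theta_k)\rVert^2$), I would obtain $\lVert\theta_{k+1}-\theta^*\rVert\leq\lVert\theta_k-\theta^*\rVert\leq\lVert\theta_0-\theta^*\rVert$ for all $k$.

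Now let $\delta_k=f(\theta_k)-f(\theta^*)$ and $R=\lVert\theta_0-\theta^*\rVert^2$. Convexity \eqref{e:convex} plus Cauchy-Schwarz gives $\delta_k\leq \lVert\nabla f(\theta_k)\rVert\,\lVert\theta_k-\theta^*\rVert\leq \lVert\nabla f(\theta_k)\rVert\sqrt{R}$, so $\lVert\nabla f(\theta_k)\rVert^2\geq \delta_k^2/R$. Substituting into the descent lemma gives the quadratic recursion
\begin{equation*}
    \delta_{k+1}\leq \delta_k-\frac{\delta_k^2}{2\bar{L}R}.
\end{equation*}
Dividing by $\delta_k\delta_{k+1}$ (and using $\delta_{k+1}\leq\delta_k$) yields $\frac{1}{\delta_{k+1}}-\frac{1}{\delta_k}\geq \frac{1}{2\bar{L}R}$, which telescopes to
\begin{equation*}
    \frac{1}{\delta_k}\geq \frac{1}{\delta_0}+\frac{k}{2\bar{L}R}, \qquad \text{equivalently} \qquad \delta_k\leq \frac{2\bar{L}R}{k+2\bar{L}R/\delta_0}.
\end{equation*}

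Finally, I would produce the constant $4$ in the denominator by bounding $\delta_0$. Applying smoothness \eqref{e:smooth_convex} at $x=\theta^*$, $y=\theta_0$ and using $\nabla f(\theta^*)=0$ gives $\delta_0=f(\theta_0)-f(\theta^*)\leq \frac{\bar{L}}{2}R$, so $2\bar{L}R/\delta_0\geq 4$, establishing \eqref{e:GD_SmoothConvex_convergence}. Setting the right-hand side equal to $\epsilon$ and solving for $k$ gives the iteration-complexity bound. There is no real obstacle here beyond careful bookkeeping with constants; the only subtle step is the monotonicity of $\lVert\theta_k-\theta^*\rVert$, which is essential for replacing $\lVert\theta_k-\theta^*\rVert$ by the constant $\sqrt{R}$ in the quadratic recursion.
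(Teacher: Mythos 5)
Your proposal is correct and follows essentially the same route as the paper's proof: the descent lemma from smoothness, monotonicity of $\lVert\theta_k-\theta^*\rVert$, the Cauchy--Schwarz dual bound, the telescoped quadratic recursion, and the initial-gap bound $\delta_0\leq\bar{L}R/2$ to produce the $+4$. No gaps.
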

\begin{proof}[Modified from \citep{Nesterov_2018}]
    From $\bar{L}$-smoothness \eqref{e:smooth_convex},
    \begin{equation*}
        f(\theta_{k+1})\leq f(\theta_k)+\nabla f(\theta_k)^T(\theta_{k+1}-\theta_k)+\frac{\bar{L}}{2}\lVert \theta_{k+1}-\theta_k\rVert^2.
    \end{equation*}
    Applying the iterative method in \eqref{e:Gradient_Method} with $\bar{\alpha}=1/\bar{L}$,
    \begin{equation}\label{e:GD_Primal_Progress_first}
        f(\theta_{k+1})\leq f(\theta_k)-\frac{1}{2\bar{L}}\lVert\nabla f(\theta_k)\rVert^2,
    \end{equation}
    and thus the primal progress may be bounded as,
    \begin{equation}\label{e:GD_Primal_Progress}
        (f(\theta_k)-f^*)-(f(\theta_{k+1})-f^*)=f(\theta_k)-f(\theta_{k+1})\geq\frac{1}{2\bar{L}}\lVert\nabla f(\theta_k)\rVert^2.
    \end{equation}
    Using \eqref{e:GD_Primal_Progress_first} and from convexity \eqref{e:convex}, $f(\theta)\leq f(\theta^*)+\nabla f(\theta)^T(\theta-\theta^*)$,
    \begin{equation*}
        f(\theta_{k+1})-f(\theta^*)\leq\frac{\bar{L}}{2}\left(\hspace{-0.075cm}\frac{2}{\bar{L}}\nabla f(\theta_k)^T(\theta_k-\theta^*)-\frac{1}{\bar{L}^2}\lVert\nabla f(\theta_k)\rVert^2-\lVert\theta_k-\theta^*\rVert^2+\lVert\theta_k-\theta^*\rVert^2\hspace{-0.075cm}\right).
    \end{equation*}
    Grouping terms with the iterative method in \eqref{e:Gradient_Method} with $\bar{\alpha}=1/\bar{L}$,
    \begin{equation*}
        0\leq f(\theta_{k+1})-f(\theta^*)\leq\frac{\bar{L}}{2}\left(\lVert\theta_k-\theta^*\rVert^2-\lVert\theta_{k+1}-\theta^*\rVert^2\right),
    \end{equation*}
    from which it can be seen that $\lVert\theta_{k+1}-\theta^*\rVert^2\leq\lVert\theta_k-\theta^*\rVert^2$. Thus from convexity \eqref{e:convex}, the dual bound may be expressed as,
    \begin{equation}\label{e:GD_Dual_Bound}
        f(\theta_k)-f(\theta^*)\leq\nabla f(\theta_k)^T(\theta_k-\theta^*)\leq\lVert\nabla f(\theta_k)\rVert\lVert\theta_k-\theta^*\rVert\leq\lVert\nabla f(\theta_k)\rVert\lVert\theta_0-\theta^*\rVert.
    \end{equation}
    Combining the primal progress in \eqref{e:GD_Primal_Progress} and the dual bound in \eqref{e:GD_Dual_Bound},
    \begin{equation*}
        (f(\theta_k)-f(\theta^*))-(f(\theta_{k+1})-f(\theta^*))\geq\frac{(f(\theta_k)-f(\theta^*))^2}{2\bar{L}\lVert\theta_0-\theta^*\rVert^2}.
    \end{equation*}
    Thus,
    \begin{align*}
        \frac{1}{f(\theta_{k+1})-f(\theta^*)}-\frac{1}{f(\theta_k)-f(\theta^*)}&=\frac{(f(\theta_k)-f(\theta^*))-(f(\theta_{k+1})-f(\theta^*))}{(f(\theta_k)-f(\theta^*))(f(\theta_{k+1})-f(\theta^*))}\\
        &\geq\frac{(f(\theta_k)-f(\theta^*))}{2\bar{L}\lVert\theta_0-\theta^*\rVert^2(f(\theta_{k+1})-f(\theta^*))}\\
        &\geq\frac{1}{2\bar{L}\lVert\theta_0-\theta^*\rVert^2}.
    \end{align*}
    Collecting terms and using $f(\theta_0)-f(\theta^*)\leq\bar{L}\lVert\theta_0-\theta^*\rVert^2/2$ (from $\bar{L}$-smoothness \eqref{e:smooth_convex}),
    \begin{equation*}
        \frac{1}{f(\theta_k)-f(\theta^*)}\geq\frac{1}{f(\theta_0)-f(\theta^*)}+\frac{k}{2\bar{L}\lVert\theta_0-\theta^*\rVert^2}\geq \frac{k+4}{2\bar{L}\lVert\theta_0-\theta^*\rVert^2}.
    \end{equation*}
    Bounding the inverse of the right hand side equality by $\epsilon$ completes the proof.
\end{proof}

\subsection{Gradient descent for smooth strongly convex functions}

\begin{theorem}[Modified from \citep{Bubeck_2015}]\label{theorem:GD_Exponential}
    For a $\bar{L}$-smooth and $\mu$-strongly convex function $f$, the iterates $\{\theta_k\}_{k=0}^{\infty}$ generated by \eqref{e:Gradient_Method} with $\bar{\alpha}=1/\bar{L}$ satisfy
    \begin{equation}
        f(\theta_k)-f(\theta^*)\leq(f(\theta_0)-f(\theta^*))\exp\left(-\frac{k}{\kappa}\right),
    \end{equation}
    where $\kappa=\bar{L}/\mu$, and therefore if
    \begin{equation}
        k\geq\left\lceil\kappa\log\left(\frac{f(\theta_0)-f(\theta^*)}{\epsilon}\right)\right\rceil,
    \end{equation}
    then $f(\theta_k)-f(\theta^*)\leq\epsilon$.
\end{theorem}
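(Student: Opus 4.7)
The plan is to combine the standard one-step descent lemma for smooth functions (already derived in the proof of Theorem \ref{theorem:GD_k}) with a Polyak–Łojasiewicz type inequality that follows from $\mu$-strong convexity, thereby turning the additive per-step progress into a multiplicative contraction.

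First, I would reuse the bound \eqref{e:GD_Primal_Progress_first}, which states $f(\theta_{k+1}) \leq f(\theta_k) - \frac{1}{2\bar{L}}\lVert \nabla f(\theta_k)\rVert^2$. Its derivation is identical here: it uses only $\bar{L}$-smoothness and the update $\theta_{k+1} = \theta_k - \bar{\alpha}\nabla f(\theta_k)$ with $\bar{\alpha} = 1/\bar{L}$, both of which are hypotheses of the present theorem.

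Next, I would invoke $\mu$-strong convexity in the form of a gradient-norm lower bound. Applying \eqref{e:strongly_convex} with $x = \theta_k$ and $y = \theta^*$ gives $f(\theta^*) \geq f(\theta_k) + \nabla f(\theta_k)^T(\theta^* - \theta_k) + \frac{\mu}{2}\lVert \theta^*-\theta_k\rVert^2$. Rather than keep this bound tied to the specific point $\theta^*$, I would minimize the right-hand side over all of $\mathbb{R}^N$: the minimizer of $z \mapsto f(\theta_k) + \nabla f(\theta_k)^T(z - \theta_k) + \frac{\mu}{2}\lVert z-\theta_k\rVert^2$ is $z = \theta_k - \nabla f(\theta_k)/\mu$, with minimum value $f(\theta_k) - \frac{1}{2\mu}\lVert \nabla f(\theta_k)\rVert^2$. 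Since $f(\theta^*)$ is no smaller than this minimum, I obtain the Polyak–Łojasiewicz inequality
\begin{equation*}
    \lVert \nabla f(\theta_k)\rVert^2 \;\geq\; 2\mu\bigl(f(\theta_k) - f(\theta^*)\bigr).
\end{equation*}

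Substituting this into the smoothness-based descent bound and subtracting $f(\theta^*)$ from both sides yields the one-step contraction
\begin{equation*}
    f(\theta_{k+1}) - f(\theta^*) \;\leq\; \Bigl(1 - \tfrac{\mu}{\bar{L}}\Bigr)\bigl(f(\theta_k) - f(\theta^*)\bigr) \;=\; \Bigl(1 - \tfrac{1}{\kappa}\Bigr)\bigl(f(\theta_k) - f(\theta^*)\bigr).
\end{equation*}
Iterating this inequality $k$ times and using the elementary bound $1 - 1/\kappa \leq \exp(-1/\kappa)$ (valid since $\kappa = \bar{L}/\mu \geq 1$ for any $\bar{L}$-smooth, $\mu$-strongly convex function) gives $f(\theta_k) - f(\theta^*) \leq (f(\theta_0) - f(\theta^*))\exp(-k/\kappa)$, which is the claimed rate.

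Finally, to obtain the iteration complexity it suffices to enforce $(f(\theta_0) - f(\theta^*))\exp(-k/\kappa) \leq \epsilon$, which rearranges directly into $k \geq \kappa \log\bigl((f(\theta_0)-f(\theta^*))/\epsilon\bigr)$, and taking the ceiling gives the stated bound. There is no real obstacle here; the only subtlety worth flagging is the minimization-over-$y$ argument that produces the PL inequality from strong convexity, which is the step that converts strong convexity into a quantitative gradient lower bound and therefore drives the exponential decay.
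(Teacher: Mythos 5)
Your proposal is correct and follows essentially the same route as the paper's proof: the smoothness-based descent bound $f(\theta_{k+1})\leq f(\theta_k)-\frac{1}{2\bar{L}}\lVert\nabla f(\theta_k)\rVert^2$, the gradient lower bound $f(\theta_k)-f(\theta^*)\leq\frac{1}{2\mu}\lVert\nabla f(\theta_k)\rVert^2$ obtained by minimizing the strong-convexity quadratic over its second argument, and the resulting $(1-\mu/\bar{L})$ contraction iterated and bounded by $\exp(-k/\kappa)$. No gaps.
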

\begin{proof}[Modified from \citep{Bubeck_2015}]
    From $\bar{L}$-smoothness \eqref{e:smooth_convex},
    \begin{equation*}
        f(\theta_{k+1})\leq f(\theta_k)+\nabla f(\theta_k)^T(\theta_{k+1}-\theta_k)+\frac{\bar{L}}{2}\lVert \theta_{k+1}-\theta_k\rVert^2.
    \end{equation*}
    Applying the iterative method in \eqref{e:Gradient_Method} with $\bar{\alpha}=1/\bar{L}$,
    \begin{equation*}
        f(\theta_{k+1})\leq f(\theta_k)-\frac{1}{2\bar{L}}\lVert\nabla f(\theta_k)\rVert^2,
    \end{equation*}
    and thus the primal progress may be bounded as,
    \begin{equation}\label{e:GD_Primal_Progress_SC}
        (f(\theta_k)-f(\theta^*))-(f(\theta_{k+1})-f(\theta^*))=f(\theta_k)-f(\theta_{k+1})\geq\frac{1}{2\bar{L}}\lVert\nabla f(\theta_k)\rVert^2.
    \end{equation}
    From $\mu$-strong convexity \eqref{e:strongly_convex},
    \begin{equation*}
        f(y)\geq f(x)+\nabla f(x)^T(y-x)+\frac{\mu}{2}\lVert y-x\rVert^2:=\underbar{L}(y).
    \end{equation*}
    Setting the gradient of the lower bounding quadratic $\underbar{L}(y)$ with respect to $y$ equal to zero,
    \begin{equation*}
        0=\nabla f(x)+\mu(y-x)\quad\Rightarrow\quad y=x-\frac{1}{\mu}\nabla f(x).
    \end{equation*}
    Choosing this value of $y$,
    \begin{equation*}
        f(y)\geq f(x)-\frac{1}{2\mu}\lVert\nabla f(x)\rVert^2.
    \end{equation*}
    Setting $x=\theta_k$ and $y=\theta^*$ results in the dual bound,
    \begin{equation}\label{e:GD_Dual_Bound_SC}
        f(\theta_k)-f(\theta^*)\leq\frac{1}{2\mu}\lVert\nabla f(\theta_k)\rVert^2.
    \end{equation}
    Combining the primal progress in \eqref{e:GD_Primal_Progress_SC} and the dual bound in \eqref{e:GD_Dual_Bound_SC},
    \begin{equation*}
        (f(\theta_k)-f(\theta^*))-(f(\theta_{k+1})-f(\theta^*))\geq\frac{\mu}{\bar{L}}(f(\theta_k)-f(\theta^*)).
    \end{equation*}
    Thus,
    \begin{equation*}
        f(\theta_{k+1})-f(\theta^*)\leq\left(1-\frac{\mu}{\bar{L}}\right)(f(\theta_k)-f(\theta^*)).
    \end{equation*}
    Collecting terms,
    \begin{align*}
        f(\theta_k)-f(\theta^*)&\leq\left(1-\frac{\mu}{\bar{L}}\right)^k(f(\theta_0)-f(\theta^*))\\
        &\leq(f(\theta_0)-f(\theta^*))\exp\left(-\frac{k}{\kappa}\right).
    \end{align*}
    Bounding the right hand side by $\epsilon$ completes the proof.
\end{proof}

\subsection{Heavy Ball method for symmetric positive definite quadratic functions}

\begin{theorem}[Modified from \citep{Lessard_2016,Recht_HB}]\label{theorem:Heavy_BALL}
    For a $\bar{L}$-smooth and $\mu$-strongly convex quadratic function $f(\theta)=\frac{1}{2}\theta^TA\theta-b^T\theta+c$, where $A\in\mathbb{R}^{N\times N}$ is symmetric positive definite, $b\in\mathbb{R}^N$, $c\in\mathbb{R}$, the iterates $\{\theta_k\}_{k=0}^{\infty}$ generated by \eqref{e:Heavy_Ball} with $\kappa=\bar{L}/\mu$, $\bar{\alpha}=4/\left(\sqrt{\bar{L}}+\sqrt{\mu}\right)^2$, and $\bar{\beta}=\left(\max\left\{\lvert1-\sqrt{\bar{\alpha}\bar{L}}\rvert,\lvert1-\sqrt{\bar{\alpha}\mu}\rvert\right\}\right)^2$ satisfy
    \begin{equation*}
        \left\lVert\begin{bmatrix}
            \tilde{\theta}_{k+1}\\
            \tilde{\theta}_{k}
        \end{bmatrix}\right\rVert\leq\left(\frac{\sqrt{\kappa}-1}{\sqrt{\kappa}+1}+\varepsilon_k\right)^k\left\lVert\begin{bmatrix}
            \tilde{\theta}_1\\
            \tilde{\theta}_0
        \end{bmatrix}\right\rVert,
    \end{equation*}
    where $\varepsilon_k\geq0$ is a sequence such that $\lVert T^k\rVert\leq(\rho(T)+\varepsilon_k)^k$, $\lim_{k\rightarrow\infty}\varepsilon_k=0$, $\rho(t)=\max|eig(T)|$, and where
    \begin{equation*}
        T=\begin{bmatrix}
            \left(1+\bar{\beta}\right)I-\bar{\alpha}A & -\bar{\beta}I\\
            I & 0
        \end{bmatrix}.
    \end{equation*}
\end{theorem}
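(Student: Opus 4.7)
The plan is to rewrite the Heavy Ball recursion as a linear dynamical system in the error coordinates $\tilde{\theta}_k = \theta_k - \theta^*$, then use standard spectral analysis to bound the growth of iterates by the spectral radius of the associated transition matrix.

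First I would observe that for the quadratic $f(\theta)=\tfrac12\theta^TA\theta-b^T\theta+c$, the gradient is $\nabla f(\theta)=A\theta-b$, and since $A$ is invertible (being symmetric positive definite), the minimizer satisfies $A\theta^*=b$. Thus $\nabla f(\theta)=A\tilde{\theta}$. Substituting into \eqref{e:Heavy_Ball} and subtracting $\theta^*$ from both sides gives $\tilde{\theta}_{k+1}=[(1+\bar{\beta})I-\bar{\alpha}A]\tilde{\theta}_k-\bar{\beta}\tilde{\theta}_{k-1}$, which can be written in block form as
\begin{equation*}
\begin{bmatrix}\tilde{\theta}_{k+1}\\ \tilde{\theta}_k\end{bmatrix}=T\begin{bmatrix}\tilde{\theta}_k\\ \tilde{\theta}_{k-1}\end{bmatrix},\quad T=\begin{bmatrix}(1+\bar{\beta})I-\bar{\alpha}A & -\bar{\beta}I\\ I & 0\end{bmatrix}.
\end{equation*}
Iterating from $k=1$ yields $[\tilde{\theta}_{k+1};\tilde{\theta}_k]=T^k[\tilde{\theta}_1;\tilde{\theta}_0]$. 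Taking norms and invoking the stated bound $\lVert T^k\rVert\le(\rho(T)+\varepsilon_k)^k$ (which holds with $\varepsilon_k\to 0$ by Gelfand's formula) reduces the problem to computing $\rho(T)$.

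Next I would diagonalize $A=Q\Lambda Q^T$ with $\Lambda=\mathrm{diag}(\lambda_1,\dots,\lambda_N)$ and $\mu\le\lambda_i\le\bar{L}$. After an orthogonal change of basis, $T$ decouples (up to a permutation) into $N$ independent $2\times 2$ blocks
\begin{equation*}
T_i=\begin{bmatrix}1+\bar{\beta}-\bar{\alpha}\lambda_i & -\bar{\beta}\\ 1 & 0\end{bmatrix},
\end{equation*}
whose eigenvalues are the roots of $z^2-(1+\bar{\beta}-\bar{\alpha}\lambda_i)z+\bar{\beta}=0$. When the discriminant $(1+\bar{\beta}-\bar{\alpha}\lambda_i)^2-4\bar{\beta}$ is non-positive, both roots are complex conjugates with modulus $\sqrt{\bar{\beta}}$; otherwise they are real and the larger modulus is bounded by $\max\{|1-\sqrt{\bar{\alpha}\lambda_i}|^2/\sqrt{\bar{\beta}},\sqrt{\bar{\beta}}\}$. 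The choice $\bar{\alpha}=4/(\sqrt{\bar{L}}+\sqrt{\mu})^2$ and $\bar{\beta}=\bigl(\max\{|1-\sqrt{\bar{\alpha}\bar{L}}|,|1-\sqrt{\bar{\alpha}\mu}|\}\bigr)^2$ is precisely the balancing choice that makes the discriminant non-positive for every $\lambda_i\in[\mu,\bar{L}]$, so that $\rho(T_i)=\sqrt{\bar{\beta}}$ uniformly.

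Finally, a direct computation with the prescribed step size gives $|1-\sqrt{\bar{\alpha}\bar{L}}|=|1-\sqrt{\bar{\alpha}\mu}|=(\sqrt{\bar{L}}-\sqrt{\mu})/(\sqrt{\bar{L}}+\sqrt{\mu})$, hence $\sqrt{\bar{\beta}}=(\sqrt{\kappa}-1)/(\sqrt{\kappa}+1)$. Since $\rho(T)=\max_i\rho(T_i)=\sqrt{\bar{\beta}}$, combining with $\lVert T^k\rVert\le(\rho(T)+\varepsilon_k)^k$ yields the stated bound. The main obstacle is the spectral-radius computation: verifying that the balanced choice of $(\bar{\alpha},\bar{\beta})$ actually pushes every $2\times 2$ block into the complex-conjugate-eigenvalue regime so that the bound is uniform in $i$, and that this choice is also the one that minimizes the worst-case spectral radius. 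Once that is established, the rest of the argument is a mechanical combination of diagonalization and Gelfand's formula.
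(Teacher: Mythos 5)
Your proposal is correct and follows essentially the same route as the paper: reduce the Heavy Ball recursion to the linear system $[\tilde{\theta}_{k+1};\tilde{\theta}_k]=T[\tilde{\theta}_k;\tilde{\theta}_{k-1}]$, diagonalize $A$ to decouple $T$ into $2\times2$ blocks with characteristic polynomial $s^2-(1+\bar{\beta}-\bar{\alpha}\lambda_i)s+\bar{\beta}$, verify that the prescribed $(\bar{\alpha},\bar{\beta})$ puts every block in the complex-conjugate regime so $\rho(T)=\sqrt{\bar{\beta}}=(\sqrt{\kappa}-1)/(\sqrt{\kappa}+1)$, and conclude via $\lVert T^k\rVert\leq(\rho(T)+\varepsilon_k)^k$. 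The only cosmetic difference is that you use $\nabla f(\theta)=A\tilde{\theta}$ directly where the paper invokes the Mean Value Theorem (equivalent for a quadratic), and you name Gelfand's formula explicitly.
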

\begin{proof}[Modified from \citep{Lessard_2016,Recht_HB}]
    Consider the following extended vector using \eqref{e:Heavy_Ball}
    \begin{equation*}
        \begin{bmatrix}
            \tilde{\theta}_{k+1}\\
            \tilde{\theta}_k
        \end{bmatrix}
        =
        \begin{bmatrix}
            \left(1+\bar{\beta}\right)\theta_k - \bar{\beta}\theta_{k-1}-\bar{\alpha}\nabla f(\theta_k)-\theta^*\\
            \tilde{\theta}_k
        \end{bmatrix}.
    \end{equation*}
    For $\bar{\theta}_k\in[\theta_k,\theta^*]$ if $\theta_k\leq\theta^*$ or $\bar{\theta}_k\in[\theta^*,\theta_k]$ if $\theta^*<\theta_k$, by the Mean Value Theorem,
    \begin{align*}
        \begin{bmatrix}
            \tilde{\theta}_{k+1}\\
            \tilde{\theta}_k
        \end{bmatrix}
        &=
        \begin{bmatrix}
            \left(1+\bar{\beta}\right)\theta_k - \bar{\beta}\theta_{k-1}-\bar{\alpha}\nabla^2 f(\bar{\theta}_k)(\theta_k-\theta^*)-\theta^*\\
            \tilde{\theta}_k
        \end{bmatrix}\\
        &=\begin{bmatrix}
            \left(1+\bar{\beta}\right)I-\bar{\alpha}\nabla^2 f(\bar{\theta}_k) & -\bar{\beta}I\\
            I & 0
        \end{bmatrix}
        \begin{bmatrix}
            \tilde{\theta}_k\\
            \tilde{\theta}_{k-1}
        \end{bmatrix}\\
        &=\underbrace{\begin{bmatrix}
            \left(1+\bar{\beta}\right)I-\bar{\alpha}A & -\bar{\beta}I\\
            I & 0
        \end{bmatrix}}_{T}
        \begin{bmatrix}
            \tilde{\theta}_k\\
            \tilde{\theta}_{k-1}
        \end{bmatrix}.
    \end{align*}
    Therefore, for $\rho(T)\leq\frac{\sqrt{\kappa}-1}{\sqrt{\kappa}+1}$, collecting terms,
    \begin{equation*}
        \left\lVert\begin{bmatrix}
            \tilde{\theta}_{k+1}\\
            \tilde{\theta}_{k}
        \end{bmatrix}\right\rVert\leq\left(\frac{\sqrt{\kappa}-1}{\sqrt{\kappa}+1}+\varepsilon_k\right)^k\left\lVert\begin{bmatrix}
            \tilde{\theta}_1\\
            \tilde{\theta}_0
        \end{bmatrix}\right\rVert,
    \end{equation*}
    where $\varepsilon_k\geq0$ is a sequence such that $\lVert T^k\rVert\leq(\rho(T)+\varepsilon_k)^k$ and $\lim_{k\rightarrow\infty}\varepsilon_k=0$ and $\rho(t)=\max|eig(T)|$ (see \citep{Lessard_2016,Recht_HB} for a discussion of $\varepsilon_k$).
    
    We now proceed to show that indeed $\rho(T)\leq\frac{\sqrt{\kappa}-1}{\sqrt{\kappa}+1}$. Given that $A$ is a real symmetric positive definite matrix, it has an eigendecomposition $A=Q\Lambda Q^T$, where $Q$ is an orthogonal matrix and $\Lambda=\text{diag}(\lambda_1,\ldots,\lambda_N)$, where $\lambda_i$ are the eigenvalues of $A$.
    Therefore,
    \begin{equation*}
        T=
        \begin{bmatrix}
            Q & 0\\
            0 & Q
        \end{bmatrix}
        \begin{bmatrix}
            \left(1+\bar{\beta}\right)I-\bar{\alpha}\Lambda & -\bar{\beta}I\\
            I & 0
        \end{bmatrix}
        \begin{bmatrix}
            Q & 0\\
            0 & Q
        \end{bmatrix}^T.
    \end{equation*}
    By a similarity transformation, the eigenvalues of $T$ are the same as the eigenvalues of
    \begin{equation*}
        T_i=
        \begin{bmatrix}
            1+\bar{\beta}-\bar{\alpha}\lambda_i& -\bar{\beta}\\
            1 & 0
        \end{bmatrix}, \quad i\in\{1,2,\ldots,N\}.
    \end{equation*}
    Thus for each $i\in\{1,2,\ldots,N\}$, the eigenvalues of each $T_i$ may be calculated from the roots of $s^2-\left(1+\bar{\beta}-\bar{\alpha}\lambda_i\right)s+\bar{\beta}=0$. If $\left(1+\bar{\beta}-\bar{\alpha}\lambda_i\right)^2\leq 4\bar{\beta}$, then the magnitude of the roots may be bounded from above by $\sqrt{\bar{\beta}}$. The condition $\left(1+\bar{\beta}-\bar{\alpha}\lambda_i\right)^2\leq 4\bar{\beta}$ is satisfied if
    \begin{equation*}
        \bar{\beta}\in\left[(1-\sqrt{\bar{\alpha}\lambda_i})^2,(1+\sqrt{\bar{\alpha}\lambda_i})^2\right],
    \end{equation*}
    which holds for the chosen $\bar{\beta}=\left(\max\left\{\lvert1-\sqrt{\bar{\alpha}\bar{L}}\rvert,\lvert1-\sqrt{\bar{\alpha}\mu}\rvert\right\}\right)^2$. Therefore, $\rho(T)\leq\sqrt{\bar{\beta}}$.
    
    Using $\bar{\alpha}=4/\left(\sqrt{\bar{L}}+\sqrt{\mu}\right)^2$ and $\bar{\beta}=\left(\max\left\{\lvert1-\sqrt{\bar{\alpha}\bar{L}}\rvert,\lvert1-\sqrt{\bar{\alpha}\mu}\rvert\right\}\right)^2$, it can be seen that
    \begin{equation*}
        \bar{\beta}=\max\left\{\left(1-\frac{2\sqrt{\bar{L}}}{\sqrt{\bar{L}}+\sqrt{\mu}}\right)^2,\left(1-\frac{2\sqrt{\mu}}{\sqrt{\bar{L}}+\sqrt{\mu}}\right)^2\right\}=\left(\frac{\sqrt{\bar{L}}-\sqrt{\mu}}{\sqrt{\bar{L}}+\sqrt{\mu}}\right)^2=\left(\frac{\sqrt{\kappa}-1}{\sqrt{\kappa}+1}\right)^2,
    \end{equation*}
    and therefore,
    \begin{equation*}
        \rho(T)\leq\sqrt{\bar{\beta}}=\frac{\sqrt{\kappa}-1}{\sqrt{\kappa}+1}
    \end{equation*}
\end{proof}

\subsection{Nesterov's method for smooth convex functions}

\begin{theorem}[Modified from \citep{Bubeck_2015,Nesterov_2018}]\label{theorem:Nesterov_Convex}
    For a $\bar{L}$-smooth convex function $f$, the iterates $\{\theta_k\}_{k=0}^{\infty}$ generated by \eqref{e:Nesterov_Two_Convex_TV_beta} with $\theta_0=\nu_0$, $\bar{\alpha}=1/\bar{L}$, and $\bar{\beta}_k$ chosen as
    \begin{align}\label{e:beta_k}
        \begin{split}
            \iota_{-1} &= 0,\quad \iota_{k+1}=\frac{1+\sqrt{1+4\iota_k^2}}{2},\\
            \bar{\beta}_k&=\frac{\iota_k-1}{\iota_{k+1}},
        \end{split}
    \end{align}
    satisfy
    \begin{equation}\label{e:Nesterov_convergence}
        f(\theta_k)-f(\theta^*)\leq\frac{2\bar{L}\lVert\theta_0-\theta^*\rVert^2}{(k-1)^2},
    \end{equation}
    and therefore if
    \begin{equation}
        k\geq\left\lceil\sqrt{\frac{2\bar{L}\lVert\theta_0-\theta^*\rVert^2}{\epsilon}}+1\right\rceil,
    \end{equation}
    then $f(\theta_k)-f(\theta^*)\leq\epsilon$.
\end{theorem}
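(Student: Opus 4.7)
}

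The plan is the classical potential-function (a.k.a.\ estimating sequence) argument for Nesterov's accelerated method, adapted to the form in \eqref{e:Nesterov_Two_Convex_TV_beta}. Write $\delta_k := f(\theta_k) - f(\theta^*)$. The first step is the standard smoothness-based descent inequality: since $\theta_{k+1} = \nu_k - (1/\bar{L})\nabla f(\nu_k)$, evaluating \eqref{e:smooth_convex} at $y=\theta_{k+1}$, $x=\nu_k$ yields
\begin{equation*}
    f(\theta_{k+1}) \;\leq\; f(\nu_k) \;-\; \tfrac{1}{2\bar L}\lVert \nabla f(\nu_k)\rVert^{2}.
\end{equation*}
Combining this with the convexity bound \eqref{e:convex} applied at $\nu_k$ against the two points $\theta_k$ and $\theta^*$, and taking the convex combination with weights $\tau_k := 1/\iota_{k+1}$ on $\theta^*$ and $1-\tau_k$ on $\theta_k$, produces the key pointwise inequality
\begin{equation*}
    f(\theta_{k+1}) - f^\ast \;\leq\; (1-\tau_k)\,\delta_k \;+\; \nabla f(\nu_k)^{\!T}\bigl(\nu_k - (1-\tau_k)\theta_k - \tau_k\theta^\ast\bigr) \;-\; \tfrac{1}{2\bar L}\lVert\nabla f(\nu_k)\rVert^{2}.
\end{equation*}

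Next, multiply through by $\iota_{k+1}^{2}$ and invoke the defining identity $\iota_{k+1}^{2} - \iota_{k+1} = \iota_{k}^{2}$ implied by \eqref{e:beta_k}. Defining the auxiliary sequence $u_k := \iota_{k-1}\theta_k - (\iota_{k-1}-1)\theta_{k-1}$ (with $u_0 := \theta_0$), a short calculation using $\bar{\beta}_k = (\iota_k-1)/\iota_{k+1}$ and the gradient step shows that
\begin{equation*}
    u_{k+1} \;=\; \iota_{k+1}\nu_k - (\iota_{k+1}-1)\theta_k \;-\; \tfrac{\iota_{k+1}}{\bar L}\nabla f(\nu_k),
\end{equation*}
so that the linear-and-quadratic-in-$\nabla f(\nu_k)$ terms on the right-hand side can be completed into the square
\begin{equation*}
    \tfrac{\bar L}{2}\bigl\lVert \iota_{k+1}\nu_k - (\iota_{k+1}-1)\theta_k - \theta^\ast\bigr\rVert^{2} \;-\; \tfrac{\bar L}{2}\lVert u_{k+1} - \theta^\ast\rVert^{2}.
\end{equation*}
Checking that $\iota_{k+1}\nu_k - (\iota_{k+1}-1)\theta_k = u_k$ (via the explicit momentum update and the recursion for $\iota_k$) then yields the telescoping relation
\begin{equation*}
    \iota_{k+1}^{2}\,\delta_{k+1} \;+\; \tfrac{\bar L}{2}\lVert u_{k+1}-\theta^\ast\rVert^{2} \;\leq\; \iota_{k}^{2}\,\delta_{k} \;+\; \tfrac{\bar L}{2}\lVert u_{k}-\theta^\ast\rVert^{2}.
\end{equation*}

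Finally, iterating this inequality from $0$ to $k-1$ and dropping the nonnegative norm term gives $\iota_{k-1}^{2}\,\delta_k \leq \iota_{-1}^{2}\delta_0 + (\bar L/2)\lVert\theta_0-\theta^\ast\rVert^{2} = (\bar L/2)\lVert\theta_0-\theta^\ast\rVert^{2}$, using $\iota_{-1}=0$. A standard induction on \eqref{e:beta_k} shows $\iota_{k-1} \geq k/2$, so $\delta_k \leq 2\bar L\lVert\theta_0-\theta^\ast\rVert^{2}/(k-1)^{2}$ (an off-by-one bookkeeping step gives the exact denominator in \eqref{e:Nesterov_convergence}), and inverting to solve $\delta_k\leq\epsilon$ yields the stated iteration count.

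\paragraph{Main obstacle.} The routine pieces (smoothness, convexity, induction on $\iota_k$) are easy; the delicate part is the algebraic identification of the perfect square, i.e.\ verifying that the choice $\bar{\beta}_k=(\iota_k-1)/\iota_{k+1}$ is exactly what makes $\iota_{k+1}\nu_k-(\iota_{k+1}-1)\theta_k$ coincide with $u_k$ defined from the previous iterate. This is the step where the otherwise mysterious recursion for $\iota_k$ is used twice (once for $\iota_{k+1}^2-\iota_{k+1}=\iota_k^2$, once inside the momentum coefficient), and it is where a careless index shift would destroy the telescoping.
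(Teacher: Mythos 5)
Your proposal is correct and follows essentially the same route as the paper's proof: the same smoothness-plus-convexity combination at $\nu_k$ against $\theta_k$ and $\theta^*$ (your weights $\tau_k=1/\iota_{k+1}$, $1-\tau_k$ are the paper's weighted sum $(\iota_k-1)(f(\theta_{k+1})-f(\theta_k))+(f(\theta_{k+1})-f(\theta^*))$ up to normalization and the index shift you flag), the same use of $\iota_{k+1}^2-\iota_{k+1}=\iota_k^2$ together with the completion of the square and the momentum identity $\iota_{k+1}\nu_{k+1}-(\iota_{k+1}-1)\theta_{k+1}=\iota_k\theta_{k+1}-(\iota_k-1)\theta_k$ to telescope the potential, and the same induction $\iota_k\geq k/2$ to conclude. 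The only discrepancies are the off-by-one bookkeeping issues you already acknowledge, which do not affect the argument.
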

\begin{proof}[Modified from \citep{Bubeck_2015}]
    We begin by upper bounding the difference in objective values between iterates:
    \begin{align}\label{e:Nesterov_Convex_f_kp1_m_k}
        \begin{split}
            f(\theta_{k+1})-f(\theta_k)&=f(\theta_{k+1})-f(\nu_k)+f(\nu_k)-f(\theta_k)\\
            &\overset{\eqref{e:smooth_convex}}{\leq}\nabla f(\nu_k)^T(\theta_{k+1}-\nu_k)+\frac{\bar{L}}{2}\lVert\theta_{k+1}-\nu_k\rVert^2+f(\nu_k)-f(\theta_k)\\
            &\overset{\eqref{e:convex}}{\leq}\nabla f(\nu_k)^T(\theta_{k+1}-\nu_k)+\frac{\bar{L}}{2}\lVert\theta_{k+1}-\nu_k\rVert^2+\nabla f(\nu_k)^T(\nu_k-\theta_k)\\
            &\overset{\eqref{e:Nesterov_Two_Convex}}{=}\nabla f(\nu_k)^T(\theta_{k+1}-\theta_k)+\frac{1}{2\bar{L}}\lVert\nabla f(\nu_k)\rVert^2\\
            &\overset{\eqref{e:Nesterov_Two_Convex}}{=}\nabla f(\nu_k)^T(\nu_k-\theta_k)-\frac{1}{2\bar{L}}\lVert\nabla f(\nu_k)\rVert^2\\
            &\overset{\eqref{e:Nesterov_Two_Convex}}{=}\bar{L}(\nu_k-\theta_{k+1})^T(\nu_k-\theta_k)-\frac{\bar{L}}{2}\lVert\nu_k-\theta_{k+1}\rVert^2.
        \end{split}
    \end{align}
    With the same procedure the difference in objective value between the next iterate and optimum may be bounded as
    \begin{equation}\label{e:Nesterov_Convex_f_kp1_m_star}
        f(\theta_{k+1})-f(\theta^*)\leq\bar{L}(\nu_k-\theta_{k+1})^T(\nu_k-\theta^*)-\frac{\bar{L}}{2}\lVert\nu_k-\theta_{k+1}\rVert^2.
    \end{equation}
    Using \eqref{e:Nesterov_Convex_f_kp1_m_k} and \eqref{e:Nesterov_Convex_f_kp1_m_star}:
    \begin{align*}
        &(\iota_k-1)(f(\theta_{k+1})-f(\theta_k))+(f(\theta_{k+1})-f(\theta^*))=\\
        &\quad \iota_k(f(\theta_{k+1})-f(\theta^*))-(\iota_k-1)(f(\theta_k)-f(\theta^*))\leq\\
        &\quad\bar{L}(\nu_k-\theta_{k+1})^T(\iota_k\nu_k-(\iota_k-1)\theta_k-\theta^*)-\frac{\bar{L}\iota_k}{2}\lVert\nu_k-\theta_{k+1}\rVert^2
    \end{align*}
    From \eqref{e:beta_k} it can be noted that $\iota_k^2-\iota_k=\iota_{k-1}^2$. Furthermore, for all $a,b\in\mathbb{R}^N$: $2a^Tb-\lVert a\rVert^2=\lVert b\rVert^2-\lVert b-a\rVert^2$. Therefore,
    \begin{align}\label{e:Nesterov_Convex_intermediate1}
        \begin{split}
            \iota_k^2(f(\theta_{k+1})&-f(\theta^*))-\iota_{k-1}^2(f(\theta_k)-f(\theta^*))\leq\\
            &\iota_k\bar{L}(\nu_k-\theta_{k+1})^T(\iota_k\nu_k-(\iota_k-1)\theta_k-\theta^*)-\frac{\bar{L}\iota_k^2}{2}\lVert\nu_k-\theta_{k+1}\rVert^2\\
            &=\frac{\bar{L}}{2}\left(2\iota_k(\nu_k-\theta_{k+1})^T(\iota_k\nu_k-(\iota_k-1)\theta_k-\theta^*)-\lVert\iota_k(\nu_k-\theta_{k+1})\rVert^2\right)\\
            &=\frac{\bar{L}}{2}\left(\lVert\iota_k\nu_k-(\iota_k-1)\theta_k-\theta^*\rVert^2-\lVert\iota_k\theta_{k+1}-(\iota_k-1)\theta_k-\theta^*\rVert^2\right).
        \end{split}
    \end{align}
    Multiplying \eqref{e:Nesterov_Two_Convex} by $\iota_{k+1}$ and using \eqref{e:beta_k},
    \begin{equation*}
        \iota_{k+1}\nu_{k+1}=\left(\iota_{k+1}+\iota_k-1\right)\theta_{k+1} - (\iota_k-1)\theta_{k},
    \end{equation*}
    and thus,
    \begin{equation}\label{e:Nesterov_Convex_intermediate2}
        \iota_{k+1}\nu_{k+1}-(\iota_{k+1}-1)\theta_{k+1}=\iota_k\theta_{k+1}-(\iota_k-1)\theta_k.
    \end{equation}
    Using \eqref{e:Nesterov_Convex_intermediate1} and \eqref{e:Nesterov_Convex_intermediate2},
    \begin{align*}
        &\iota_k^2(f(\theta_{k+1})-f(\theta^*))-\iota_{k-1}^2(f(\theta_k)-f(\theta^*))\\
        &\quad\leq\frac{\bar{L}}{2}\left(\lVert\left\{\iota_k\nu_k-(\iota_k-1)\theta_k\right\}-\theta^*\rVert^2-\lVert\left\{\iota_{k+1}\nu_{k+1}-(\iota_{k+1}-1)\theta_{k+1}\right\}-\theta^*\rVert^2\right).
    \end{align*}
    Collecting terms with the initial conditions,
    \begin{equation}\label{e:Nesterov_Convex_intermediate3}
        f(\theta_k)-f(\theta^*)\leq \frac{\bar{L}}{2\iota_{k-1}^2}\lVert\theta_0-\theta^*\rVert^2.
    \end{equation}
    Next, it will be shown by induction that the  parameters $\iota_k$ in \eqref{e:beta_k} satisfy $\iota_k\geq k/2$ for all $k\geq0$. Base case: Note from the initial condition that the inequality is satisfied. Using \eqref{e:beta_k} with $\iota_k\geq k/2$ it can be seen that $\iota_{k+1}\geq(1+\sqrt{1+4(k/2)^2})/2\geq(k+1)/2$, completing the proof by induction. Therefore \eqref{e:Nesterov_Convex_intermediate3} may be bounded as
    \begin{equation*}
        f(\theta_k)-f(\theta^*)\leq\frac{2\bar{L}\lVert\theta_0-\theta^*\rVert^2}{(k-1)^2}.
    \end{equation*}
\end{proof}

\subsection{Nesterov's method for smooth strongly convex functions}

\begin{manualtheorem}{\ref{theorem:Nesterov_SC_paper} from Main Text}[Modified from \citep{Bubeck_2015,Nesterov_2018}]
    For a $\bar{L}$-smooth and $\mu$-strongly convex function $f$, the iterates $\{\theta_k\}_{k=0}^{\infty}$ generated by \eqref{e:Nesterov_Two_Convex} with $\theta_0=\nu_0$, $\bar{\alpha}=1/\bar{L}$, $\kappa=\bar{L}/\mu$, and $\bar{\beta}=(\sqrt{\kappa}-1)/(\sqrt{\kappa}+1)$ satisfy
    \begin{equation} \label{e:Nesterov_ConstantBetaBar_convergence}
        f(\theta_k)-f(\theta^*)\leq\frac{\bar{L}+\mu}{2}\lVert\theta_0-\theta^*\rVert^2\exp\left(-\frac{k}{\sqrt{\kappa}}\right),
    \end{equation}
    and therefore if
    \begin{equation}
        k\geq\left\lceil\sqrt{\kappa}\log\left(\frac{(\bar{L}+\mu)\lVert\theta_0-\theta^*\rVert^2}{2\epsilon}\right)\right\rceil,
    \end{equation}
    then $f(\theta_k)-f(\theta^*)\leq\epsilon$.
\end{manualtheorem}
\begin{proof}[Modified from \citep{Bubeck_2015}]
    Consider the following sequence of $\mu$-strongly convex quadratic functions defined as
    \begin{align}\label{e:Nesterov_Phi_Definition}
        \begin{split}
            \Phi_0(\theta)&=f(\nu_0)+\frac{\mu}{2}\lVert\theta-\nu_0\rVert^2\\
            \Phi_{k+1}(\theta)&=\left(1-\frac{1}{\sqrt{\kappa}}\right)\Phi_k(\theta)+\frac{1}{\sqrt{\kappa}}\left(f(\nu_k)+\nabla f(\nu_k)^T(\theta-\nu_k)+\frac{\mu}{2}\lVert\theta-\nu_k\rVert^2\right).
        \end{split}
    \end{align}
    Given that $f$ is $\mu$-strongly convex,
    \begin{equation*}
        f(\theta)\geq f(\nu_k)+\nabla f(\nu_k)^T(\theta-\nu_k)+\frac{\mu}{2}\lVert\theta-\nu_k\rVert^2,
    \end{equation*}
    and thus $\Phi_{k+1}(\theta)$ may be bounded as
    \begin{equation*}
        \Phi_{k+1}(\theta)\leq\left(1-\frac{1}{\sqrt{\kappa}}\right)\Phi_k(\theta)+\frac{1}{\sqrt{\kappa}}f(\theta).
    \end{equation*}
    Collecting terms, the function $\Phi_k$ can be seen to provide a lower bound approximation of $f$ as
    \begin{equation}\label{e:Nesterov_Phi_kp1}
        \Phi_k(\theta)\leq\left(1-\frac{1}{\sqrt{\kappa}}\right)^k\left(\Phi_0(\theta)-f(\theta)\right)+f(\theta).
    \end{equation}
    Assume for the moment the following inequality relating the function value at step $k$, $f(\theta_k)$, to the minimum value of the function $\Phi_k$:
    \begin{equation}\label{e:Nesterov_f_leq_Phi_k}
        f(\theta_k)\leq\min_{\theta}\Phi_k(\theta)=\Phi^*_k.
    \end{equation}
    Assuming the inequality in \eqref{e:Nesterov_f_leq_Phi_k}, the proof of the theorem follows as
    \begin{align}
        \begin{split}
            f(\theta_k)-f(\theta^*)&\overset{\eqref{e:Nesterov_f_leq_Phi_k}}{\leq} \Phi_k(\theta^*)-f(\theta^*)\overset{\eqref{e:Nesterov_Phi_kp1}}{\leq}\left(1-\frac{1}{\sqrt{\kappa}}\right)^k\left(\Phi_0(\theta^*)-f(\theta^*)\right)\\
            &\overset{\eqref{e:Nesterov_Phi_Definition}}{\leq}\left(1-\frac{1}{\sqrt{\kappa}}\right)^k\left(\frac{\mu}{2}\lVert\nu_0-\theta^*\rVert^2+f(\nu_0)-f(\theta^*)\right)\\
            &\overset{\nu_0=\theta_0}{\leq}\left(1-\frac{1}{\sqrt{\kappa}}\right)^k\left(\frac{\mu}{2}\lVert\theta_0-\theta^*\rVert^2+f(\theta_0)-f(\theta^*)\right)\\
            &\overset{\eqref{e:smooth_convex}}{\leq}\frac{\bar{L}+\mu}{2}\lVert\theta_0-\theta^*\rVert^2\left(1-\frac{1}{\sqrt{\kappa}}\right)^k\\
            &\leq \frac{\bar{L}+\mu}{2}\lVert\theta_0-\theta^*\rVert^2\exp\left(-\frac{k}{\sqrt{\kappa}}\right)
        \end{split}
    \end{align}
    
    Thus it remains to prove the inequality in \eqref{e:Nesterov_f_leq_Phi_k}.
    
    The proof of the inequality in \eqref{e:Nesterov_f_leq_Phi_k} follows by induction. Base case: From \eqref{e:Nesterov_Phi_Definition}$: \Phi_0(\theta_0)=f(\nu_0)+\frac{\mu}{2}\lVert\theta_0-\nu_0\rVert^2=f(\theta_0)$ as $\theta_0=\nu_0$. Bounding the function value at the next iterate:
    \begin{align}\label{e:Nesterov_f_theta_kp1}
        \begin{split}
            &f(\theta_{k+1})\overset{\eqref{e:smooth_convex}}{\leq}f(\nu_k)+\nabla f(\nu_k)^T(\theta_{k+1}-\nu_k)+\frac{\bar{L}}{2}\lVert \theta_{k+1}-\nu_k\rVert^2\overset{\eqref{e:Nesterov_Two_Convex}}{\leq}f(\nu_k)-\frac{1}{2\bar{L}}\lVert\nabla f(\nu_k)\rVert^2\\
            &=\left(1-\frac{1}{\sqrt{\kappa}}\right)f(\theta_k)+\left(1-\frac{1}{\sqrt{\kappa}}\right)\left(f(\nu_k)-f(\theta_k)\right)+\frac{1}{\sqrt{\kappa}}f(\nu_k)-\frac{1}{2\bar{L}}\lVert\nabla f(\nu_k)\rVert^2\\
            &\overset{\eqref{e:convex}}{\leq}\left(1-\frac{1}{\sqrt{\kappa}}\right)f(\theta_k)+\left(1-\frac{1}{\sqrt{\kappa}}\right)\nabla f(\nu_k)^T(\nu_k-\theta_k)+\frac{1}{\sqrt{\kappa}}f(\nu_k)-\frac{1}{2\bar{L}}\lVert\nabla f(\nu_k)\rVert^2\\
            &\overset{\eqref{e:Nesterov_f_leq_Phi_k}}{\leq}\left(1-\frac{1}{\sqrt{\kappa}}\right)\Phi^*_k+\left(1-\frac{1}{\sqrt{\kappa}}\right)\nabla f(\nu_k)^T(\nu_k-\theta_k)+\frac{1}{\sqrt{\kappa}}f(\nu_k)-\frac{1}{2\bar{L}}\lVert\nabla f(\nu_k)\rVert^2
        \end{split}
    \end{align}
    From \eqref{e:Nesterov_Phi_Definition} it can be noted that $\nabla^2\Phi_k(\theta)=\mu I$ and therefore $\Phi_k$ may be of the form
    \begin{equation}\label{e:Nesterov_Phi_k_quadratic}
        \Phi_k(\theta)=\Phi_k^*+(\mu/2)\lVert\theta-v_k\rVert^2,
    \end{equation}
    for a $v_k\in\mathbb{R}^N$. Using this form of $\Phi_k$ alongside \eqref{e:Nesterov_Phi_Definition}, the gradient may be expressed as
    \begin{equation*}
        \mu(\theta-v_{k+1})\overset{\eqref{e:Nesterov_Phi_k_quadratic}}{=}\nabla\Phi_{k+1}(\theta)\overset{\eqref{e:Nesterov_Phi_Definition}}{=}\mu\left(1-\frac{1}{\sqrt{\kappa}}\right)(\theta-v_k)+\frac{1}{\sqrt{\kappa}}\left(\nabla f(\nu_k)+\mu(\theta-\nu_k)\right).
    \end{equation*}
    Solving for $v_{k+1}$:
    \begin{equation}\label{e:Nesterov_v_kp1}
        v_{k+1}=\left(1-\frac{1}{\sqrt{\kappa}}\right)v_k-\frac{1}{\mu\sqrt{\kappa}}\nabla f(\nu_k)+\frac{1}{\sqrt{\kappa}}\nu_k.
    \end{equation}
    Thus using \eqref{e:Nesterov_Phi_Definition} and \eqref{e:Nesterov_v_kp1},
    \begin{align}\label{e:Nesterov_Phi_star_kp1}
        \begin{split}
            \Phi^*_{k+1}+\frac{\mu}{2}\lVert\nu_k-v_{k+1}\rVert^2&=\Phi_{k+1}(\nu_k)=\left(1-\frac{1}{\sqrt{\kappa}}\right)\Phi_k(\nu_k)+\frac{1}{\sqrt{\kappa}}f(\nu_k)\\
            &=\left(1-\frac{1}{\sqrt{\kappa}}\right)\Phi^*_k+\left(1-\frac{1}{\sqrt{\kappa}}\right)\frac{\mu}{2}\lVert\nu_k-v_k\rVert^2+\frac{1}{\sqrt{\kappa}}f(\nu_k)
        \end{split}
    \end{align}
    Using \eqref{e:Nesterov_v_kp1}:
    \begin{align}\label{e:Nesterov_nu_k_m_v_kp1}
        \begin{split}
        &\lVert\nu_k-v_{k+1}\rVert^2\\
        &=\left(1-\frac{1}{\sqrt{\kappa}}\right)^2\lVert\nu_k-v_k\rVert^2+\frac{1}{\mu^2\kappa}\lVert\nabla f(\nu_k)\rVert^2-\frac{2}{\mu\sqrt{\kappa}}\left(1-\frac{1}{\sqrt{\kappa}}\right)\nabla f(\nu_k)^T(v_k-\nu_k).
        \end{split}
    \end{align}
    Combining \eqref{e:Nesterov_Phi_star_kp1} and \eqref{e:Nesterov_nu_k_m_v_kp1}:
    \begin{align}\label{e:Nesterov_Phi_star_kp1_second}
        \begin{split}
            &\Phi^*_{k+1}=\left(1-\frac{1}{\sqrt{\kappa}}\right)\Phi^*_k+\left(1-\frac{1}{\sqrt{\kappa}}\right)\frac{\mu}{2}\lVert\nu_k-v_k\rVert^2+\frac{1}{\sqrt{\kappa}}f(\nu_k)\\
            &-\frac{\mu}{2}\left(1-\frac{1}{\sqrt{\kappa}}\right)^2\lVert\nu_k-v_k\rVert^2-\frac{1}{2\mu\kappa}\lVert\nabla f(\nu_k)\rVert^2+\frac{1}{\sqrt{\kappa}}\left(1-\frac{1}{\sqrt{\kappa}}\right)\nabla f(\nu_k)^T(v_k-\nu_k)\\
            &=\left(1-\frac{1}{\sqrt{\kappa}}\right)\Phi^*_k+\left(1-\frac{1}{\sqrt{\kappa}}\right)\frac{\mu}{2\sqrt{\kappa}}\lVert\nu_k-v_k\rVert^2+\frac{1}{\sqrt{\kappa}}f(\nu_k)\\
            &-\frac{1}{2\bar{L}}\lVert\nabla f(\nu_k)\rVert^2+\frac{1}{\sqrt{\kappa}}\left(1-\frac{1}{\sqrt{\kappa}}\right)\nabla f(\nu_k)^T(v_k-\nu_k).
        \end{split}
    \end{align}
    Next it will be shown that $v_k-\nu_k=\sqrt{\kappa}(\nu_k-\theta_k)$ via a proof by induction. Base case: Note from the initial condition $\theta_0=\nu_0$, \eqref{e:Nesterov_Phi_Definition}, and \eqref{e:Nesterov_Phi_k_quadratic} it can be seen that $v_0=\nu_0=\theta_0$. Consider the following equality statements at the next iteration:
    \begin{align}\label{e:Nesterov_Last_Induction}
        \begin{split}
            v_{k+1}-\nu_{k+1}&\overset{\eqref{e:Nesterov_v_kp1}}{=}\left(1-\frac{1}{\sqrt{\kappa}}\right)v_k-\frac{1}{\mu\sqrt{\kappa}}\nabla f(\nu_k)+\frac{1}{\sqrt{\kappa}}\nu_k-\nu_{k+1}\\
            &\overset{\text{I.H.}}{=}\sqrt{\kappa}\nu_k-(\sqrt{\kappa}-1)\theta_k-\frac{\sqrt{\kappa}}{\bar{L}}\nabla f(\nu_k)-\nu_{k+1}\\
            &\overset{\eqref{e:Nesterov_Two_Convex}}{=}\sqrt{\kappa}\theta_{k+1}-(\sqrt{\kappa}-1)\theta_k-\nu_{k+1}\\
            &\overset{\eqref{e:Nesterov_Two_Convex}}{=}\sqrt{\kappa}(\nu_{k+1}-\theta_{k+1})
        \end{split}
    \end{align}
    Using \eqref{e:Nesterov_f_theta_kp1}, \eqref{e:Nesterov_Phi_star_kp1_second}, and \eqref{e:Nesterov_Last_Induction}, it can be seen that
    \begin{equation}
        f(\theta_{k+1})\leq\Phi^*_{k+1},
    \end{equation}
    thus completing the proof of the inequality of \eqref{e:Nesterov_f_leq_Phi_k} by induction.
\end{proof}

\subsection{Regularization technique for non-strongly convex functions}
\label{ss:Regularization_New}

\begin{manuallemma}{\ref{l:L_N_rate} from Main Text (with proof)}
    The iterates $\{\theta_k\}_{k=0}^{\infty}$ generated by \eqref{e:Nesterov_Two_Convex} for the function in \eqref{e:Strongly_Convex_Objective} with $\theta_0=\nu_0$, $\Psi\geq\max\{1,\lVert\theta_0-\theta^*\rVert^2\}$, $\mu=\epsilon/\Psi$, $\bar{L}=1+\mu$, $\bar{\alpha}=1/\bar{L}$, $\kappa=\bar{L}/\mu$, $\bar{\beta}=(\sqrt{\kappa}-1)/(\sqrt{\kappa}+1)$, if
    \begin{equation}
        k\geq\left\lceil\sqrt{1+\frac{\Psi}{\epsilon}}\log\left(2+\frac{\Psi}{\epsilon}\right)\right\rceil,\text{ then $\frac{L(\theta_k)-L(\theta^*)}{\N}\leq\epsilon$}.
    \end{equation}
\end{manuallemma}
\begin{proof}
    The normalized loss function gap may be bounded from above using $f$ in \eqref{e:Strongly_Convex_Objective}, $\Psi\geq\max\{1,\lVert\theta_0-\theta^*\rVert^2\}$, and $\mu=\epsilon/\Psi$ as
    \begin{align*}
        \frac{L(\theta_k)-L(\theta^*)}{\N}&=f(\theta_k)-f(\theta^*)+\frac{\mu}{2}\left(\lVert\theta^*-\theta_0\rVert^2-\lVert\theta_k-\theta_0\rVert^2\right)\\
        &\leq f(\theta_k)-f(\theta^*_{\epsilon})+\frac{\epsilon}{2},
    \end{align*}
    where $f(\theta^*_{\epsilon})$ is the optimal value of $f$, that is, $\nabla f(\theta^*_{\epsilon})= \frac{\nabla L(\theta^*_{\epsilon})}{\N}+\mu(\theta^*_{\epsilon}-\theta_0)=0$. Applying the result of Theorem \ref{theorem:Nesterov_SC_paper} to $f$ in \eqref{e:Strongly_Convex_Objective} with $\Psi\geq\max\{1,\lVert\theta_0-\theta^*\rVert^2\}$, $\mu=\epsilon/\Psi$, $\bar{L}=1+\mu$, $\bar{\alpha}=1/\bar{L}$, $\kappa=\bar{L}/\mu$, $\bar{\beta}_k=(\sqrt{\kappa}-1)/(\sqrt{\kappa}+1)$, the normalized loss function gap may be bounded as
    \begin{align*}
        \begin{split}
            \frac{L(\theta_k)-L(\theta^*)}{\N}&\leq \frac{\bar{L}+\mu}{2}\lVert\theta_0-\theta^*_{\epsilon}\rVert^2\exp\left(-\frac{k}{\sqrt{\kappa}}\right)+\frac{\epsilon}{2}\\
            &\leq\frac{\bar{L}+\mu}{2}\lVert\theta_0-\theta^*\rVert^2\exp\left(-\frac{k}{\sqrt{\kappa}}\right)+\frac{\epsilon}{2}\\
            &=\frac{\Psi+2\epsilon}{2\Psi}\lVert\theta_0-\theta^*\rVert^2\exp\left(-\frac{k}{\sqrt{\kappa}}\right)+\frac{\epsilon}{2}\\
            &\leq\frac{\Psi+2\epsilon}{2}\exp\left(-\frac{k}{\sqrt{\kappa}}\right)+\frac{\epsilon}{2}\\
            &=\frac{\Psi+2\epsilon}{2}\exp\left(-\frac{k}{\sqrt{1+\frac{\Psi}{\epsilon}}}\right)+\frac{\epsilon}{2}
        \end{split}
    \end{align*}
    Thus $\frac{L(\theta_k)-L(\theta^*)}{\N}\leq\epsilon$ if,
    \begin{equation*}
        k\geq\left\lceil\sqrt{1+\frac{\Psi}{\epsilon}}\log\left(2+\frac{\Psi}{\epsilon}\right)\right\rceil.
    \end{equation*}
\end{proof}
\begin{manuallemma}{\ref{l:L_rate} from Main Text (with proof)}
    The iterates $\{\theta_k\}_{k=0}^{\infty}$ generated by \eqref{e:Nesterov_Two_Convex} for the function in \eqref{e:Strongly_Convex_Objective} with $\theta_0=\nu_0$, $\Psi\geq\max\{1,\N\lVert\theta_0-\theta^*\rVert^2\}$, $\mu=\epsilon/\Psi$, $\bar{L}=1+\mu$, $\bar{\alpha}=1/\bar{L}$, $\kappa=\bar{L}/\mu$, $\bar{\beta}=(\sqrt{\kappa}-1)/(\sqrt{\kappa}+1)$, if
    \begin{equation}
        k\geq\left\lceil\sqrt{1+\frac{\Psi}{\epsilon}}\log\left(2+\frac{\Psi}{\epsilon}\right)\right\rceil,\text{ then $L(\theta_k)-L(\theta^*)\leq\epsilon$}.
    \end{equation}
\end{manuallemma}
\begin{proof}
    The loss function gap may be bounded from above using $f$ in \eqref{e:Strongly_Convex_Objective}, $\Psi\geq\max\{1,\N\lVert\theta_0-\theta^*\rVert^2\}$, and $\mu=\epsilon/\Psi$ as
    \begin{align*}
        L(\theta_k)-L(\theta^*)&=\N(f(\theta_k)-f(\theta^*))+\frac{\mu}{2}\N\left(\lVert\theta^*-\theta_0\rVert^2-\lVert\theta_k-\theta_0\rVert^2\right)\\
        &\leq \N(f(\theta_k)-f(\theta^*_{\epsilon}))+\frac{\epsilon}{2},
    \end{align*}
    where $f(\theta^*_{\epsilon})$ is the optimal value of $f$, that is, $\nabla f(\theta^*_{\epsilon})= \frac{\nabla L(\theta^*_{\epsilon})}{\N}+\mu(\theta^*_{\epsilon}-\theta_0)=0$. Applying the result of Theorem \ref{theorem:Nesterov_SC_paper} to $f$ in \eqref{e:Strongly_Convex_Objective} with $\Psi\geq\max\{1,\N\lVert\theta_0-\theta^*\rVert^2\}$, $\mu=\epsilon/\Psi$, $\bar{L}=1+\mu$, $\bar{\alpha}=1/\bar{L}$, $\kappa=\bar{L}/\mu$, $\bar{\beta}_k=(\sqrt{\kappa}-1)/(\sqrt{\kappa}+1)$, the loss function gap may be bounded as
    \begin{align*}
        \begin{split}
            L(\theta_k)-L(\theta^*)&\leq \frac{\bar{L}+\mu}{2}\N\lVert\theta_0-\theta^*_{\epsilon}\rVert^2\exp\left(-\frac{k}{\sqrt{\kappa}}\right)+\frac{\epsilon}{2}\\
            &\leq\frac{\bar{L}+\mu}{2}\N\lVert\theta_0-\theta^*\rVert^2\exp\left(-\frac{k}{\sqrt{\kappa}}\right)+\frac{\epsilon}{2}\\
            &=\frac{\Psi+2\epsilon}{2\Psi}\N\lVert\theta_0-\theta^*\rVert^2\exp\left(-\frac{k}{\sqrt{\kappa}}\right)+\frac{\epsilon}{2}\\
            &\leq\frac{\Psi+2\epsilon}{2}\exp\left(-\frac{k}{\sqrt{\kappa}}\right)+\frac{\epsilon}{2}\\
            &=\frac{\Psi+2\epsilon}{2}\exp\left(-\frac{k}{\sqrt{1+\frac{\Psi}{\epsilon}}}\right)+\frac{\epsilon}{2}
        \end{split}
    \end{align*}
    Thus $L(\theta_k)-L(\theta^*)\leq\epsilon$ if,
    \begin{equation*}
        k\geq\left\lceil\sqrt{1+\frac{\Psi}{\epsilon}}\log\left(2+\frac{\Psi}{\epsilon}\right)\right\rceil.
    \end{equation*}
\end{proof}

\clearpage
\subsection{Comparison of non-asymptotic convergence rates}
\label{ss:Comparison_with_Constants}

\begin{table}[h]
    \caption{Comparison of gradient-based methods for the linear regression squared loss function in \eqref{e:Squared_Loss_N} with constants included (constants chosen optimally according to each proof), for the case of a constant regressor $\phi$. Here $\bar{L}=\lVert\phi\rVert^2$.}
    \label{t:Comparison_Gradient_Methods_Constants}
    \centering
    \begin{tabular}{lc}
        \toprule
        Algorithm & \# Iterations for $L(\theta_k)-L(\theta^*)\leq\epsilon$ \\
        \midrule
        Gradient Descent Fixed (Theorem \ref{theorem:GD_k}) & $\left\lceil\frac{2\bar{L}\lVert\theta_0-\theta^*\rVert^2}{\epsilon}-4\right\rceil$ \\
        Nesterov Acceleration (Theorem \ref{theorem:Nesterov_Convex}) & $\left\lceil\sqrt{\frac{2\bar{L}\lVert\theta_0-\theta^*\rVert^2}{\epsilon}}+1\right\rceil$ \\
        \rowcolor{lightgray!40}This Paper (Lemma \ref{l:L_rate}) & $\left\lceil\sqrt{1+\frac{(1+\bar{L})\lVert\theta_0-\theta^*\rVert^2}{\epsilon}}\log\left(2+\frac{(1+\bar{L})\lVert\theta_0-\theta^*\rVert^2}{\epsilon}\right)\right\rceil$ \\
        \bottomrule
    \end{tabular}
\end{table}

\begin{figure}[ht]
    \centerline{
	    \includegraphics[trim={0.5cm 1cm 2cm 1.5cm},clip,width=0.5\textwidth]{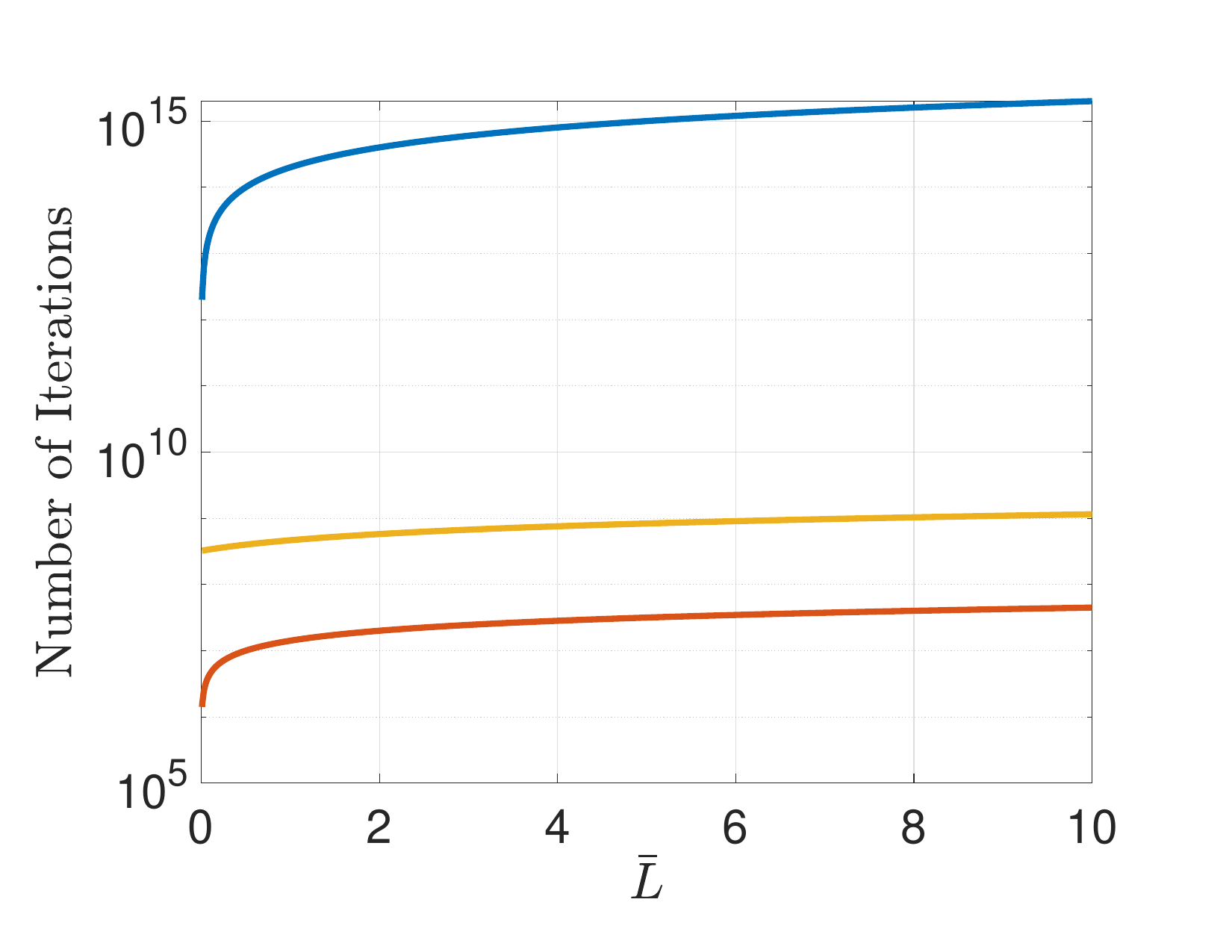}
	    \includegraphics[trim={0.5cm 1cm 2cm 1.5cm},clip,width=0.5\textwidth]{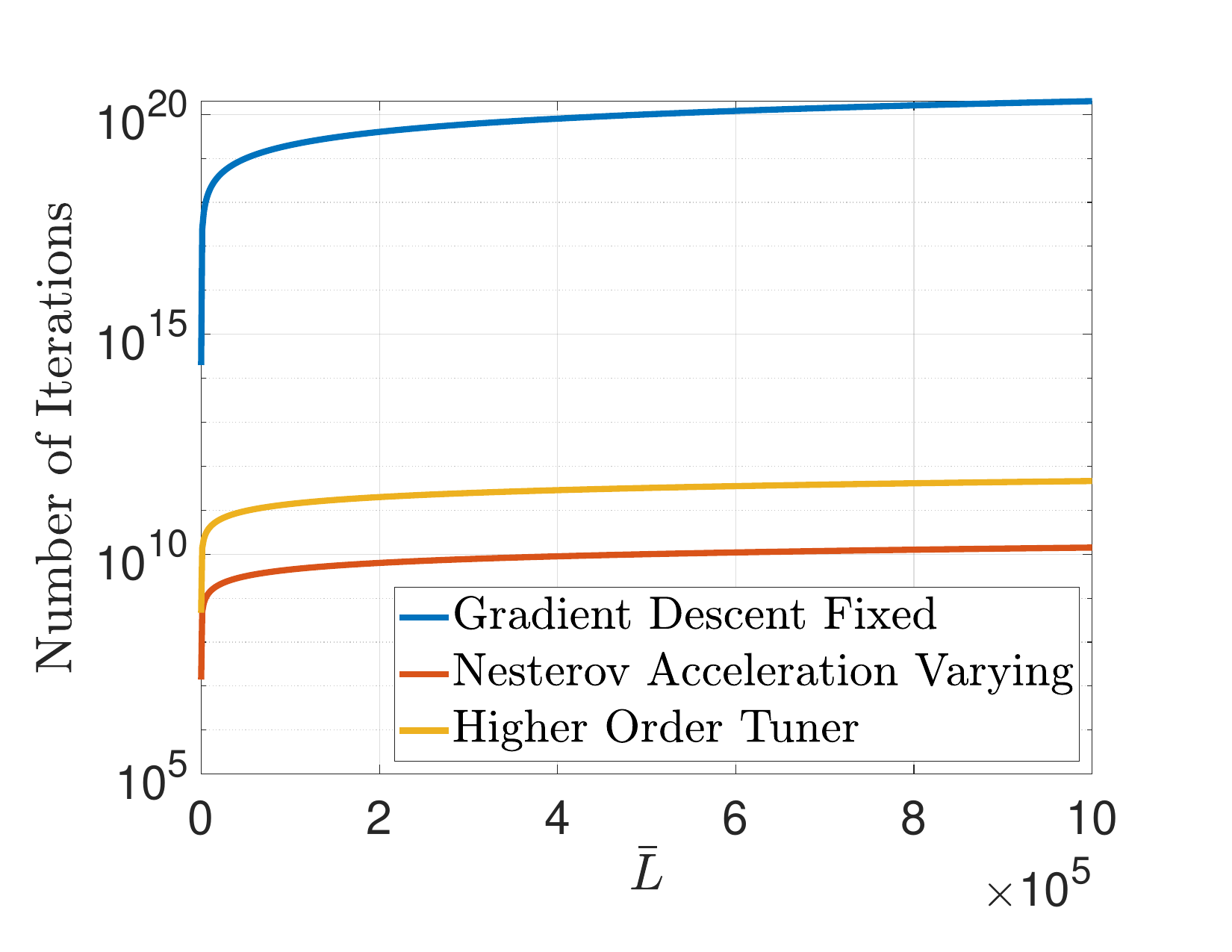}
	    }
	    \caption{(to be viewed in color) Number of iterations to reach an $\epsilon$ sub-optimal point as in Table \ref{t:Comparison_Gradient_Methods_Constants}, for $\epsilon=10^{-14}$, and $\lVert\theta_0-\theta^*\rVert=1$.}
    \label{f:Comparison_Gradient_Methods_Constants}
\end{figure}

\clearpage
\section{Details of experiments in the main paper and additional experiments}
\label{s:Experiment_details}
This section provides additional information regarding the experiments reported in the main paper in Sections \ref{sec:SHP} and \ref{sec:IDP}, which are covered in \ref{ProblemStatement} and in \ref{IDPProblemStatement}, respectively. Here, the details of the algorithms compared as well as the selection of the hyperparameters are discussed. Additional experiments are covered in \ref{SHPadd} and \ref{IDPadd}, where different types of loss functions are discussed where the underlying regressors have new variations.

All simulations were implemented in Python. Online python notebooks can be found at \underline{\href{https://colab.research.google.com/drive/14V1X4kyEU3G6NMZuCaswSHSLaywUbqEe}{link1}} and at \underline{\href{https://colab.research.google.com/drive/1cQRk44jaMNXCtOvEZ-QWIeqzKZBujQQN}{link2}} for the corresponding problems. Videos of the image deblurring problems can be found also at \underline{\href{https://drive.google.com/drive/folders/1oQFFWuUox-ChbFeQFveGlLR2nim-8y0w?usp=sharing}{Google Drive folder}}. 

The experiments were run on a MacBook Pro 2018 with a 2.2 GHz Intel Core i7. We report the average iterations/second for each problem:
\begin{itemize}
    \item Nesterov's smooth convex function: For $n=401$, the average iterations/second was: $64.63$.
    \item Image Deblurring Problem: The average iterations/second was: $130.6$.
\end{itemize}

\subsection{Nesterov's smooth convex function}
\label{SmoothConvexMinProblem}

\subsubsection{Details of experiments in the main paper}\label{ProblemStatement}
The function used in this experiment, described in Section \ref{sec:SHP}, is modified from \citep[p.~69]{Nesterov_2018} and consists of minimizing a function of the form
\begin{equation}\label{e:minSHP}
    \underset{\theta\in  \mathbb{R}^{n}}{\text{min}}L_{k}(\theta),
\end{equation}
where
\begin{equation} \label{e:SHP}
    L_{k}(\theta)=\frac{a_k}{4}\left\{ \frac{1}{2}\left[b_k(\theta^{(1)})^{2}+c_k\sum_{i=1}^{n-1}(\theta^{(i)}-\theta^{(i+1)})^{2}+b_k(\theta^{(n)})^{2}\right]-d_k\theta^{(1)}\right\}.
\end{equation}
In \eqref{e:SHP}, $a_k, b_k, c_k$ and $d_k$ are positive scalars which depend on the iteration $k$, and the superscript $(i)$ indicates the $i$-th element of the vector $\theta$.

The gradient of this function can be written in the following form:
\begin{equation}
    \begin{split}
    \nabla L_{k}(\theta) &= \frac{a_k}{4}\left( A_k\theta - D_k \right) \\
    &=\frac{a_k}{4}\left(\left[\begin{array}{cccccc}
        b_k+c_k & -c_k & 0 & \cdots & 0 & 0\\
        -c_k & 2c_k & -c_k & \cdots & 0 & 0\\
        0 & -c_k & 2c_k & \cdots & 0 & 0\\
        \vdots & \vdots & \vdots & \ddots & \vdots & \vdots\\
        0 & 0 & 0 & \cdots & 2c_k & -c_k\\
        0 & 0 & 0 & \cdots & -c_k & b_k+c_k
        \end{array}\right]\left[\begin{array}{c}
        \theta^{(1)}\\
        \theta^{(2)}\\
        \theta^{(3)}\\
        \vdots\\
        \theta^{(n-1)}\\
        \theta^{(n)}
        \end{array}\right]-\left[\begin{array}{c}
        d_k\\
        0\\
        0\\
        \vdots\\
        0\\
        0
        \end{array}\right]\right)
    \end{split}{}
\end{equation}{}

The problem in \eqref{e:minSHP} can therefore be solved using the following system of equations:
\begin{equation}
    A_k\theta = D_k.
\end{equation}
Because of the structure of $D_k$, we only need the first column of $A_k^{-1}$, to obtain the optimal solution $\theta^*$, which can be determined as
\begin{equation}\label{e:optimaltheta}
    {\theta_k^{*}}^{(i)} = d_k \left[ \frac{(n-i)b_k+c_k}{(n-1)b_k^2+2b_kc_k} \right]\text{ for } 1 \leq i\leq n, 
\end{equation}

which shows that $\theta^*$ does not depend on $a_k$. However, the minimum value of $L_k(\theta^*)$ is given by
\begin{equation}\label{e:OptimalValueF_m}
    L_k^*=L_k(\theta^*)=\frac{a_k}{8}(-1+\frac{1}{n+1}),
\end{equation}{}
where it has been assumed that $b_k=c_k=d_k=1$. Also, the problem can be written in the form of $L_k(\theta)=||\phi_k^T 
\theta||^2+B_k^T\theta$, where  $\phi_k^T$ and $B_k^T$ are:
\begin{equation} \label{e:phiconstant}
    \phi_k^{T}=\frac{\sqrt{2a_k}}{4}\left[\begin{array}{ccccccc}
\sqrt{b_k} & 0 & 0 & \cdots & 0 & 0 & 0\\
0 & 0 & 0 & \cdots & 0 & 0 & \sqrt{b_k}\\
\sqrt{c_k} & -\sqrt{c_k} & 0 & \cdots & 0 & 0 & 0\\
0 & \sqrt{c_k} & -\sqrt{c_k} & \cdots & 0 & 0 & 0\\
\vdots & \vdots & \vdots & \ddots & \vdots & \vdots & \vdots\\
0 & 0 & 0 & \cdots & \sqrt{c_k} & -\sqrt{c_k} & 0\\
0 & 0 & 0 & \cdots & 0 & \sqrt{c_k} & -\sqrt{c_k}
\end{array}\right]
\end{equation}{}

\begin{equation}
    B_k^{T}=\frac{-a_k}{4}\left[\begin{array}{cccc}
d_k & 0 & \cdots & 0\end{array}\right]
\end{equation}{}

In this form, the gradient of the function can also be written as:
\begin{equation}
    \nabla L_{k}(\theta) = 2\phi_k\phi_k^\intercal\theta +B_k 
\end{equation}{}

Also, note that the hessian of $L_k(\theta)$ is $\nabla^2 L_k(\theta)=2\phi_k\phi_k^\intercal=\frac{a_k}{4}A_k \leq \bar{L}I $, and therefore we can obtain the $\bar{L}$-smoothness parameter of the function by upper-bounding its maximum eigenvalue. For instance, for $b_k=c_k=1$, the $\bar{L}$-smoothness parameter is $a_k$.

The results reported in Figure \ref{fig:SHPconstant} and Figure \ref{fig:SHPstep2} corresponded to the case where $\theta \in \mathbb{R}^{401}$ and to a fixed choice of $b_k$, $c_k$, $d_k$ as 
\begin{equation} \label{e:abcd_sims}
    b_k=c_k=d_k=1,
\end{equation}
while $a_k$ was switched from 2 to 8000 in Figure \ref{fig:SHPconstant} and from 2 to 8 in Figure \ref{fig:SHPstep2}, both at $k=500$. As mentioned in Section \ref{sec:SHP}, this corresponded to a step change in the $\Bar{L}$-smoothness parameter of the cost function $L_k$.

\paragraph{(A) Algorithms:} \label{SHPAlg}
The algorithms that were compared in Figure \ref{fig:SHPconstant} and Figure \ref{fig:SHPstep2} in Section \ref{sec:SHP} were the following, and are listed in the same order as in the legend in Figure \ref{fig:SHPsimulations}.
\begin{itemize}
    \item Gradient Descent \eqref{e:Gradient_Method}:
    \begin{equation*}
        \theta_{k+1}=\theta_k-\bar{\alpha}\nabla L_k(\theta_k)
    \end{equation*}
    \item Normalized Gradient Descent:
    \begin{equation}\label{e:NormalizedGD_App}
        \theta_{k+1}=\theta_k-\frac{\bar{\alpha}}{\N_k}\nabla L_k(\theta_k)
    \end{equation}
    \item Nesterov Acceleration with time-varying $\bar{\beta}_k$ (\eqref{e:Nesterov_Two_Convex_TV_beta} with $\bar{\beta}_k$ chosen as in \eqref{e:beta_k}):
    \begin{equation} \label{e:NesterovApp}
        \begin{split}
            \theta_{k}&= \nu_k - \bar{\alpha} \nabla L_k(\theta_k) \\
            \nu_{k+1}&=(1+\bar{\beta}_k)\theta_{k+1}-\Bar{\beta}_k\theta_k
        \end{split}{}
    \end{equation}{}
    \item Nesterov Acceleration with constant $\bar{\beta}$ \eqref{e:Nesterov_Two_Convex}:
    \begin{equation*}
        \begin{split}
        \theta_{k}&= \nu_k - \bar{\alpha} \nabla L_k(\theta_k) \\
        \nu_{k+1}&=\theta_k+\Bar{\beta}(\theta_k-\theta_{k-1})
        \end{split}{}
    \end{equation*}{}    
    \item Higher Order Tuner (Algorithm \ref{alg:HOT_R}):
    \begin{equation*}
        \begin{split}
        \bar{\theta}_{k}&= \theta_k - \gamma\beta \left(\frac{\nabla L_k(\theta_k)}{\N_k}+\mu(\theta_k-\theta_0)\right) \\
        \theta_{k+1} &= \bar{\theta}_{k} -\beta (\bar{\theta}_{k}-\vartheta_k) \\
        \vartheta_{k+1}&=\vartheta_k-\gamma\left(\frac{\nabla L_k(\theta_{k+1})}{\N_k}+\mu(\theta_{k+1}-\theta_0)\right)
        \end{split}{}
    \end{equation*}{}   
\end{itemize}

\paragraph{(B) Hyperparameter Selection:} \label{SHPHyperparameter}
As mentioned in Section \ref{sec:SHP}, this experiment was carried out using hyperparameters (a) chosen according to Theorem \ref{th:HOT_paper_Full_Alg}, and (b) chosen as in Proposition \ref{prop:equivalence}, with the results shown in  Figure \ref{fig:SHPconstant} and Figure \ref{fig:SHPstep2}, respectively.
\begin{itemize}
    \item \textbf{Choice (a)}: This corresponds to Figure \ref{fig:SHPconstant}. For the Higher Order Tuner we choose
    \begin{equation}\label{e:SHP_betagamma}
        \begin{split}
            \mu &= 10^{-5},\\
            \beta &= 0.1,\\
            \gamma &= \frac{\beta(2-\beta)}{16+\beta^2+\mu\left(\frac{57\beta+1}{16\beta}\right)} = 0.01186.
        \end{split}
    \end{equation}
    For the four other methods shown in Figure \ref{fig:SHPsimulations}, described by Eqs. \eqref{e:Gradient_Method}, \eqref{e:NormalizedGD_App}, \eqref{e:NesterovApp}, and \eqref{e:Nesterov_Two_Convex}, the constant parameters are chosen as $\Bar{\alpha}=\gamma\beta$ and $\Bar{\beta}=1-\beta$, as per Proposition \ref{prop:equivalence}, with the hyperparameters $\beta$ and $\gamma$ chosen as in \eqref{e:SHP_betagamma}.
    \item \textbf{Choice (b)}:  As mentioned before, the cost function here was assumed to change as described in Figure \ref{fig:SHPstep2} on page \pageref{fig:SHPsimulations}, which was enabled by allowing $a_k$ in \eqref{e:phiconstant} to change from 2 to 8.  In this case, we chose the hyperparameters for the Higher Order Tuner as
    \begin{equation} \label{e:1bmubetagamma}
        \begin{split}
            \mu &= 3.7174\cdot{} 10^{-6},\\
            \beta &= 0.0027,\\
            \gamma &= 183.62,
        \end{split}
    \end{equation}
    and the hyperparameters for the other four methods as     
    \begin{equation}\label{e:1balphabarbetabar}
        \begin{split}
            \Bar{\alpha} &= 0.4999,\\
            \Bar{\beta} &= 0.9972.
        \end{split}
    \end{equation}
    These choices were arrived at using the flow chart indicated in Figure \ref{fig:Hyperparameter_selection}. That is, using \eqref{e:phiconstant}, and the value of $a_0$, $b_0$ and $c_0$ as in \eqref{e:abcd_sims}, $\N_0$ was determined. $\theta^*$ is determined through \eqref{e:optimaltheta}, and $\theta_0=0$. With a choice of $\epsilon=10^{-3}$, the remaining parameters were calculated as in Figure \ref{fig:Hyperparameter_selection}, which led us to the hyperparameters as in \eqref{e:1bmubetagamma} and \eqref{e:1balphabarbetabar}. The selection as in Figure \ref{fig:Hyperparameter_selection} is reasonable, as it follows the guidelines in Lemma \ref{l:L_rate} which guarantees fast convergence of the Nesterov algorithm as indicated in \eqref{e:Desired_Rate}.
\end{itemize}

\begin{figure}
    \centering
         \begin{tikzpicture}[
                roundnode1/.style={circle, fill=Black!5, very thick, minimum size=7mm},
                roundnode2/.style={circle, draw=Black!60, fill=Black!5, very thick, minimum size=7mm},
                squarednode1/.style={rectangle, fill=Black!5, very thick, minimum size=5mm},
                squarednode2/.style={rectangle, draw=Black!60, fill=Black!5, very thick, minimum size=5mm},
                ]
                \node[squarednode2]        (N0)        {$\N_0=1+\lVert \phi_0 \rVert_2^2$};
                \node[squarednode2]        (Lbar)       [right=of N0] {$\bar{L}=\bar{L}$-smoothness of $L_0(\theta)$};
                \node[squarednode1]        (Psi)       [below=of N0] {$\Psi=\max\{1,\N_0\lVert\theta_0-\theta^*\rVert^2\}$};
                \node[squarednode1]        (mubar)       [below =of Psi] {$\bar{\mu}=\frac{\epsilon}{\Psi}$};
                \node[squarednode2]        (epsilon)       [left=of mubar] {$\epsilon$};
                \node[squarednode1]        (Lbarupdate)       [right =of mubar] {$\bar{L}=\bar{L}+\bar{\mu}$};
                \node[squarednode1]        (kappa)       [below=of mubar] {$\kappa = \frac{\bar{L}}{\bar{\mu}}$};
                \node[squarednode1]        (alphabar)       [below=of Lbarupdate] {$\bar{\alpha}=\frac{1}{\bar{L}}$};
                \node[squarednode1]        (betabar)       [below=of kappa] {$\bar{\beta}=\frac{\sqrt{\kappa}-1}{\sqrt{\kappa}+1}$};
                \node[squarednode1]        (beta)       [below=of betabar] {$\beta=1-\bar{\beta}$};
                \node[squarednode1]        (gamma)       [right=of beta] {$\gamma=\frac{\bar{\alpha}}{\beta}$};
                
                \draw [->] (N0.south) -- node[midway,right=1em ]  {} (Psi.north);
                \draw [->] (epsilon.east) -- node[midway,right=1em ]{} (mubar.west);
                \draw [->] (Psi.south) -- node[midway,right=1em ]{} (mubar.north);
                \draw [->] (Lbar.south) -- node[midway,right=1em ]{} (Lbarupdate.north);
                \draw [->] (mubar.east) -- node[midway,right=1em ]{} (Lbarupdate.west);
                \draw [->] (Lbarupdate.south) -- node[midway,right=1em ]{} (kappa.east);
                \draw [->] (mubar.south) -- node[midway,right=1em ]{} (kappa.north);
                \draw [->] (Lbarupdate.south) -- node[midway,right=1em ]{} (alphabar.north);
                \draw [->] (kappa.south) -- node[midway,right=1em ]{} (betabar.north);
                \draw [->] (betabar.south) -- node[midway,right=1em ]{} (beta.north);
                \draw [->] (beta.east) -- node[midway,right=1em ]{} (gamma.west);
                \draw [->] (alphabar.south) -- node[midway,right=1em ]{} (gamma.north);
            \end{tikzpicture}
\caption{Hyperparameter selection for Choice (b)}
\label{fig:Hyperparameter_selection}
\end{figure}
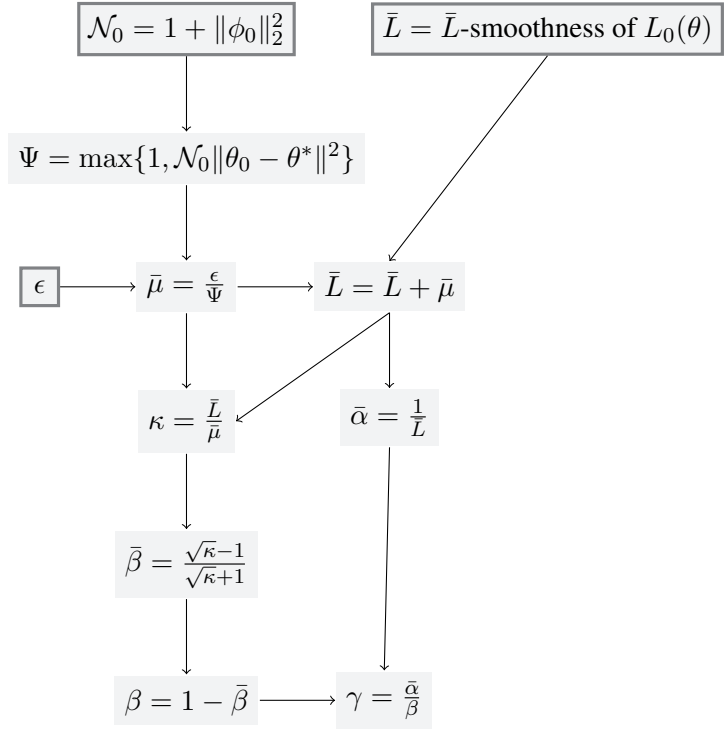{}

\subsubsection{Additional experiments}\label{SHPadd}

\paragraph{(A) Simulations:}

In the following additional experiments we show the performance of the same algorithms considered in the previous experiments, with the hyperparameters chosen as in Choice (a) and Choice (b). For each choice, we consider three scenarios:
\begin{itemize}
    \item Constant regressors simulation, with $a_k$, $b_k$, $c_k$ and $d_k$ as
    \begin{equation}\label{e:SHP_stable_constantApp}
        \begin{split}
            a_k &= 2,\\
            b_k &= c_k = d_k = 1.\\
        \end{split}
    \end{equation}
    \item Two types of step changes in regressors introduced by changing both the $\Bar{L}$-smoothness parameter of the function and $\theta^*$. For the first step change, St-I, this is accomplished by choosing:
        \begin{equation}\label{e:SHP_stable_stepApp}
        \begin{split}
            a_k &= 2,\\
            b_{k}&=\left\{ \begin{array}{cc}
                1 & \text{if }k<500\\
                2000 & \text{if }k\geq500
                \end{array}\right.,\\
            c_{k}&=\left\{ \begin{array}{cc}
                1 & \text{if }k<500\\
                1500 & \text{if }k\geq500
                \end{array}\right.,\\
            d_k &= 1.\\
        \end{split}
    \end{equation}
    Step change, St-II, is defined by the selections
    \begin{equation}\label{e:SHP_aggressive_stepApp}
        \begin{split}
            a_k &= 2,\\
            b_{k}&=\left\{ \begin{array}{cc}
                1 & \text{if }k<500\\
                2 & \text{if }k\geq500
                \end{array}\right.,\\
            c_{k}&=\left\{ \begin{array}{cc}
                1 & \text{if }k<500\\
                1.5 & \text{if }k\geq500
                \end{array}\right.,\\
            d_k &= 1.\\
        \end{split}
    \end{equation}
    \item Sinusoidal change in the regressors modifying the $\Bar{L}$-smoothness parameter of the function while keeping $\theta^*$ constant. Here too, there are two types of changes. Type Sn-I is defined by:
    \begin{equation}\label{e:SHP_stable_sinusoidalApp}
        \begin{split}
            a_k &= 8000 - 7998\sin(0.01k+\pi/2),\\
            b_k &= c_k = d_k = 1,\\
        \end{split}
    \end{equation}
        and type Sn-II is defined by
    \begin{equation}\label{e:SHP_aggressive_sinusoidalApp}
        \begin{split}
            a_k &= 5 - 3\cos(0.01k),\\
            b_k &= c_k = d_k = 1.\\
        \end{split}
    \end{equation}
\end{itemize}

Figure \ref{fig:SHP_stable_simulationsApp} shows the results for these three scenarios with the hyperparameters chosen as in Choice (a). Each sub-figure shows the error between $L_k(\theta_k)$ and $L_k(\theta^*)$ divided by $\bar{L}$. For the constant case, in Figure \ref{fig:SHP_stable_constantApp}, the Higher Order Tuner converges slowly compared to both Nesterov methods, but faster than the simple gradient descent methods. It is important to note that the Normalized Gradient Descent and the Higher Order Tuner are slower than the corresponding Gradient Descent and Nesterov Acceleration with constant $\Bar{\beta}$ which is due to the introduction of the normalization in the learning rate. In Figure \ref{fig:SHP_stable_stepApp} $a_k$, $b_k$, $c_k$ and $d_k$ vary as in \eqref{e:SHP_stable_stepApp}. This change, not only modifies the $\Bar{L}$-smoothness parameter of the function, but also changes the optimal solution $\theta^*$ of the function, which explains the jump in the graph. Similar to the case considered in Figure \ref{fig:SHPsimulations}, these variations in regressors cause the unnormalized methods to become unstable. Lastly, Figure \ref{fig:SHP_stable_sinusoidalApp} correspond to the regressors as in \eqref{e:SHP_stable_sinusoidalApp}. We can see how both Nesterov methods become unstable in this case even before the Gradient Descent, and that the normalized methods remain stable. 

\begin{figure}[ht]
\centering
\begin{subfigure}{.5\textwidth}
  \centering
  \includegraphics[width=1\linewidth]{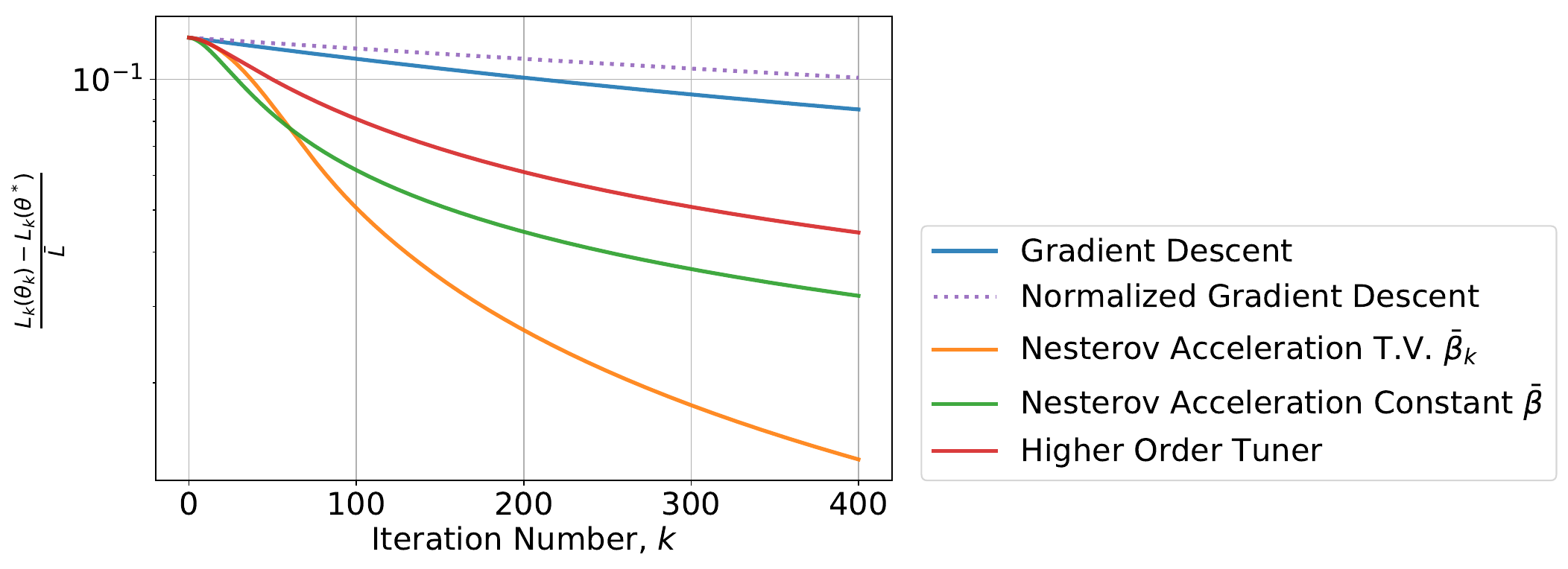}
  \caption{}
  \label{fig:SHP_stable_constantApp}
\end{subfigure}%
\begin{subfigure}{.5\textwidth}
  \centering
  \includegraphics[width=1\linewidth]{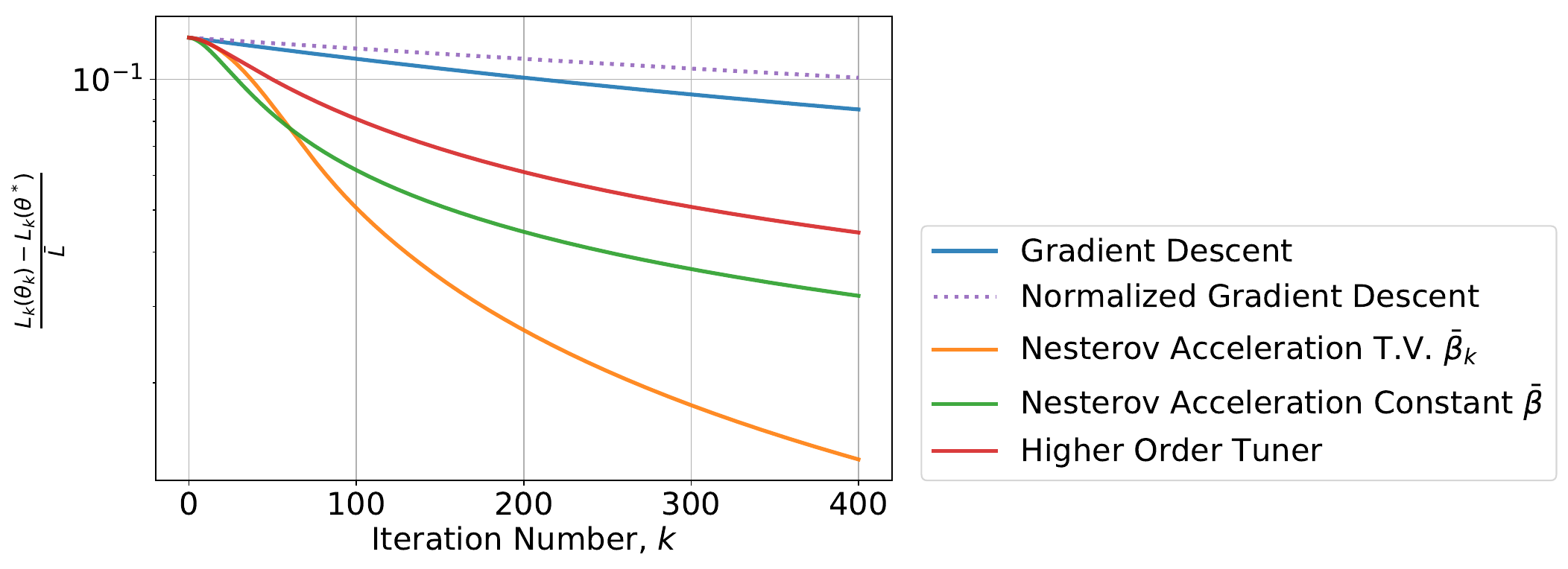}
\end{subfigure}
\begin{subfigure}{.5\textwidth}
  \centering
  \includegraphics[width=1\linewidth]{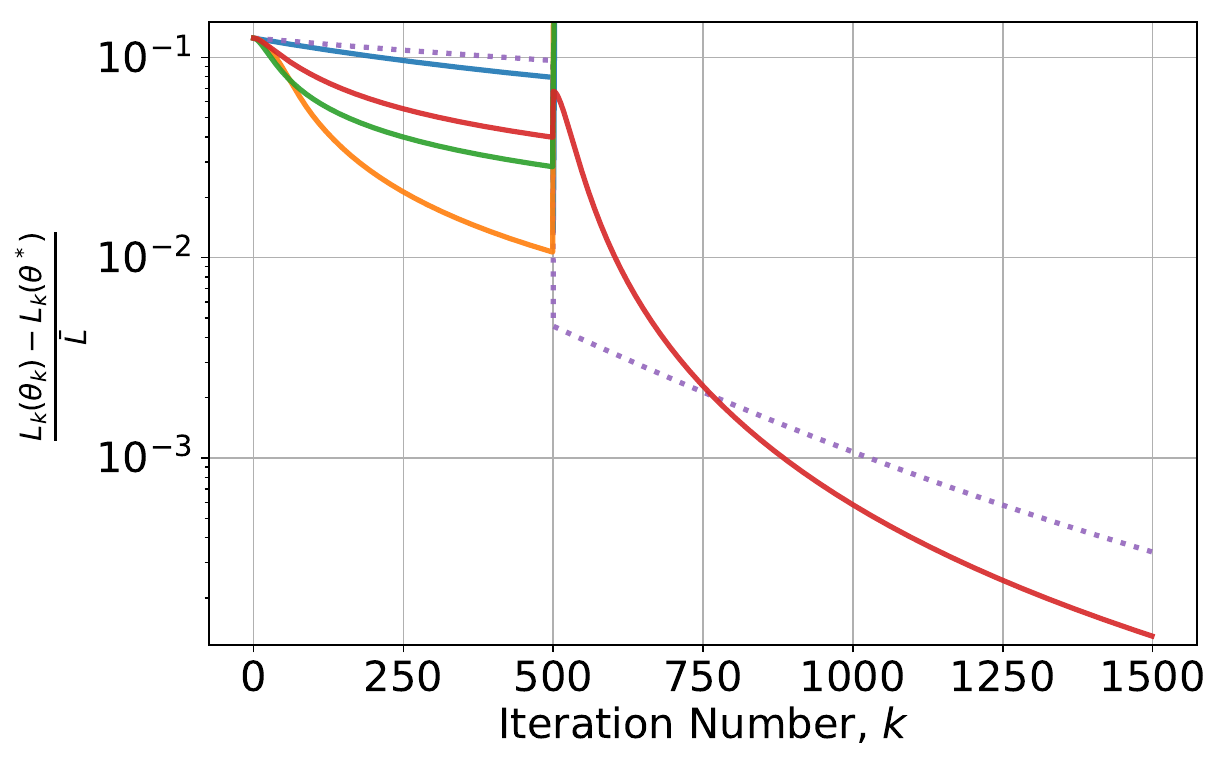}
  \caption{}
  \label{fig:SHP_stable_stepApp}
\end{subfigure}%
\begin{subfigure}{.5\textwidth}
  \centering
  \includegraphics[width=1\linewidth]{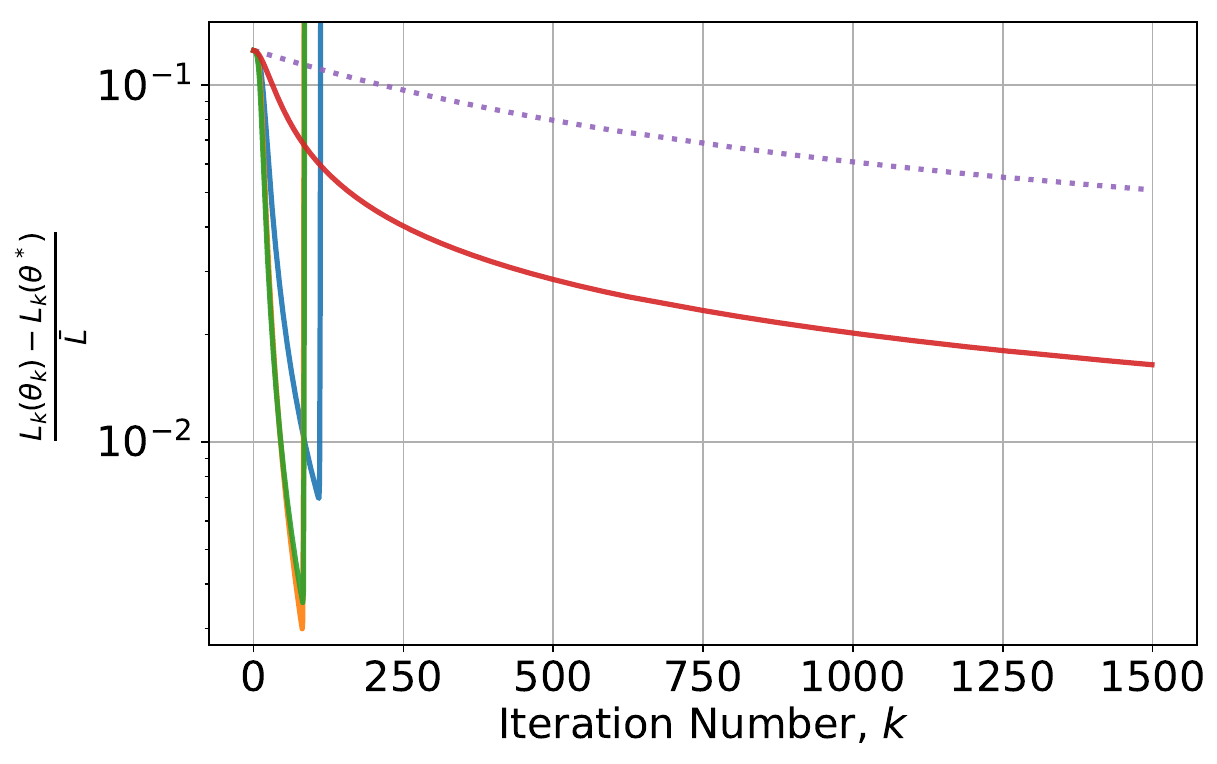}
  \caption{}
  \label{fig:SHP_stable_sinusoidalApp}
\end{subfigure}
\caption{Smooth convex minimization hyperparamters chosen as in Choice (a). (a) Constant regressors as in \eqref{e:SHP_stable_constantApp}. (b) Step change in regressors as in \eqref{e:SHP_stable_stepApp}.  (c) Sinusoidal time-varying regressors as in \eqref{e:SHP_stable_sinusoidalApp}}.
\label{fig:SHP_stable_simulationsApp}
\end{figure}

Figure \ref{fig:SHPsimulationsApp} shows the results for the same three scenarios with the hyperparameters chosen as in Choice (b). It is important to note that the modification of regressors in these experiments is smaller than the modifications from the previous case. Figure \ref{fig:SHPconstantApp} corresponds to constant regressors given by \eqref{e:SHP_stable_constantApp}, where the performance of each algorithm as well as the lower and upper bounds for the corresponding methods are shown. Here we can see that the gradient descent and normalized gradient descent algorithms have slower convergence rates than the other accelerated learning algorithms. Also, Nesterov Acceleration with time-varying $\bar{\beta}_k$ algorithm achieves the fastest performance, followed by the Nesterov with constant $\bar{\beta}$ and then the Higher Order Tuner. In Figure \ref{fig:SHPstepApp} the regressors change as in \eqref{e:SHP_aggressive_stepApp}. This produces instability in the unnormalized methods except for the simple Gradient Descent method.  Lastly, in Figure \ref{fig:SHP_stable_sinusoidalApp} the regressors change in a sinusoidal way as in \eqref{e:SHP_aggressive_sinusoidalApp}. We can see how both Nesterov methods become unstable before the Gradient Descent, and that the normalized methods remain stable. 

\begin{figure}[ht]
\centering
\begin{subfigure}{.5\textwidth}
  \centering
  \includegraphics[width=1\linewidth]{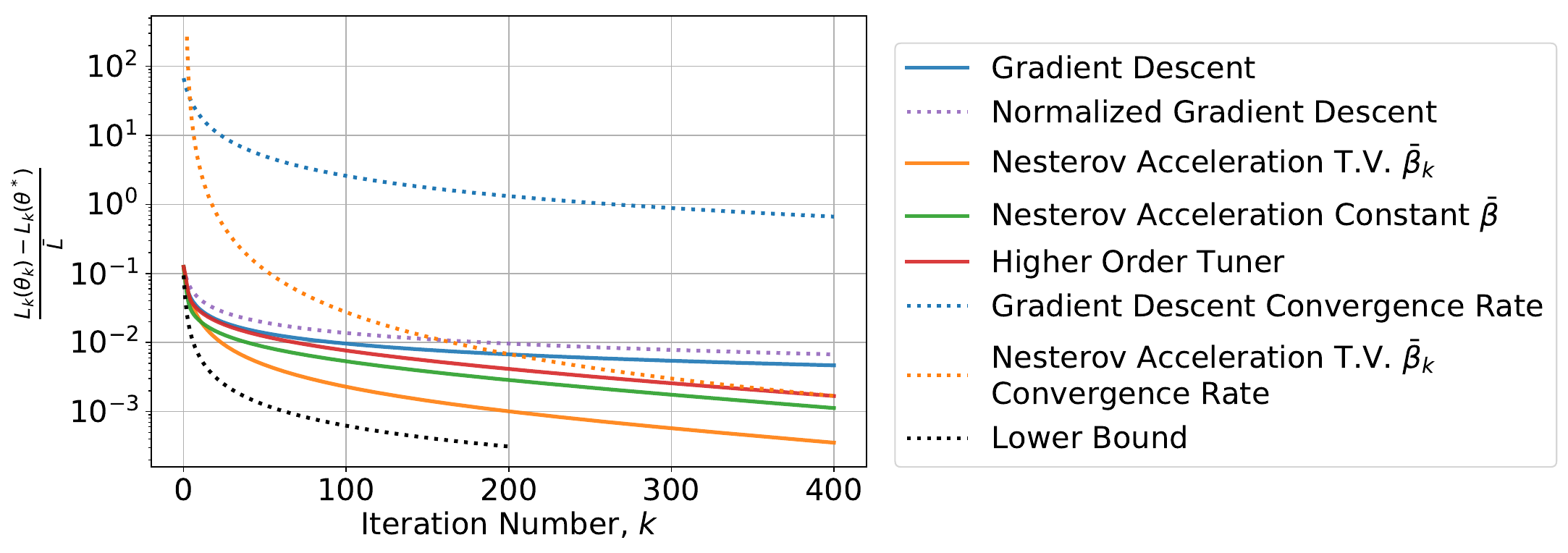}
  \caption{}
  \label{fig:SHPconstantApp}
\end{subfigure}%
\begin{subfigure}{.5\textwidth}
  \centering
  \includegraphics[width=1\linewidth]{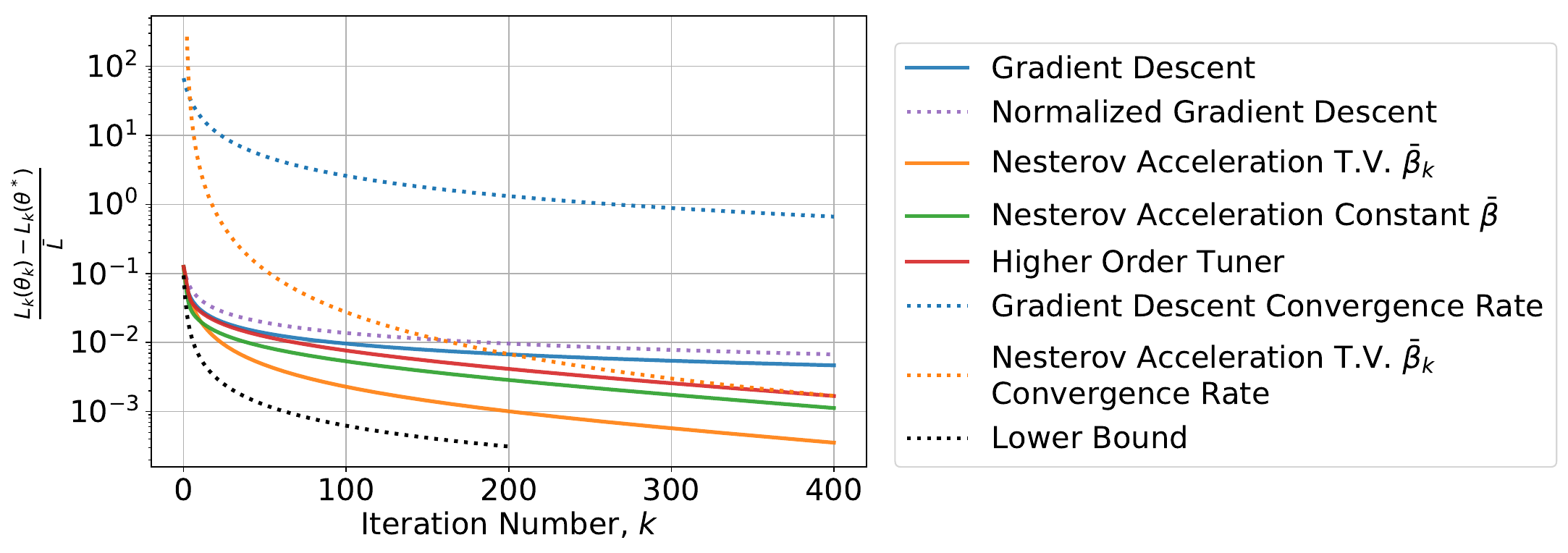}
\end{subfigure}
\begin{subfigure}{.5\textwidth}
  \centering
  \includegraphics[width=1\linewidth]{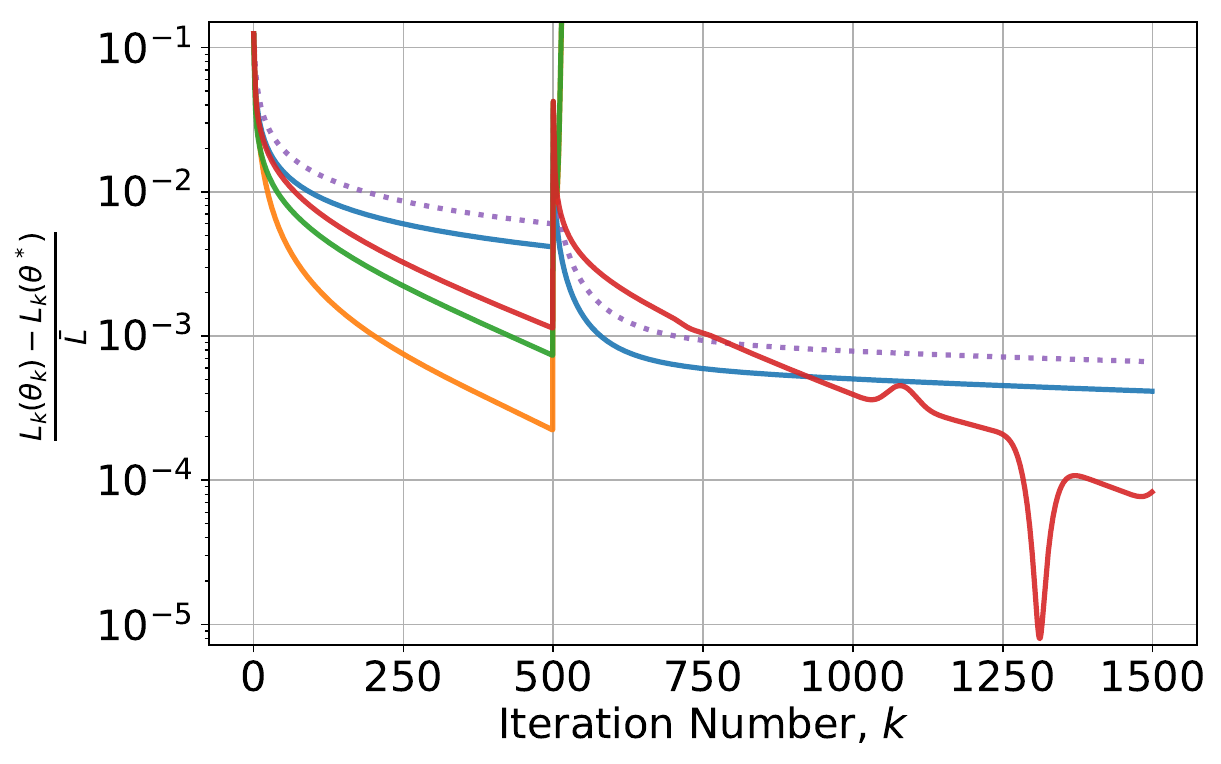}
  \caption{}
  \label{fig:SHPstepApp}
\end{subfigure}%
\begin{subfigure}{.5\textwidth}
  \centering
  \includegraphics[width=1\linewidth]{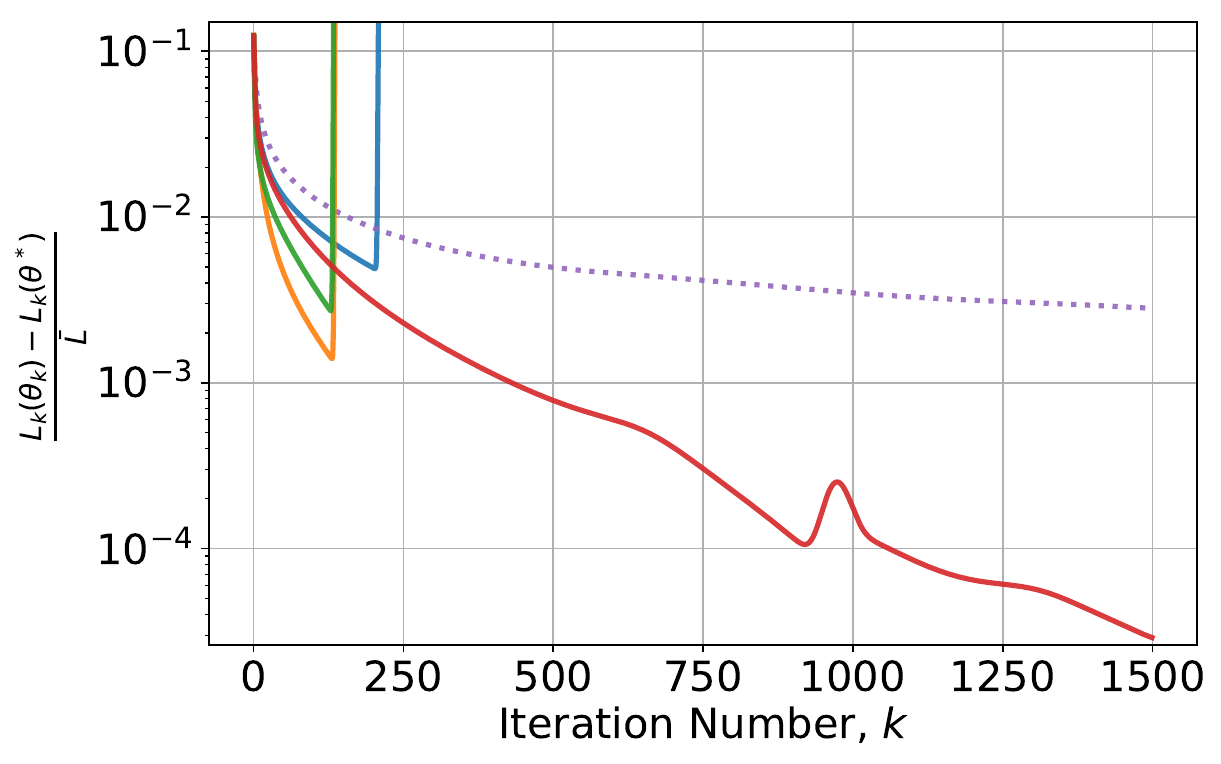}
  \caption{}
  \label{fig:SHPsinusoidalApp}
\end{subfigure}
\caption{Smooth convex minimization with hyperparameters chosen as in Choice (b). (a) Constant regressors as in  \eqref{e:SHP_stable_constantApp}.  (b) Step change in regressors as in \eqref{e:SHP_aggressive_stepApp}. (c) Sinusoidal time-varying regressors as in \eqref{e:SHP_aggressive_sinusoidalApp}. }
\label{fig:SHPsimulationsApp}
\end{figure}

\paragraph{(B) Convergence rates and lower bound:} \label{paragraph:LowerBound}
For the simulations done with the hyperparameters as in Choice (b), and for the constant regressors case (Figure \ref{fig:SHPconstantApp}), we plot the lower complexity bound and the convergence rates for the following algorithms:

We plot the convergence rates obtained from the corresponding equations of each theorem:
\begin{itemize}
    \item Gradient Descent Upper Bound: Theorem \ref{theorem:GD_k}, Equation \ref{e:GD_SmoothConvex_convergence}.
    \item Nesterov Acceleration with time-varying $\bar{\beta}_k$ Upper Bound: Theorem \ref{theorem:Nesterov_Convex}, equation \ref{e:Nesterov_convergence}.
\end{itemize}{}

For the lower bound, we make use of a theorem from \citep[p.~71]{Nesterov_2018}. This is done by pointing out a hard function to minimize for iterative schemes satisfying the following assumption:
\begin{assumption}\label{ass:iterativemethod}
    (Modified from \citep[p.~69]{Nesterov_2018}) An iterative method $\mathscr{M}$ generates a sequence of test points ${\theta_k}$ such that
    \begin{equation}
        \theta_k \in \theta_0 + \textit{Lin} \left\{ \nabla L_k(\theta_0),...,\nabla L_k(\theta_{k+1}) \right \}, \quad k \geq 1.
    \end{equation}{}
\end{assumption}{}

\begin{theorem}\label{theorem:NesterovLowerRate}
(Modified from \citep[p.~71]{Nesterov_2018}) For any $k, 1\leq k \leq \frac{1}{2}(n-1)$, and any $\theta_0 \in \Re^n$ there exists a function $f \in \mathscr{F}_L^{\infty,1}(\mathbb{R}^n)$ such that for any first-order method $\mathscr{M}$ satisfying Assumption \ref{ass:iterativemethod} we have
\[f(\theta_k)-f^* \geq \frac{3\bar{L}\left\Vert \theta_0-\theta^{*}\right\Vert^2}{32(k+1)^2} ,\]
\[\left\Vert \theta_k-\theta^{*}\right\Vert^2 \geq \frac{1}{8} \left\Vert \theta_0-\theta^{*}\right\Vert^2, \]
where $\theta^*$ is the minimum of the function $f$ and $f^* =f(\theta^*)$.
\end{theorem}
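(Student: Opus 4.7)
The plan is to follow Nesterov's classical lower-bound construction: exhibit a single ``worst-case'' quadratic function on which every first-order method satisfying Assumption \ref{ass:iterativemethod} is forced, by the Krylov structure of its iterates, to make slow progress. Without loss of generality I would take $\theta_0 = 0$ (since the algorithm class is translation-equivariant).

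First, I would introduce the hard function
\[
f(\theta) \;=\; \frac{\bar{L}}{8}\!\left(\tfrac{1}{2}\theta^T A_{2k+1}\,\theta \;-\; e_1^T \theta\right),
\]
where $A_{2k+1}$ is the tridiagonal matrix with $2$ on the diagonal and $-1$ on the sub/superdiagonals in its first $2k+1$ rows/columns and $0$ elsewhere, so that $f \in \mathscr{F}_{\bar{L}}^{\infty,1}(\mathbb{R}^n)$ with the prescribed smoothness constant. The key structural lemma would be: if $\theta_0 = 0$, then by induction on $i$, any method satisfying Assumption \ref{ass:iterativemethod} produces iterates with $\theta_i \in \mathbb{R}^{i,n} := \{x : x^{(j)} = 0 \text{ for } j > i\}$. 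This follows because $\nabla f(0) = -\tfrac{\bar{L}}{8}e_1 \in \mathbb{R}^{1,n}$, and in general $A_{2k+1}$ shifts a vector in $\mathbb{R}^{i,n}$ into $\mathbb{R}^{i+1,n}$, so each gradient evaluation can extend the nonzero support by at most one coordinate.

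Next I would compute $\theta^*$ explicitly. The unique minimizer of $f$ restricted to the first $2k+1$ coordinates solves $A_{2k+1}\theta^* = e_1$, which by direct substitution gives ${\theta^*}^{(j)} = 1 - \tfrac{j}{2k+2}$ for $j=1,\dots,2k+1$ and $0$ afterward. From this one computes $\|\theta^* - \theta_0\|^2 = \sum_{j=1}^{2k+1}(1 - \tfrac{j}{2k+2})^2$, and by a routine telescoping calculation $f^* = -\tfrac{\bar{L}}{16}(1 - \tfrac{1}{2k+2})$. Because $\theta_k$ lies in $\mathbb{R}^{k,n}$, the distance $\|\theta_k - \theta^*\|^2$ is bounded below by $\sum_{j=k+1}^{2k+1}({\theta^*}^{(j)})^2$; evaluating this sum and comparing against $\|\theta^* - \theta_0\|^2$ yields the ratio $\tfrac{1}{8}$ announced in the theorem. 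Similarly, $f(\theta_k) \geq \min_{x \in \mathbb{R}^{k,n}} f(x)$, and this restricted minimum can be computed by the same Dirichlet-type calculation on $A_k$; subtracting $f^*$ and keeping only the leading term gives the $\tfrac{3\bar{L}\|\theta_0 - \theta^*\|^2}{32(k+1)^2}$ bound.

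The main obstacle is bookkeeping: the lower bound on $f(\theta_k) - f^*$ requires explicit diagonalization (or at least careful summation) of the closed-form solutions to $A_m x = e_1$ for $m = k$ and $m = 2k+1$, and then extracting sharp constants. The Krylov/sparsity invariance itself is a clean induction, but rigorously identifying the constants $\tfrac{3}{32}$ and $\tfrac{1}{8}$ from the partial sums of $\sum_j (1 - j/(2k+2))^2$ requires algebraic care; this is the step where one has to match Nesterov's bookkeeping exactly rather than a routine big-$\mathcal{O}$ estimate.
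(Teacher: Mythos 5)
The paper itself gives no argument here --- its ``proof'' is a pointer to \cite[p.~72]{Nesterov_2018} --- and your sketch is precisely the classical construction that reference contains: the Krylov/sparsity invariance $\theta_i\in\mathbb{R}^{i,n}$ under Assumption \ref{ass:iterativemethod}, the explicit minimizer ${\theta^*}^{(j)}=1-\tfrac{j}{2k+2}$ of the chain function on $2k+1$ coordinates, the bound $\lVert\theta_k-\theta^*\rVert^2\geq\sum_{j=k+1}^{2k+1}({\theta^*}^{(j)})^2$ giving the ratio $\tfrac18$ via partial sums of squares, and $f(\theta_k)\geq\min_{x\in\mathbb{R}^{k,n}}f(x)=f_k^*$ for the first inequality. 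So the route is the right one and matches the cited source.

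One concrete slip would prevent you from landing on the stated constant: the hard function must be normalized as $f(\theta)=\tfrac{\bar{L}}{4}\bigl(\tfrac12\theta^TA_{2k+1}\theta-e_1^T\theta\bigr)$, not $\tfrac{\bar{L}}{8}(\cdots)$. Since $0\preceq A_{2k+1}\preceq 4I$, the prefactor $\bar{L}/4$ is what makes the function exactly $\bar{L}$-smooth; with $\bar{L}/8$ it is only $(\bar{L}/2)$-smooth (still admissible in $\mathscr{F}_{\bar{L}}^{\infty,1}$, so the second inequality survives, as it is scale-invariant), but the gap $f_k^*-f_{2k+1}^*$ and hence the first bound halves, yielding $\tfrac{3\bar{L}}{64(k+1)^2}$ rather than $\tfrac{3\bar{L}}{32(k+1)^2}$. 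With the prefactor corrected, $f^*=-\tfrac{\bar{L}}{8}\bigl(1-\tfrac{1}{2k+2}\bigr)$, $f_k^*-f_{2k+1}^*=\tfrac{\bar{L}}{16(k+1)}$, and $\lVert\theta_0-\theta^*\rVert^2\leq\tfrac{2(k+1)}{3}$ combine to give exactly the claimed $\tfrac{3}{32}$; the rest of your bookkeeping plan goes through as described.
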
{}
\begin{proof} Refer to \citep[p.~72]{Nesterov_2018}.
\end{proof}

\subsection{Image deblurring problem}\label{ImageDeblurringStatement}
The image deblurring problem is a Linear Inverse Problem which has been studied previously in \citep{Hansen_2006,Beck_2009,Beck_2009_TIP}. We simulate this problem to show the advantages of the proposed method through improved deblurring in the images.

In this problem, we consider gray-scale images, with an image represented by a matrix of size $n$ by $m$. This matrix can be reshaped as a vector concatenating each one of the columns producing a column vector of size $nm$. If the image is in a matrix form, it will be denoted with an upper case letter; if it is reshaped as a vector, it will be denoted with a lower case letter.

We consider the non-blind deblurring problem framework \citep{WangTao}, which consists on obtaining the best estimate of the original image $X$, given its blurry version $B$, and the known blur operator $A$. This is expressed with the following linear model:
\begin{equation}\label{e:IDP_constant}
Ax=b
\end{equation}
where $x \in \mathbb{R}^{nm}$ is the reshaped actual image $X$, $b \in \mathbb{R}^{nm}$ is the reshaped blurry image $B$, and $A \in \mathbb{R}^{nm\text{x}nm}$ is the blur operator.

Apart from the blurred image $b$, which is measurable, the matrix $A$ is assumed to be known from apriori external information. Blurry images can be caused by physical or mechanical processes, such as: out-of-focus lens, moving while taking the photo, or due to air variations which affect the light going through the lens \citep{Hansen_2006}, in addition to malicious cyber attacks. 

We are interested on the time-varying blur scenario, where we are given a set of blurry images and we want to obtain the actual image. Such time-variations can occur in a fixed security camera with a constant actual image, due to fog or smoke, or an out-of-focus arrangement of the camera lens. This causes \eqref{e:IDP_constant} to be modified as
\begin{equation}
A_kx=b_k
\end{equation}

In order to build the blur operator $A$, we only require the Point Spread Function (PSF) of the blur and the boundary conditions \citep{Hansen_2006}:
\begin{itemize}
    \item The PSF determines how a single bright pixel is blurred and how its neighbors are affected. The blurry image is obtained by performing the 2-dimensional convolution of the PSF over the actual image. Typically, if the blur is considered a local phenomenon and space invariant, the PSF can be expressed through a smaller array $P$. Assuming that all the light is captured by the camera, the values in the PSF should sum to 1 \citep{Hansen_2006}.
    \item The boundary conditions detail how the image would look like out of the frame. The boundary conditions could be zero, periodic or reflexive. For this specific experiment we will consider periodic boundary conditions.
\end{itemize}

For the experiments, we will consider Gaussian blur. The PSF array P in such case is defined through the following formula from \citep[p~26]{Hansen_2006}:
\begin{equation}\label{e:psfGauss}
    p_{ij}=\exp\left(-\frac{(i-k)^{2}}{2\sigma^{2}}-\frac{(j-l)^{2}}{2\sigma^{2}}\right),
\end{equation}
where $(k,l)$ is the center of the kernel, and $\sigma$ is the standard deviation of the Gaussian distribution. All the elements in $P$ must be scaled such that the elements sum to 1. In these experiments, we will maintain the kernel size constant, and we will vary $\sigma_k$. Therefore, the PSF array $P_k$ is defined by $P_k=\text{psfGauss}(\sigma_k)$, where $\text{psfGauss}$ denotes the corresponding steps to compute \eqref{e:psfGauss} for a kernel size of $9$, and scaling $P$ appropriately.

In order to obtain the best estimate of $x$, one can minimize the following loss function:
\begin{equation} \label{e:idp_notreg}
\underset{x}{\mathrm{argmin}}\left[\frac{1}{2}||Ax-b||_2^2 \right] 
\end{equation}

Typically, a regularizing term is added to Equation \ref{e:idp_notreg}. This is because $A$ may be ill-conditioned \citep{Hansen_2006}, and for certain situations where noise might be present. Thus, adding the regularization term helps to stabilize the solution. Different types of regularization can be applied \citep{Hansen_2006,Beck_2009,Beck_2009_TIP}. Much of this paper is focused on the noise-free case except for one short experiment discussed at the end of this document with a few preliminary results.

A more computationally efficient way to solve this problem (with periodic boundary conditions) is based on the Fourier transform \citep{Hansen_2006}. In this domain, the blur operation is a simple element-wise matrix multiplication of the blur operator $\phi_k$ and the transformed image $\theta^*$, i.e.:  $\phi_k \odot \theta^*$. The blur operator, $\phi_k$, is obtained directly from the PSF array $P$, by applying the function from \citep[p.~43]{Hansen_2006}, which obtains the spectrum of the matrix $A$ without building that matrix, nor computing its eigenvalues, and using only the information on the PSF array P. Note that the PSF array $P$ needs to be reshaped to the image size by padding with $0$ values. In this paper, we will denote this function as
\begin{equation}
    \phi_k = \text{blur\_operator} (P_k) = \text{blur\_operator}(\text{psfGauss}(\sigma_k)).
\end{equation}
Using the frequency domain translates to the following relationships: $X \leftrightarrow \theta^*$ and $B_k \leftrightarrow y_k$, where $\leftrightarrow$ represents the direct and inverse 2D Fourier transformation. 

We represent the Image Deblurring problem in the Fourier domain as:
\begin{equation} 
\theta=\underset{\theta}{\mathrm{argmin}}\left[\frac{1}{2}\lVert \phi_k \odot \theta-y_k\rVert_{fro}^2 \right].
\end{equation}

Computationally the operations will be performed in the element-wise form. In spite of this, and in order to be consistent with the math notation used in this paper, we will consider $\theta$ and $y_k$ as column vectors of size $nm$, and $\phi_k$ will be reshaped as a diagonal matrix of size $nm$ by $nm$. Then, the problem is rewritten as:
\begin{equation} \label{e:L1_reg}
\theta=\underset{\theta}{\mathrm{argmin}}\left[\frac{1}{2}\lVert \phi_k  \theta-y_k\rVert_2^2 \right] .
\end{equation}

An important consequence from the condition that all PSFs should satisfy (all elements sum to 1 for any blur intensity) is that
\begin{equation}
    \lVert \phi_k \rVert_2^2=1, \quad \forall k.
\end{equation}
Because of this, the case where the blur is time-varying might not affect the stability of unnormalized algorithms. However, we consider an alternative scenario where the information on $y_k$ and $\phi_k$ might be corrupted due to some external issue or disturbance on the communication system. In these circumstances, the relation $y_k = \phi\theta^*$ would be satisfied, but $\lVert \phi_k \rVert_2^2$ could vary its value. We represent this disturbance on the information with the scalar $\delta_k$. Thus, the blur operator $\phi_k$ is redefined as
\begin{equation}
    \phi_k = \delta_k \odot \text{blur\_operator} (P_k) = \delta_k \odot \text{blur\_operator}(\text{psfGauss}(\sigma_k)).
\end{equation}

\subsubsection{Details of experiments in the main paper}\label{IDPProblemStatement}

As indicated in Section \ref{sec:IDP}, Figure \ref{fig:IDPsimulations} represents the results of three different algorithms with an specific choice of hyperparameters on each subfigure. The scenario is the same in all cases where the blurry image $y_k$ is generated through a constant Gaussian PSF array P of size 11 as in \eqref{e:psfGauss} with $\sigma_k=7$, and a time-varying $\delta_k$ as
\begin{equation}
    \delta_{k} = \left\{ \begin{array}{cc}
                1 & \text{if }k<500\\
                \frac{199}{200}k-496.5 & \text{if }500 \leq k < 700 \\
                200 & \text{if }k\geq700
                \end{array}\right..\\
\end{equation}

We now proceed to a discussion of the algorithms and hyperparameters.

\paragraph{(A) Algorithms}
We consider the following methods listed in the same order as in the legend in Figure \ref{fig:IDPsimulations}.
\begin{itemize}
    \item Gradient Descent Method \eqref{e:Gradient_Method}. With $\theta_0$ chosen as the blurred image in the first iteration, i.e. $\theta_0=y_0$; since this is the best estimate of $\theta^*$ at the beginning.
    \begin{equation*}\label{e:GD_IDP}
    \begin{split}
        \theta_{k+1}&= \theta_k - \Bar{\alpha} \phi_k(\phi_k^T\theta_k-y_k)  \\
    \end{split}{}
    \end{equation*}{}
    \item Nesterov Acceleration with time-varying $\bar{\beta}_k$ (\eqref{e:Nesterov_Two_Convex_TV_beta} with $\bar{\beta}_k$ chosen as in \eqref{e:beta_k}). With $\theta_0$ chosen consistently with the previous method, and $\nu_0=\theta_0$.
    \begin{equation*}\label{e:NEST_IDP}
    \begin{split}
        \theta_{k}&= \nu_k - \Bar{\alpha} \phi_k(\phi_k^T\nu_k-y_k) \\
        \nu_{k+1}&=(1+\bar{\beta}_k)\theta_{k+1}-\Bar{\beta}_k\theta_k
    \end{split}{}
    \end{equation*}{}
    \item Higher Order Tuner (Algorithm \ref{alg:HOT_R}). Again, $\theta_0=\vartheta_0=y_0$ to be consistent with the previous methods.
    \begin{equation*}
        \begin{split}
        \bar{\theta}_{k}&= \theta_k - \gamma\beta \left(\frac{\phi_k(\phi_k^T\theta_k-y_k)}{\N_k}+\mu(\theta_k-\theta_0)\right) \\
        \theta_{k+1} &= \bar{\theta}_{k} -\beta (\bar{\theta}_{k}-\vartheta_k) \\
        \vartheta_{k+1}&=\vartheta_k-\gamma\left(\frac{\phi_k(\phi_k^T\theta_{k+1}-y_k)}{\N_k}+\mu(\theta_{k+1}-\theta_0)\right)
        \end{split}{}
    \end{equation*}{}  
\end{itemize}

\paragraph{(B) Hyperparameter selection:}
For this experiment,as mentioned in Section \ref{sec:IDP}, we have four choices for the hyperparameters as opposed to two in \ref{SmoothConvexMinProblem}.

\begin{itemize}
    \item \textbf{Choice (1)}: This corresponds to Figure \ref{fig:IDP_stable}. Here, we choose the hyperparameters of the Higher Order Tuner satisfying Theorem \ref{th:HOT_paper_Full_Alg}, with
    \begin{equation}\label{e:IDP_betagamma}
        \begin{split}
            \mu &= 10^{-20},\\
            \beta &= 0.1,\\
            \gamma &= \frac{\beta(2-\beta)}{16+\beta^2+\mu\left(\frac{57\beta+1}{16\beta}\right)} = 0.01186.
        \end{split}
    \end{equation}
    For the two other methods (Gradient Descent and Nesterov Acceleration with time-varying $\Bar{\beta}_k$) the constant parameter is chosen as $\Bar{\alpha}=\frac{\gamma\beta}{\N_0}=0.00059$ where $\N_0$ corresponds to the first $\phi_k$.
    \item \textbf{Choice (2)}: This corresponds to Figure \ref{fig:IDP_stable_SUP}. The hyperparameters for the Higher Order Tuner are chosen as in \eqref{e:IDP_betagamma} and $\Bar{\alpha}=\frac{\gamma\beta}{\max \N_k}=2.966\cdot10^{-8}$, where $\max \N_k$ corresponds to the maximum value of all future $\phi_k$.
    \item \textbf{Choice (3)}: This corresponds to Figure \ref{fig:IDP_aggresive}. We choose the learning rate $\Bar{\alpha}$ as the maximum value for which convergence is guaranteed for constant regressors \citep{Beck_2009}. In this case, we only know the first $\phi_k$, so this value corresponds to $\Bar{\alpha}=1/\lVert \phi_0 \rVert^2_2=1$. Then, for the Higher Order Tuner, we define comparable hyperparameters by choosing $\gamma\beta=1$ with
    \begin{equation} \label{e:IDP_gammabeta34}
        \begin{split}
            \mu &= 10\cdot10^{-20},\\
            \beta &= 0.1,\\
            \gamma &= 10. \\
        \end{split}
    \end{equation}
    \item \textbf{Choice (4)}: This corresponds to Figure \ref{fig:IDP_aggresive_SUP}. In this case, we know all $\phi_k$, so $\Bar{\alpha}=\frac{1}{\lVert \max \phi_k \rVert^2_2}=2.5\cdot10^{-5}$. Then, for the Higher Order Tuner $\mu$, $\beta$ and $\gamma$ are chosen as in \eqref{e:IDP_gammabeta34}.
\end{itemize}

\subsubsection{Experiments in the presence of time-varying blur and noise}\label{IDPadd}
\paragraph{(A) Simulations:}
In Figure \ref{fig:SinusoidalBlur} we show the performance of the Higher Order Tuner for constant $\delta_k$ and time-varying blur. We modify the blur intensity by modifying $\sigma$ in a sinusoidal form. The hyperparameters are chosen as in Choice (3). Note that these modifications on $\sigma_k$ do not modify the $\lVert \phi_k\rVert^2_2$, and the other methods remain stable in this scenario.

\begin{figure}[b]
        \begin{subfigure}[b]{0.25\textwidth}
            \centering
            \includegraphics[width=\textwidth]{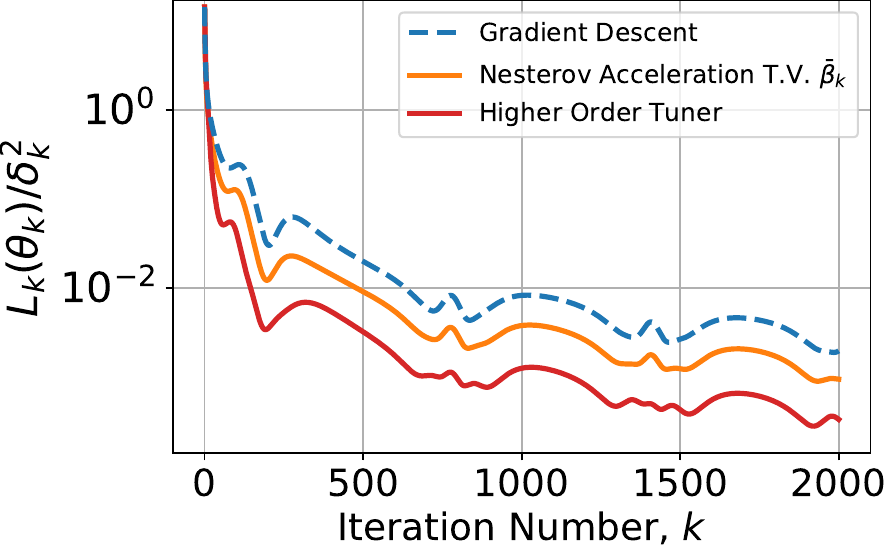}
            \caption{} 
        \end{subfigure}
        \begin{subfigure}[b]{0.74\textwidth}
            \centering
            \includegraphics[width=\textwidth]{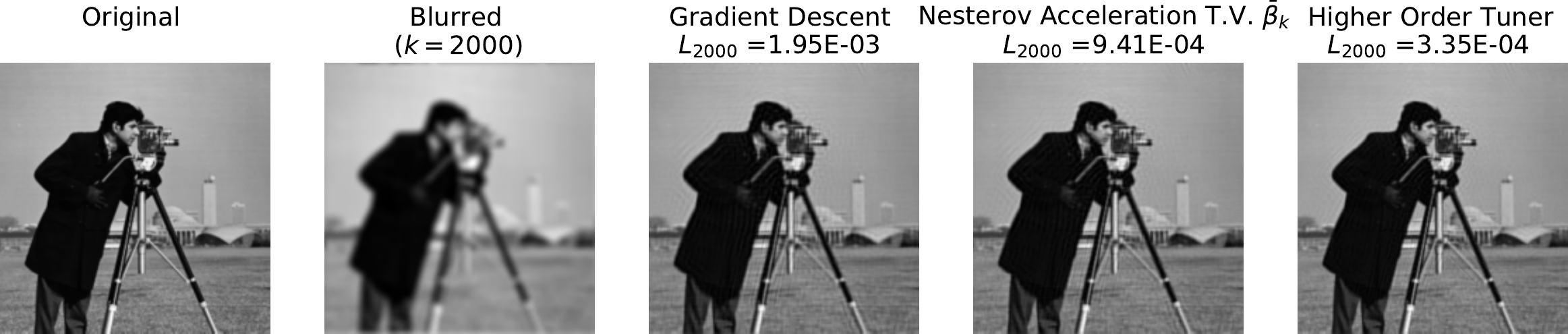}
            \caption{} 
        \end{subfigure}
        \caption{Image deblurring problem for time-varying blur, constant $\delta_k=1$, and hyperparameters as in Choice (3). Blur is defined with a Gaussian blur PSF with a constant kernel size of 11 and $\sigma_k=7-4.1\sin(0.01k)$. This produces $\phi_k=\text{blur\_operator}(\text{psfGauss}(\sigma_k))$. (a) Loss values. (b) Original, blurry and reconstructed images for each different method. }
        \label{fig:SinusoidalBlur}
\end{figure}

Lastly, we show the performance of the Higher Order Tuner when there is noise present in the blurry image. A random noise is added to the blurry image as $N(0,0.02)$. The Higher Order Tuner was implemented for the case when $\mu=0$ and $\mu=0.1$, and the hyperparameters $\gamma$ and $\beta$ as in Choice (3).  It can be seen from Figure \ref{fig:Noisy} that the tuner results in better results with $\mu=0.1$ rather than $\mu=0$. In addition, an $l_1$ regularization term with a weighting parameter $\lambda$ as in \citep{Beck_2009} can be added as well to the loss function. The deblurring problem in this setting can be rewritten as
\begin{equation} \label{e:L1_reg_theta}
    \theta=\underset{\theta}{\mathrm{argmin}}\left[\frac{1}{2}\lVert \phi_k  \theta-y_k\rVert_2^2 + \lambda \lVert \theta \rVert_1 \right].
\end{equation}
It was observed that with the addition of such a $\lambda$, the performance of the high-order tuner improved even further compared to Figure x. As a theoretical analysis of this case remains to be analyzed, we do not present these figures in this paper.

\begin{figure}
\centering
        \begin{subfigure}[b]{0.74\textwidth}
            \centering
            \includegraphics[width=\textwidth]{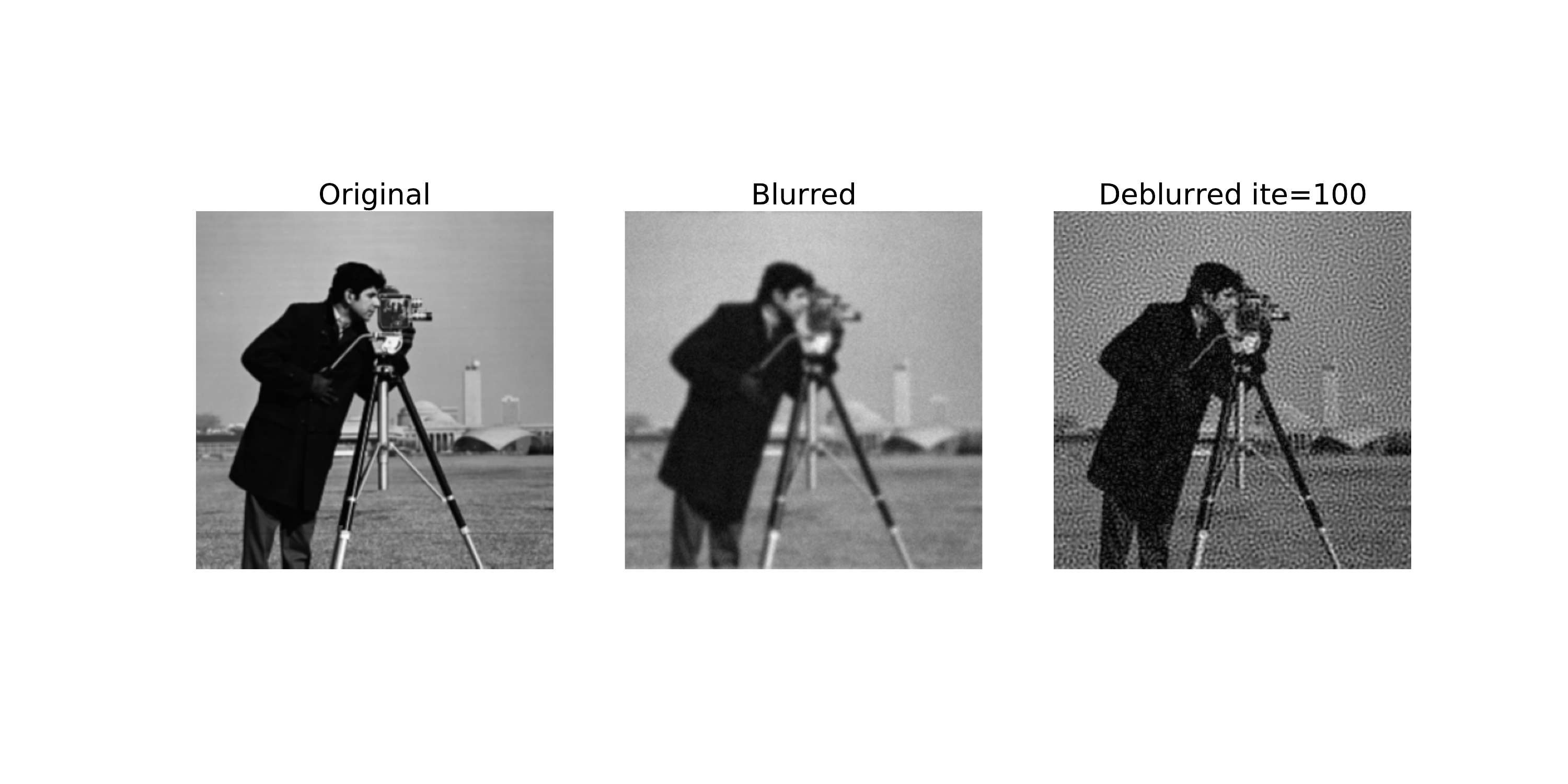}
            \caption{}
        \end{subfigure}
        \begin{subfigure}[b]{0.74\textwidth}
            \centering
            \includegraphics[width=\textwidth]{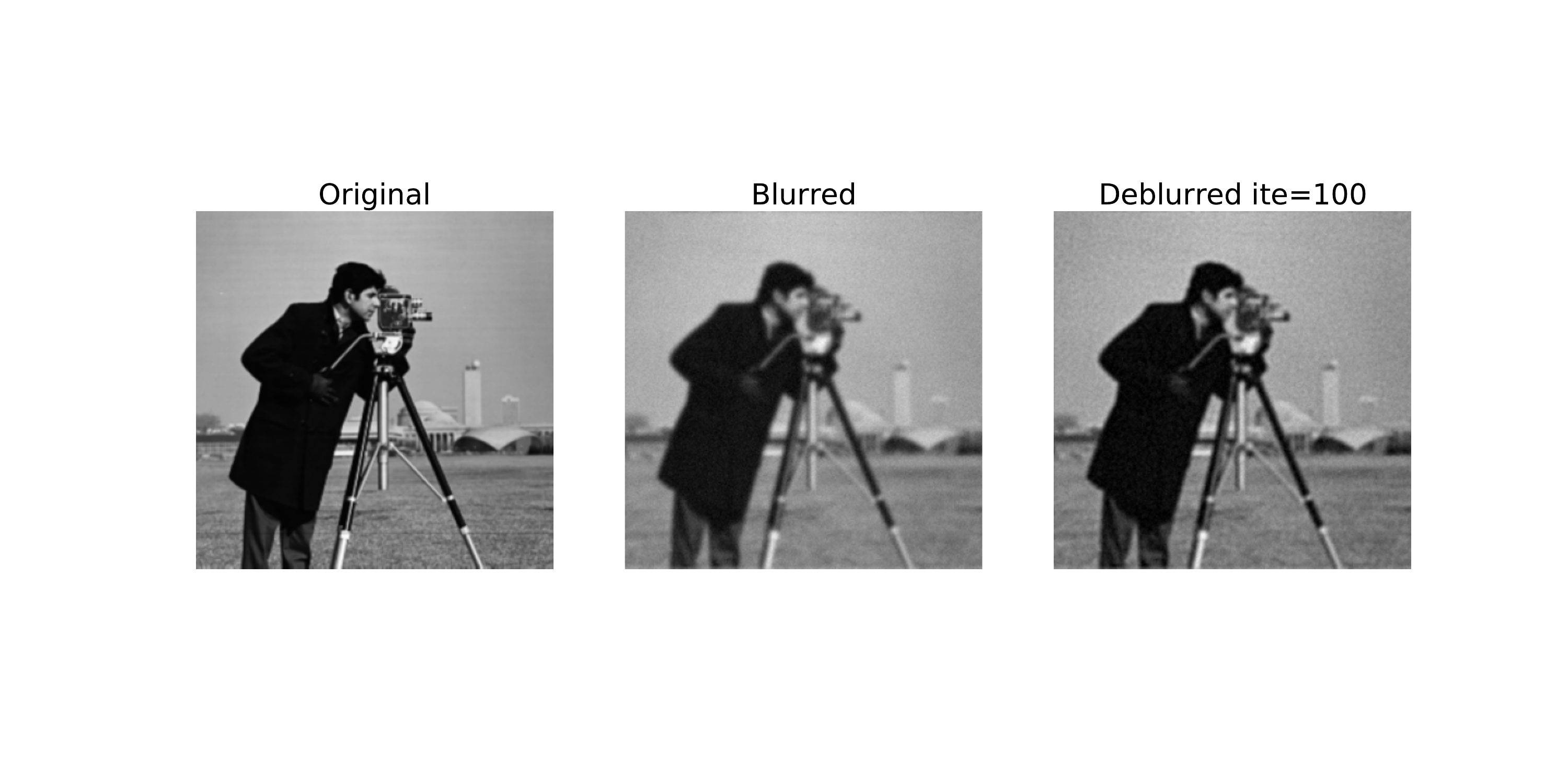}
            \caption{}
        \end{subfigure}
        \caption{Noisy image deblurring problem for constant blur and constant $\delta_k=1$, $\gamma$ and $\beta$ as in Choice (3). Random noise is $N(0,0.02)$. a) $\mu=0$ b) $\mu=0.1$}
        \label{fig:Noisy}
\end{figure}

\end{document}